
\documentclass[10pt,a4paper]{amsart}

\usepackage{graphicx,amssymb,color,amscd}
\usepackage[justification=centering]{caption}

\theoremstyle{plain}
\newtheorem{theorem}{Theorem}[section]
\newtheorem{lemma}[theorem]{Lemma}
\newtheorem{proposition}[theorem]{Proposition}
\newtheorem{corollary}[theorem]{Corollary}
\theoremstyle{definition}
\newtheorem{definition}[theorem]{Definition}
\newtheorem{thm}{Theorem}
\theoremstyle{remark}
\newtheorem{notation}[theorem]{Notation} 
\newtheorem{convention}[theorem]{Convention} 
\newtheorem{remark}[theorem]{Remark}
\newtheorem{fact}[thm]{Fact}
\newtheorem{claim}[theorem]{Claim}
\newtheorem*{acknowledgments}{Acknowledgments}
\numberwithin{equation}{section}
\numberwithin{figure}{section}

\arraycolsep 1pt

%
\newcommand{\bd}{\begin{description}}   
\newcommand{\ed}{\end{description}} 
\newcommand{\ba}{\begin{array}}      \newcommand{\ea}{\end{array}} 
\newcommand{\bc}{\begin{center}}     \newcommand{\ec}{\end{center}} 
\newcommand{\be}{\begin{enumerate}}  \newcommand{\ee}{\end{enumerate}} 
\newcommand{\beq}{\begin{eqnarray}}  \newcommand{\eeq}{\end{eqnarray}} 
\newcommand{\beQ}{\begin{eqnarray*}} \newcommand{\eeQ}{\end{eqnarray*}} 
\newcommand{\bi}{\begin{itemize}}    \newcommand{\ei}{\end{itemize}}

\newcommand{\ov}{\overline}

\newcommand{\s}{\mathcal{S}}

%
%

\newcommand{\1}{\mathbf{1}}
\newcommand{\Tube}{\textrm{Tube}}
\newcommand{\Aut}{\textrm{Aut}}
\newcommand{\lr}[1]{\stackrel{\scriptsize{\raisebox{.5ex}[0pt][.5ex]{$#1$}}}{\rightarrow}}
\newcommand\mydot{\ifmmode\, \cdots\else\makebox[1em][c]{\hfil.\hfil.\hfil.}\fi}
\newcommand\mydots{{\cdotp}{\cdotp}{\cdotp}}%

\newcommand{\w}{\textrm{w}} 
\begin{document} 
\title[Arrow Calculus]{Arrow calculus for welded and classical links} 
\author[J.B. Meilhan]{Jean-Baptiste Meilhan} 
\address{Univ. Grenoble Alpes, CNRS, Institut Fourier, F-38000 Grenoble, France}
	 \email{jean-baptiste.meilhan@univ-grenoble-alpes.fr}
\author[A. Yasuhara]{Akira Yasuhara} 
\address{Tsuda University, Kodaira-shi, Tokyo 187-8577, Japan}
	 \email{yasuhara@tsuda.ac.jp}
\subjclass[2000]{57M25, 57M27}
\keywords{knot diagrams, finite type invariants, Gauss diagrams, claspers}
\begin{abstract} 
We develop a calculus for diagrams of knotted objects. 
We define Arrow presentations, which encode the crossing informations of a diagram into arrows in a way 
somewhat similar to Gauss diagrams,  
and more generally w-tree presentations, which can be seen as `higher order Gauss diagrams'. 
This Arrow calculus is used to develop an analogue of Habiro's 
clasper theory for welded knotted objects, which contain classical  link diagrams as a subset. This provides a 'realization' of Polyak's algebra of arrow diagrams at the welded level, and leads to a characterization of finite type invariants of welded knots and long knots. 
As a corollary, we recover several topological results due to Habiro and 
Shima and to Watanabe on knotted surfaces in $4$-space. 
We also classify welded string links up to homotopy, thus recovering a result of the first author with Audoux, Bellingeri and Wagner. 
\end{abstract} 

\maketitle

\section{Introduction}
A Gauss diagram is a combinatorial object, introduced by M.~Polyak and O.~Viro in \cite{PV} and T.~Fiedler in \cite{fiedler}, which encodes faithfully $1$-dimensional knotted objects in $3$-space. 
To a knot diagram, one associates a Gauss diagram by connecting, on a copy of $S^1$, the two preimages of each crossing by an arrow, oriented from the over- to the under-passing strand and labeled by the sign of the crossing. 
Gauss diagrams form a powerful tool for studying knots and their invariants. In particular, a result of M.~Goussarov \cite{GPV} states that any finite type (Goussarov-Vassiliev) knot invariant admits a Gauss diagram formula, i.e. can be expressed as a weighted count of arrow configurations in a Gauss diagram. 
A remarkable feature of this result is that, although it concerns classical knots, its proof heavily relies on \emph{virtual knot theory}.
Indeed, Gauss diagrams are inherently related to virtual knots, since an arbitrary Gauss diagram doesn't always represent a classical knot, but a virtual one \cite{GPV,Kauffman}. 

More recently, further topological applications of virtual knot theory arose from its \emph{welded} quotient, where one allows a strand to pass over a virtual crossing \cite{FRR}. 
This quotient is completely natural from the virtual knot group viewpoint, which naturally satisfies this additional local move. Hence all virtual invariants derived from the knot group, such as the Alexander polynomial or Milnor invariants, are intrinsically invariants of welded knotted objects. 
Welded theory is also natural by the fact that classical knots and (string) links can be 'embedded' in their welded counterparts. 
The topological significance of welded theory was enlightened by  S.~Satoh \cite{Satoh}; 
building on early works of  T.~Yajima \cite{yajima}, he defined the so-called Tube map, which `inflates' welded diagrams into ribbon knotted surfaces in dimension $4$. 
Using the Tube map, welded theory was successfully used in \cite{ABMW} to classify ribbon knotted annuli and tori up to link-homotopy (for knotted annuli, it was later shown that the ribbon case can be used to give a general link-homotopy classification \cite{AMW}).

\medskip

In this paper, we develop an \emph{arrow calculus} for welded knotted objects, which can be regarded as a kind of `higher order Gauss diagram' theory. 
We first recast the notion of Gauss diagram into so-called \emph{Arrow presentations} for classical and welded knotted objects. 
Unlike Gauss diagrams, which are `abstract' objects, Arrow presentations are planar immersed arrows which `interact' with knotted  diagrams. 
They satisfy a set of \emph{Arrow moves}, which we prove to be complete, in the following sense. 
\begin{thm}[Thm.~\ref{thm:main1}]\label{thm1}
Two Arrow presentations represent equivalent diagrams if and only if they are related by Arrow moves.
\end{thm}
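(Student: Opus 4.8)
The plan is to recognise the stated completeness theorem as the injectivity half of a bijection between Arrow presentations modulo Arrow moves and welded diagrams modulo welded equivalence, the bijection being induced by the \emph{expansion} operation (surgery along the arrows, as set up in the preceding sections); surjectivity will be the much easier realizability statement that every diagram is the expansion of some Arrow presentation. Throughout, ``equivalence'' of diagrams means the generalised Reidemeister moves together with the over-commute move.

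I would first dispose of the ``if'' direction, i.e. soundness. For this it suffices to check, for each Arrow move on our list, that it changes the expansion of an Arrow presentation only by a finite sequence of generalised Reidemeister and over-commute moves. This is a finite, local verification: births and deaths of trivial arrows expand to Reidemeister~I and~II moves, slides of a head or a tail along the diagram or across a crossing expand to Reidemeister~III and virtual moves, and the moves controlling how two arrows cross one another in the plane expand to detour-type moves. Beyond careful bookkeeping of signs and orientations I expect no real difficulty here; this step also shows that expansion descends to a well-defined map on the quotients.

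For the ``only if'' direction I would argue in three steps. \textbf{(1) A canonical Arrow presentation.} To a diagram $D$ associate the Arrow presentation $\Phi(D)$ obtained by isotoping $D$ into a thin neighbourhood of a trivial diagram and recording each crossing by a short embedded arrow, oriented from the over- to the under-strand and labelled by its sign; this is the concrete incarnation of the Gauss diagram of $D$, and one checks at once that the expansion of $\Phi(D)$ is $D$ up to planar isotopy (which in particular yields surjectivity). \textbf{(2) Move-equivariance.} If $D$ and $D'$ differ by a single generalised Reidemeister or over-commute move, then $\Phi(D)$ and $\Phi(D')$ are related by Arrow moves; this is a case analysis with one case per generating move, in each of which the required Arrow moves are exhibited. \textbf{(3) Well-definedness up to Arrow moves.} For an arbitrary Arrow presentation $A$ one has $A \sim \Phi(E(A))$ by Arrow moves, where $E$ denotes expansion: the expansion of a single arrow is a small tangle whose crossings, when re-recorded as in step~(1), are carried back to that one arrow by a bounded sequence of Arrow moves (reversal, splitting and merging of parallel arrows, and slides), and one iterates this over the arrows of $A$ while keeping track of how they are immersed in the plane.

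Assembling these: given Arrow presentations $A_1,A_2$ whose expansions $E(A_1)$ and $E(A_2)$ are welded equivalent, write the equivalence as a finite composition of generating moves; step~(2) yields $\Phi(E(A_1)) \sim \Phi(E(A_2))$ by Arrow moves, step~(3) yields $A_i \sim \Phi(E(A_i))$ by Arrow moves for $i=1,2$, and chaining these gives $A_1 \sim A_2$ by Arrow moves, as desired. The substantive work is concentrated in steps~(2) and~(3). In~(2) the delicate cases are Reidemeister~III and the over-commute move, where three arrows have to be pushed past one another and their heads and tails commuted; this is the exact analogue of the intricate ``move~2'' manipulations in Habiro's clasper calculus and is what I expect to be the main obstacle. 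In~(3), showing that an arrow is absorbed by its own expansion is precisely what pins down how long the list of Arrow moves must be, and the plan's success hinges on having chosen that list generously enough — if a case in~(2) or~(3) cannot be closed, the remedy is to enlarge the list by the missing move and re-verify soundness for it.
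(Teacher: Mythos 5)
Your plan is essentially the paper's own proof: soundness by checking each Arrow move on surgered diagrams, completeness via a canonical Arrow presentation, the lifting of generalized Reidemeister and OC moves to Arrow moves on canonical presentations (the paper's Lemma on canonical presentations of equivalent diagrams), and the reduction of an arbitrary presentation to a canonical one by Arrow moves. The only differences are cosmetic — the paper's canonical presentation keeps the virtualized copy of the diagram rather than a crossing-free base, and w-arrows carry twists plus a side convention for the head rather than sign labels — and the fixed list of six Arrow moves does suffice to close all the cases you flag, so no enlargement of the list is needed.
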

We stress that, unlike Gauss diagrams analogues of Reidemeister moves, which involve rather delicate compatibility conditions in terms of the arrow signs and local strands orientations, Arrow moves involve no  such restrictions. \\
The main advantage of this calculus, however, is that it generalizes to `higher orders'. This relies on the notion of 
 \emph{w-tree presentation}, where arrows 
 are generalized to oriented trees, which can thus be thought of as `higher order Gauss diagrams'. Arrow moves are then extended to a calculus of \emph{w-tree moves}, i.e. we have a w-tree version of Theorem \ref{thm1}. 

\medskip

Arrow calculus should also be regarded as a welded version of the Goussarov-Habiro theory \cite{Habiro,Gusarov:94}, solving  partially a problem set by M.~Polyak in \cite[Problem 2.25]{ohtsukipb}. 
In \cite{Habiro}, Habiro introduced the notion of clasper for (classical) knotted objects, which is a kind of embedded graph carrying a surgery instruction. 
A striking result is that clasper theory gives a topological characterization of the information carried by finite type invariants of knots.  
More precisely, Habiro used claspers to define the $C_k$-equivalence relation, for any integer $k\ge 1$, and showed that two knots share all finite type invariants up to degree $<k$ if and only if they are $C_k$-equivalent.  This result was also independently obtained by Goussarov in \cite{Gusarov:94}. 
In the present paper, we use w-tree presentations to define a notion of $\w_k$-equivalence. 
We observe that two $\w_k$-equivalent welded knotted objects share all finite type invariants of degree $<k$,  and prove that the converse holds for welded knots and long knots. 
More precisely, we use Arrow calculus to show the following. 
\begin{thm}[Thm.~\ref{thm:wk} and Cor.~\ref{cor:ftiwklong}]\label{thm2}
Any welded knot is $\w_k$-equivalent to the unknot, for \emph{any} $k\ge 1$. 
Hence there is no non-trivial finite type invariant of welded knots. 
\end{thm}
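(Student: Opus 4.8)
The plan is to derive the second assertion formally from the first and to concentrate all the work on showing that every welded knot is $\w_k$-equivalent to the unknot. Indeed, once one knows (as is immediate from the definitions) that $\w_k$-equivalent welded knotted objects share all finite type invariants of degree $<k$, then for any finite type invariant $v$ of welded knots of degree $d$ one applies the $\w_{d+1}$-triviality of an arbitrary welded knot $K$ to conclude $v(K)=v(\text{unknot})$; so $v$ is constant, i.e. there is no non-trivial finite type invariant.

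For the main statement I would argue by induction on $k$. The case $k=1$ is free: every welded knot admits a w-tree presentation on the trivial diagram $U$ (its Arrow presentation, coming from the Gauss diagram via the discussion around Theorem~\ref{thm1}), and every w-tree has order $\ge 1$, so $K$ is $\w_1$-equivalent to the unknot. For the inductive step, suppose $K$ is presented on $U$ by a finite collection $T$ of w-trees, all of order $\ge k$. I want a new w-tree presentation of the \emph{same} welded knot in which every w-tree has order $\ge k+1$; the idea is to delete the order-$k$ w-trees one at a time, each deletion being paid for only by w-trees of order $\ge k+1$.

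This is where the w-tree calculus and the one-component hypothesis come in. Pick $T_0\in T$ of order exactly $k$. Using the w-tree moves — Fusion, Reversal, Expansion, and in particular the Tails/Heads exchange moves, whose "error terms" are w-trees whose order is the \emph{sum} of the orders of the interacting w-trees, hence $\ge k+k\ge k+1$ here — together with ambient isotopies sliding endpoints along the single underlying circle, I would push all endpoints of the remaining w-trees, and all but a small neighbourhood of $T_0$ itself, out of the way, bringing $T_0$ into an isolated position: contained in a disk that meets $U$ in one trivial arc and meets nothing else. By the w-tree analogue of the fact that surgery along an isolated (trivial) clasper is trivial, such a $T_0$ may be deleted without changing the welded knot. (Alternatively, one slides an endpoint of $T_0$ until its head is adjacent to one of its own tails and applies the resulting "generalized Reidemeister~I" reduction; either way $T_0$ disappears modulo $\w_{k+1}$.) Iterating over all order-$k$ w-trees — the number of which strictly decreases at each step, only strictly higher-order ones being created — yields a presentation of $K$ all of whose w-trees have order $\ge k+1$, i.e. $K$ is $\w_{k+1}$-equivalent to the unknot, completing the induction.

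The main obstacle, and the point requiring genuine care, is the bookkeeping of orders: one must verify that none of the moves used to isolate $T_0$ — sliding the other w-trees and their endpoints across $T_0$, across each other, and across the strand of $U$ — ever produces a w-tree of order $<k+1$, which rests precisely on the additivity of orders built into the exchange moves of the w-tree calculus, and on checking that "isolated w-tree deletion" is indeed a legitimate consequence of that calculus. It is exactly here that having a single component is essential: on one circle there is always enough room to slide every endpoint past every other and to isolate a chosen w-tree, whereas for a link one cannot move an endpoint from one component to another — in accordance with the existence of non-trivial finite type invariants of welded links. The statement for welded long knots, and hence Corollary~\ref{cor:ftiwklong}, would be obtained by the same argument with the circle replaced by a line.
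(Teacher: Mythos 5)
Your reduction of the second assertion to the first (via Proposition \ref{prop:ftiwk}) and your general scheme -- push the lowest-degree trees into isolated position using exchange moves whose error terms have strictly higher degree -- do match the skeleton of the paper's proof of Lemma \ref{lem:separate}. But the step that actually kills the isolated tree is wrong. The Isolated move (4) holds only in degree $1$: for $k\ge 2$ there is no ``isolated w-tree deletion'' in the calculus, and the fact you invoke is false. Surgery along a single $\w_k$-tree contained in a disk meeting the diagram in one trivial arc is, on the trivial long knot, exactly the long knot $L_k$ of Figure \ref{fig:K0}, which by Lemma \ref{lem:wkAlex} has $\alpha_k=1$ and hence (Lemma \ref{lem:Alexfti} together with Proposition \ref{prop:ftiwk}, or Theorem \ref{thm:ftialexlong}) is not even $\w_{k+1}$-equivalent to the trivial diagram; the analogous clasper statement is not true either. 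Your fallback, a reduction when ``the head is adjacent to one of its own tails'', also does not exist: the only cancellations of this kind available are the Isolated move for w-arrows and the Fork Lemma \ref{lem:fork}, which requires two \emph{adjacent tails} attached to the same trivalent vertex, with no head in between.

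What the paper does instead, and what your argument is missing, is a use of the closedness of the component to manufacture a fork. After isolating a degree-$l$ tree $T$ in a disk $B$ (Lemma \ref{cor:separate}), one looks at the first endpoint of $T$ met along $O$ inside $B$: if it is the head, Tails Exchange and Antisymmetry produce a fork and $T$ is deleted by the Fork Lemma; if it is a tail, that tail is slid out of $B$ and carried \emph{all the way around the closed knot}, past endpoints of other trees at the cost of trees of degree $\ge l+1$ (Corollary \ref{lem:exchange}), re-entering $B$ from the far side -- in particular never crossing the head of $T$ itself -- until the head is met first. This going-around-the-circle step is precisely what is unavailable for long knots, so your closing claim that the same argument ``with the circle replaced by a line'' handles the long-knot case cannot be right: welded long knots carry the nontrivial finite type invariants $\alpha_k$ and are not all $\w_k$-trivial (Corollary \ref{cor:ftiwk}); note also that Corollary \ref{cor:ftiwklong}, despite its label, concerns welded (round) knots. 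Accordingly, your diagnosis that the essential hypothesis is ``one component rather than several'' is off target; the essential dichotomy here is closed versus long, i.e.\ the ability to carry a tail around the component.
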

\begin{thm}[Cor.~\ref{cor:ftiwk}]\label{thm3}
The following assertions are equivalent, for any $k\ge 1$: 
\begin{enumerate}
\item[$\circ$]  two welded long knots are $\w_k$-equivalent, 
\item[$\circ$]  two welded long knots share all finite type invariants of degree $<k$, 
\item[$\circ$]  two welded long knots have same invariants 
$\{ \alpha_i \}$ for $2\le i\le k$.
\end{enumerate}
\end{thm}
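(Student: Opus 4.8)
The plan is to prove the cycle of implications $(1)\Rightarrow(2)\Rightarrow(3)\Rightarrow(1)$; the first two are soft and the content is entirely in $(3)\Rightarrow(1)$. For $(1)\Rightarrow(2)$ one invokes the observation made just before Theorem~\ref{thm2}: surgery along a w-tree of degree $\ge k$ can be expressed as a degree-$k$ alternating sum of welded diagrams, so it does not change any finite type invariant of degree $<k$; hence $\w_k$-equivalent welded long knots share all such invariants. For $(2)\Rightarrow(3)$ one only needs that each $\alpha_i$ with $2\le i\le k$ is itself a finite type invariant of degree $i-1$, which holds by construction of $\alpha_i$ (it is read off from the degree-$(i-1)$ part of a w-tree presentation); since $i-1<k$, two welded long knots sharing all degree-$<k$ invariants in particular agree on $\alpha_2,\dots,\alpha_k$.

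The heart of the statement is $(3)\Rightarrow(1)$, for which I would establish a canonical form for welded long knots modulo $\w_k$-equivalence. Start from a w-tree presentation of a welded long knot $K$. Since $\w_k$-equivalence is generated by inserting and deleting w-trees of degree $\ge k$, one may delete all of those and assume only w-trees of degree $\le k-1$ occur. Then run the w-tree calculus — the w-tree version of Theorem~\ref{thm1}, together with the expansion move and the AS-type and IHX-type relations, all valid modulo w-trees of strictly higher degree — processing the trees by \emph{increasing} degree $d=1,2,\dots,k-1$. At stage $d$ the trees of degree $<d$ are already in normal form and are not disturbed, since every move applied to a degree-$d$ tree produces error terms of degree $>d$ only; the relations then let me rewrite the degree-$d$ part, modulo trees of degree $>d$, as $|n_d|$ parallel copies (oriented according to the sign of $n_d\in\modZ$) of a single fixed \emph{standard} degree-$d$ w-tree $S_d$. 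The combinatorial input is that, \emph{on a single strand}, the $\modZ$-module of w-trees of a given degree, taken modulo the calculus relations and modulo higher degree, is free of rank one, generated by $S_d$; this is the w-tree counterpart of the fact that the associated graded of welded long knots is a polynomial ring with one generator in each degree. The outcome is that $K$ is $\w_k$-equivalent to the ``stack'' $W(n_1,\dots,n_{k-1})$ obtained by surgery on $n_d$ copies of $S_d$ for $d=1,\dots,k-1$.

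It then remains to identify the $n_d$ in terms of the $\alpha_i$. Computing directly on the standard stacks, and using that $S_d$ is precisely what $\alpha_{d+1}$ is designed to detect, one finds $\alpha_{d+1}\big(W(n_1,\dots,n_{k-1})\big) = c_d\, n_d + P_d(n_1,\dots,n_{d-1})$ for a fixed nonzero integer $c_d$ and a fixed polynomial $P_d$; hence $(n_1,\dots,n_{k-1})$ is recovered inductively from $\alpha_2,\dots,\alpha_k$. Therefore, if $\alpha_i(K) = \alpha_i(K')$ for all $2\le i\le k$, then $K$ and $K'$ reduce to the same canonical form, so $K \sim_{\w_k} W(n_1,\dots,n_{k-1}) \sim_{\w_k} K'$, proving $(3)\Rightarrow(1)$.

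The hardest part will be the canonical form in $(3)\Rightarrow(1)$: showing that the w-tree moves really do collapse an arbitrary degree-$d$ configuration on one strand onto a multiple of $S_d$ modulo higher degree. This needs, first, careful control of the degree-$>d$ ``error terms'' created by each move — organised by the increasing induction on degree, so that these errors only ever land in not-yet-normalised degrees and, beyond degree $k-1$, are discarded outright — and, second, the rank-one computation for w-trees of each degree on a single component. The latter is exactly where the difference between long and closed welded knots appears: for closed welded knots an extra sliding move kills every $S_d$, which is how Theorem~\ref{thm2} is recovered. Once the canonical form and the non-degeneracy $\alpha_{d+1}(S_d)\neq 0$ are in hand, everything else is formal.
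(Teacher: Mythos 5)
Your overall architecture coincides with the paper's: (1)$\Rightarrow$(2) is Proposition~\ref{prop:ftiwk}, and (3)$\Rightarrow$(1) is proved there by exactly the canonical form you describe --- Lemma~\ref{lem:wklong} collapses the degree-$l$ part onto powers of the standard one-branch long knots $L_l$ of Figure~\ref{fig:K0}, the increasing induction on degree is Equation~(\ref{eq:wk}) in the proof of Theorem~\ref{thm:ftialexlong}, and the exponents are recovered from the Alexander coefficients via multiplicativity (Corollary~\ref{cor:additive}) together with $\alpha_i(L_j)=\delta_{ij}$ (Lemma~\ref{lem:wkAlex}). However, as a proof your proposal has a genuine gap: the ``rank-one'' statement --- that, modulo trees of strictly higher degree, every degree-$d$ w-tree on a single strand reduces to copies of one standard tree --- is the entire technical content of (3)$\Rightarrow$(1), and you assert it rather than establish it. In the paper this is Lemma~\ref{lem:wklong}, whose proof requires the IHX relation (Lemma~\ref{lem:ihx}) to reduce to a single external vertex, the Fork Lemma~\ref{lem:fork} and Tails Exchange move, the Twist Lemma~\ref{lem:twist}, Lemma~\ref{cor:separate} to control the higher-degree error terms, and above all the computation of Figures~\ref{fig:Twk} and~\ref{fig:Twk2}, which exchanges the position of the head relative to the two external tails at the cost of higher-degree trees. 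None of this is routine, and it is also exactly where the hypothesis ``long'' enters (for closed welded knots the same manipulations kill the standard trees, giving Theorem~\ref{thm:wk}), so flagging it as ``the hardest part'' without an argument leaves the heart of the theorem unproved.

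There is also a concrete error in your soft steps. Your justification of (2)$\Rightarrow$(3) claims that $\alpha_i$ is a finite type invariant of degree $i-1$, ``read off from the degree-$(i-1)$ part of a w-tree presentation''; but $\alpha_i$ is defined from the normalized Alexander polynomial via Fox calculus and is of finite type degree exactly $i$ (Lemma~\ref{lem:Alexfti}). Indeed $\alpha_d(L_d)=1$ while $L_d$ is $\w_d$-equivalent to the unknot, so $\alpha_d$ cannot have degree $\le d-1$. The same off-by-one recurs when you say the standard degree-$d$ tree is detected by $\alpha_{d+1}$: by Lemma~\ref{lem:wkAlex} it is detected by $\alpha_d$. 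With the correct degrees the equivalence holds with the invariants $\{\alpha_i\}_{i<k}$, which is how Corollary~\ref{cor:ftiwk} is stated in the body (the range $2\le i\le k$ in the introduction's formulation should be read accordingly); your degree claim was forced by that range but is false, so both (2)$\Rightarrow$(3) and your coefficient-matching step $\alpha_{d+1}(W)=c_d n_d+P_d$ need the corrected indexing.
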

\noindent Here, the invariants $\alpha_i$ are given by the coefficients of the power series expansion at $t=1$ of the normalized Alexander polynomial. 

From the finite type point of view, w-trees can thus be regarded as a `realization' of the space of oriented diagrams 
introduced in \cite{WKO1}, 
where the universal invariant of welded (long) knots takes its values, 
and which is a quotient of the Polyak algebra \cite{polyak_arrow}. 
This  is similar to clasper theory, which provide a topological realization of Jacobi diagrams. See Sections \ref{sec:wslfti} and \ref{sec:virtual} for further comments. 
\\ 
We note that Theorem \ref{thm2} and the equivalence (2)$\Leftrightarrow$(3) of Theorem \ref{thm3} were independently shown for \emph{rational-valued} finite type invariants by D.~Bar-Natan and S.~Dancso \cite{WKO1}. 
Our results hold in the general case, i.e. for invariants valued in any abelian group. 
We also show that welded long knots up to $\w_k$-equivalence form a finitely generated free abelian group, see Corollary \ref{cor:abelian}. 

\medskip

Using Satoh's Tube map, we can promote these results to topological ones. More precisely, we obtain that there is no non-trivial finite type invariant of ribbon torus-knots (Cor.~\ref{cor:topo1}), and reprove a result of K.~Habiro and A.~Shima \cite{HS} stating that  
finite type invariants of ribbon $2$-knots are determined by the (normalized) Alexander polynomial (Cor.~\ref{cor:topo2}). 
Moreover, we show that Theorem \ref{thm3} implies a result of T.~Watanabe \cite{watanabe} which characterizes topologically finite type invariants of ribbon $2$-knots.
See Section \ref{sec:watanabe}. 

\medskip

We also develop a version of Arrow calculus \emph{up to homotopy}. 
Here, the notion of homotopy for welded diagrams is generated by the \emph{self-(de)virtualization move}, which replaces a classical crossing between two strands of a same component by a virtual one, or vice-versa. 
We use the homotopy Arrow calculus to prove the following. 
\begin{thm}[Cor.~\ref{thm:wsl}]\label{thm4}
Welded string links are classified up to homotopy by welded Milnor invariants. 
\end{thm}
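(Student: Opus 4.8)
The plan is to combine the homotopy version of Arrow calculus with the structure of welded Milnor invariants. First I would set up a homotopy w-tree calculus, in parallel with the $\w_k$-calculus used for the earlier theorems: since homotopy is generated by self-(de)virtualization, I expect that a w-tree with a repeated index on two of its endpoints can be deleted (or freely slid) up to homotopy, so that only \emph{repeat-free} w-trees matter. I would prove the needed homotopy w-tree moves (a ``self'' w-tree vanishes; w-trees whose endpoint multiset is not multiplicity-free can be discarded; head/tail reversal and the usual expansion/slide moves hold up to homotopy), so that every welded string link is, up to homotopy, presented by a union of repeat-free w-trees on the trivial diagram.

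Next I would extract a normal form. Using the w-tree expansion and the STU-type and ``AS/IHX''-type relations established earlier in the arrow calculus, I would show that one can reduce to linear w-trees (sequences of arrows), and then, reading off the sequence of indices along such a tree, identify its contribution with a Milnor-type word in the free group / free Lie algebra on the strand meridians. The key point is that a repeat-free linear w-tree of ``length'' $k$ changes the welded Milnor invariants $\overline{\mu}$ of length $\le k$ in a controlled, essentially triangular way (it alters exactly the longest Milnor invariant associated to its index word, modulo invariants of strictly smaller length and modulo the indeterminacy), so surgeries along such trees realize all admissible values of the Milnor invariants.

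With realization in hand, the classification is a standard triangular induction on length: given two welded string links with equal welded Milnor invariants, I compare them on the lowest length where a w-tree presentation of their ``difference'' is supported; equality of $\overline\mu$ at that length forces (via the computation above) that all length-$k$ trees can be cancelled up to homotopy and up to trees of length $>k$, and one iterates. Since the invariants $\overline\mu$ of a welded string link of $n$ components are supported in bounded length (repeat-free words on $n$ letters), the induction terminates and yields that the two string links are homotopic. Conversely, welded Milnor invariants are known to be homotopy invariants (they come from the peripheral structure of the welded group, which is preserved by self-virtualization), so they indeed classify.

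The main obstacle I anticipate is the precise bookkeeping of the \emph{indeterminacy} of welded Milnor invariants and matching it with the w-tree relations: one must show that exactly the w-tree manipulations allowed up to homotopy (index repetition, head/tail swaps, the IHX/STU moves) generate precisely the indeterminacy subgroup in which $\overline\mu$ is well-defined, so that the map {repeat-free w-trees up to homotopy moves} $\to$ {Milnor data} is a bijection and not merely a surjection. Establishing this two-sided correspondence — i.e. that no ``extra'' relations among repeat-free w-trees hold up to homotopy beyond those reflected in the Milnor indeterminacy — is the technical heart, and I would handle it by exhibiting, for each admissible Milnor datum, an explicit w-tree surgery and checking injectivity against a complete set of invariants, exactly as in the clasper-theoretic proof for classical string links, transported through the Tube map or directly via the welded group. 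This also recovers the result of Audoux--Bellingeri--Meilhan--Wagner, now as a corollary of the general arrow calculus.
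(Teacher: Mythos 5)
Your proposal follows essentially the same route as the paper: delete repeated w-trees up to homotopy (the paper's Lemma \ref{lem:h}), reduce non-repeated trees to the linear ones $T_{Ii}$ via the Antisymmetry/IHX/Twist lemmas, and run a triangular induction on degree in which the realization Lemma \ref{lem:Milnor}, additivity, and homotopy/$\w_k$-invariance of Milnor invariants identify the exponents, the induction terminating because repeat-free trees on $n$ strands have degree $<n$. One simplification you get for free: welded Milnor invariants of \emph{string links} carry no indeterminacy (they are well-defined integers), so the delicate matching of w-tree relations with an indeterminacy subgroup that you flag as the technical heart is not needed — surjectivity of the normal form plus invariance suffices, with no bijectivity statement at the level of trees.
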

This result, which is a generalization of Habegger-Lin's classification of string links up to link-homotopy \cite{HL}, was first shown by B.~Audoux, P.~Bellingeri, E.~Wagner and the first author in \cite{ABMW}. Our version is stronger in that it gives, in terms of w-trees and welded Milnor invariants, an explicit representative for the homotopy class of a welded string link, see Theorem \ref{thm:hrep}.  
Moreover, this result can be used to give homotopy classifications of ribbon annuli and torus-links, as shown in \cite{ABMW}.

\medskip 

The rest of this paper is organized as follows. 

We recall in Section \ref{sec:basics} the basics on classical and welded knotted objects, and the connection to ribbon knotted objects in dimension $4$. 
In Section \ref{sec:w}, we give the main definition of this paper, introducing w-arrows and w-trees. 
We then focus on w-arrows in Section \ref{sec:warrsurg}. We  define Arrow presentations and Arrow moves, and prove Theorem \ref{thm1}. 
The relation to Gauss diagrams is also discussed in more details in Section \ref{sec:GD1}. 
Next, in Section \ref{sec:wtreesurg} we turn to w-trees. We define the Expansion move (E), which leads to the notion of w-tree presentation, and we provide a collection of moves on such presentations. 
In Section \ref{sec:invariants}, we give the definitions and some properties of the welded extensions of the knot group, the normalized Alexander polynomial, and Milnor invariants. We also review the finite type invariant theory for welded knotted objects. 
The $\w_k$-equivalence relation is introduced and studied in Section \ref{sec:wkequiv}.  We also clarify there the relation to finite type invariants and to Habiro's $C_n$-equivalence. 
Theorems \ref{thm2} and \ref{thm3} are proved in Section \ref{sec:wk_knots}.  
In Section \ref{sec:homotopy}, we consider Arrow calculus up to homotopy, and prove Theorem \ref{thm4}. 
We close this paper with Section \ref{sec:thisistheend}, where we gather several comments, questions and remarks. 
In particular, we prove in Section \ref{sec:watanabe} the topological consequences of our results, stated above. 

\begin{acknowledgments}
The authors would like to thank Benjamin Audoux for stimulating conversations, 
and Haruko A.~Miyazawa for her useful comments. 
This paper was completed during a visit of first author at Tsuda University, Tokyo, whose hospitality and support is warmly acknowledged. 
The second author is partially supported by a Grant-in-Aid for Scientific Research (C) 
($\#$17K05264) of the Japan Society for the Promotion of Science.
\end{acknowledgments}

\section{A quick review of classical and welded knotted objects}\label{sec:basics}

\subsection{Basic definitions}

A \emph{classical knotted object} is the image of an embedding of some oriented $1$-manifold in $3$-dimensional space. 
Typical examples include knots and links, braids, string links, and more generally tangles. 
It is well known that such embeddings are faithfully represented by a generic planar projection, where the only singularities are 
transverse double points endowed with a diagrammatic over/under information, as on the left-hand side of Figure \ref{fig:cross}, 
modulo Reidemeister moves I, II and III.  

This diagrammatic realization of classical knotted objects generalizes to virtual and welded knotted objects, as we briefly outline below.

A \emph{virtual diagram} is an immersion of some oriented $1$-manifold in the plane, whose singularities are a finite number of transverse double points that are labeled, either as a \emph{classical crossing} 
or as a \emph{virtual crossing}, as shown in Figure \ref{fig:cross}. 
\begin{figure}[!h]
  \includegraphics[scale=1]{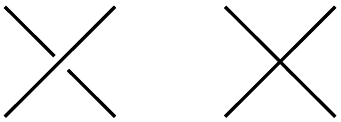}
  \caption{A classical and a virtual crossing.} \label{fig:cross}
\end{figure}
\begin{convention}
Note that we do not use here the usual drawing convention for virtual crossings, with a circle around the corresponding double point. 
\end{convention} 

There are three classes of local moves that one considers on virtual diagrams: 
\begin{itemize}
 \item[$\circ$]  the three classical Reidemeister moves, 
 \item[$\circ$]  the three virtual Reidemeister moves, which are the exact analogues of the classical ones with all classical crossings replaced by virtual ones, 
 \item[$\circ$]  the Mixed Reidemeister move, shown on the left-hand side of Figure \ref{fig:moves}. 
\end{itemize}
We call these three classes of moves the \emph{generalized Reidemeister moves}. 
\begin{figure}[!h]
  \includegraphics[scale=1]{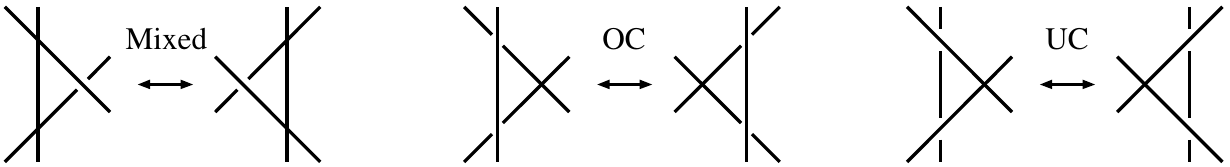}
  \caption{The Mixed, OC and UC moves on virtual diagrams}\label{fig:moves}
\end{figure}
A \emph{virtual knotted object} is the equivalence class of a virtual diagram under planar isotopy and generalized Reidemeister moves.  
This notion was introduced by Kauffman in \cite{Kauffman}, where we refer the reader for a much more detailed treatment. 

Recall that generalized Reidemeister moves in particular imply the so-called \emph{detour move}, 
which replaces an arc passing through a number of virtual crossings 
by any other such arc, with same endpoints.  

Recall also that there are two `forbidden' local moves, called \emph{OC and UC moves} (for Overcrossings and Undercrossings Commute), 
as illustrated in Figure \ref{fig:moves}. 

In this paper, we shall rather consider the following natural quotient of virtual theory. 
\begin{definition}\label{def:welded}
A \emph{welded knotted object} is the equivalence class of a virtual diagram under planar isotopy, generalized Reidemeister moves and OC moves.   
\end{definition}
There are several reasons that make this notion both natural and interesting.  The virtual knot group introduced by Kauffman in \cite{Kauffman} at the early stages of virtual knot theory, is intrasically a welded invariants. 
As a consequence, the virtual extensions of classical invariants derived from (quotients of) the fundamental group 
are in fact welded invariants, see Section \ref{sec:invariants}. 
Another, topological motivation is the relation with ribbon knotted objects in codimension $2$, see Section \ref{sec:ribbon}.

\medskip

In what follows, we will be mainly interested in \emph{welded links and welded string links}, which are the welded extensions of classical link and string link diagrams. 
Recall that, roughly speaking, an $n$-component welded string link is a diagram made of $n$ arcs properly immersed in a square with $n$ points marked on the lower and upper faces, such that the $k$th arc runs from the $k$th lower to the $k$th upper marked point. A $1$-component string link is often called \emph{long knot} in the literature -- we shall use this terminology here as well. 

Welded (string) links are a genuine extension of classical (string) links, in the sense that the latter can de 'embedded' into the former ones. 
This is shown strictly as in the knot case \cite[Thm.1.B]{GPV}, and actually also holds for virtual objects. 

\begin{convention}
 In the rest of this paper, by `diagram' we will implicitly mean an oriented diagram, containing classical and/or virtual crossings, 
 and the natural equivalence relation on diagrams will be that of Definition \ref{def:welded}. 
 We shall sometimes use the terminology `welded diagram' to emphasize this fact. 
 As noted above, this includes in particular classical (string) link diagrams. 
\end{convention}

\begin{remark}\label{rem:wdetour}
 Notice that the OC move, together with generalized Reidemeister moves, implies a welded version of the detour move, called \emph{w-detour move}, 
 which replaces an arc passing through a number of over-crossings 
 by any other such arc, with same endpoints. This is proved strictly as for the detour move, the OC move playing the role of the Mixed move.  
\end{remark}

\subsection{Welded theory and ribbon knotted objects in codimension $2$}\label{sec:ribbon}

As already indicated, one of the main interests of welded knot theory is that it allows to study certain knotted surfaces in $4$-space. 
As a matter of fact, the main results of this paper will have such topological applications, so we briefly review these objects and their connection to welded theory. 

Recall that a \emph{ribbon immersion} of a $3$-manifold $M$ in $4$-space is an immersion admitting only ribbon singularities, 
which are $2$-disks with two preimages, one being embedded in the interior of $M$, and the other being properly embedded. 

A \emph{ribbon $2$-knot} is the boundary of a ribbon immersed $3$-ball in $4$-space, and a  \emph{ribbon torus-knot} is, 
likewise, the boundary of a ribbon immersed solid torus in $4$-space. 
More generally, by \emph{ribbon knotted object}, we mean a knotted surface obtained as the boundary of some ribbon immersed $3$-manifold in $4$-space.

Using works of T.~Yajima \cite{yajima}, S.~ Satoh defined in \cite{Satoh} a surjective \emph{Tube map}, from welded diagrams  to ribbon $2$-knotted  objects. Roughly speaking, the Tube map assigns, to each classical crossing of a diagram, a pair of locally linked annuli in a $4$-ball, as shown in \cite[Fig.~6]{Satoh}; 
next, it only remains to connect these annuli to one another by unlinked annuli, as prescribed by the diagram.
Although not injective in general,\footnote{The Tube map is not injective for welded knots \cite{IK}, but is injective for welded braids \cite{BH} and welded string links up to homotopy \cite{ABMW}. } the Tube map acts faithfully on the `fundamental group'. 
This key fact, which will be made precise in Remark \ref{rem:faithful}, will allow to draw several topological consequences from our diagrammatic results.  See Section \ref{sec:watanabe}. 

\begin{remark}\label{rem:iwanttotakeyouhigher}
One can more generally define $k$-dimensional ribbon knotted objects in codimension $2$, for any $k\ge 2$, and the Tube map generalizes straightforwardly to a surjective map from welded diagrams to $k$-dimensional ribbon knotted objects. See for example \cite{AMW}. 
As a matter of fact, most of the topological results of this paper extend freely to ribbon knotted objects in codimension $2$. 
\end{remark}

\section{w-arrows and w-trees} \label{sec:w}

Let $D$ be a diagram.   
The following is the main definition of this paper.
\begin{definition}
A \emph{$\w$-tree} for $D$ is a connected uni-trivalent tree $T$, immersed in the plane of the diagram such that: 
\begin{itemize}
 \item[$\circ$]  the trivalent vertices of $T$ are pairwise disjoint and disjoint from $D$, 
 \item[$\circ$]  the univalent vertices of $T$ are pairwise disjoint and are contained in $D\setminus \{\textrm{crossings of $D$}\}$, 
 \item[$\circ$]  all edges of $T$ are oriented, such that each trivalent vertex has two ingoing and one outgoing edge, 
 \item[$\circ$]  we allow virtual crossings between edges of $T$, and between $D$ and edges of $T$, but classical crossings involving $T$ are not allowed,    
 \item[$\circ$]  each edge of $T$ is assigned a number (possibly zero) of decorations $\bullet$, called \emph{twists}, which are disjoint from all vertices and crossings, and subject to the involutive rule 
     \begin{center}
    \includegraphics[scale=1.1]{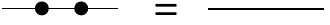} 
    \end{center} 
 \end{itemize}
 \noindent A w-tree with a single edge is called a \emph{w-arrow}. 
\end{definition}
For a union of w-trees for $D$, vertices are assumed to be pairwise disjoint, and all crossings among edges are assumed to be virtual.  
See Figure \ref{fig:extree} for an example. 
\begin{figure}[!h]
  \includegraphics[scale=1]{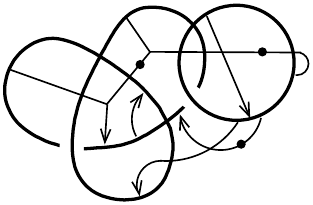} 
  \caption{Example of a union of w-trees}\label{fig:extree}
\end{figure}

We call \emph{tails} the univalent vertices of $T$ with outgoing edges, 
and we call the \emph{head} the unique univalent vertex with an ingoing edge. 
We will call \emph{endpoint} any univalent vertex of $T$, when we do not need to distinguish between tails and head. 
The edge which is incident to the head is called \emph{terminal}.

Two endpoints of a union of w-trees for $D$ are called \emph{adjacent} if, when travelling along $D$,  
these two endpoints are met consecutively, without encountering any crossing or endpoint.  

\begin{remark}
Note that, given a uni-trivalent tree, picking a univalent vertex as the head uniquely determines an orientation on all edges respecting the above rule. 
Thus, we usually only indicate the orientation on w-trees at the terminal edge. 
However, it will occasionnally be useful to indicate the orientation on other edges, for example when drawing local pictures. 
\end{remark}

\begin{definition}
Let $k\ge 1$ be an integer. 
A \emph{$\w$-tree of degree $k$}, or \emph{$\w_k$-tree}, for $D$ is a w-tree for $D$ with $k$ tails. 
\end{definition}

\begin{convention}
We will use the following drawing conventions. 
Diagrams are drawn with bold lines, while w-trees are drawn with thin lines. 
See Figure \ref{fig:extree}. 
We shall also use the symbol $\circ$ to describe a w-tree that \emph{may or may not} contain a twist at the indicated edge:  
 \begin{center}
  \includegraphics[scale=1]{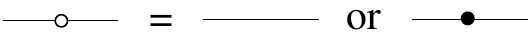} 
 \end{center}
\end{convention}

\section{Arrow presentations of diagrams} \label{sec:warrsurg}

In this section, we focus on w-arrows. 
We explain how w-arrows carry `surgery' instructions on diagrams, so that they provide a way to encode diagrams. 
A complete set of moves is provided, relating any two w-arrow presentations of equivalent diagrams. 
The relation to the theory of Gauss diagrams is also discussed. 

\subsection{Surgery along w-arrows}
Let $A$ be a union of w-arrows for a diagram $D$. 
\emph{Surgery along $A$} yields a new diagram, denoted by $D_A$, which is defined as follows. 

Suppose that there is a disk in the plane that intersects $D\cup A$ as shown in Figure \ref{fig:surgery}. 
The figure then represents the result of surgery along $A$ on $D$. 
\begin{figure}[!h]
  \includegraphics[scale=1]{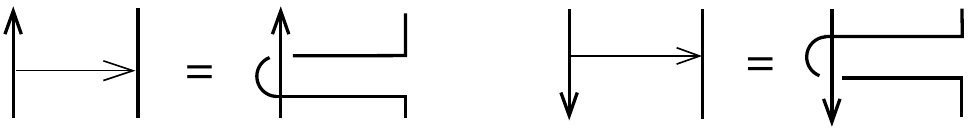} 
  \caption{Surgery along a w-arrow}\label{fig:surgery}
\end{figure}
\noindent 
We emphasize the fact that the orientation of the portion of diagram containing the tail needs to be specified to define the surgery move. 

If some w-arrow of $A$ intersects the diagram $D$ (at some virtual crossing disjoint from its endpoints), then this introduces pairs of virtual crossings as indicated on the left-hand side of the figure below. 
Likewise, the right-hand side of the figure indicates the rule when two portions of  (possibly of the same) w-arrow(s) of $A$ intersect.  
\begin{center}
  \includegraphics{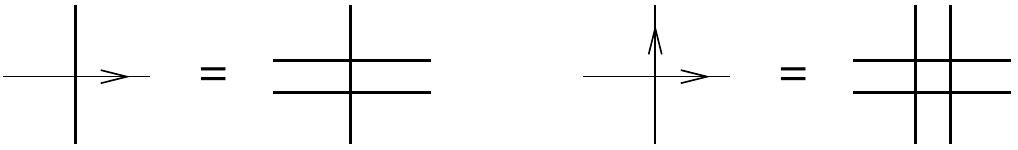}
\end{center}

Finally, if some w-arrow of $A$ contains some twists, we simply insert virtual crossings accordingly, as indicated below:
\begin{center}
  \includegraphics{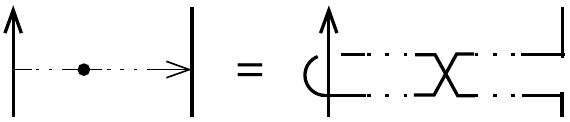}
\end{center}

\noindent 
Note that this is compatible with the involutive rule for twists by the virtual Reidemeister II move, as shown below. 
    \begin{center}
    \includegraphics{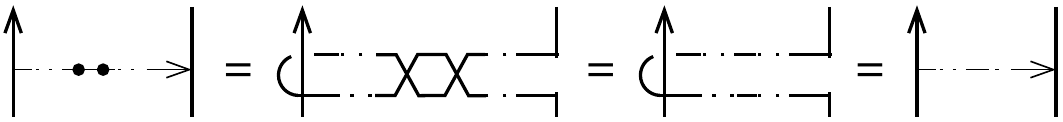}
    \end{center} 
    
An example is given in Figure \ref{fig:exemple}.  
\begin{figure}[!h]
  \includegraphics[scale=1]{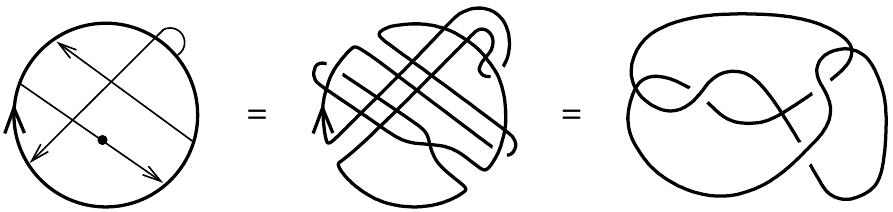}
  \caption{An example of diagram obtained by surgery along w-arrows}\label{fig:exemple}
\end{figure}

\subsection{Arrow presentations}

Having defined surgery along w-arrows, we are led to the following. 
\begin{definition}\label{def:arrowpres}
An \emph{Arrow presentation} for a diagram $D$ is a pair $(V,A)$ of a diagram $V$ \emph{without classical crossings} and a collection of w-arrows $A$ for $V$, such that surgery on $V$ along $A$ yields the diagram $D$. \\
We say that two Arrow presentations are \emph{equivalent} if the surgeries yield equivalent diagrams.  
We will simply denote this equivalence by $=$. 
\end{definition}
In the next section, we address the problem of generating this equivalence relation by local moves on Arrow presentations. 

As Figure \ref{fig:wcross} illustrates, surgery along a w-arrow is equivalent to a \emph{devirtualization move}, which is a local move that replaces a virtual crossing by a classical one. 
\begin{figure}[!h]
  \includegraphics[scale=1]{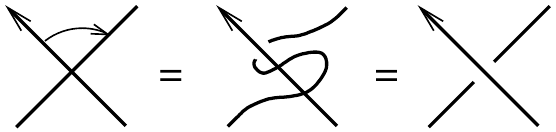} 
  \caption{Surgery along a w-arrow is a devirtualization move. }\label{fig:wcross}
\end{figure}
This observation implies the following.
\begin{proposition}\label{prop:wApres}
 Any diagram admits an Arrow presentation. 
\end{proposition}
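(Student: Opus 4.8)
The plan is to produce, for an arbitrary welded diagram $D$, an explicit Arrow presentation $(V,A)$. The key observation, already recorded just before the statement, is that surgery along a single w-arrow realizes a devirtualization move (Figure \ref{fig:wcross}): it converts a virtual crossing into a classical one, with the sign/type of the resulting classical crossing governed by the orientation of the tail strand and by the presence or absence of a twist on the arrow. So the strategy is to run this correspondence backwards: start from $D$, virtualize every classical crossing to obtain a crossing-free diagram $V$, and then reinstate each classical crossing by attaching one w-arrow.

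First I would fix a generic diagram $D$ representing the given welded knotted object, with classical crossings $c_1,\dots,c_m$ (and possibly some virtual crossings, which we leave untouched). For each $c_j$, replace it by a virtual crossing; call the resulting diagram $V$. By construction $V$ has no classical crossings, so it is an admissible underlying diagram for an Arrow presentation. Next, for each $j$ I would insert a w-arrow $A_j$ in a small disk neighbourhood of the (now virtual) crossing $c_j$, positioned exactly as in the local picture of Figure \ref{fig:wcross}: the head on the strand that was the over-strand at $c_j$, the tail on the strand that was the under-strand, oriented so that the tail-strand orientation matches the one appearing in the surgery move of Figure \ref{fig:surgery}, and carrying a twist if and only if this is needed to recover the correct sign of the original classical crossing $c_j$. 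Let $A=\bigcup_j A_j$. Since the $c_j$ are distinct crossings of $D$, their disk neighbourhoods can be chosen pairwise disjoint, so the $A_j$ have pairwise disjoint endpoints and the only crossings among the edges of $A$ (and between $A$ and $V$) are virtual — hence $A$ is a genuine union of w-arrows for $V$.

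It then remains to check that surgery on $V$ along $A$ returns $D$. This is a purely local verification: in each disk neighbourhood of $c_j$, surgery along $A_j$ performs precisely the devirtualization of Figure \ref{fig:wcross}, turning the virtual crossing back into the classical crossing $c_j$ with its original over/under information and sign, while outside these disjoint disks nothing changes. Therefore $V_A = D$ as welded diagrams (indeed, literally as diagrams, not merely up to equivalence), and $(V,A)$ is an Arrow presentation for $D$, establishing the proposition.

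I do not expect a serious obstacle here: the content is entirely contained in the local model of Figure \ref{fig:wcross} together with the earlier description of surgery along w-arrows, and the proof is really just the remark ``surgery along a w-arrow is a devirtualization move'' read in reverse. The only point requiring a modicum of care is the bookkeeping of orientations and twists, i.e. making sure that for every possible local configuration of strand orientations at a classical crossing one can choose the w-arrow (with or without a twist) so that the surgery outputs the crossing with the correct sign; but this is a finite, routine case check on the pictures, and the involutive twist rule guarantees the choice is unambiguous.
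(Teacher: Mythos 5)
Your argument is correct and is exactly the paper's proof: the canonical Arrow presentation $(V_D,A)$ is obtained by applying the local rule of Figure \ref{fig:wcross} at each classical crossing, replacing it by a virtual crossing together with a w-arrow (with a twist when needed to recover the sign). Nothing further is required.
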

More precisely, for a diagram $D$, there is a uniquely defined Arrow presentation $(V_D,A)$  
which is obtained by applying the rule of Figure \ref{fig:wcross} at each (classical) crossing. 
Note that $V_D$ is obtained from $D$ by replacing all classical crossings by virtual ones. 
\begin{definition}
We call the pair $(V_D,A)$ the \emph{canonical Arrow presentation} of the diagram $D$. 
\end{definition}
\noindent For example, for the diagram of the trefoil show in Figure \ref{fig:trefoil}, the canonical Arrow presentation is given in the center of the figure. 

\subsection{Arrow moves}\label{sec:arrow_moves}

 \emph{Arrow moves} are the following six types of local moves among Arrow presentations. 
 \begin{enumerate}
  \item[(1) ] Virtual Isotopy. 
    Virtual Reidemeister moves involving edges of w-arrows and/or strands of diagram, together with the following local moves:\footnote{Here, in the figures, the vertical strand is either a portion of diagram or of a w-arrow.}
    \begin{center}
    \includegraphics{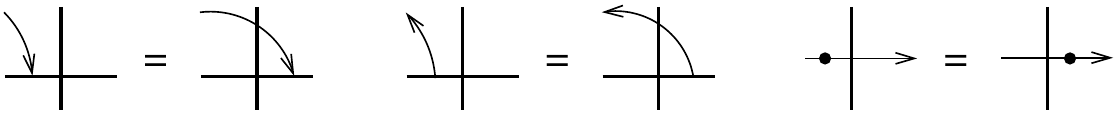} 
    \end{center}
  \item[(2) ] Head/Tail Reversal.
    \begin{center}
    \includegraphics[scale=0.9]{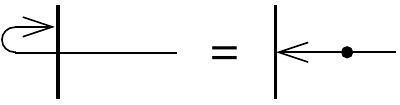} \qquad \qquad
    \includegraphics[scale=0.9]{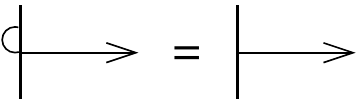} 
    \end{center}
  \item[(3) ] Tails Exchange.
    \begin{center}
    \includegraphics[scale=1]{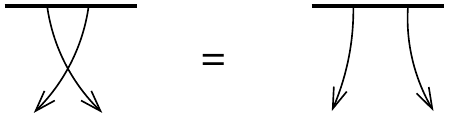} 
    \end{center}
  \item[(4) ] Isolated Arrow.
    \begin{center}
    \includegraphics[scale=0.8]{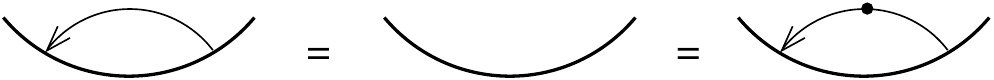} 
    \end{center}
  \item[(5) ] Inverse.
    \begin{center}
    \includegraphics[scale=0.9]{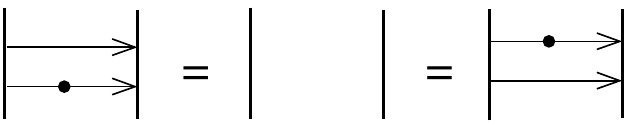} 
    \end{center}
  \item[(6) ] Slide.
    \begin{center}
    \includegraphics[scale=1]{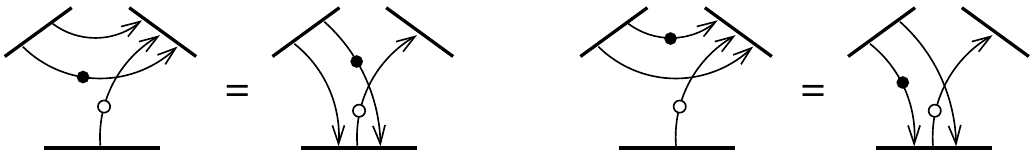} 
    \end{center}
 \end{enumerate}

\begin{lemma}\label{lem:wamoves}
Arrow moves yield equivalent Arrow presentations. 
\end{lemma}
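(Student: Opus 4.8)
The plan is to verify, one move at a time, that each of the six Arrow moves preserves the equivalence class of the diagram obtained by surgery. Since ``equivalence'' of Arrow presentations means equivalence of the resulting welded diagrams (Definition~\ref{def:arrowpres}), and since surgery along a w-arrow is defined purely locally (Figure~\ref{fig:surgery}, together with the rules for inserting virtual crossings at w-arrow/diagram and w-arrow/w-arrow crossings and at twists), in each case I would draw the local picture of $V\cup A$ before and after the move, perform the surgery on both sides, and exhibit a sequence of generalized Reidemeister moves and OC moves relating the two resulting local tangles. Because surgery is local, it suffices to work inside the disk supporting the move, with the understanding that everything outside is fixed.

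First I would dispose of move~(1), Virtual Isotopy: here the effect on the surgered diagram is visibly a sequence of virtual Reidemeister moves (and the detour move, or rather its welded analogue from Remark~\ref{rem:wdetour}), since moving an edge of a w-arrow across virtual crossings only shuffles virtual crossings in $D_A$; the listed auxiliary moves in the figure correspond to pushing the surgery band through virtual crossings, which is again a detour-type move. Moves~(2) and~(3), Head/Tail Reversal and Tails Exchange, are checked by the same token: after surgery, reversing a head or tail or swapping two adjacent tails changes the local diagram by a virtual Reidemeister~II together with a detour move (for reversal, one also uses the twist-insertion rule and its compatibility with the involutive rule, already noted after the surgery definition via virtual RII). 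Move~(4), Isolated Arrow, says that a w-arrow whose head and tail are adjacent on a crossingless strand can be erased; after surgery this is exactly a classical Reidemeister~I move (the devirtualization picture of Figure~\ref{fig:wcross} applied to an adjacent head/tail produces a small kink), so it is immediate.

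The two remaining moves, (5)~Inverse and (6)~Slide, are where the genuine content lies, and I expect the Slide move to be the main obstacle. For Inverse, surgery along two parallel w-arrows with opposite twisting/orientation data produces, after the devirtualization of Figure~\ref{fig:wcross}, two classical crossings of opposite sign on parallel strands, which cancel by a Reidemeister~II move together with detour moves to absorb the intervening virtual crossings; I would spell this out from the surgery figure directly. For Slide, one must show that sliding the head of one w-arrow along a strand past the tail (or endpoint region) of another w-arrow does not change the surgered diagram up to equivalence: after surgery this becomes a statement about commuting a band-attachment past another, and the key point is precisely that we are in the \emph{welded} setting, so that the OC move (equivalently the w-detour move of Remark~\ref{rem:wdetour}) is available to push an over-passing strand across the region where the other surgery band meets $D$. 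The delicate part is to track the orientations and the placement of the resulting virtual crossings carefully, since the Slide move figure fixes a particular configuration of heads, tails and strand orientations; I would break it into the relevant cases according to which endpoint is being passed, and in each case reduce to an OC move plus virtual Reidemeister moves. Once all six moves are checked locally, the lemma follows, since any Arrow move is by definition supported in such a disk and the surgery construction is compatible with the ambient equivalence relation.
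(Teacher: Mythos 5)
Your overall framework is the same as the paper's: check each of the six moves locally, perform the surgery on both sides, and relate the results by generalized Reidemeister moves and OC moves, and your treatment of (1), (4), (5) and (6) is consistent with how the paper argues (the paper proves the Slide move by isotopies and detour moves, one OC move, and a final application of the Tails Exchange move, which matches your expectation that the welded relation is the essential ingredient there).

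There is, however, one step that would fail as you have written it: your justification of the Tails Exchange move (3). You claim that swapping two adjacent tails changes the surgered diagram only by a virtual Reidemeister II move together with a detour move. This cannot be right. After surgery, the strand carrying the two tails is the \emph{over}-strand of the two corresponding classical crossings, so exchanging the tails amounts to commuting two overcrossings along that strand, i.e.\ to an OC move; and the OC move is not a consequence of generalized Reidemeister moves alone. Indeed, the paper's proof of (3) (Figure \ref{fig:tc}) explicitly invokes the OC move, the remark after Lemma \ref{lem:equivwA} identifies (3) as the Arrow-move counterpart of OC, and Section \ref{sec:virtual} points out that (3) is \emph{false} in the purely virtual setting precisely because it is essentially equivalent to OC. So your proof of (3) must use OC (which your global toolkit does allow, but your case analysis does not), otherwise the argument breaks. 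Two smaller imprecisions are harmless but worth noting: surgery along an isolated arrow produces a kink containing a classical \emph{and} a virtual crossing, so (4) needs a detour move in addition to classical Reidemeister I; and the Head Reversal part of (2) uses classical (generalized) Reidemeister moves, not only virtual RII and the twist rule.
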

\begin{proof}
 Virtual Isotopy moves (1) are easy consequences of the surgery definition of w-arrows and virtual Reidemeister moves. 
 This is clear for the Reidemeister-type moves, since all such moves locally involve only virtual crossings. 
 The remaining local moves essentially follow  from detour moves. 
 For example, the figure below illustrates the proof of one instance of the second move, for one choice of orientation at the tail: 
    \begin{center}
    \includegraphics{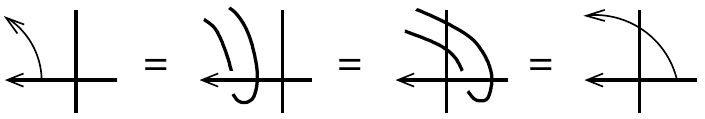}
    \end{center}  
\noindent All other moves of (1) are given likewise by virtual Reidemeister moves. 

Having proved this first sets of moves, we can freely use them to simplify the proof of the remaining moves. 
For example, we can freely assume that the w-arrow involved in the Reversal move (2) is either as 
shown on the left-hand side of Figure \ref{fig:tr} below, or differs from this figure by a single twist. 
The proof of the Tail Reversal move is given in Figure \ref{fig:tr} in the case where the w-arrow has no twist and the strand is oriented upwards (in the figure of the lemma). 
\begin{figure}[!h]
   \includegraphics[scale=0.8]{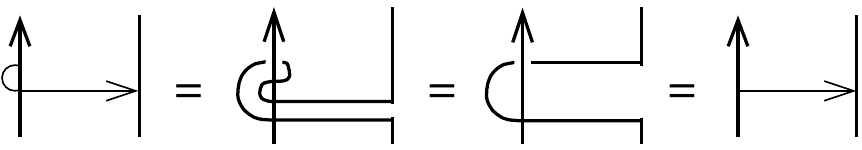}
  \caption{Proving the Tail Reversal move}\label{fig:tr}
\end{figure}
It only uses the definition of a w-arrow and the virtual Reidemeister II move. 
The other cases are similar, and left to the reader. 
\\
Likewise, we only prove Head Reversal in Figure \ref{fig:hr} when the w-arrow has no twist. Note that the Tail Reversal and Isotopy moves allow us to chose the strand orientation as depicted. 
\begin{figure}[!h]
   \includegraphics[scale=0.8]{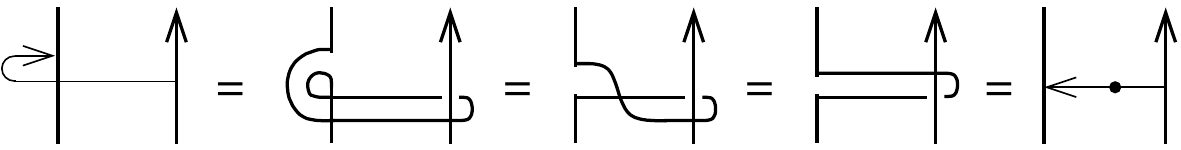}
  \caption{Proving the Head Reversal move}\label{fig:hr}
\end{figure}
\noindent The identities in the figure follow from elementary applications of generalized Reidemeister moves. 

Figure \ref{fig:tc} shows (3). There, the second and fourth identities are applications of the detour move, while the third move uses the OC move. 
 \begin{figure}[!h]
   \includegraphics[scale=1]{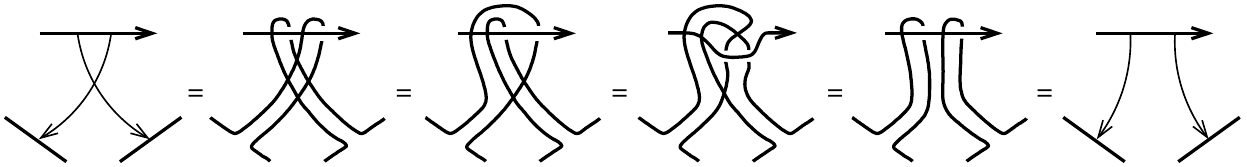}
   \caption{Proving the Tails Exchange move}\label{fig:tc}
\end{figure}
\noindent In Figure \ref{fig:tc}, we had to choose a local orientation for the upper strand.
This implies the result for the other choice of orientation, by using the Tail Reversal move (2).

Moves  (4) and (5) are direct consequences of the definition, and are left to the reader.

Finaly, we prove (6). 
We only show here the first version of the move, the second one being strictly similar. 
There are \emph{a priori} several choices of local orientations to consider, 
which are all declined in two versions, depending on whether we insert a twist on the $\circ$-marked w-arrow or not.  
Figure \ref{fig:slide} illustrates the proof for one choice of orientation, in the case where no twist is inserted. 
The sequence of identities in this figure is given as follows: the second and third identities use isotopies and detour moves, 
the fourth (vertical) one uses the OC move, then followed by  isotopies and detour moves which give the fifth equality. 
The final step uses the Tails Exchange move (3). 
 \begin{figure}[!h]
   \includegraphics[scale=1]{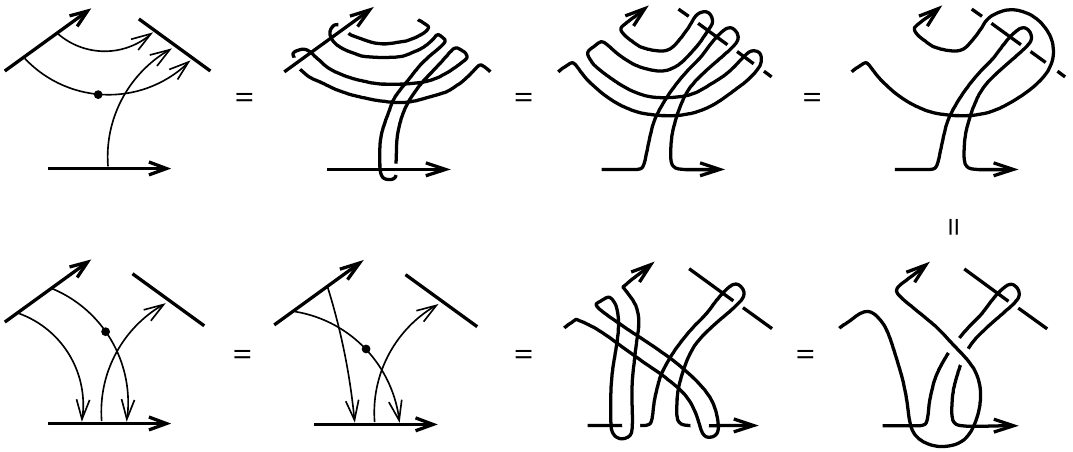}
  \caption{Proving the Slide move}\label{fig:slide}
\end{figure}

\noindent Now, notice that the exact same proof applies in the case where there is a twist on the $\circ$-marked w-arrow. 
Moreover, if we change the local orientation of, say, the bottom strand in the figure, the result follows from the previous case 
by the Reversal move (2), the Tails Exchange move (3) and twist involutivity, as the following picture indicates: 
\begin{center}
  \includegraphics[scale=1]{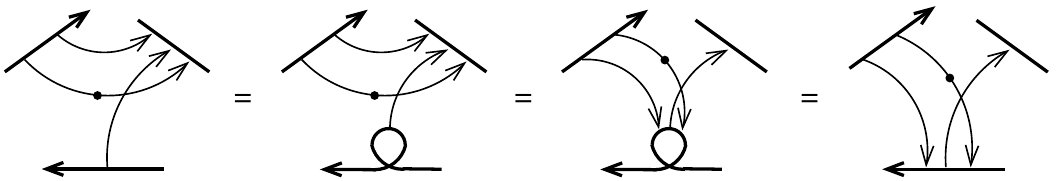}
\end{center}
We leave it to the reader to check that, similarly, all other choices of local orientations follow from the first one. 
\end{proof}

The main result of this section is that this set of moves is complete. 
\begin{theorem}\label{thm:main1}
Two Arrow presentations represent equivalent diagrams if and only if they are related by Arrow moves.
\end{theorem}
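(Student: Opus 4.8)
The plan is to prove the two directions separately. The ``if'' direction is already established by Lemma~\ref{lem:wamoves}, so the entire content is the ``only if'' direction: given two Arrow presentations $(V,A)$ and $(V',A')$ with $V_A$ equivalent to $V'_{A'}$ as welded diagrams, we must produce a finite sequence of Arrow moves connecting them. The natural strategy is to factor the problem through the \emph{canonical} Arrow presentation $(V_D,A_D)$ of a diagram $D$ (from Proposition~\ref{prop:wApres}): first show that \emph{any} Arrow presentation of a fixed diagram $D$ is related by Arrow moves to the canonical one, and second show that if $D$ and $D'$ are related by a single generalized Reidemeister move or OC move (or planar isotopy), then their canonical Arrow presentations are related by Arrow moves. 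Composing these two reductions gives the theorem.

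For the first reduction, I would argue by induction on the number of w-arrows in $A$. The key observation is Figure~\ref{fig:wcross}: surgery along a w-arrow is a devirtualization move, so the effect of $A$ on $V$ is to convert certain virtual crossings into classical ones in a prescribed way. Pick a w-arrow $a\in A$ whose head and tail are ``innermost'', i.e. adjacent along $V$ to nothing but possibly other endpoints; using the Virtual Isotopy moves (1), the Slide move (6), the Tails Exchange move (3), and the Reversal moves (2) one can isotope $a$ so that its head and tail bound a small embedded bigon meeting $V$ in a short arc, i.e. $a$ looks exactly like the local picture of Figure~\ref{fig:wcross}. One then replaces this configuration: the surgered diagram $D$ has, at the corresponding spot, a classical crossing, whose canonical w-arrow is precisely $a$ in that standard position. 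Removing $a$ from the picture and viewing the classical crossing it produced as ``already part of $D$'', one is left with an Arrow presentation of $D$ with one fewer w-arrow in the non-standard part; induction finishes the argument. The only subtlety is bookkeeping: one must make sure that moving $a$ into standard position, and the various crossings it makes with $V$ and with the other w-arrows, are all accounted for by moves (1)--(6), and here the Slide move and Tails Exchange are doing the real work of commuting endpoints and edges past one another.

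For the second reduction, one checks each generator of the equivalence relation on diagrams. Planar isotopy and the virtual/mixed Reidemeister moves are immediate, since they only involve virtual crossings and $V_D$ is built from $D$ by virtualizing classical crossings; the move on the diagram side translates verbatim into a move of type (1) on the presentation side. For a classical Reidemeister~I move, the created or destroyed classical crossing corresponds to a w-arrow with adjacent head and tail, which is removed by the Isolated Arrow move (4). For Reidemeister~II, the two cancelling classical crossings of opposite sign correspond to two w-arrows which, after bringing them to standard position, are exactly the configuration of the Inverse move (5). For Reidemeister~III, one must compare the three w-arrows before and after the move; after putting all three into standard position (using moves (1),(2),(3),(6)), the required identity is a combination of Slide, Tails Exchange and Virtual Isotopy — this is the analogue of the Gauss-diagram Reidemeister~III relation, but with no sign or orientation constraints, as emphasized in the introduction. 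Finally, the OC move on the diagram side (one strand passing over a virtual crossing) corresponds to sliding a head past a tail, which is handled by the Slide move (6) together with Tails Exchange.

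The main obstacle I anticipate is the \emph{first} reduction rather than the second: carefully showing that an arbitrary w-arrow in an arbitrary configuration can be brought to the standard ``bigon'' position of Figure~\ref{fig:wcross} using only moves (1)--(6), without accidentally needing a move one has not yet justified. In particular one must handle the case where the w-arrow $a$ passes through many virtual crossings with $V$ and with other w-arrows, and the case where its endpoints are separated along $V$ by many other endpoints; the Slide and Tails Exchange moves must be shown to suffice to push everything out of the way. Once the standard-position lemma is in hand, both the ``any presentation $\to$ canonical'' step and the ``canonical vs.\ canonical under a Reidemeister/OC move'' step follow by the kind of explicit local-picture manipulations already illustrated in the proof of Lemma~\ref{lem:wamoves}.
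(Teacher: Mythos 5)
Your overall architecture is exactly the paper's: the \emph{if} part is Lemma~\ref{lem:wamoves}, and the \emph{only if} part is obtained by (a) showing every Arrow presentation is Arrow-move equivalent to a canonical one and (b) checking that canonical presentations of diagrams differing by a single generalized Reidemeister or OC move are related by Arrow moves (this is Lemma~\ref{lem:equivwA}). However, your execution of step (a) contains a step that would fail as described. You propose to choose an ``innermost'' w-arrow and, when its endpoints are separated along $V$ by other endpoints, to push those endpoints out of the way using the Slide and Tails Exchange moves. But among the Arrow moves only adjacent \emph{tails} commute for free: exchanging a head with another head, or a head with a tail, is not an Arrow move --- in the paper these exchanges cost additional higher-degree w-trees (Lemmas~\ref{lem:he} and \ref{lem:ht}), which do not exist inside an Arrow presentation. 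So the reordering of endpoints along $V$ that your induction relies on cannot in general be carried out with moves (1)--(6), and if it could, it would typically change the represented diagram.

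The fix is that the standard position you need is much weaker than endpoint adjacency along $V$: the left-hand side of Figure~\ref{fig:wcross} is a purely local planar configuration around the arrow (head on one strand portion, tail on another, with a nearby virtual crossing), and the two strand portions may be far apart along the diagram. Since all crossings involving arrow edges are virtual, one first removes twists by involutivity and the Head Reversal move (2), then cleans a neighborhood of each arrow using only Virtual Isotopy (1) and Tail Reversal (2), and finally creates the adjacent virtual crossing by a virtual Reidemeister II move; this puts \emph{all} arrows simultaneously into the form of Figure~\ref{fig:wcross}, with no induction and no Slide or Tails Exchange, yielding the canonical presentation of some diagram equivalent to $D$ by Lemma~\ref{lem:wamoves}. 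Two smaller misassignments in your step (b): the OC move is realized by the Tails Exchange move (3) (together with Isotopy and Tail Reversal), not by sliding a head past a tail, and the Reidemeister~III verification also uses the Inverse move (5) in addition to Slide, Tails Exchange and Isotopy; these are local checks you would correct when drawing the pictures, but the reliance on endpoint reordering in step (a) is the genuine gap.
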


The \emph{if} part of the statement is shown in Lemma \ref{lem:wamoves}. 
In order to prove the \emph{only if} part, we will need the following. 

\begin{lemma}\label{lem:equivwA}
If two diagrams are equivalent, then their canonical Arrow presentations are related by Arrow moves.  
\end{lemma}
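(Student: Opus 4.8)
The plan is to exploit the fact that diagram equivalence is \emph{generated} by local moves, and to verify the statement one move at a time. Equivalence of welded diagrams is generated by planar isotopy together with the classical Reidemeister moves R1, R2, R3, the three virtual Reidemeister moves, the Mixed move and the OC move; each such move is supported in a disk $\Delta$, outside of which the two diagrams coincide. Since the canonical Arrow presentation is built \emph{locally}, by applying the rule of Figure~\ref{fig:wcross} independently at each classical crossing (producing a small w-arrow localized near that crossing), the canonical presentations of the two equivalent diagrams agree outside $\Delta$. Hence it suffices to show that, for each generating move relating $D$ to $D'$, the canonical presentations $(V_D,A)$ and $(V_{D'},A')$ restricted to a neighborhood of $\Delta$ are related by Arrow moves; these local modifications then propagate to a genuine equivalence of the global presentations by Lemma~\ref{lem:wamoves}.

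The moves that neither create nor destroy classical crossings are the easy ones. For planar isotopy, the three virtual Reidemeister moves and the Mixed move, the classical crossings inside $\Delta$ are merely transported, possibly across virtual crossings, so the associated w-arrows and the virtual diagram $V$ are carried along by virtual Reidemeister moves and detour moves. These are precisely the Virtual Isotopy moves (1). In particular the Mixed move, in which a single classical crossing slides across a virtual crossing, becomes a detour of the strand carrying an endpoint of the associated w-arrow, again an instance of (1).

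It remains to treat the moves that genuinely change the number of w-arrows. First I would draw the canonical presentation on both sides of each. For R1, a classical kink becomes a virtual kink carrying a single w-arrow whose head and tail are adjacent on the same strand; removing it is exactly the Isolated Arrow move (4), after a virtual R1. For R2, the two opposite classical crossings become two w-arrows which, after Virtual Isotopy (1), are seen to be inverse to one another and cancel by the Inverse move (5). For the OC move, the two over-crossings produce two w-arrows whose tails are adjacent on the common over-strand, so commuting them is precisely the Tails Exchange move (3)---consistently with the fact, noted in the proof of Lemma~\ref{lem:wamoves}, that (3) is itself a manifestation of the OC move.

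The main obstacle is Reidemeister III. Here the three classical crossings on each side become three w-arrows, and the head/tail assignment is dictated by the over/under data through the devirtualization rule of Figure~\ref{fig:wcross}; consequently the various versions of R3 (varying the over/under status of the three strands and their orientations) \emph{a priori} require different arguments. My plan is to first use the already established Reversal moves (2) and Virtual Isotopy moves (1) to normalize orientations and endpoint positions, reducing to a single representative configuration, and then to realize the passage of the moving strand by the Slide move (6) followed by a Tails Exchange move (3), with (1) tidying up the resulting virtual crossings. The remaining configurations should then follow from this representative one via (2), (3) and twist involutivity, exactly as in the case analysis carried out for the Slide move in Lemma~\ref{lem:wamoves}. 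Checking that \emph{every} over/under and orientation case indeed reduces to the representative one is the delicate, bookkeeping-heavy part of the argument.
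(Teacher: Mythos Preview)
Your approach is essentially the same as the paper's: reduce to checking each generating move locally, handle the purely virtual moves by (1), R1 by (4), R2 by (5), OC by (3), and R3 by a combination built around the Slide move (6). Two points of execution are worth noting.

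First, for R3 you anticipate a ``delicate, bookkeeping-heavy'' case analysis over the eight orientation choices. The paper avoids this entirely by invoking Polyak's result \cite{PolyakR} that a single oriented instance of R3 suffices to generate all others (modulo the other Reidemeister moves), so only one configuration needs to be checked. This is a genuine shortcut you should use.

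Second, your R3 sketch (``Slide (6) followed by Tails Exchange (3)'') is a bit optimistic. In the paper's argument the Slide move cannot be applied directly to the three w-arrows coming from the canonical presentation; one first uses the Inverse move (5) to \emph{insert} an auxiliary cancelling pair of w-arrows, then applies Slide and Tails Exchange, and finally removes a pair again by (5). Without this insert/cancel trick the heads and tails are not in position for (6). Similarly, for R2 the two w-arrows produced by the canonical rule are not literally in the form required by (5); a Head Reversal (2) is needed first. These are minor but necessary ingredients that your outline glosses over.
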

\begin{proof}
It suffices to show that generalized Reidemeister moves and OC moves are realized by Arrow moves among canonical Arrow presentations. 

Virtual Reidemeister moves and the Mixed move follow from Virtual Isotopy moves (1). For example, the case of the Mixed move is illustrated in Figure \ref{fig:mixed_proof} (the argument holds for any choice of orientation).
 \begin{figure}[!h]
  \includegraphics[scale=0.90]{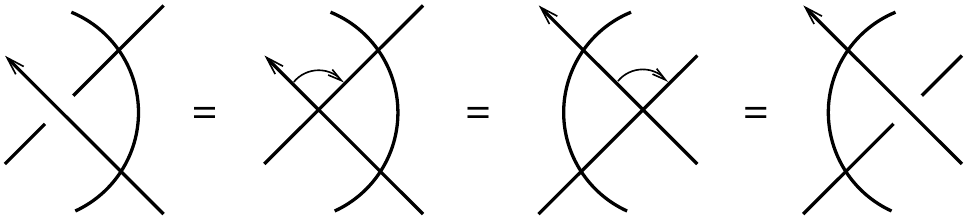}
  \caption{Realizing the Mixed move by Arrow moves}\label{fig:mixed_proof}
\end{figure}

The OC move is, expectedly, essentially a consequence of the Tails Exchange move (3). 
More precisely, Figure \ref{fig:OC_proof} shows how applying the Tails Exchange together with Isotopy moves (1), followed by Tail Reversal moves (2), and further Isotopy moves, realizes the OC move. 
 \begin{figure}[!h]
  \includegraphics[scale=0.90]{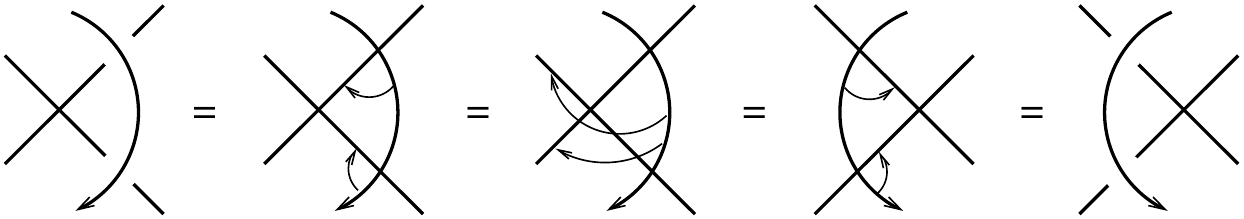}
  \caption{Realizing the OC move by Arrow moves}\label{fig:OC_proof}
\end{figure}

We now turn to classical Reidemeister moves. The proof for the Reidemeister I move is illustrated in Figure \ref{fig:R1_proof}. There, the second equality uses move (1), while the third equality uses the Isolated Arrow move (4). (More precisely, one has to consider both orientations in the figure, as well as the opposite crossing, but these other cases are similar.)
 \begin{figure}[!h]
  \includegraphics[scale=0.90]{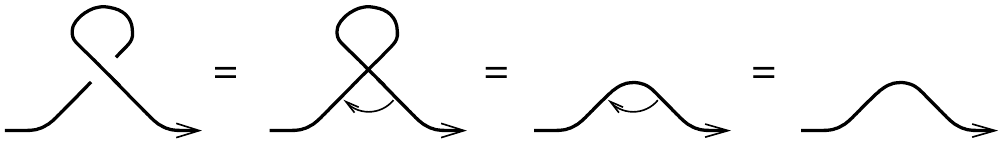}
  \caption{Realizing the Reidemeister I move by Arrow moves}\label{fig:R1_proof}
\end{figure}
The proof for the Reidemeister II move is shown in Figure \ref{fig:R2_proof}, where the second equality uses moves 
(1) and the Head Reversal move (2), and the third equality uses the Inverse move (5).
 \begin{figure}[!h]
  \includegraphics[scale=1]{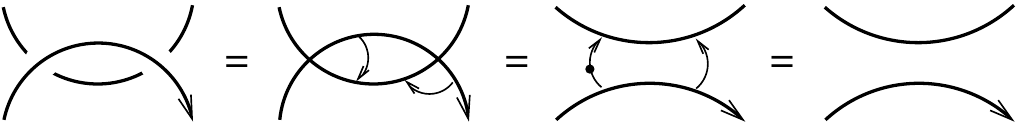}
  \caption{Realizing the Reidemeister II move by Arrow moves}\label{fig:R2_proof}
\end{figure}
Finally, for the Reidemeister move III, we first note that, although there are \emph{a priori} eight choices of orientation to be considered, Polyak showed that only one is necessary \cite{PolyakR}. 
We consider this move in Figure \ref{fig:R3_proof}. 
\begin{figure}[!h]
  \includegraphics[scale=1]{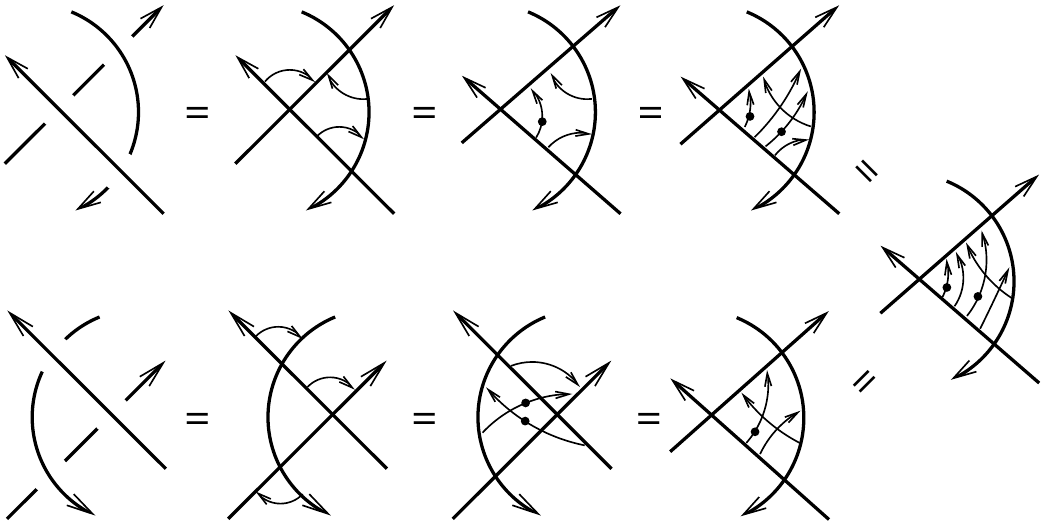}
  \caption{Realizing the Reidemeister III move by Arrow moves}\label{fig:R3_proof}
\end{figure}
There, the second equality uses the Reversal and Isotopy moves (2) and (1), the third equality uses the Inverse move (5), and the fourth one uses the Slide move (6) as well as the Tails Exchange move (3). Then the fifth equality uses the Inverse move back again, the sixth equality uses the Reversal, Isotopy and Tails Exchange moves, and the seventh one uses further Reversal and Isotopy moves. 
\end{proof}
\begin{remark}
We note from the above proof that some of the Arrow moves appear as essential analogues of the generalized Reidemeister moves: the Isolated move (4) gives Reidemeister I move, while the Inverse move (5) and Slide move (6) give Reidemeister II and III, respectively. Finaly, the Tails Exchange move (3) corresponds to the OC move.  
\end{remark}

We can now prove the main result of this section. 
\begin{proof}[Proof of Theorem \ref{thm:main1}]
As already mentioned, it suffices to prove the \emph{only if} part. 
Observe that, given a diagram $D$, any Arrow presentation of $D$ is equivalent to the canonical Arrow presentation of some diagram. 
Indeed, by the involutivity of twists and the Head Reversal move (2), we can assume that the Arrow presentation of $D$ contains no twist. 
We can then apply Isotopy and Tail Reversal moves (1) and (2) to assume that each w-arrow is contained in a disk where it looks as on the left-hand side of Figure \ref{fig:surgery}; by using virtual Reidemeister moves II, we can actually assume that it is next to a (virtual) crossing, as on the left-hand side of Figure \ref{fig:wcross}. The resulting Arrow presentation is thus a canonical Arrow presentation of some diagram (which is equivalent to $D$, by Lemma \ref{lem:wamoves}). 

Now, consider two equivalent diagrams, and pick any Arrow presentations for these diagrams. 
By the previous observation, these Arrow presentations are equivalent to canonical Arrow presentations of equivalent diagrams. The result then follows from Lemma \ref{lem:equivwA}. 
\end{proof}

\subsection{Relation to Gauss diagrams}\label{sec:GD1}

Although similar-looking and closely related, w-arrows are not to be confused with arrows of Gauss diagrams. 
In particular, the signs on arrows of a Gauss diagram are not equivalent to twists on w-arrows. 
Indeed, the sign of the crossing defined by a w-arrow relies on the local orientation of the strand where its head is attached. 
The local orientation at the tail, however, is irrelevant. 
Let us clarify here the relationship between these two objects. 

Given an Arrow presentation $(V,A)$ for some diagram $K$ (of, say, a knot) one can always turn it by Arrow moves into 
an Arrow presentation $(V_0,A_0)$, where $V_0$ is a trivial diagram, with no crossing. 
See for example the case of the trefoil in Figure \ref{fig:trefoil}. 
 \begin{figure}[!h]
  \includegraphics[scale=1]{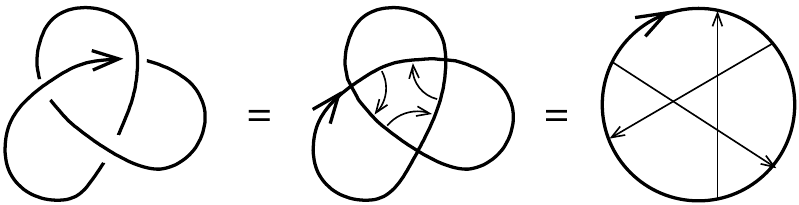} 
  \caption{The right-handed trefoil as obtained by surgery on w-arrows}\label{fig:trefoil}
\end{figure}
There is a unique Gauss diagram for $K$ associated to $(V_0,A_0)$, which is simply obtained by the following rule.  
First, each w-arrow in $A_0$ enherits a sign, which is $+$ (resp. $-$) if, when running along $V_0$ following the orientation, 
the head is attached to the right-hand (resp. left-hand) side. Next, change this sign if and only if the w-arrow contains an odd number of twists. 
For example, the Gauss diagram for the right-handed trefoil shown in Figure  \ref{fig:trefoil} is obtained from the Arrow presentation on the right-hand side by labeling all three arrows by $+$. 
Note that, if the head of a w-arrow is attached to the right-hand side of the diagram, then the parity of the number of twists corresponds to the sign. 

Conversely, any Gauss diagram can be converted to an Arrow presentation, 
by attaching the head of an arrow to the right-hand (resp. left-hand) side of the (trivial) diagram if it is labeled by a $+$ (resp. $-$). 

Theorem \ref{thm:main1}  provides a complete calculus (Arrow moves) for this alternative version of Gauss diagrams (Arrow presentations), 
which is to be compared with the Gauss diagram versions of Reidemeister moves.  
Although the set of Arrow moves is larger, and hence less suitable for (say) proving invariance results, it is 
in general much simpler to manipulate. Indeed, Gauss diagram versions of Reidemeister moves III contain rather delicate compatibility conditions, given by both the arrow signs and local orientations of the strands, see \cite{GPV}; Arrow moves, on the other hand, involve no such condition. 

Moreover, we shall see in the next sections that Arrow calculus generalizes widely to w-trees. 
This can thus be seen as an `higher order Gauss diagram' calculus. 

\section{Surgery along w-trees}\label{sec:wtreesurg}

In this section, we show how w-trees allow to generalize surgery along w-arrows. 

\subsection{Subtrees, expansion, and surgery along w-trees}

We start with a couple preliminary definitions.

A \emph{subtree} of a w-tree is a connected union of edges and vertices of this w-tree. \\
Given a subtree $S$ of a $\w$-tree $T$ for a diagram $D$ (possibly $T$ itself), consider for each endpoint $e$ of $S$ a point $e'$ 
on $D$ which is adjacent to $e$, so that $e$ and $e'$ are met consecutively, in this order, when running along $D$ following the orientation. 
One can then form a new subtree $S'$, by joining these new points by the same directed subtree as $S$, so that it runs parallel to it and crosses 
it only at virtual crossings. We then say that $S$ and $S'$ are two \emph{parallel subtrees}. 

We now introduce the \emph{Expansion move} (E), which comes in two versions as shown in Figure \ref{fig:Exp}. 
 \begin{figure}[!h]
  \includegraphics[scale=1]{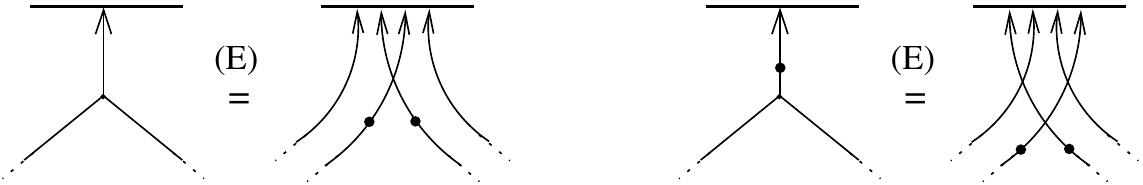}
  \caption{Expanding w-trees by using (E)  }
\label{fig:Exp}
\end{figure}
\begin{convention}\label{conv:parallel}
In Figure \ref{fig:Exp}, the dotted lines on the left-hand side of the equality represent two subtrees, 
forming along with the part which is shown a $\w$-tree.  
The dotted parts on the right-hand side then represent parallel copies of both subtrees. 
Together with the represented part, they form pairs of parallel w-tree which only differ by a twist on the terminal edge. 
See the first equality of Figure \ref{fig:exe} for an example. 
We shall use this diagrammatic convention throughout the paper. 
\end{convention}
By applying (E) recursively,  we can eventually turn any w-tree into a union of w-arrows. 
Note that this process is uniquely defined. 
An example is given in Figure \ref{fig:exe}.
\begin{definition}
The \emph{expansion} of a w-tree is the union of w-arrows obtained from repeated applications of (E). 
\end{definition}
 \begin{figure}[!h]
  \includegraphics[scale=1]{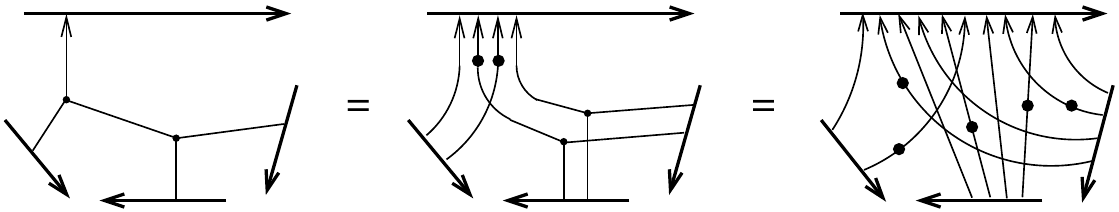}
  \caption{Expansion of a $\w_3$-tree}\label{fig:exe}
\end{figure}

\begin{remark}\label{rem:commutator}
As Figure \ref{fig:exe} illustrates, the expansion of a $\w_k$-tree $T$ takes the form of an `iterated commutators of w-arrows'.  
More precisely, labeling the tails of $T$ from $1$ to $k$, and denoting by $i$ a w-arrow running from (a neighborhood of) tail $i$ to (a neighborhood of) the head of $T$, and by $i^{-1}$ a similar w-arrow with a twist, then the heads of the w-arrows in the expansion of $T$ are met along $D$ according to a $k$-fold commutator in $1,\cdots,k$.  
See Section \ref{sec:algebra} for a more rigorous and detailed treatment.  
\end{remark}

The notion of expansion leads to the following. 
\begin{definition}
The \emph{surgery along a w-tree} is surgery along its expansion. 
\end{definition}
As before, we shall denote by $D_T$ the result of surgery on a diagram $D$ along a union $T$ of w-trees. 

\begin{remark}\label{rem:expansion}
We have the following \emph{Brunnian-type property}. 
Given a w-tree $T$, consider the trivial tangle $D$ given by a neighborhood of its endpoints: the tangle $D_T$ is Brunnian, in the sense that deleting any component yields a trivial tangle. 
Indeed, in the expansion of $T$, we have that deleting all w-arrows which have their tails on a same component of $D$, produces a union of w-arrows which yields a trivial surgery, thanks to the Inverse move (5). 
\end{remark}

\subsection{Moves on w-trees} \label{sec:moves}

In this section, we extend the Arrow calculus set up in Section \ref{sec:warrsurg} to w-trees.
The expansion process, combined with Lemma \ref{lem:wamoves}, gives immediately the following.
\begin{lemma}\label{lem:treemoves}
Arrow moves (1) to (4) hold for w-trees as well. More precisely:
\begin{itemize}
 \item[$\circ$]  one should add the following local moves to (1): 
   \begin{center}
    \includegraphics[scale=1]{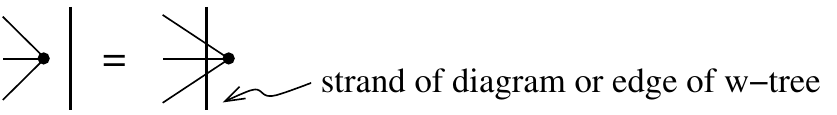}
   \end{center}
 \item[$\circ$]  The Tails Exchange move (3) may involves tails from different components or from a single component. 
\end{itemize}
\end{lemma}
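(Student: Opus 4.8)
The plan is to reduce every statement about w-trees to the corresponding statement about w-arrows via the expansion move (E), using that surgery along a w-tree is by definition surgery along its expansion, together with Lemma 3.10 (Arrow moves yield equivalent Arrow presentations) applied to the expanded w-arrows. The point is that each of the moves (1)–(4), performed on a w-tree, can be realized by first expanding (which is an equivalence by construction, since (E) is just a bookkeeping of parallel subtrees and introduces only twists on terminal edges in a controlled way), then applying the already-established w-arrow moves to the cloud of w-arrows in the expansion, and finally recontracting. So the structure of the proof is: (i) observe expansion commutes with the local picture of each move; (ii) invoke Lemma 3.10 on the expansion; (iii) record the two extra clauses (the new isotopy moves involving trivalent vertices, and the fact that Tails Exchange now allows tails on one or several components).

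First I would handle the Virtual Isotopy moves (1). For the Reidemeister-type and ``edge sliding'' moves of (1) that involve only edges of the w-tree (not its endpoints), these locally involve only virtual crossings, exactly as in the w-arrow case, so they follow directly from virtual Reidemeister moves and the detour move, with no need to expand at all. For the new moves listed in the figure \texttt{isotopy\_tree.pdf} — sliding a strand of the diagram, or an edge of another w-tree, across a trivalent vertex — I would note that after expansion a trivalent vertex becomes a small pattern of parallel w-arrow edges running together, so ``crossing the vertex'' becomes ``crossing a parallel bunch of w-arrow edges simultaneously'', which is a detour move on each edge in turn; hence these follow from (1) for w-arrows plus detour moves. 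I would draw one representative local picture and leave the remaining orientation/twist variants to the reader, as is done throughout Section 5.

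Next, Head/Tail Reversal (2): a reversal applied at an endpoint $e$ of a w-tree only modifies the terminal or a tail edge and the local strand it meets; after expansion, every w-arrow in the expansion has its head (resp. one of its tails) in that same neighborhood of $e$, so the w-tree reversal becomes a simultaneous reversal on all those w-arrows, realized by repeated applications of w-arrow move (2) together with twist involutivity. The Tails Exchange move (3): this is the key case, and here I expect the only genuine subtlety. Exchanging two adjacent tails of w-trees amounts, after expansion, to exchanging two adjacent bunches of w-arrow tails; applying w-arrow move (3) repeatedly swaps them one pair at a time, which is fine regardless of whether the two tails belong to the same w-tree or different ones, and regardless of whether they sit on the same component of the diagram or on different ones — this is exactly the strengthened statement claimed. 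I would make explicit that no compatibility condition appears because w-arrow move (3) had none. Finally, Isolated Arrow (4) for a w-tree: if a whole w-tree sits inside a disk meeting the diagram in a single arc through one endpoint, then after expansion all its w-arrows are isolated in that disk, and w-arrow move (4) removes them one by one (using (1) to disentangle them first); alternatively this is immediate from the Brunnian-type Remark 5.5 applied to that trivial local tangle.

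The main obstacle is bookkeeping rather than mathematics: one must be careful that, when a move is applied to a w-tree that is only a \emph{subtree} of a larger w-tree, or to one w-tree in the presence of others, the expansion of the ambient configuration still restricts, on the relevant local disk, to precisely the expansion of the piece being moved — so that the w-arrow moves of Lemma 3.10 can be applied locally without disturbing the rest. I would dispatch this by noting that expansion is performed edge-by-edge and is local to neighborhoods of the subtrees involved, so the local pictures of Figures in Section 5 are faithful, and the proof reduces, move by move, to the w-arrow case exactly as in Lemma 3.10. The new isotopy moves and the ``single vs. different components'' clause for (3) are then just remarks recording what the w-arrow-level argument already gives.
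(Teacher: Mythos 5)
Your overall strategy is the same as the paper's: the paper disposes of this lemma in one line, by observing that each of the moves (1)--(4), applied to a w-tree, reduces via the Expansion move (E) to a configuration of w-arrows to which Lemma \ref{lem:wamoves} applies, and your treatment of (1), (2) and (3) is a correct unpacking of exactly that reduction (expansion is local and uniquely defined, a trivalent vertex expands to a parallel bunch of edges handled by detour moves, a head or tail of the tree expands to a cluster of arrow heads or tails handled by repeated w-arrow moves (2) and (3), with no compatibility conditions appearing).

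However, your clause for the Isolated move (4) is wrong as stated, and the justification would fail. You claim that a w-tree contained in a disk meeting the diagram in a single arc can be deleted, because ``after expansion all its w-arrows are isolated in that disk''; neither assertion is true. The welded long knots $L_k$ of Figure \ref{fig:K0} are obtained by surgery along a single $\w_k$-tree all of whose endpoints lie on one arc of the trivial long knot, and they are nontrivial by Lemma \ref{lem:wkAlex}, so such a tree is not removable in general. The obstruction is the position of the head: after expansion the arrow heads form one cluster and the tails form clusters at the former tails, so an individual arrow of the expansion is isolated only when the head cluster is adjacent to the tail clusters on the correct side; when a tail precedes the head (as for $L_k$) no arrow of the expansion can be removed by move (4), and indeed the surgery is nontrivial. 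The appeal to the Brunnian-type property (Remark \ref{rem:expansion}) also does not help here: that remark concerns the tangle formed by \emph{distinct} components given by neighborhoods of the endpoints, and says nothing when all endpoints lie on a single arc. The correct tree analogue of (4) requires the head to be met first among the endpoints in the disk (or to be adjacent to its tails on the appropriate side); removal then proceeds by Tails Exchange together with the Antisymmetry and Fork Lemmas \ref{lem:as} and \ref{lem:fork}, which is precisely how the paper handles isolated trees later, in the proof of Lemma \ref{lem:separate}. You should restrict your statement of (4) for w-trees accordingly and replace the ``all arrows become isolated'' argument by this ordered, exchange-then-cancel removal.
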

\begin{remark}\label{rem:parallel}
As a consequence of the Tails Exchange move for w-trees, 
the relative position of two (sub)trees for a diagram is completely specified by the relative position of the two heads. 
In particular, we can unambiguously refer to \emph{parallel w-trees} 
by only specifying the relative position of their heads. 
Likewise, we can freely refer to `parallel subtrees' of two w-trees if these subtrees do not contain the head. 
\end{remark}
\begin{convention}
In the rest of the paper, we will use the same terminology for the w-tree versions of moves (1) to (4), and in particular we will use the same numbering. 
As for moves (5) and (6), we will rather refer to the next two lemmas when used for w-trees. 
\end{convention}
As a generalization of the Inverse move (5), we have the following. 
\begin{lemma}[Inverse]\label{lem:inverse}
Two parallel w-trees which only differ by a twist on the terminal edge yield a trivial surgery.\footnote{Recall from Convention \ref{conv:parallel} that, in the figure, the dotted parts represent two parallel subtrees. } \\
\begin{center}
   \includegraphics[scale=1]{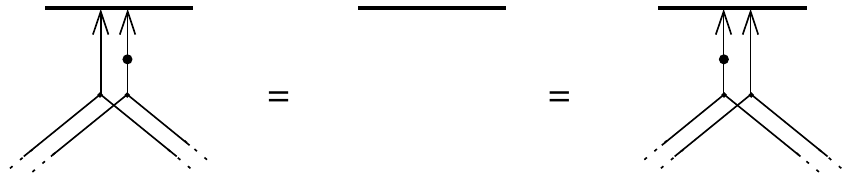}
\end{center}
\end{lemma}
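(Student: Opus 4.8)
The plan is to induct on the degree $k$ of the w-trees involved (equivalently, on the number of tails), using the Expansion move (E) to reduce the claim to statements about w-trees of strictly smaller degree, with the base case $k=1$ being exactly the Inverse move (5) for w-arrows established in Lemma \ref{lem:wamoves}. First I would spell out what needs to be shown: we have two parallel w-trees $T$ and $T'$ of degree $k$, running side by side (their heads adjacent on $D$, with $T'$ immediately after $T$ along the orientation, say), differing only by a single twist on the terminal edge, and we must show that surgery along $T\cup T'$ is trivial, i.e. $D_{T\cup T'}=D$. By definition, surgery along a w-tree is surgery along its expansion, so equivalently we must show that the union of the expansions of $T$ and $T'$ gives a trivial surgery.

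The key step is to apply one round of (E) to the terminal edge of each of $T$ and $T'$. Since $T$ and $T'$ differ precisely by a twist on the terminal edge, Convention \ref{conv:parallel} tells us exactly how their single-step expansions compare: expanding $T$ produces (locally near the head) a configuration of two sub-w-trees of smaller degree ending in heads met in one order, while expanding $T'$ — because of the extra twist — produces the \emph{same} two sub-w-trees but with the roles/order of the two resulting heads interchanged, together with matching twists dictated by the move. Concretely, writing the expansion of $T$ near the head as a ``commutator'' $[P,Q]$ of the two halves (in the sense of Remark \ref{rem:commutator}), the expansion of $T'$ is the commutator taken in the opposite order, i.e. $[Q,P]$ (up to twists on terminal edges). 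Placing $T$ and $T'$ in parallel, the union of their expansions then consists of four sub-w-trees whose heads appear along $D$ in the pattern $P\,Q\,Q'\,P'$ (or some reordering of it obtainable by the Tails Exchange move (3) for w-trees, Lemma \ref{lem:treemoves}), where $P'$ is parallel to $P$ differing by a twist on its terminal edge, and likewise for $Q,Q'$. Now I would use the w-tree Tails Exchange move to bring $Q$ adjacent to $Q'$ and $P$ adjacent to $P'$, so that the configuration becomes $P\,(Q\,Q')\,P'$; the inner pair $Q\cup Q'$ is exactly an instance of the Inverse lemma in degree $<k$, hence by the induction hypothesis it may be deleted (trivial surgery), leaving $P$ adjacent to $P'$, which is again an instance of the Inverse lemma in degree $<k$ and may likewise be deleted. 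This yields the trivial diagram $D$, as desired.

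The main obstacle I anticipate is the bookkeeping in the inductive step: one must verify carefully that after one application of (E) the two expansions really do pair up into cancelling parallel pairs after permuting heads by Tails Exchange — in particular that the twists land exactly where the Inverse hypothesis requires them (a twist on a \emph{terminal} edge of each sub-pair), and that the various crossings introduced among the edges are all virtual and hence absorbed by Virtual Isotopy moves (1). A secondary subtlety is that ``parallel'' for w-trees is only well-defined up to Tails Exchange (Remark \ref{rem:parallel}), so I should phrase the induction in a way that is insensitive to the precise planar embedding of the subtrees away from their heads; fixing once and for all the convention that the relevant data is the cyclic order of heads along $D$ makes this clean. Modulo this combinatorial care, every individual move invoked (E), (1), (3), and the lower-degree Inverse) is already available, so the argument is a finite induction with no new geometric input.
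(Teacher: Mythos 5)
Your overall strategy is exactly the paper's: induct on the degree, take the base case to be the Inverse move (5) for w-arrows, apply one Expansion move (E) to each tree, and cancel the resulting lower-degree trees pairwise by the induction hypothesis. However, two details in your inductive step are misstated. First, one application of (E) to a w-tree does not produce two lower-degree subtrees: by Convention \ref{conv:parallel} it produces \emph{four} (a parallel pair, differing by a terminal twist, for each of the two subtrees), with heads met along $D$ in the commutator pattern $A\,\ov{B}\,\ov{A}\,B$; so the union of the two expansions consists of eight trees, with heads in the pattern $A\,\ov{B}\,\ov{A}\,B\,\ov{B'}\,A'\,B'\,\ov{A'}$, not four trees in the pattern $P\,Q\,Q'\,P'$. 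Second, the Tails Exchange move (3) only permutes \emph{tails} and cannot be used to bring two heads together; if head rearrangement were really needed here you would be forced to invoke the Heads Exchange or Head--Tail Exchange lemmas, which are proved later \emph{using} the Inverse lemma, creating a circularity. Fortunately neither correction damages the argument: in the correct eight-tree configuration the innermost pair ($B$ and $\ov{B'}$) is already an adjacent parallel pair differing by a terminal twist, so it cancels by induction, after which the next pair becomes adjacent, and the cancellation telescopes through all four pairs with no exchange moves at all --- which is precisely the content of the paper's Figure \ref{fig:invproof}. So your plan is sound once the bookkeeping for (E) is corrected, and it coincides with the published proof.
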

\begin{proof}
We only prove here the first equality, the second one being strictly similar. 
We proceed by induction on the degree of the w-trees involved. 
The w-arrow case is given by move (5). 
Now, suppose that the left-hand side in the above figure involves two $\w_{k}$-trees. 
Then, one can apply (E) to both to obtain a union of eight $\w$-trees of degree $<k$. 
Figure \ref{fig:invproof} then shows how repeated use of the induction hypothesis implies the result.  
 \begin{figure}[!h]
    \reflectbox{\rotatebox[origin=r]{180}{\includegraphics[scale=1]{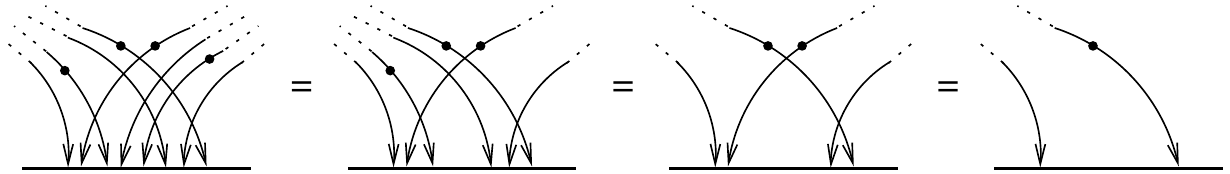}}}
  \caption{Proving the Inverse move for w-trees}\label{fig:invproof}
\end{figure}
\end{proof}

\begin{convention}\label{rem:inverse2}
 In the rest of this paper, when given a union $S$ of w-trees with adjacent heads, we will denote by $\overline{S}$ the union of w-trees such that we have
 \[ \textrm{\includegraphics[scale=0.8]{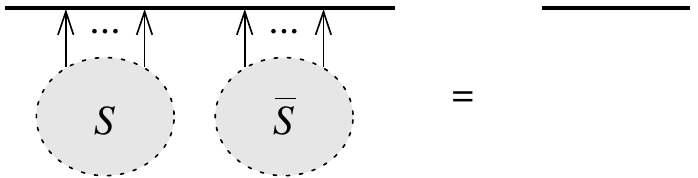}} \]
Note that $\overline{S}$ can be described explicitly from $S$, by using the Inverse Lemma \ref{lem:inverse} recursively. 
We stress that the above graphical convention will always be used for w-trees with adjacent heads, so that no tail is attached to the represented portion of diagram.  
\end{convention}

Likewise, we have the following natural generalization of the Slide move (6). 
\begin{lemma}[Slide]\label{lem:slide}
The following equivalence holds. 
\begin{figure}[!h]
  \includegraphics[scale=1.2]{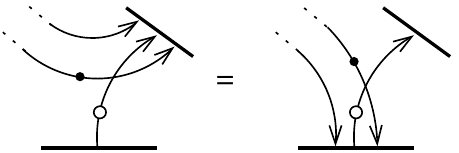}
 \caption{The Slide move for w-trees}\label{fig:sl}
\end{figure}
\end{lemma}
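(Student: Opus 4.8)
The plan is to argue by induction on the degree of the w-tree being slid past the head of the other tree, reducing everything eventually to the w-arrow Slide move (6), already established in Lemma \ref{lem:wamoves}. The setup is: we have a w-tree $T$ whose head is attached to a strand of $D$, and the terminal edge of another w-tree $S$ passes by (or has its head near) that same local portion of the strand; the Slide move lets us move $S$'s terminal edge from one side of $T$'s head to the other, at the cost of introducing a parallel copy of $S$ (in the manner dictated by Figure \ref{fig:sl}). Throughout I would freely use the w-tree versions of moves (1)--(4) from Lemma \ref{lem:treemoves}, the Inverse Lemma \ref{lem:inverse}, and Convention \ref{rem:inverse2} for the notation $\overline{S}$.

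First I would dispose of the base case. When the tree $T$ being passed is a w-arrow, this is exactly move (6), possibly after using Head/Tail Reversal (2) and twist involutivity to normalize the local orientations and twists — precisely the kind of reduction already carried out in the proof of Lemma \ref{lem:wamoves} for move (6) itself. For the inductive step, suppose $T$ is a $\w_k$-tree with $k \ge 2$. I would apply the Expansion move (E) to $T$, writing its expansion (or one step of it) as a union of lower-degree w-trees whose heads sit consecutively along the strand, next to the terminal edge of $S$. Now I can slide $S$'s terminal edge past each of these lower-degree heads one at a time, using the induction hypothesis; each such slide spawns a parallel copy of $S$. After passing all the heads coming from the expansion of $T$, I collect the spawned parallel copies of $S$ and recombine, using (E) in reverse and the Tails Exchange move (3) to reorganize the heads, so that the accumulated parallel copies assemble into the single parallel copy of $S$ prescribed by the right-hand side of Figure \ref{fig:sl}. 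The Inverse Lemma \ref{lem:inverse} is used to cancel any pairs of parallel copies differing by a twist that arise in this recombination.

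The main obstacle I anticipate is bookkeeping: after sliding $S$ past the several heads produced by expanding $T$, one obtains not one but several parallel copies of $S$, interleaved with the pieces of $T$'s expansion, and with various twists and orientation choices on terminal edges. The crux is to check that these copies recombine — via (E) backwards, Tails Exchange (3), and Inverse (\ref{lem:inverse}) — into exactly the configuration on the right-hand side of Figure \ref{fig:sl}, with the correct number of copies (one) and the correct twist/orientation data, rather than leaving a residual discrepancy. This is essentially a commutator-identity computation, matching the "iterated commutator of w-arrows" description of expansions from Remark \ref{rem:commutator}; once the degree-one case and the expansion pattern are pinned down, the recombination is forced, but verifying it cleanly for all orientation cases is where the real work lies. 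As is standard in this paper, I would present the argument for one choice of local orientations and twist placements in a figure analogous to Figure \ref{fig:invproof}, and note that the remaining cases follow by Reversal (2), Tails Exchange (3), and twist involutivity, exactly as in the proof of the w-arrow Slide move.
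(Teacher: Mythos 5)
Your proof targets the wrong move, and as a result the induction is set up on the wrong object. In Figure \ref{fig:sl} the ``obstacle'' is a w-arrow --- it is always of degree $1$ (this is explicit in Remark \ref{rem:genslide}, which generalizes the lemma by replacing \emph{the w-arrow in Figure \ref{fig:sl}} by a bunch of parallel w-arrows) --- and what gets slid along it is a \emph{pair} of parallel $\w_k$-trees, with no additional tree created: the Slide move is an exact equivalence of presentations. What you describe --- moving the terminal edge of a tree $S$ past the head of another tree $T$ at the cost of spawning a parallel copy of $S$ --- is essentially the Heads Exchange move of Lemma \ref{lem:he} (proved later by (E), twist involutivity and the Inverse Lemma, with no induction), not the Slide move. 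Consequently your base case is not move (6) as you claim, and your inductive step expands the wrong object: the w-arrow being slid along has nothing to expand, while the two parallel w-trees being slid, whose degree is the parameter the induction should run over, are never expanded in your argument.

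Even read on its own terms, the inductive step has a genuine gap at the recombination stage: after sliding $S$ one at a time past the arrows coming from the expansion of $T$ (which are not parallel to one another), the spawned copies of $S$ have their heads interleaved with the pieces of that expansion, and gathering them back together requires further head exchanges, each producing new correction trees; nothing forces these to cancel, so at best you obtain the statement up to higher-degree trees, not the exact equivalence asserted by the lemma. The paper's argument avoids all of this: induct on the degree of the slid pair, apply (E) to \emph{both} parallel $\w_k$-trees so as to obtain eight trees of degree $<k$ which again come in four parallel pairs, slide each pair along the (unchanged) w-arrow by the induction hypothesis, and apply (E) backwards to reassemble the pair of $\w_k$-trees on the other side --- exactly as in the proof of the Inverse Lemma \ref{lem:inverse}, the parallelism of the pair makes the expansion pieces match up and no correction terms ever appear.
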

\begin{proof}
The proof is done by induction on the degree of the w-trees involved in the move, as in the proof of Lemma \ref{lem:inverse}. 
The degree $1$ case is the Slide move (6) for w-arrows. 
Now, suppose that the left-hand side in the figure of Lemma \ref{lem:slide} involves two $\w_{k}$-trees, 
and apply (E) to obtain a union of eight $\w$-trees of degree $<k$. 
These $\w$-trees are so that we can slide them pairwise, using the induction hypothesis four times. 
Applying (E) back again to the resulting eight $\w$-trees, we obtain the desired pair of $\w_{k}$-trees. 
\end{proof}
\begin{remark}\label{rem:genslide}
The Slide Lemma \ref{lem:slide} generalizes as follows. 
If one replace the w-arrow in Figure \ref{fig:sl} by a bunch of parallel w-arrows, then the lemma still applies. 
Indeed, it suffices to insert, using the Inverse Lemma \ref{lem:inverse}, 
pairs of parrallel w-trees between the endpoints of each pair of consecutive w-arrows, apply the Slide Lemma \ref{lem:slide}, 
then remove pairwise all the added w-trees again by the Inverse Lemma. 
Note that this applies for any parallel bunch of w-arrows, for any choice of orientation and twist on each individual w-arrow.
\end{remark}

We now provide several supplementary moves for w-trees. 

\begin{lemma}[Head Traversal]\label{lem:jump}
A w-tree head can pass through an isolated union of w-trees:\footnote{In the figure, the shaded part indicates a portion of diagram with some w-trees, which is contained in a disk as shown. } 
\begin{center}
   \includegraphics[scale=1]{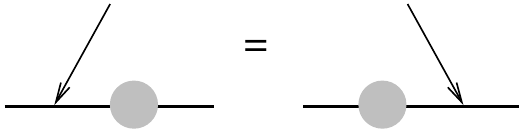}
\end{center}
\end{lemma}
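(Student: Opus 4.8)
The plan is to reduce the statement, by the Expansion move (E), to a situation involving only w-arrows, and then to use the moves already available for w-arrows — chiefly the Slide move~(6), the Inverse move~(5), and the Tails Exchange move~(3) — together with the w-detour move. First I would observe that it suffices to prove the case where the head that needs to traverse belongs to a \emph{w-arrow}: indeed, if the traversing w-tree $T$ has degree $k\ge 2$, one applies (E) at its terminal vertex to replace $T$ by two parallel w-trees of degree $<k$ differing by a twist on the terminal edge, whose heads are adjacent; one then pushes these two heads through the isolated union of w-trees one at a time, and an induction on $k$ closes this reduction. So the core of the argument is: a single w-arrow head can be pushed across a disk $\Delta$ containing a portion of diagram together with an isolated collection of w-trees.

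For that core case, I would argue as follows. Since the union of w-trees inside $\Delta$ is \emph{isolated}, meaning in particular that none of their tails lies on the strand that the traversing head is attached to, each of these interior w-trees is itself (by (E)) a union of w-arrows, all of whose heads and tails are on strands inside $\Delta$. The traversing w-arrow enters $\Delta$ along its terminal edge, which by a w-detour move may be assumed to run parallel to the boundary strand and to cross everything inside $\Delta$ only at virtual crossings. The head is then slid past each interior w-arrow head in turn: each such elementary passage is exactly an instance of the Slide move~(6) (or its inverse), possibly preceded and followed by Reversal~(2) and Isotopy~(1) moves to arrange the local orientations, and by Tails Exchange~(3) to reorder tails that the terminal edge passes near. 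Passing the head of the traversing arrow \emph{over} a tail of an interior w-arrow is, after surgery, simply a crossing change between an over-strand and a strand, hence is absorbed by the w-detour move / Virtual Isotopy (since $T$'s edges only meet $D$ virtually). Collecting these elementary moves, the head emerges on the other side of $\Delta$, which is the claimed equality.

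The step I expect to be the main obstacle is bookkeeping the local orientations and twists during the elementary passages in the w-arrow case: the Slide move~(6) was proved in Lemma~\ref{lem:wamoves} only for specified orientations, with the other cases deduced via Reversal~(2), Tails Exchange~(3) and twist involutivity, and one must check that this deduction goes through uniformly no matter how the interior w-trees sit inside $\Delta$. A related subtlety is making precise the word ``isolated'': one should phrase it so that the traversing head never needs to cross a \emph{trivalent} vertex of an interior w-tree, only its (expanded) edges and endpoints — this is why expanding the interior w-trees into w-arrows first is convenient, and why the hypothesis that the shaded part is contained in a disk disjoint from the terminal edge's tail is used. Once these orientation/twist cases are organized, everything else is a routine concatenation of moves~(1),~(2),~(3),~(5),~(6) and w-detour moves, exactly as in the proofs of Lemmas~\ref{lem:inverse} and~\ref{lem:slide}.
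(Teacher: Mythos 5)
Your reduction of the statement to the case of a traversing w-arrow head, via (E) and induction on the degree, is exactly the paper's first step. The core of your argument, however, has a genuine gap: the elementary passages you rely on are not free moves. Pushing the traversing head past the head of an interior w-arrow is precisely the Heads Exchange move (Lemma \ref{lem:he}), and pushing it past a tail is the Head--Tail Exchange move (Lemma \ref{lem:ht}); both of these cost an additional w-tree, and neither is an instance of the Slide move (6) -- if exchanging two heads were a free Arrow move, Lemma \ref{lem:he} would need no correction term and welded theory would collapse (it is the analogue of the forbidden UC move). Likewise, passing a head across a tail is not ``absorbed by the w-detour move'': the w-detour of Remark \ref{rem:wdetour} applies to a diagram arc all of whose classical crossings are over-crossings, not to an exchange of w-tree endpoints, and a ``crossing change'' is certainly not a welded move. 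So your step-by-step traversal generates a pile of correction trees, and the essential missing point is an argument that these corrections cancel once the head has passed \emph{all} endpoints of the isolated union -- you never address this. Note also that your reading of ``isolated'' skirts the difficulty: the shaded disk contains a portion of the \emph{diagram} to which the interior w-trees are attached, and in general this includes the very strand carrying the traversing head, so you cannot assume that no interior tail (or head) lies on that strand; that situation is exactly where the statement has content.

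The paper's proof avoids all of this by changing levels rather than decomposing the traversal into endpoint exchanges. After reducing to a w-arrow by (E), it performs the surgery along that w-arrow, leaving the contents of the disk untouched (in particular the interior w-trees are never expanded). The surgery produces a finger which meets the rest of the diagram, and the edges of the interior w-trees, only in virtual crossings, except for a single classical crossing at which the finger passes \emph{over} the strand. Consequently, planar isotopy together with the detour move and the w-detour move (Remark \ref{rem:wdetour}) -- plus the Virtual Isotopy moves (1) for the virtual crossings with tree edges -- allow this finger to be rerouted around the whole shaded disk and re-attached on the other side; no Exchange moves occur, hence no correction terms, and no bookkeeping of orientations beyond the two cases mentioned in the paper. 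To salvage your approach you would have to prove the cancellation of the Exchange corrections over an isolated cluster, which is substantially harder than the surgery/detour argument you are trying to avoid.
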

\begin{proof}
Clearly, by (E), it suffices to prove the result for a w-arrow head. 
The proof is given in Figure \ref{fig:jumpproof}.
(More precisely, the figure proves the equality for one choice of orientation; the other case is strictly similar.)
 \begin{figure}[!h]
   \includegraphics[scale=0.95]{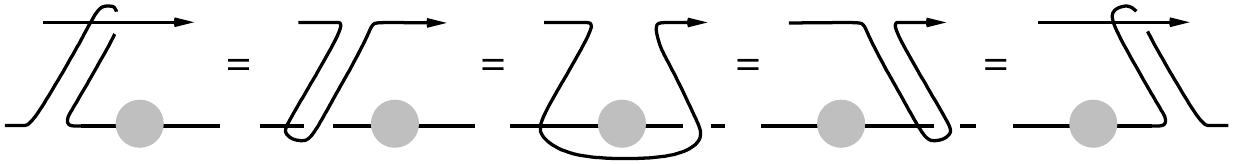}
   \caption{Proving the Head Traversal move}\label{fig:jumpproof}
\end{figure}
Surgery yields the diagram shown on the left-hand side of the figure, which can be deformed into the second diagram by a planar isotopy. 
Successive applications of the detour move and of the w-detour move (Remark \ref{rem:wdetour}) then give the next two equalities, and another planar isotopy completes the proof. 
\end{proof}

\begin{lemma}[Heads Exchange]\label{lem:he}
Exchanging two heads can be achieved at the expense of an additional w-tree, as shown below:  
\begin{center}
   \reflectbox{\rotatebox[origin=c]{180}{\includegraphics[scale=1]{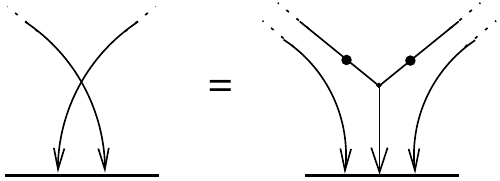}}}
\end{center}
\end{lemma}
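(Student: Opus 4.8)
The plan is to reduce, via the Expansion move (E), to the case where the two heads belong to w-arrows, and then verify the identity at the level of diagrams by a direct computation using surgery together with the detour and OC moves. First I would note that by applying (E) repeatedly to both w-trees whose heads are being exchanged, the statement for general w-trees follows from the statement for w-arrows, exactly as in the proofs of Lemmas \ref{lem:inverse} and \ref{lem:slide}: each (E) replaces a $\w_k$-tree by a pair of $\w$-trees of lower degree (differing by a twist on the terminal edge), the additional w-tree produced on the right-hand side of Heads Exchange is correspondingly built out of the subtrees of the two given trees joined at a new trivalent vertex, and the pairs of inverse trees created along the way can be absorbed using the Inverse Lemma \ref{lem:inverse} and the generalized Slide move (Remark \ref{rem:genslide}). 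So the real content is the w-arrow case.

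For two w-arrows with adjacent heads on a strand of $D$, I would compute $D_A$ directly from Figure \ref{fig:surgery}: surgery along each w-arrow inserts a small detour of the head-strand alongside the tail-strand. When the two heads sit next to each other, the two inserted detours cross, and the difference between the two orders of the heads is governed by how the over-strand of one devirtualized crossing meets the over/under strands of the other. Here one expects the situation to be exactly the local picture of an OC move (recall the Remark after Lemma \ref{lem:equivwA} identifies the Tails Exchange move with OC); exchanging the two heads forces one classical crossing to be traded, and re-expressing the resulting diagram back in terms of w-arrows produces precisely one extra w-arrow — the one whose tails are the two tails of the original arrows (joined at a new trivalent vertex) and whose head lies between the exchanged heads. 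I would present this as a short sequence of identities: surgery, then a planar isotopy, then an OC move (this is the essential step), then detour and w-detour moves (Remark \ref{rem:wdetour}) to pull the arcs back into standard position, and finally recognize the resulting configuration as a surgery presentation, invoking Theorem \ref{thm:main1} to rewrite it with the claimed w-arrow. The orientations and twists: I would treat one choice of local orientation explicitly, and remark that the remaining cases follow by Head/Tail Reversal (move (2)) and twist involutivity, as was done throughout Section \ref{sec:arrow_moves}.

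The main obstacle I anticipate is bookkeeping rather than conceptual difficulty: correctly identifying the shape, orientation, and twisting of the additional w-tree produced on the right-hand side — in particular checking that, after the OC move and the detour cleanup, the new head genuinely lands \emph{between} the two exchanged heads and that its two incoming edges are (parallel copies of) the original two arrows with the correct twist parities, so that the identity is consistent with the Inverse and Slide Lemmas when one later iterates (E). A secondary subtlety is making sure the inductive step for general w-trees does not introduce spurious extra trees: this is handled by carefully pairing up the eight lower-degree trees produced by the two (E) moves and cancelling the inverse pairs, exactly as in Lemma \ref{lem:slide}. Once the w-arrow picture is drawn correctly, both the base case and the induction are routine.
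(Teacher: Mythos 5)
There is a genuine gap at the step you yourself flag as essential. For two w-arrows, the heads lie on the strand that passes \emph{under} the (parallel copies of the) tail strands; exchanging two adjacent heads is therefore an exchange of two consecutive under-crossings along that strand, i.e.\ a UC move -- precisely the forbidden move of Figure \ref{fig:moves} -- and not an OC move. The OC move governs the exchange of \emph{tails} (this is the content of the Tails Exchange move (3), as you note), so the sequence ``surgery, isotopy, OC, detour moves'' cannot realize the head exchange: if it could, the two heads would commute by welded-legal moves and no compensating tree would be needed, contradicting the fact that the $\w_2$-tree appearing here is exactly what realizes the UC move (Section \ref{sec:w1}). Nor does invoking Theorem \ref{thm:main1} conjure the specific extra tree; it only says equivalent diagrams have Arrow presentations related by Arrow moves, it does not identify which ones. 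The correct verification runs in the opposite direction: start from the side containing the additional w-tree, apply (E) \emph{once} at its new trivalent vertex, which produces parallel (twisted) copies of the two given trees, and then cancel adjacent inverse pairs using twist involutivity and the Inverse Lemma \ref{lem:inverse}. Algebraically this is just $B\,[\ov{B},\ov{A}]\,A = B\,\ov{B}\,A\,B\,\ov{A}\,A = AB$, in the formalism of Section \ref{sec:algebra}.

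A secondary point: your induction on degree (modelled on the Inverse and Slide Lemmas, pairing off eight lower-degree trees) is not needed and would complicate matters, since the shape of the compensating tree depends on the whole trees being exchanged. Because the Inverse Lemma is already available for w-trees of arbitrary degree, the one-step expansion argument above proves the general case directly, with no reduction to the w-arrow case.
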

\begin{proof}
Starting from the right-hand side of the above equality, applying the Expansion move (E) gives the first equality in Figure \ref{fig:heproof}.  
 \begin{figure}[!h]
    \reflectbox{\rotatebox[origin=c]{180}{\includegraphics[scale=0.9]{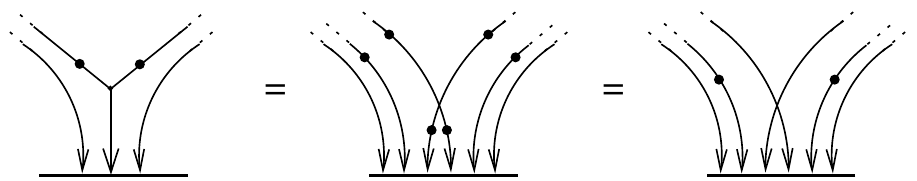}}}
  \caption{Proving the Heads Exchange move}\label{fig:heproof}
\end{figure}
\noindent 
The involutivity of twists gives the second equality, and two applications of 
the Inverse Lemma \ref{lem:inverse} then conclude the proof.    
\end{proof}

\begin{remark}\label{cor:hh}
By strictly similar arguments, one can show the simple variants of the Heads Exchange move given in Figure \ref{fig:head2}.
 \begin{figure}[!h]
   \reflectbox{\rotatebox[origin=c]{180}{\includegraphics[scale=0.9]{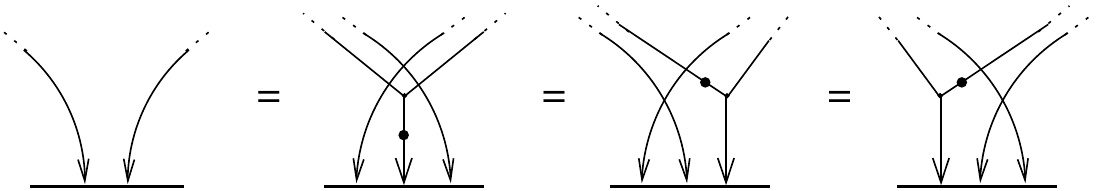}}}
  \caption{Some variants of the Heads Exchange move}\label{fig:head2}
\end{figure}
\end{remark}
\begin{lemma}[Head--Tail Exchange]\label{lem:ht}
Exchanging a w-tree head and a w-arrow tail can be achieved at the expense of an additional w-tree, as shown in Figure \ref{fig:ht}. 
 \begin{figure}[!h]
  \includegraphics[scale=0.95]{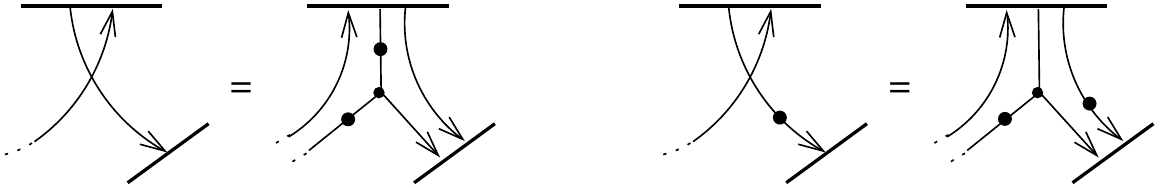}
  \caption{The Head--Tail Exchange move}\label{fig:ht}
 \end{figure}
\end{lemma}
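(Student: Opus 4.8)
The statement to prove is the Head--Tail Exchange move (Lemma~\ref{lem:ht}): exchanging the position along $D$ of a w-tree head and a w-arrow tail introduces one additional w-tree, whose terminal edge is attached to $D$ near the exchanged location and which is obtained by grafting the relocated w-arrow's tail onto the original w-tree. The natural strategy, mirroring the proofs of the Inverse (Lemma~\ref{lem:inverse}) and Slide (Lemma~\ref{lem:slide}) lemmas, is \emph{induction on the degree of the w-tree whose head is being moved}. So first I would reduce to the base case where the moving object is a w-arrow head passing a w-arrow tail, and handle everything of higher degree by the Expansion move (E).

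\emph{Base case.} Here both objects are w-arrows: a w-arrow $a$ whose head sits on $D$ adjacent to the tail of a w-arrow $b$. The goal is to show that sliding the head of $a$ past the tail of $b$ produces, as a correction term, a single new w-arrow (equivalently a $\w_2$-tree obtained by merging $a$ with $b$ at $b$'s tail, per the figure). I would prove this directly at the level of surgered diagrams: perform the two devirtualization moves, deform by planar isotopy, and then apply detour and w-detour moves (Remark~\ref{rem:wdetour}) exactly as in the proof of the Head Traversal move (Lemma~\ref{lem:jump}), together with the Tails Exchange move (3) to reorganize the tails at the end. As in that lemma, one orientation of the relevant strand needs to be drawn explicitly, and the other follows by a Head/Tail Reversal move (2) plus twist involutivity. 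I would also note here that the decoration/twist bookkeeping is forced by the definition of surgery along a w-arrow, so no genuine case analysis on twists is needed beyond invoking (2).

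\emph{Inductive step.} Suppose the moving head belongs to a $\w_k$-tree $T$ with $k\ge 2$. Apply (E) to $T$, which writes its surgery as a union of two $\w$-trees of degree $<k$ (differing by a twist on the terminal edge) together with their parallel subtrees, with two heads adjacent to the tail of the w-arrow $b$. Using Remark~\ref{rem:parallel} I can freely arrange the relative position of these two heads. Now move each of the two lower-degree heads past the tail of $b$ in turn, invoking the induction hypothesis twice; this produces two correction w-trees, one for each piece. Then I would apply (E) in reverse to recombine the two degree-$<k$ trees back into $T$ (now on the other side of $b$'s tail), and check that the two correction terms likewise recombine, via (E) backwards, into the single w-tree asserted by the lemma --- here the Heads Exchange move (Lemma~\ref{lem:he}) and its variants (Remark~\ref{cor:hh}), together with the Inverse Lemma, will be needed to bring the two correction heads into the adjacent position required before recollapsing.

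\emph{Main obstacle.} The routine parts are the planar-isotopy/detour manipulations in the base case. The real difficulty is the recombination bookkeeping in the inductive step: after invoking the induction hypothesis on the two halves of the expanded tree, the two correction w-trees and the two halves of $T$ are interleaved along $D$ in an a priori awkward order, and I must reorganize them --- using Tails Exchange, Heads Exchange, and the Inverse Lemma --- into precisely the configuration on which (E) can be run backwards to yield a single $\w_{k+?}$ correction tree and the relocated $T$. Getting the edge orientations and the terminal-edge twist to match up correctly through the reverse Expansion is the delicate point; I expect it to come out right because (E) is uniquely defined (so the reverse operation is unambiguous) and because the correction tree's combinatorial type is dictated by grafting $b$'s tail onto $T$ in a way that is compatible with the commutator structure of expansions (Remark~\ref{rem:commutator}).
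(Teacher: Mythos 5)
Your overall strategy diverges from the paper's: the paper does \emph{not} prove Lemma \ref{lem:ht} by a fresh induction, but derives it in four short steps from tools already established -- insert a cancelling pair via the Inverse Lemma \ref{lem:inverse}, apply the Slide Lemma \ref{lem:slide}, apply the Heads Exchange Lemma \ref{lem:he}, then cancel via the Inverse Lemma again (all the inductive work is already packaged inside those three lemmas). Your plan of inducting on the degree of the tree whose head moves is not unreasonable in spirit, but as written it has two genuine gaps.

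First, the base case. You propose to treat a w-arrow head passing a w-arrow tail ``exactly as in the proof of the Head Traversal move,'' i.e.\ by planar isotopy, detour and w-detour moves, plus Tails Exchange. But Head Traversal is precisely the situation where \emph{no} correction term appears; here the whole content of the statement is that a correction $\w_2$-tree is created, and your sketch contains no mechanism producing it. Worse, the obstruction is structural: surgery along $a$ creates a classical crossing, and moving the tail of $b$ past it would require passing a strand \emph{under} an arc, which is the forbidden UC move -- that is exactly why the correction tree is there. (Also note the correction in this case is a $\w_2$-tree, not ``a new w-arrow.'') A correct base case either verifies the identity of surgered diagrams with the expanded correction tree drawn in from the start, or -- as in the paper -- manufactures the correction by first inserting a cancelling pair with the Inverse move.

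Second, the inductive step, which you yourself flag as the main obstacle, is where the argument actually fails as sketched. One application of (E) to $T$ yields \emph{four} lower-degree trees (two pairs differing by a terminal twist), not two; applying the induction hypothesis to each of the four heads produces four correction trees of the form ``graft $A$, $\ov{B}$, $\ov{A}$, $B$ onto $b$.'' These are not in the configuration on which (E) can be run backwards, and algebraically their product is not the grafting of $[A,B]$ onto $b$: in the formalism of Section \ref{sec:algebra}, $[A^{\pm},X]\,[\ov{B}^{\pm},X]\,[\ov{A}^{\pm},X]\,[B^{\pm},X]$ differs from $[[A,B]^{\pm},X]$ by further commutator terms. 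Since Lemma \ref{lem:ht} is an exact equivalence of presentations (not an identity up to higher-degree trees), these extra terms must be shown to reassemble or cancel, and doing so amounts to re-proving the Slide/Heads-Exchange machinery inside your induction. Once you allow yourself Lemmas \ref{lem:inverse}, \ref{lem:slide} and \ref{lem:he}, the induction is unnecessary: the paper's four-step derivation gives the statement directly.
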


\begin{proof}
We only prove the version of the equality where there is no twist on the left-hand side, the other one being strictly similar. 
The proof is given in Figure \ref{fig:htproof}. 
 \begin{figure}[!h]
   \includegraphics[scale=0.9]{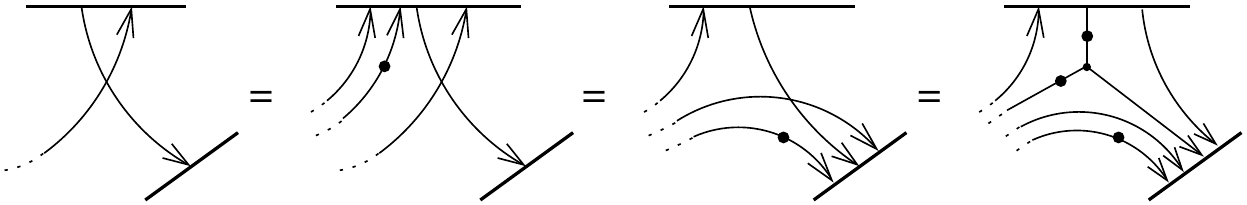}
  \caption{Proving the Head--Tail Exchange move}\label{fig:htproof}
\end{figure}
The three identities depicted there respectively use the Inverse Lemma \ref{lem:inverse}, the Slide Lemma \ref{lem:slide} and the Heads Exchange Lemma \ref{lem:he}. 
Another application of the Inverse Lemma then concludes the argument. 
\end{proof}

\begin{lemma}[Antisymmetry]\label{lem:as}
The cyclic order at a trivalent vertex, induced by the plane orientation, may be changed at the cost of a twist on the three incident edges:  
 \begin{figure}[h]
   \includegraphics[scale=0.9]{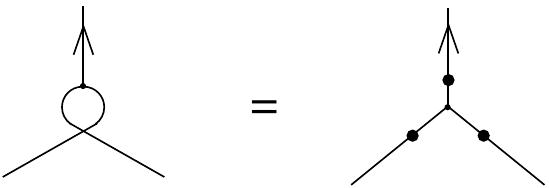}
\end{figure}
\end{lemma}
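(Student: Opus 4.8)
The plan is to prove the move in two steps: first for the special case in which $v$ is the \emph{terminal} trivalent vertex of the w-tree $T$ containing it, treated directly via the Expansion move (E), and then to reduce the general case to this one by induction on the degree of $T$.

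For the terminal case, I would write the incoming edges of $v$ as carrying subtrees $S_1$ and $S_2$, and denote by $e_3$ the terminal edge. Applying (E) at $v$ expresses the surgery along $T$ as surgery along a union of four w-trees, with tops parallel copies of $S_1,S_2,\overline{S_1},\overline{S_2}$ and with adjacent heads arranged along the diagram in the iterated-commutator pattern of Remark \ref{rem:commutator} (when $\deg T=2$ these are four w-arrows and this is just the expansion of a $\w_2$-tree). Applying (E) in the same way to the right-hand side of the claimed equality produces the union of the same four w-trees, but with the roles of $S_1$ and $S_2$ interchanged and with the twists originally on $e_1,e_2,e_3$ redistributed onto the new terminal edges. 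I would then match the two configurations using the Heads Exchange Lemma \ref{lem:he} and its variants (Remark \ref{cor:hh}) to reorder the heads, the Inverse Lemma \ref{lem:inverse} to cancel the auxiliary w-trees thereby created, and the Head/Tail Reversal moves (2) together with twist involutivity and Virtual Isotopy moves (1) to reconcile the strand orientations and twist parities; running (E) backwards then finishes this case.

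For the general case I would induct on $\deg T$, the case $\deg T=2$ being covered above since then $v$ is automatically terminal. If $\deg T\geq 3$ and $v$ is not terminal, let $v_t$ be the terminal vertex, with incoming subtrees $S_1,S_2$ of degree $<\deg T$, say with $v\in S_1$. Applying (E) at $v_t$ replaces $T$ by a union of four w-trees of strictly smaller degree, exactly two of which — those with tops $S_1$ and $\overline{S_1}$ — contain a copy of $v$ with its three incident edges lying inside $S_1$. By the induction hypothesis I may reverse the cyclic order at $v$ in each of these two w-trees at the cost of three twists on its incident edges, inserted consistently since those edges sit in $S_1$; running (E) backwards then yields $T$ with the cyclic order at $v$ reversed and a twist on each of its three incident edges, as wanted.

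The hard part will be the twist-and-orientation bookkeeping in the terminal case: one must track precisely how a twist placed on each of $e_1,e_2,e_3$ propagates through (E), which new terminal edges acquire a twist when heads are interchanged via Lemma \ref{lem:he}, and how the various choices of local strand orientation at the three endpoints interact. The combinatorics should be routine but delicate, and I expect that in the final write-up all orientation variants will be reduced, via the Reversal moves (2), to a single displayed computation with the rest left to the reader.
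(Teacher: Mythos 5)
Your overall skeleton is the same as the paper's: settle the case where the trivalent vertex is adjacent to the head by expanding at it, and reduce the general case by applying (E) at the terminal vertex, invoking the induction hypothesis in the two resulting smaller trees that contain the vertex, and contracting with (E) again (the paper inducts on the number of edges from the head to the vertex rather than on the degree of $T$, but the inductive step is identical). The soft spot in your plan is the terminal case. You propose to match the two expansions by reordering heads with the Heads Exchange Lemma \ref{lem:he} and cancelling ``the auxiliary w-trees thereby created'' with the Inverse Lemma \ref{lem:inverse}; but Heads Exchange genuinely creates a new w-tree each time it is used, these extra trees do not come in inverse pairs, and you give no argument that they disappear. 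Since Lemma \ref{lem:as} asserts an exact equivalence of presentations (not an equivalence up to higher-degree trees), that route as written does not close. Fortunately it is unnecessary: after expanding both sides at the vertex, the two configurations of four parallel w-trees coincide on the nose, up to planar isotopy and twist involutivity --- algebraically this is just the identity $[B,A]=\ov{[\ov{A},\ov{B}]}$ of Section \ref{sec:algebra} --- so the base case needs only (E), isotopy, and (E) back again, which is exactly the paper's Figure \ref{fig:asproof1}. You should verify this coincidence directly instead of reaching for Lemmas \ref{lem:he} and \ref{lem:inverse}.

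A smaller inaccuracy sits in your inductive step: when $v$ is the vertex of $S_1$ adjacent to the terminal vertex of $T$, its three incident edges do not all ``lie inside $S_1$'' --- in the two smaller trees with tops $S_1$ and $\ov{S_1}$, the third incident edge of the copy of $v$ is their terminal edge. Applying the induction hypothesis there places a twist on those terminal edges, which flips which member of the parallel pair carries the terminal twist; reverse (E) still applies (by involutivity the pair still differs by exactly one twist on the terminal edge) and the twist is returned onto the edge joining $v$ to the terminal vertex, as required. This parity bookkeeping is harmless but should be carried out explicitly rather than dismissed as happening inside $S_1$.
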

\begin{proof}
The proof is by induction on the number of edges from the head to the trivalent vertex involved in the move.  
When there is only one edge, the result simply follows from (E), isotopy of the resulting w-trees, and (E) back again, as shown in Figure \ref{fig:asproof1}.
 \begin{figure}[!h]
   \reflectbox{\rotatebox[origin=c]{180}{\includegraphics[scale=1]{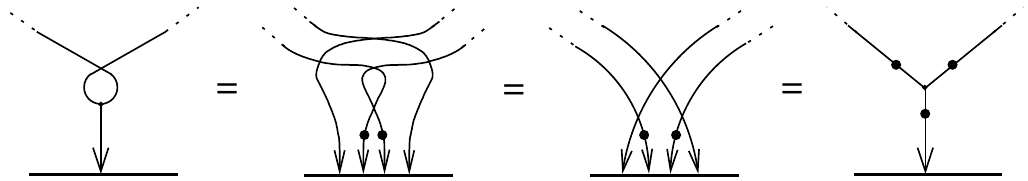}}}
  \caption{Proving the Antisymmetry move 
  }\label{fig:asproof1}
\end{figure}
\noindent (Here, we only show the case where the terminal edge contains no twist: the other case is similar.)
\noindent 
In the general case, we use (E) to apply the induction hypothesis to the resulting w-trees, and use (E) back again, as in the proofs of Lemmas \ref{lem:inverse} and \ref{lem:slide}. 
\end{proof}

A \emph{fork} is a subtree which consists of two adjacent tails  connected to the same trivalent vertex (possibly containing some twists).  
See Figure \ref{fig:fork}. 
\begin{lemma}[Fork move]\label{lem:fork}
Surgery along a w-tree containing a fork does not change the equivalence class of a diagram.
 \begin{figure}[!]
  \reflectbox{\rotatebox[origin=c]{180}{\includegraphics[scale=0.9]{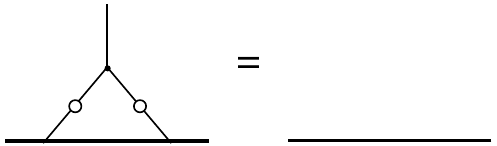}}}
  \caption{The Fork move}\label{fig:fork}
\end{figure}
\end{lemma}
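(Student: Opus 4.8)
The plan is to reduce the general case, by the Expansion move (E), to a statement about w-arrows, and then to exhibit explicitly the union of w-arrows obtained when expanding a w-tree containing a fork, recognizing it as a configuration that yields a trivial surgery. First I would observe that a fork consists of two adjacent tails, say labelled $i$ and $i+1$, attached to a common trivalent vertex $v$; by Remark \ref{rem:parallel} (the Tails Exchange move for w-trees), their relative position is completely specified, so I may assume they are as drawn in Figure \ref{fig:fork}, differing only by possible twists which I can push around using twist involutivity and the Head Reversal move. Applying (E) repeatedly to the w-tree, the key point is what happens near the vertex $v$: by Remark \ref{rem:commutator}, the expansion takes the form of an iterated commutator of w-arrows, and the two tails of the fork contribute, in the innermost commutator slot, a pair of w-arrows of the shape $j\, j^{-1}$ (one w-arrow from the neighborhood of tail $i$, and a parallel one with a twist from tail $i+1$) — i.e. two parallel w-arrows differing only by a twist, placed consecutively.

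Next I would invoke the Inverse Lemma \ref{lem:inverse} (or, at the w-arrow level, the Inverse move (5)) to cancel this adjacent pair of parallel, oppositely-twisted w-arrows, which removes them without changing the diagram. The subtlety is that, in the full expansion, the w-arrows coming from the fork are not literally adjacent along the diagram with nothing in between — other w-arrows coming from the rest of the tree may be interleaved. To handle this I would argue by induction on the degree of the w-tree: in the base case where the vertex $v$ is the only trivalent vertex incident to the terminal edge (so the w-tree has degree $2$), the expansion is literally a pair of parallel w-arrows with adjacent heads differing by a twist, and the Inverse move finishes it. In the inductive step, I apply (E) once to split off the subtree containing the fork; this produces two parallel w-trees (differing by a twist on the terminal edge), each still containing a fork but of strictly smaller degree, together with a pair of parallel copies of the complementary subtree. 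By the induction hypothesis each of the two smaller forked w-trees gives a trivial surgery and may be deleted, after which the two remaining parallel copies of the complementary subtree cancel by the Inverse Lemma \ref{lem:inverse}, leaving the original diagram unchanged.

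The main obstacle I expect is bookkeeping the twists and the precise order in which the w-arrow heads of the expansion meet the diagram, so that the adjacency hypothesis of the Inverse Lemma \ref{lem:inverse} is genuinely met at each stage; this is exactly the kind of verification that the commutator description of Remark \ref{rem:commutator} is designed to make routine, but one must be careful that the induction via (E) keeps the fork intact and keeps the two relevant w-trees parallel with adjacent heads. A secondary point is to check the move for all the twist configurations allowed in Figure \ref{fig:fork}: as in the proofs of Lemmas \ref{lem:inverse} and \ref{lem:slide}, it suffices to treat one configuration and deduce the others using twist involutivity together with the Reversal moves (2), so I would state that one case in detail and dispatch the rest with the standard remark that they are similar.
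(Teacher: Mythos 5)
Your second-paragraph induction is essentially the paper's proof: the paper likewise applies (E) once, observes that two of the four resulting w-trees still contain a fork and are removed by the induction hypothesis, and cancels the remaining parallel pair by the Inverse Lemma \ref{lem:inverse}; it inducts on the number of edges from the head to the fork rather than on the degree, but the two inductions are interchangeable here. Two small corrections. First, your base case is misstated: the expansion of a $\w_2$-tree is not a single pair of parallel w-arrows but \emph{four} w-arrows, whose heads are met in the commutator pattern $a\,\ov{b}\,\ov{a}\,b$; it is because the fork's two tails are adjacent that these cancel, as two parallel pairs, using the Tails Exchange move (3) and two applications of the Inverse move (5) — this is exactly the content of the paper's Figure \ref{fig:forkproof}. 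Second, in the inductive step the tails of the two parallel forked copies produced by (E) are interleaved, so their adjacency (hence the fact that they genuinely contain forks) must be restored by a Tails Exchange move, a point the paper states explicitly; you gesture at this bookkeeping but should make it part of the argument. With these repairs your proof coincides with the paper's.
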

\begin{proof}
The proof is by induction on the number of edges from the head to the fork. 
The initial case of a $\w_2$-tree with adjacent tails is shown in Figure \ref{fig:forkproof}, in the case where no edge contain a twist
(the other cases are similar).  
 \begin{figure}[!h]
  \includegraphics[scale=0.9]{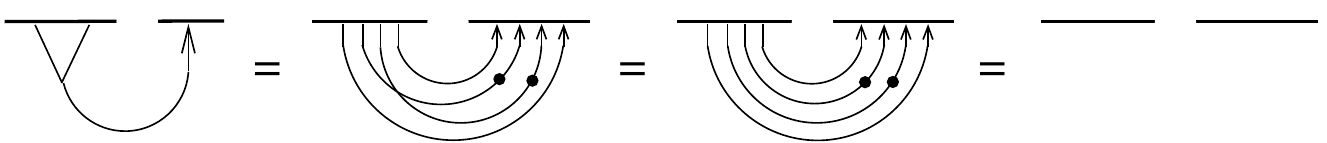}
  \caption{Proving the Fork move}\label{fig:forkproof}
\end{figure}
The inductive step is clear: applying (E) to a w-tree containing a fork yields four w-trees, two of which contain a fork, by the Tails Exchange move (3). 
Using the induction hypothesis, we are thus left with two w-trees which cancel by the Inverse Lemma \ref{lem:inverse}. 
\end{proof}

\subsection{w-tree presentations for welded knotted objects}

We have the following natural generalization of the notion of Arrow presentation. 
\begin{definition}
Suppose that a diagram is obtained from a diagram $U$ \emph{without} classical crossings by surgery along a union $T$ of w-trees.
Then $(U,T)$ is called a \emph{w-tree presentation} of the diagram.\\
Two w-tree presentations are \emph{equivalent} if they represent equivalent diagrams.  
\end{definition}
Let us call \emph{w-tree moves} the set of moves on w-trees given by the results of Section \ref{sec:wtreesurg}. 
More precisely, w-tree moves consists of the Expansion move (E), Moves (1)-(4) of Lemma \ref{lem:treemoves}, and the Inverse (Lem.~ \ref{lem:inverse}), Slide (Lem.~\ref{lem:slide}), Head Traversal (Lem.~\ref{lem:jump}), Heads Exchange (Lem.~\ref{lem:he}), Head--Tail Exchange (Lem.~\ref{lem:ht}), Antisymmetry (Lem.~\ref{lem:as}) and Fork (Lem.~\ref{lem:fork}) moves. 
Clearly, w-tree moves yield equivalent w-tree presentations. 

Examples of w-tree presentations for the right-handed trefoil are given in Figure \ref{fig:trefoil2}. 
There, starting from the Arrow presentation of Figure \ref{fig:trefoil}, we apply the Head--Tail Exchange Lemma \ref{lem:ht}, the Tails Exchange move (3) and the Isolated Arrow move (4). 
\begin{figure}[!h]
   \includegraphics[scale=1]{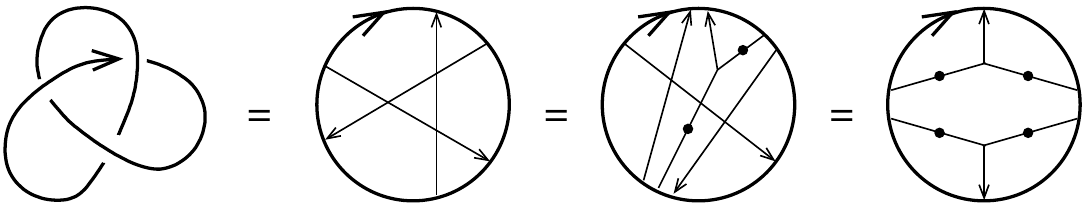}
  \caption{Tree-presentation for the trefoil}\label{fig:trefoil2}
\end{figure}
As mentioned in Section \ref{sec:GD1}, these can be regarded as kinds of `higher order Gauss diagram' presentations for the trefoil. 

\begin{remark}\label{rem:0+0}
As pointed out to the authors by D.~Moussard, Figure \ref{fig:trefoil2} shows that the trefoil can be written as a composite knot when seen as a welded object (note, however, that connected sum is not well-defined for welded knots).   Actually, it follows from the Fork Lemma \ref{lem:fork} that the two factors are equivalent to the unknot, meaning that the trefoil is, rather surprisingly, the composite of two unknots.  In fact, we can show, using bridge presentations and Arrow calculus, that this is the case for \emph{any} $2$-bridge knot. 
\end{remark}

It follows from Theorem \ref{thm:main1} that w-tree moves provide a complete calculus for w-tree presentations. 
In other words, we have the following. 
\begin{theorem}
Two w-tree presentations represent equivalent diagrams if and only if they are related by w-tree moves.
\end{theorem}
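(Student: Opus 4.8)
The plan is to deduce this theorem from Theorem~\ref{thm:main1} by reducing every w-tree presentation to an Arrow presentation, and then checking that equivalences between the resulting Arrow presentations can be lifted back to w-tree moves. First I would observe that, given a w-tree presentation $(U,T)$, the expansion process turns $T$ into a union of w-arrows $E(T)$, so that $(U,E(T))$ is an Arrow presentation of the same diagram; moreover this step is itself a sequence of w-tree moves, namely repeated applications of the Expansion move (E). Thus the passage from a w-tree presentation to its expanded Arrow presentation is invertible through w-tree moves.

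Next I would argue the \emph{only if} direction, which is the substantive one. Suppose $(U,T)$ and $(U',T')$ are w-tree presentations of equivalent diagrams. Passing to expansions, $(U,E(T))$ and $(U',E(T'))$ are Arrow presentations of equivalent diagrams, hence by Theorem~\ref{thm:main1} they are related by a finite sequence of Arrow moves (1)--(6). The point is that each Arrow move is a special case of a w-tree move: moves (1)--(4) are listed as w-tree moves in Lemma~\ref{lem:treemoves}, the Inverse move (5) is the degree-one case of the Inverse Lemma~\ref{lem:inverse}, and the Slide move (6) is the degree-one case of the Slide Lemma~\ref{lem:slide}. Therefore the whole sequence relating $(U,E(T))$ to $(U',E(T'))$ consists of w-tree moves, and composing with the Expansion moves at both ends gives a sequence of w-tree moves from $(U,T)$ to $(U',T')$. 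The \emph{if} direction is immediate, since every w-tree move was shown earlier to yield an equivalent w-tree presentation.

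One subtlety I would address is that the Arrow moves produced by Theorem~\ref{thm:main1} act on a union of w-arrows that a priori need bear no relation to the w-arrows coming from an expansion; but this causes no problem, because the statement we are proving makes no claim that the intermediate presentations are themselves expansions of w-trees --- w-arrows are just w-trees of degree one, so every Arrow presentation is in particular a w-tree presentation, and every Arrow move is in particular a w-tree move on such presentations. Hence the chain of moves stays within the class of w-tree presentations throughout.

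The main obstacle is essentially bookkeeping rather than a genuine difficulty: one must make sure that the Expansion move is genuinely reversible as a w-tree move (it is, being stated as an equality in Figure~\ref{fig:Exp}) and that no orientation- or twist-related side conditions obstruct reading each Arrow move as a w-tree move (they do not, since moves (1)--(6) were proved for w-trees with the same conventions). Once these points are made explicit, the proof is a short two-step argument, and I would write it as such.

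\begin{proof}
The \emph{if} part is clear, since w-tree moves were shown to yield equivalent w-tree presentations. For the \emph{only if} part, let $(U,T)$ and $(U',T')$ be w-tree presentations of equivalent diagrams. Applying the Expansion move (E) repeatedly, $(U,T)$ is related by w-tree moves to the Arrow presentation $(U,E(T))$, where $E(T)$ is the expansion of $T$; similarly $(U',T')$ is related by w-tree moves to $(U',E(T'))$. Since these two Arrow presentations represent equivalent diagrams, Theorem~\ref{thm:main1} provides a sequence of Arrow moves relating them. Each Arrow move is a w-tree move: moves (1)--(4) by Lemma~\ref{lem:treemoves}, and moves (5) and (6) by the degree-one cases of the Inverse Lemma~\ref{lem:inverse} and the Slide Lemma~\ref{lem:slide}. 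Concatenating, we obtain a sequence of w-tree moves from $(U,T)$ to $(U',T')$.
\end{proof}
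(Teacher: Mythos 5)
Your proof is correct and is essentially the paper's own argument: the paper deduces this theorem directly from Theorem~\ref{thm:main1} by expanding w-trees into w-arrows via (E) and noting (as in its remark that only (E) and Arrow moves (1)--(6) are needed) that Arrow moves are w-tree moves. You have simply made explicit the bookkeeping that the paper leaves implicit.
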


Note that the set of w-tree moves is highly non-minimal. 
In fact, the above remains true when only considering the Expansion 
move (E) and Arrow moves (1)-(6). 

\section{Welded invariants}\label{sec:invariants}

In this section, we review several welded extensions of classical invariants. 
\subsection{Virtual knot group}  \label{sec:group}

Let $L$ be a welded (string) link diagram. 

Recall that the \emph{group $G(L)$ of $L$} is defined by a Wirtinger presentation, as follows. 
Each arc of $L$ (i.e. each piece of strand bounded by either a strand endpoint 
or an underpassing arc in a classical crossing) yields a generator, 
and each classical crossing gives a relation, as indicated in Figure \ref{fig:wirtinger}. 
\begin{figure}[!h]
   \includegraphics[scale=0.9]{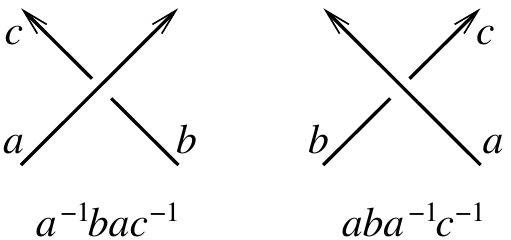}
  \caption{Wirtinger relation at each crossing}\label{fig:wirtinger}
\end{figure}

Since virtual crossings do not produce any generator or relation, virtual and Mixed Reidemeister moves obviously preserve the group presentation \cite{Kauffman}.
It turns out that this `virtual knot group' is 
also invariant under the OC move, and is thus a welded invariant \cite{Kauffman,Satoh}. 

\subsubsection{Wirtinger presentation using w-trees}\label{sec:wirtinger}

Given a w-tree presentation of a diagram $L$, we can associate a Wirtinger presentation of $G(L)$ which involves in general fewer generators and relations. 
More precisely, let $(U,T)$ be a w-tree presentation of $L$, where $T=T_1\cup \cdots \cup T_r$ has $r$ connected components. 
The $r$ heads of $T$ split $U$ into a collection of $n$ arcs,\footnote{More precisely, the heads of $T$ split $U$ into a collections of arcs and possibly several circles, corresponding to closed components of $U$ with no head attached. }
and we pick a generator $m_i$ for each of them. 
Consider the free group $F$ generated by these generators, where the inverse of a generator  $m_i$ will be denoted by $\ov{m_i}$.
Arrange the heads of $T$ (applying the Head Reversal move (2) if needed) so that it looks locally as in Figure \ref{fig:wirt2}.
Then we have 
$$ G(L) = \langle \{m_i\}_i \,\vert \, R_j\, (j=1,\cdots ,r)  \rangle, $$
where $R_j$ is a relation associated with $T_j$ as illustrated in the figure. 
There, $w(T_j)$ is a word in $F$, constructed as follows. 
\begin{figure}[!h]
   \includegraphics[scale=0.9]{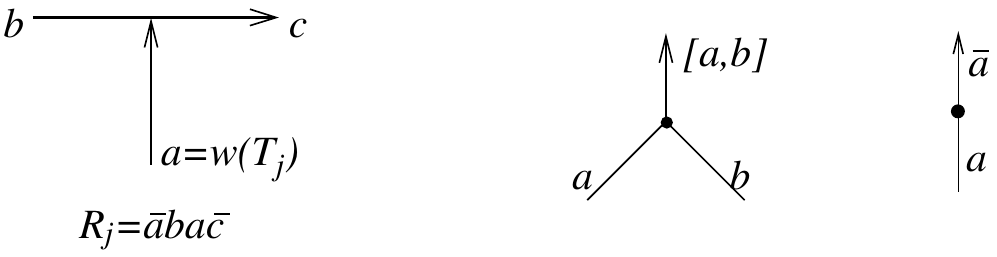}
  \caption{Wirtinger-type relation at a head, and the procedure to define $w(T)$}\label{fig:wirt2}
\end{figure}

First, label each edges of $T_j$ which is incident to a tail by the generator $m_i$ inherited from its attaching point. 
Next, label all edges of $T_j$ by elements of $F$ by applying recursively the rules illustrated in Figure \ref{fig:wirt2}. 
More precisely, assign recursively to each outgoing edge at a trivalent vertex the formal bracket 
$$[a,b]:=a\ov{b}\ov{a}b, $$
where $a$ and $b$ are the labels of the two ingoing edge, following the plane orientation around the vertex; we also require that a label meeting a twist is replaced by its inverse. This procedure yields a word $w(T_j)\in F$ associated to $T_j$, which is defined as the label at its terminal edge. 
Note that this procedure more generally associates a formal word to any subtree of $T_j$, and that, by the Tail Reversal move (2), the local orientation of the diagram at each tail is not relevant in this process.  

In the case of a canonical Arrow presentation of a diagram, the above procedure recovers the usual Wirtinger presentation of the diagram, 
and it is easily checked that, in general, this procedure indeed gives a presentation of the same group.  

\begin{remark}\label{rem:faithful}
 As outlined in Section \ref{sec:ribbon}, the Tube map that `inflates' a welded diagram $L$ into a ribbon knotted surface acts faithfully on the virtual knot group, in the sense that we have an isomorphism $G(L)\cong \pi_1\big(\Tube(L)\big)$,\footnote{Here, $\pi_1\big(\Tube(L)\big)$ denote the fundamental group of the complement of the surface  $\Tube(L)$ in $4$-space. } which maps meridians to meridians and (preferred) longitudes to (preferred) longitudes, so that the Wirtinger presentations are in one--to--one correspondence; see \cite{Satoh, yajima,ABMW}. 
\end{remark}

\subsubsection{Algebraic formalism for w-trees}\label{sec:algebra}

Let us push a bit further the algebraic tool introduced in the previous section.  

Given two w-trees $T$ and $T'$ with adjacent heads in a w-tree presentation, such that the head of $T$ is met before that of $T'$ when following the orientation, we define 
 $$ w(T\cup T'):=w(T)w(T')\in F. $$
\begin{convention}
Here $F$ denotes the free group on the set of Wirtinger generators of the given w-tree presentation, as defined in Section \ref{sec:wirtinger}. 
In what follows, we will always use this implicit notation. 
\end{convention}

Note that, if $\ov{T}$ is obtained from $T$ by inserting a twist in its terminal edge, then $w(\ov{T})=\ov{w(T)}$, and $w(T\cup \ov{T})=1$, which is compatible with Convention \ref{rem:inverse2}. 

Now, if we denote by $E(T)$ the result of one application of (E) to some w-tree $T$, then we have $w(T)=w(E(T))$. 
More precisely, if we simply denote by $A$ and $B$ the words associated with the two subtrees at the two ingoing edges of the vertex where (E) is applied, then we have 
$$ w(T)=[A,B]=A\, \ov{B}\, \ov{A}\, B. $$

We can therefore reformulate (and actually, easily reprove) some of the results of Section \ref{sec:moves} in these algebraic terms. 
For example, the Heads Exchange Lemma \ref{lem:he} translates to 
$$ AB = B[\ov{B},\ov{A}]A, $$
and its variants given in Figure \ref{fig:head2}, to 
$$ AB = B\ov{[A,B]}A = BA[\ov{A},B] = [A,\ov{B}]BA. $$
The Antisymmetry Lemma \ref{lem:as} also reformulates nicely; for example the `initial case' shown in Figure \ref{fig:asproof1} can be restated as 
$$ [B,A] = \ov{[\ov{A},\ov{B}]}. $$
Finally, the Fork Lemma \ref{lem:fork} is simply
$$ [ \mydot [A,A] \mydot ]=1. $$

In the sequel, although we will still favor the more explicit diagrammatical language, we shall sometimes make use of this algebraic formalism.  

\subsection{The normalized Alexander polynomial for welded long knots}  \label{sec:alex}

Let $L$ be a welded long knot diagram.
Suppose that the group of $L$ has presentation 
$
G(L) = \langle x_1,\cdots, x_{m} \vert r_1,\cdots, r_{n} \rangle$ for some $m,n$. 
Consider the $n\times m$ \emph{Jacobian matrix} 
$M=\left( \varphi\left(\dfrac{\partial r_i}{\partial x_j}\right) \right)_{i,j}$,  
where $\frac{\partial }{\partial x_j}$ denote the Fox free derivative in variable $x_j$, and where 
$\varphi: \mathbb{Z} F(x_1,\cdots, x_{m})\rightarrow \mathbb{Z}[t^{\pm 1}]$ 
is the ring homomorphism mapping each generator $x_i$ of the free group $F(x_1,\cdots, x_{m})$ to $t$.
 
The \emph{Alexander polynomial} of $L$, denoted by $\Delta_L(t)\in \mathbb{Z}[t^{\pm 1}]$, 
is defined as the greatest common divisor of the 
$(m-1)\times (m-1)$-minors of $M$, which is well-defined up to a unit factor. 

In order to remove the indeterminacy in the definition of $\Delta_L(t)$, we further require that $\Delta_L(1)=1$ and that $\frac{d\Delta_L}{dt}(1)=0$. 
The resulting invariant is the \emph{normalized Alexander polynomial} of $L$, denoted by $\tilde\Delta_L$ (see e.g. \cite{HKS}).
Taking the power series expansion at $t=1$ as 
$$
\tilde\Delta_L(t) = 1 + \sum_{k\ge 2} \alpha_k(L) (1-t)^k 
$$
thus defines an infinite sequence of integer-valued invariants $\alpha_k$ of welded long knots. 
(Our definition slightly differs from the one used in \cite{HKS}, by a factor $(-1)^k$.) 
\begin{definition}
We call the invariant $\alpha_k$ the $k$th \emph{normalized coefficient} of the Alexander polynomial. 
\end{definition}
We now give a realization result for the coefficients $\alpha_k$ in terms of w-trees. 
Consider the welded long knots $L_k$ or $\overline{L_k}$ ($k\ge 2$) defined in Figure \ref{fig:K0}. 
\begin{figure}[!h]
   \includegraphics[scale=0.9]{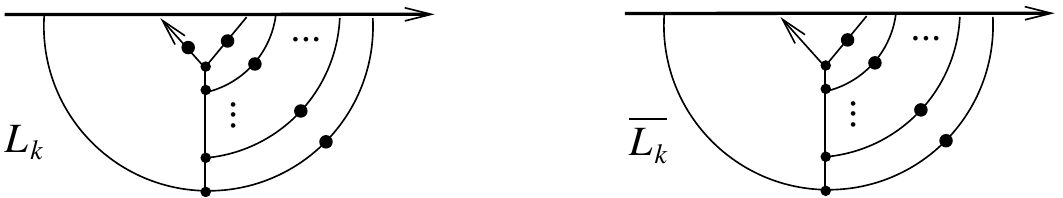}
  \caption{The welded long knots $L_k$ or $\overline{L_k}$, given by surgery along a single $\w_k$-tree ($k\ge 2$).}\label{fig:K0}
\end{figure}
\begin{lemma} \label{lem:wkAlex}
Let $k\ge 2$. The normalized Alexander polynomial of $L_k$ and $\overline{L_k}$ are given by 
 $$ \tilde\Delta_{L_k}(t) = 1 + (1-t)^{k} \quad\textrm{and}\quad \tilde\Delta_{\overline{L_k}}(t) = 1 - (1-t)^{k}. $$
\end{lemma}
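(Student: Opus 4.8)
The plan is to compute the Alexander polynomial directly from the Wirtinger presentation afforded by the $\w_k$-tree presentation, using the algebraic formalism of Section \ref{sec:algebra}. Since $L_k$ is obtained by surgery along a single $\w_k$-tree $T$, whose $k$ tails sit on a trivial long knot $U$, the heads-and-tails decomposition of $U$ produces only finitely many Wirtinger generators $m_0, m_1, \dots, m_k$ (one for the initial arc, and one created past each tail as we travel along $U$), subject to a single relation $R$ coming from the head of $T$. The relation reads $m_k = w(T)\, m_0\, \overline{w(T)}$ in the notation of Figure \ref{fig:wirt2}, where $w(T)$ is the iterated bracket word built from the labels $m_0, \dots, m_k$ attached to the tails (up to inverses from twists). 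The key observation is that as we pass each tail, the generator changes by conjugation by the edge-label there: tracking this around the trivial knot $U$ one gets $m_i = u_i\, m_0\, \overline{u_i}$ for suitable words $u_i$, so that every $m_i$ is conjugate to $m_0$, and the whole presentation simplifies drastically. This is essentially the statement that $G(L_k)$ is the group of a ribbon/fibered-type knot.

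Next I would feed this presentation into the Fox-calculus machine. Applying $\varphi$ (sending every generator to $t$) to $\partial R / \partial x_j$, all conjugation words $u_i, w(T)$ become units $t^{e}$ and contribute nothing except through their \emph{total exponent}; the essential content of $\partial[A,B]/\partial x_j$ under $\varphi$ reduces, by the Leibniz rule for Fox derivatives and $\varphi([A,B]) = 1$, to the familiar formula $\varphi(\partial[A,B]/\partial x_j) = (1-\varphi(B))\,\varphi(\partial A/\partial x_j) + (\varphi(A)-1)\,\varphi(\partial B/\partial x_j)$ (up to the unit ambiguity we will kill by normalization anyway). Iterating this $k$ times along the bracket structure of $w(T)$ — which, by Remark \ref{rem:commutator}, is a $k$-fold commutator in the tail-generators — yields that the relevant $(m-1)\times(m-1)$ minor of the Jacobian equals $(1-t)^{k}$ up to a unit of $\mathbb{Z}[t^{\pm 1}]$; the base case $k=1$ (a w-arrow, giving the unknot) and $k=2$ can be checked by hand, and the inductive step is exactly one application of the commutator Fox-derivative identity above, mirroring the Expansion move (E) at the top vertex of $T$.

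Having obtained $\Delta_{L_k}(t) \doteq (1-t)^k$ up to a unit $\pm t^m$, the last step is to pin down the normalization. We need the representative $\tilde\Delta_{L_k}$ with $\tilde\Delta_{L_k}(1)=1$ and $\tilde\Delta_{L_k}'(1)=0$. Writing $\Delta_{L_k}(t) = \pm t^m (1-t)^k$ and imposing $\tilde\Delta(1)=1$ forces us to add the correct multiple of the "other generator" of the Alexander module coming from the arc structure — more concretely, the normalized Alexander polynomial of a long knot is $\Delta_{L_k}(t)$ divided by the gcd-unit and then rescaled; since $k \geq 2$, the conditions $\tilde\Delta(1)=1$ and $\tilde\Delta'(1)=0$ are automatically compatible with $\tilde\Delta_{L_k}(t) = 1 + c\,(1-t)^k$ for a constant $c$, and comparing leading behavior gives $c = \pm 1$. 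The sign is then determined by whether the terminal edge of the $\w_k$-tree carries a twist: for $L_k$ (no twist on the terminal edge, per Figure \ref{fig:K0}) one gets $c = +1$, and for $\overline{L_k}$ the extra twist inverts $w(T)$, flipping the sign of the top-degree Fox-derivative term and hence $c = -1$. I expect \textbf{the main obstacle} to be bookkeeping the unit ambiguity cleanly: one must argue that the $(1-t)^k$ factor is genuinely the gcd of the minors (not a proper divisor of it) — equivalently that the Alexander module of $L_k$ is cyclic with order exactly $(1-t)^k$ — and simultaneously track the sign/power-of-$t$ unit precisely enough to distinguish $L_k$ from $\overline{L_k}$ after normalization. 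This is where I would be most careful, probably by exhibiting the module directly as $\mathbb{Z}[t^{\pm1}]/((1-t)^k)$ from the simplified presentation rather than fighting minors.
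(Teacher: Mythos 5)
Your overall strategy is the paper's: read off a Wirtinger presentation from the $\w_k$-tree presentation (Section \ref{sec:wirtinger}), apply Fox calculus to the iterated-commutator word, and induct on the bracket structure. But two things go wrong. First, the presentation you describe is not the one this setup gives: in the w-tree Wirtinger presentation the generators correspond to the arcs cut by the \emph{heads} only -- a tail creates an overcrossing for the strand it sits on, so nothing happens to the Wirtinger generator as you ``pass a tail'', and there is no conjugation there. Since $L_k$ is obtained from $\1$ by surgery along a single tree with one head, the presentation has exactly two generators $l,r$ and one relation $R_k\, l\, R_k^{-1} r^{-1}$, where $R_k=w(T)$ is a length-$k$ commutator in $l$ and $r^{-1}$; your $k+1$ generators $m_0,\dots,m_k$ with one relation (deficiency $k$) cannot be a presentation of a welded long knot group.

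Second, and more seriously, the final assembly fails. Your induction computes (up to units) $\varphi\bigl(\partial R_k/\partial l\bigr)=(1-t)^{k-1}$ and $\varphi\bigl(\partial R_k/\partial r\bigr)=-(1-t)^{k-1}$ -- this is exactly the paper's inductive step -- but these are Fox derivatives of the commutator word alone, not minors of the Jacobian of the full relator. Plugging them into $R=R_k\, l\, R_k^{-1}r^{-1}$ gives, since $\varphi(R_k)=1$,
\begin{equation*}
\varphi\Bigl(\tfrac{\partial R}{\partial l}\Bigr)=1+(1-t)\,\varphi\Bigl(\tfrac{\partial R_k}{\partial l}\Bigr)=1+(1-t)^{k},
\qquad
\varphi\Bigl(\tfrac{\partial R}{\partial r}\Bigr)=-\bigl(1+(1-t)^{k}\bigr),
\end{equation*}
so the Alexander polynomial is $1+(1-t)^k$ up to a unit, \emph{not} $(1-t)^k$. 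Your claim $\Delta_{L_k}\doteq (1-t)^k$ cannot be repaired by normalization: normalizing only multiplies by $\pm t^m$, and no unit multiple of $(1-t)^k$ takes the value $1$ at $t=1$ (indeed $\Delta_L(1)=\pm1$ for any welded long knot), so ``adding the correct multiple of the other generator'' is not a legitimate move, and the Alexander module of $L_k$ is $\mathbb{Z}[t^{\pm1}]/(1+(1-t)^k)$, not $\mathbb{Z}[t^{\pm1}]/((1-t)^k)$. The constant term $1$ comes from the conjugation structure of the Wirtinger relation itself, and once it is there the two normalization conditions $\tilde\Delta(1)=1$, $\tilde\Delta'(1)=0$ hold automatically for $k\ge2$; the sign for $\overline{L_k}$ then does follow as you say, since the twist replaces $R_k$ by its inverse and flips the sign of $\varphi(\partial R_k/\partial x)$, giving $1-(1-t)^k$.
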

Note that these are genuine equalities: there are no higher order terms. 
In particular, we have $\alpha_i(L_k) = -\alpha_i(\overline{L_k})= \delta_{ik}$. 
\begin{proof}[Proof of Lemma \ref{lem:wkAlex}]
 The presentation for $G(L_k)$ given by the defining $\w_k$-tree presentation is 
$\langle l,r  \vert  R_k l R_k^{-1} r^{-1} \rangle$, 
 where  $R_k=\big[ [\cdots[[[l,r^{-1}],r^{-1}],r^{-1}]\cdots ],r^{-1}\big]$ is a length $k$ commutator.
 One can show inductively that 
 \[\textrm{$\varphi\left(\dfrac{\partial R_k}{\partial l}\right) =   (1-t)^{k-1}\,$  and 
 $\,\varphi\left(\dfrac{\partial R_k}{\partial r}\right) =   -(1-t)^{k-1}$,}\] 
 so that the normalized Alexander polynomial is given by 
 $ \tilde\Delta_{L_k}(t)  = 1 + (1-t)^{k}$. 
The result for $\overline{L_k}$ is completely similar, and is left to the reader. 
\end{proof}

The following might be well-known; the proof is completely straightforward and is thus omitted.
\begin{lemma}\label{lem:additive}
 The normalized Alexander polynomial of welded long knots is multiplicative. 
\end{lemma}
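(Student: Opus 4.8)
The plan is to prove that $\tilde\Delta_{L\cdot L'} = \tilde\Delta_L \cdot \tilde\Delta_{L'}$ for any two welded long knots $L,L'$, where $L\cdot L'$ denotes their composition (stacking). The starting point is the observation that composition of long knots behaves well at the level of groups: a Wirtinger presentation of $G(L\cdot L')$ is obtained by amalgamating those of $G(L)$ and $G(L')$ along the meridian of the common endpoint. Concretely, writing $G(L) = \langle x_1,\dots,x_m \mid r_1,\dots,r_n\rangle$ with $x_1$ the meridian at the right endpoint, and $G(L') = \langle y_1,\dots,y_{m'} \mid s_1,\dots,s_{n'}\rangle$ with $y_1$ the meridian at the left endpoint, one gets $G(L\cdot L') = \langle x_1,\dots,x_m, y_2,\dots,y_{m'} \mid r_1,\dots,r_n, s_1',\dots,s_{n'}'\rangle$, where $s_j'$ is $s_j$ with $y_1$ replaced by $x_1$.

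The key step is then to analyze the Jacobian matrix of this amalgamated presentation under the abelianization map $\varphi$. After sending every generator to $t$, the Jacobian matrix $M$ for $L\cdot L'$ is block-triangular: the $r_i$-rows only involve $\partial/\partial x_j$ for $j\le m$, while the $s_j'$-rows involve both the $x$-variables and the $y$-variables. Crucially, since $\partial s_j' / \partial x_1$ picks up exactly the contribution that $\partial s_j/\partial y_1$ would have carried, and since after applying $\varphi$ the Fox derivatives of a relation satisfy the fundamental identity $\sum_i \varphi(\partial r/\partial x_i)(t-1) = 0$, the $x_1$-column in the $s'$-block is determined by the $y$-columns. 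A careful bookkeeping of the $(m+m'-2)\times(m+m'-2)$ minors — deleting, say, the $x_1$-column — then factors $M$ (up to the relevant minor) as a product of the two smaller matrices, giving $\Delta_{L\cdot L'} \doteq \Delta_L \cdot \Delta_{L'}$ up to a unit.

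Finally I would pin down the normalization. Since $\Delta_L(1)=\Delta_{L'}(1)=1$ and the derivatives vanish at $t=1$ for both normalized polynomials, and since the factorization above is an equality up to a unit $\pm t^k$, evaluating at $t=1$ forces the unit to be $\pm 1$ and matching derivatives at $t=1$ forces it to be $+1$; hence $\tilde\Delta_{L\cdot L'} = \tilde\Delta_L\cdot\tilde\Delta_{L'}$ on the nose. Alternatively, and perhaps more cleanly, one can avoid minors altogether by using the interpretation of $\tilde\Delta_L$ as the order of the Alexander module $H_1$ of the infinite cyclic cover: for long knots the cover of $L\cdot L'$ is built by gluing the covers of $L$ and $L'$ along a line (a Mayer–Vietoris argument over $\mathbb{Z}[t^{\pm1}]$), so the module is (up to free summands that do not affect the order) the direct sum, and orders multiply.

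The main obstacle I anticipate is the normalization step: the raw multiplicativity $\Delta_{L\cdot L'}\doteq\Delta_L\Delta_{L'}$ is classical and easy, but matching the specific normalization $\Delta(1)=1$, $\Delta'(1)=0$ requires knowing that this normalization is itself multiplicative — i.e. that if $f,g$ both satisfy $f(1)=g(1)=1$ and $f'(1)=g'(1)=0$ then so does $fg$, which is immediate, combined with the fact that the unit ambiguity $\pm t^k$ is killed by these two conditions. Since the paper explicitly says this lemma is "completely straightforward" and omits the proof, I would keep the argument short: state the amalgamation of Wirtinger presentations, invoke classical multiplicativity of the (unnormalized) Alexander polynomial for this amalgamated presentation, and observe that the normalization conditions are preserved under products and eliminate the unit.
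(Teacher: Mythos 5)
Your argument is correct and is exactly the ``completely straightforward'' proof the paper omits: stacking welded long knots amalgamates the Wirtinger presentations along the shared meridian, deleting that meridian's column makes the Alexander matrix block-diagonal so the unnormalized polynomials multiply up to a unit $\pm t^k$, and the two normalization conditions ($\Delta(1)=1$ kills the sign, $\frac{d\Delta}{dt}(1)=0$ kills $t^k$) are themselves preserved under products, forcing equality on the nose. The only point to treat with care is your infinite-cyclic-cover aside: a welded long knot is not a classical knot in $3$-space, so that reading must go through the group-theoretic Alexander module (or the Tube map to ribbon knotted objects), but your main Fox-calculus route avoids this entirely.
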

Lemma~\ref{lem:additive} implies the following additivity result.
\begin{corollary}\label{cor:additive}
Let $k$ be a positive integer and let $K$ be a welded long knot with $\alpha_i(K)=0~(i\leq k-1)$. 
Then, for any welded long knot $K'$, 
$\alpha_k(K\cdot K') = \alpha_k(K) + \alpha_k(K')$.
\end{corollary}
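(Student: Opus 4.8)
The plan is to deduce Corollary~\ref{cor:additive} directly from the multiplicativity of the normalized Alexander polynomial (Lemma~\ref{lem:additive}) by a simple power series computation. First I would write $\tilde\Delta_K(t) = 1 + \sum_{j\ge 2}\alpha_j(K)(1-t)^j$ and, using the hypothesis $\alpha_i(K)=0$ for $i\le k-1$, observe that this truncates to $\tilde\Delta_K(t) = 1 + \alpha_k(K)(1-t)^k + O\big((1-t)^{k+1}\big)$. Similarly $\tilde\Delta_{K'}(t) = 1 + \sum_{j\ge 2}\alpha_j(K')(1-t)^j$, with no vanishing assumption needed on the $\alpha_j(K')$.

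Next I would invoke Lemma~\ref{lem:additive}, which gives $\tilde\Delta_{K\cdot K'}(t) = \tilde\Delta_K(t)\,\tilde\Delta_{K'}(t)$, and expand the product. The coefficient of $(1-t)^k$ on the right-hand side picks up three types of contributions: the term $1\cdot\alpha_k(K')(1-t)^k$ from multiplying the leading $1$ of $\tilde\Delta_K$ by the degree-$k$ term of $\tilde\Delta_{K'}$; the term $\alpha_k(K)(1-t)^k\cdot 1$ from the degree-$k$ term of $\tilde\Delta_K$ times the leading $1$ of $\tilde\Delta_{K'}$; and cross terms $\alpha_i(K)\alpha_{k-i}(K')(1-t)^k$ for $2\le i\le k-2$. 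The point is that every cross term involves a factor $\alpha_i(K)$ with $2\le i\le k-2\le k-1$, which vanishes by hypothesis. Reading off the coefficient of $(1-t)^k$ on both sides then yields $\alpha_k(K\cdot K') = \alpha_k(K) + \alpha_k(K')$.

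There is essentially no obstacle here; the only mild subtlety to flag is that one must be careful that the vanishing hypothesis is used precisely to kill the convolution cross terms, and that the case $k=2$ is vacuous in the sense that there are no cross terms at all (the range $2\le i\le k-2$ is empty), so the identity $\alpha_2(K\cdot K') = \alpha_2(K) + \alpha_2(K')$ holds unconditionally — consistent with the corollary's hypothesis being empty for $k=2$. I would present this as a three-line computation, explicitly displaying the truncated product expansion in a single \begin{align*}\ldots\end{align*} block and circling the coefficient of $(1-t)^k$, then concluding. The whole proof should fit in under half a page.
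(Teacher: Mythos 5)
Your proof is correct and is exactly the argument the paper has in mind: Corollary~\ref{cor:additive} is stated as an immediate consequence of the multiplicativity of $\tilde\Delta$ (Lemma~\ref{lem:additive}), with the vanishing hypothesis on $\alpha_i(K)$ for $i\le k-1$ killing the convolution cross terms in the coefficient of $(1-t)^k$, just as you computed. Nothing to add; your remark that the case $k=2$ holds unconditionally is also consistent with the paper's conventions, since the expansion of $\tilde\Delta$ starts at degree $2$.
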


\subsection{Welded Milnor invariants}  \label{sec:milnor}

We now recall the general virtual extension of Milnor invariants given in \cite{ABMW}, which is an invariant of welded string links. 
This construction is intrasically topological, since it is defined via the Tube map as the $4$-dimensional analogue of Milnor invariants 
for (ribbon) knotted annuli in $4$-space; we will however give here a purely combinatorial reformulation. 

Given an $n$-component welded string link $L$, consider the group $G(L)$ defined in Section \ref{sec:group}. 
Consider also the free group $F^l$ and $F^u$ generated by the $n$ `lower' and `upper' Wirtinger generators, i.e. the generators  associated with the $n$ arcs of $L$ containing the initial, resp. terminal, point of each component.  
Recall that the lower central series of a group $G$ is the family of nested subgroups $\{\Gamma_kG\}_{k\ge 1}$ defined recursively by 
$\Gamma_1 G=G$ and $\Gamma_{k+1} G=[G,\Gamma_k G]$. 
Then, for each $k\ge 1$, we have a sequence of isomorphisms\footnote{This relies heavily on the topological realization of 
welded string links as ribbon knotted annuli in $4$-space by the Tube map, which acts faithfully at the level of the group system: see Section 5 of \cite{ABMW} for the details.  }
 $$ F_n/\Gamma_k F_n \simeq F^l/\Gamma_k F^l \simeq G(L)/\Gamma_k G(L)\simeq F^u/\Gamma_k F^u\simeq F_n/\Gamma_k F_n, $$
where $F_n$ is the free group on $m_1,\cdots,m_n$. 
In this way, we associate to $L$ an element $\varphi_k(L)$ of $\Aut(F_n/\Gamma_k F_n)$. 
This is more precisely a conjugating automorphism, in the sense that, 
for each $i$, $\varphi_k(L)$ maps $m_i$ to a conjugate $m_i^{\lambda^k_i}$; 
we call this conjugating element $\lambda^k_i\in F_n/\Gamma_k F_n$ the \emph{combinatorial $i$th longitude}. 
Now, consider the \emph{Magnus expansion}, which is the group homomorphism $E:F_n\rightarrow \mathbb{Z}\langle\langle X_1,\cdots,X_n\rangle\rangle$ 
mapping each generator $m_i$ to the formal power series $1+X_i$.  
\begin{definition}
 For each sequence $I=i_1\cdots i_{m-1}i_m$ of (possibly repeating) indices in $\{1,\cdots,n\}$, 
 the \emph{welded Milnor invariant} $\mu^w_I(L)$ of $L$ is the coefficient of the monomial $X_{i_1}\cdots X_{i_{m-1}}$ in $E(\lambda_{i_m}^k)$, for any $k\ge m$. The number of indices in $I$ is called the \emph{length} of the invariant.
\end{definition}

For example, the simplest welded Milnor invariants $\mu^w_{ij}$ indexed by two distinct integers $i,j$ 
are the so-called virtual linking numbers $lk_{i/j}$ (see \cite[\S 1.7]{GPV}. 

\begin{remark}\label{rem:classico}
 This is a welded extension of the classical Milnor $\mu$-invariants, in the sense that if $L$ is a (classical) string link, then 
 $\mu_I(L)=\mu^w_I(L)$ for any sequence $I$. 
\end{remark}

The following realization result, in terms of w-trees, is to be compared with \cite[pp.190]{Milnor} and \cite[Lem.~4.1]{yasuhara}. 
\begin{lemma}\label{lem:Milnor}
Let $I=i_1\cdots i_k$ be a sequence of indices in $\{1,\cdots,n\}$, and, for any $\sigma$ in the symmetric group $S_{k-2}$ of degree $k-2$, set
$\sigma(I)=i_{\sigma(1)}\cdots i_{\sigma(k-2)} i_{k-1}i_k$. 
\begin{figure}[!h]
   \includegraphics[scale=0.8]{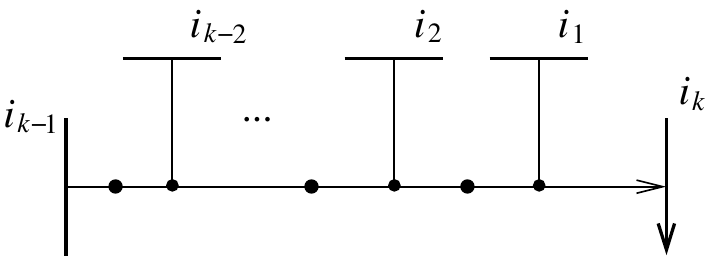}
  \caption{The $\w_{k-1}$-tree $T_{i_1,i_2,\cdots ,i_{k-1},i_k}$ for $\1_n$}\label{fig:wMilnor}
\end{figure}
Consider the w-tree $T_I$ for the trivial $n$-string link diagram $\1_n$ shown in Figure \ref{fig:wMilnor}. 
Then we have 
$$ \mu^w_{\sigma(I)}\left((\1_n)_{T_I}\right)  
= \left\{ \begin{array}{ll}
					    1 & \textrm{if $\sigma=$id,} \\
					    0 & \textrm{otherwise.} \\
					    \end{array}\right.
$$
Moreover for all $\sigma\in S_{k-2}$, we have  
$$\mu^w_{\sigma(I)}\left((\1_n)_{T_I}\right) = -\mu^w_{\sigma(I)}\left((\1_n)_{\overline{T}_I}\right), $$ 
where $\overline{T}_I$ is the w-tree obtained from $T_I$ by inserting a twist in the terminal edge.
\end{lemma}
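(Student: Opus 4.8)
The plan is to compute the combinatorial longitudes of $(\1_n)_{T_I}$ directly from the Wirtinger presentation attached to the w-tree presentation, using the algebraic formalism of Section \ref{sec:algebra}. Since $T_I$ is a single $\w_{k-1}$-tree whose tails are attached to components $i_1,\dots,i_{k-1}$ and whose head is attached to component $i_k$, the only Wirtinger generators that matter are the meridians $m_{i_1},\dots,m_{i_{k-1}}$ (carried to the tails) and the meridian $m_{i_k}$ of the head component; all other generators are unaffected, so $\lambda^k_{i_m}=1$ for $m\le k-2$ and $\lambda^k_{i_{k-1}}=1$ as well, while $\lambda^k_{i_k}=w(T_I)$ up to the usual bookkeeping. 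By the bracket rule, $w(T_I)$ is the iterated formal commutator $[m_{i_1},[m_{i_2},[\cdots,[m_{i_{k-2}},[m_{i_{k-1}},\cdot\,]]\cdots]]]$ in a shape dictated by Figure \ref{fig:wMilnor} — the precise nesting is what makes the leading Magnus term come out as the single monomial $X_{i_1}X_{i_2}\cdots X_{i_{k-2}}X_{i_{k-1}}$ (possibly up to a sign absorbed by the twist convention, cf. the computation in Lemma \ref{lem:wkAlex}).

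The core computation is then a Magnus-expansion lemma: if $w$ is an iterated commutator of weight $k-1$ in distinct generators arranged in the order $m_{i_1},\dots,m_{i_{k-1}}$, then $E(w) = 1 + \pm X_{i_1}\cdots X_{i_{k-1}} + (\text{higher order})$. This is the standard fact that the lowest-degree term of the Magnus expansion of a weight-$r$ basic commutator is the corresponding Lie monomial; here I would just expand $E([a,b]) = 1 + (E(a)-1)(E(b)-1) - (E(b)-1)(E(a)-1) + \cdots$ and induct on the commutator length, checking that the nesting in $T_I$ produces exactly the monomial $X_{i_1}X_{i_2}\cdots X_{i_{k-1}}$ with coefficient $+1$ (and nothing else of degree $\le k-2$ contributing to $\mu^w$). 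Since $\mu^w_{\sigma(I)}$ reads off the coefficient of $X_{i_{\sigma(1)}}\cdots X_{i_{\sigma(k-2)}}$ in $E(\lambda_{i_k})$, and $E(w(T_I))$ has exactly one degree-$(k-2)$ monomial coefficient relevant here — namely $X_{i_1}\cdots X_{i_{k-2}}$, after accounting for the last variable $X_{i_{k-1}}$ — only $\sigma=\mathrm{id}$ yields $1$ and every other permutation yields $0$. (One has to be slightly careful about whether indices among $i_1,\dots,i_{k-1}$ repeat; the statement is about the formal sequence $\sigma(I)$, and the monomial $X_{i_1}\cdots X_{i_{k-2}}$ is read with multiplicity, so the argument goes through verbatim.)

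For the second assertion, replacing $T_I$ by $\overline{T}_I$ inserts a twist on the terminal edge, which by the rule of Section \ref{sec:algebra} replaces $w(T_I)$ by $\overline{w(T_I)}$, hence replaces the longitude $\lambda_{i_k}$ by its inverse modulo $\Gamma_k F_n$. Since $E(\overline{w}) = E(w)^{-1} = 1 - (E(w)-1) + (E(w)-1)^2 - \cdots$, the lowest-order nontrivial term changes sign and all lower terms vanish, so every length-$k$ welded Milnor invariant $\mu^w_{\sigma(I)}$ changes sign. This part is essentially immediate once the first part is in place.

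The main obstacle is bookkeeping the exact commutator shape of $w(T_I)$ dictated by Figure \ref{fig:wMilnor}, and in particular pinning down the sign and verifying that no \emph{other} degree-$(k-2)$ monomial (after stripping $X_{i_{k-1}}$) appears in $E(w(T_I))$ — i.e. that the realization is ``clean'' with coefficient exactly $\delta_{\sigma,\mathrm{id}}$ rather than merely nonzero. This is the analogue of the classical fact (Milnor, \cite{Milnor}) that a suitably chosen iterated-commutator string link realizes a single $\mu$-invariant; here it reduces to the combinatorial identity that the leading Magnus term of the specific nested commutator is the sought-after monomial, which I expect to handle by induction on $k$ together with the expansion move (E) to reduce the degree, exactly as in the inductive proofs of Lemmas \ref{lem:inverse} and \ref{lem:slide}.
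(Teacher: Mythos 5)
Your proposal is correct and takes essentially the same route as the paper, whose proof is just the observation that the combinatorial $i_k$th longitude of $(\1_n)_{T_I}$ is the iterated commutator $w(T_I)=[i_1,[i_2,\cdots,[i_{k-2},i_{k-1}^{-1}]^{-1}\cdots]^{-1}]$ (all other longitudes being trivial), followed by the standard Magnus-expansion computation in the spirit of Milnor, with the twisted case handled by inverting the longitude. One small caveat: the degree-$(k-1)$ part of $E(w(T_I))$ is a full Lie element rather than a single monomial, so the precise fact to check (as you indeed note later) is that the only monomial of the shape $X_{i_{\sigma(1)}}\cdots X_{i_{\sigma(k-2)}}X_{i_{k-1}}$ occurring is the identity-ordered one, with coefficient $+1$.
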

\begin{proof}
This is a straightforward calculation, based on the observation that the combinatorial $i_k$th longitude of $T_I$ is given by  
$$\lambda^k_{i_k}=[i_1,[i_2,\cdots ,[i_{k-3},[i_{k-2},i_{k-1}^{-1}]^{-1}]^{-1}\cdots]^{-1} ]$$ (all other longitudes are clearly trivial). 
\end{proof}

\begin{remark}
The above definition can be adapted to welded link invariants, which involves, as in the classical case, a recurring indeterminacy depending on lower order invariants. In particular, the first non-vanishing invariants are well defined integers, and Lemma \ref{lem:Milnor} applies in this case. 
\end{remark}

Finally, let us add the following additivity result. 
\begin{lemma}\label{lem:Madditive}
Let $L$ and $L'$ be two welded string links of the same number of components.
Let $m$, resp. $m'$, be the integer such that all welded Milnor invariants of $L$, resp. $L'$, of length $\le m$, resp. $\le m'$, are zero. 
Then $\mu^w_I(L\cdot L') = \mu^w_I(L) + \mu^w_I(L')$ for any sequence $I$ of length $\le m+m'$.
\end{lemma}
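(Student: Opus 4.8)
The plan is to mimic the classical argument for additivity of Milnor invariants under stacking of string links, transposed to the combinatorial/welded setting via the Magnus expansion. The key object is the combinatorial longitude: write $\lambda_i(L)$, $\lambda_i(L')$, $\lambda_i(L\cdot L')$ for the $i$th combinatorial longitudes of $L$, $L'$ and their product, viewed in $F_n/\Gamma_k F_n$ for a suitably large $k$. I would first recall the standard multiplicativity of longitudes under stacking: since $G(L\cdot L')$ is obtained by amalgamating the Wirtinger presentations of $L$ and $L'$ along the $n$ middle arcs, one gets $\lambda_i(L\cdot L') \equiv \lambda_i(L)\cdot \lambda_i(L')'$, where $\lambda_i(L')'$ is the word $\lambda_i(L')$ rewritten in the generators coming from $L$ via the isomorphism $F^u(L)/\Gamma_k \cong F^l(L')/\Gamma_k$ induced by the identification of the gluing arcs. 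Concretely, this identification sends the upper meridians of $L$ to the lower meridians of $L'$, and up to lower central series filtration these agree with the $m_j$ modulo terms that are themselves longitudes of $L$, hence modulo $\Gamma_{m+1}$.

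Next I would pass to the Magnus expansion $E\colon F_n \to \modZ\langle\langle X_1,\dots,X_n\rangle\rangle$. Multiplicativity of longitudes gives $E(\lambda_{i_m}(L\cdot L')) = E(\lambda_{i_m}(L))\cdot E(\lambda_{i_m}(L')')$. By hypothesis all welded Milnor invariants of $L$ of length $\le m$ vanish, which by definition means $E(\lambda_{i_m}(L)) = 1 + (\text{terms of degree} \ge m-1)$; similarly $E(\lambda_{j}(L')) = 1 + (\text{degree} \ge m'-1)$ terms. The rewriting $(\,\cdot\,)'$ replaces each $X_j$ by $X_j + (\text{higher degree corrections coming from the longitudes of } L)$; since those corrections have degree $\ge m-1 \ge 1$, substituting them into $E(\lambda_{i_m}(L'))$, whose lowest-degree nonconstant part has degree $\ge m'-1$, only affects monomials of degree $\ge m'-1 + (m-1) \ge m+m'-2$... one has to be a little careful here, but the point is that in the product $E(\lambda_{i_m}(L))\cdot E(\lambda_{i_m}(L')')$ the coefficient of any monomial $X_{i_1}\cdots X_{i_{m-1}}$ of degree $\le m+m'-1$ is simply the sum of its coefficient in $E(\lambda_{i_m}(L))$ and in $E(\lambda_{i_m}(L'))$: the cross terms from the product have degree $\ge (m-1)+(m'-1) = m+m'-2$, and the correction terms from rewriting push things even higher, so in the relevant range they contribute nothing beyond what a naive $X_j \mapsto X_j$ substitution would give. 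Reading off the coefficient of $X_{i_1}\cdots X_{i_{m-1}}$ for $|I| = m \le m+m'-1$... rather, for any $I$ with $|I| \le m+m'$, so that $X_{i_1}\cdots X_{i_{|I|-1}}$ has degree $|I|-1 \le m+m'-1$, yields exactly $\mu^w_I(L\cdot L') = \mu^w_I(L) + \mu^w_I(L')$.

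The main obstacle I anticipate is making precise the claim that the middle-arc identification induces, modulo the lower central series at the needed depth, the \emph{identity} on $F_n/\Gamma_k$ rather than merely an isomorphism: one must check that the upper meridians of $L$ and lower meridians of $L'$ are identified with the $m_j$ up to conjugation/product by elements of $\Gamma_m$ (for $L$) and $\Gamma_{m'}$ (for $L'$), and track how this interacts with the degree bookkeeping in the Magnus expansion. This is exactly the welded analogue of the computation behind Milnor's original additivity statement, and here it is underpinned by the faithful group-system behavior of the Tube map recalled in Remark~\ref{rem:faithful} and the isomorphisms of Section~\ref{sec:milnor}. Once the degree estimate on the cross terms and correction terms is pinned down, the conclusion is immediate. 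Since the paper flags the proof of Lemma~\ref{lem:additive} (and Corollary~\ref{cor:additive}) as "completely straightforward", I expect the authors to either omit this proof or dispatch it with a one-line reference to the classical argument; I would follow the same route, spelling out only the longitude multiplicativity and the degree count.

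\begin{proof}
This is the welded analogue of the classical additivity of Milnor invariants under stacking, and the argument is the same. Fix $k$ larger than $m+m'$. Stacking $L$ on top of $L'$ amalgamates the two group presentations along the $n$ middle arcs, and one checks as usual that the $i$th combinatorial longitude satisfies $\lambda_{i}(L\cdot L') \equiv \lambda_{i}(L)\,\lambda_{i}(L')'$ in $F_n/\Gamma_k F_n$, where $\lambda_i(L')'$ is $\lambda_i(L')$ rewritten via the identification of the upper meridians of $L$ with the lower meridians of $L'$; by the isomorphisms of Section~\ref{sec:milnor} this identification sends each $m_j$ to $m_j$ modulo $\Gamma_m F_n$. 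Applying the Magnus expansion $E$ gives $E(\lambda_{i_m}(L\cdot L')) = E(\lambda_{i_m}(L))\cdot E(\lambda_{i_m}(L')')$. The hypotheses say that $E(\lambda_{i_m}(L)) - 1$ has no terms of degree $< m-1$ and $E(\lambda_{i_m}(L')) - 1$ has no terms of degree $< m'-1$; moreover the rewriting $(\,\cdot\,)'$ only modifies $E(\lambda_{i_m}(L'))$ by terms of degree $\ge m'-1+(m-1)$. Hence for any sequence $I$ with $|I| \le m+m'$ the monomial $X_{i_1}\cdots X_{i_{|I|-1}}$, of degree $|I|-1 \le m+m'-1 < (m-1)+(m'-1)+1$, receives no contribution from the product's cross terms nor from the correction terms, so its coefficient in $E(\lambda_{i_m}(L\cdot L'))$ equals the sum of its coefficients in $E(\lambda_{i_m}(L))$ and $E(\lambda_{i_m}(L'))$. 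This is exactly $\mu^w_I(L\cdot L') = \mu^w_I(L) + \mu^w_I(L')$.
\end{proof}
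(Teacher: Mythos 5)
Your route is exactly the intended one: the paper does not spell out this proof at all (it refers to the classical argument, \cite[Lem.~3.3]{MY1}), and multiplicativity of longitudes under stacking plus a degree count in the Magnus expansion is precisely that argument. However, as written your degree bookkeeping contains an off-by-one error that makes the key step fail. A welded Milnor invariant of length $\ell$ is the coefficient of a monomial of degree $\ell-1$ in $E(\lambda_{i_\ell})$, so the hypothesis that all invariants of $L$ of length $\le m$ vanish says that $E(\lambda_i(L))-1$ contains only terms of degree $\ge m$ (equivalently, each longitude of $L$ lies in $\Gamma_m F_n$ modulo $\Gamma_k F_n$), not of degree $\ge m-1$ as you state; similarly for $L'$ with $m'$. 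With your weaker bounds the cross terms in the product $E(\lambda_{i}(L))\cdot E(\lambda_{i}(L')')$ have degree $\ge (m-1)+(m'-1)=m+m'-2$, which sits \emph{inside} the range you must control (monomials of degree up to $m+m'-1$), and indeed your concluding inequality $|I|-1\le m+m'-1<(m-1)+(m'-1)+1$ reads $m+m'-1<m+m'-1$, which is false. As written, your count only yields additivity for sequences of length $\le m+m'-2$, short of the stated range.

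The fix is immediate once the hypothesis is translated correctly: the longitudes of $L$ lie in $\Gamma_m F_n$ and those of $L'$ in $\Gamma_{m'}F_n$, so the rewriting of $\lambda_i(L')$ through the meridian identification (conjugation by longitudes of $L$) changes it only modulo $\Gamma_{m+m'}F_n$, i.e.\ by Magnus terms of degree $\ge m+m'$, and the cross terms in the product likewise have degree $\ge m+m'$. Both therefore miss every monomial of degree $|I|-1\le m+m'-1$, giving $\mu^w_I(L\cdot L')=\mu^w_I(L)+\mu^w_I(L')$ for all $I$ of length $\le m+m'$, exactly as claimed. So the structure of your proof is right and matches the classical template, but you must correct the translation of the vanishing hypotheses into degree bounds (and the corresponding bound on the rewriting corrections) for the argument to close in the full range.
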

\noindent The proof is strictly the same as in the classical case, as for example in \cite[Lem. 3.3]{MY1}, and is therefore left to the reader. 

\subsection{Finite type invariants} \label{sec:fti}

The \emph{virtualization move} is a local move on diagrams which replaces a classical crossing by a virtual one. 
We call the converse local move the \emph{devirtualization} move. 

Given a welded diagram $L$, and a set $C$ of classical crossings of $L$, we denote by $L_C$ the welded diagram obtained by applying the virtualization move to all crossings in $C$; we also denote by $\vert C\vert$ the cardinality of $C$. 

\begin{definition}[\cite{GPV}]
 An invariant $v$ of welded knotted objects, taking values in an abelian group, is a \emph{finite type invariant of degree $\le k$} if, 
 for any welded diagram $L$ and any set $S$ of $k+1$ classical crossings  of $L$, we have 
 \begin{equation}\label{eq:fti}
   \sum_{S'\subset S} (-1)^{\vert S'\vert } v\left( L_{S'}\right) = 0. 
 \end{equation}
\end{definition}
An invariant is of degree $k$ if it is of degree $\le k$, but not of degree $\le k-1$.

\begin{remark}
This definition is strictly similar to the usual notion of finite type (or Goussarov-Vassiliev) invariants for classical knotted objects, with the virtualization move now playing the role of the crossing change. 
Since a crossing change can be realized by (de)virtualization moves, we have that the restriction of any welded finite type invariant to classical objects is a Goussarov-Vassilev invariants.
\end{remark}

The following is shown in \cite{HKS} (in the context of ribbon $2$-knots, see Remark \ref{rem:faithful}).
\begin{lemma}\label{lem:Alexfti}
For each $k\ge 2$, the $k$th normalized coefficient $\alpha_k$ of the Alexander polynomial is a finite type invariant of degree $k$. 
\end{lemma}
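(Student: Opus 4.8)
The plan is to prove Lemma~\ref{lem:Alexfti} by establishing two things: first, that $\alpha_k$ is of degree $\le k$, and second, that it is not of degree $\le k-1$. The second part is the easy one: by Lemma~\ref{lem:wkAlex} we have $\alpha_i(L_k) = \delta_{ik}$, so $\alpha_k$ takes at least two distinct values and is certainly non-constant; more precisely, if $\alpha_k$ were of degree $\le k-1$ it would in particular be determined by invariants of lower degree, but one exhibits an explicit pair of diagrams differing by $\le k$ virtualizations on which lower $\alpha_i$ agree while $\alpha_k$ differs — for instance, comparing $(\1)_{T}$ with $(\1)_{\overline T}$ for a suitable $\w_k$-tree, whose expansion (Remark~\ref{rem:commutator}) consists of w-arrows realizing classical crossings, so that $L_k$ and $\overline{L_k}$ are related by a bounded number of virtualizations. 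I would cite the analogous classical computation or simply invoke that $\alpha_k$ is non-trivial on $\w_k$-trees but vanishes on $\w_{k'}$-trees for $k' < k$, which by the finite-type filtration forces degree exactly $k$.

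For the main part — that $\alpha_k$ is of degree $\le k$ — I would argue via the multiplicativity of the normalized Alexander polynomial. First I would note that the defining alternating sum \eqref{eq:fti} over a set $S$ of $k+1$ crossings can be analyzed by expressing, for each crossing, the two resolutions (classical vs.\ virtual) diagrammatically: a virtualization at a crossing corresponds, in the canonical Arrow presentation, to deleting the associated w-arrow. Thus $\sum_{S'\subset S}(-1)^{|S'|}\alpha_k(L_{S'})$ becomes an alternating sum over subsets of a fixed set of $k+1$ w-arrows, where "including $S'$" means deleting those w-arrows. I would then use the group-theoretic/Fox-calculus description of $\tilde\Delta$: the Magnus-type expansion of the Wirtinger relators shows that the presence or absence of a w-arrow contributes a factor of the form $1 + (\text{terms of positive degree in } (1-t))$ to the relevant minors, so a product over a set of $k+1$ w-arrows, when fully expanded, has its degree-$\le k$ part annihilated by the alternating sum — this is the standard "$(k+1)$-fold difference of a degree-$k$ polynomial quantity vanishes" phenomenon, phrased multiplicatively.

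The cleanest route, and the one I would actually follow, is to reduce to the known classical statement. Since $\alpha_k$ restricted to classical long knots is the corresponding coefficient of the classical normalized Alexander polynomial, which is a classical finite type invariant of degree $k$ (a result of, e.g., Bar-Natan, or deducible from the Conway polynomial), and since Lemma~\ref{lem:Alexfti} is attributed to \cite{HKS} in the welded/ribbon setting, I would simply reproduce the \cite{HKS} argument: it suffices to check \eqref{eq:fti} on diagrams, and using a w-tree presentation one can arrange the $k+1$ crossings to come from $k+1$ w-arrows; the effect on $G(L)$ of deleting w-arrows is transparent through the Wirtinger presentation of Section~\ref{sec:wirtinger}, and the Fox Jacobian, hence every $(m-1)\times(m-1)$ minor, becomes (after the substitution $x_i \mapsto t$) a polynomial in the variables recording which w-arrows are present that has degree $\le k$ in $(1-t)$ per arrow in a multilinear fashion; taking the alternating sum over the $2^{k+1}$ subsets kills all contributions of $(1-t)$-degree $\le k$, and after normalizing $\Delta_L(1)=1$, $\Delta_L'(1)=0$ one reads off that the degree-$k$ coefficient $\alpha_k$ satisfies \eqref{eq:fti}.

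The main obstacle I anticipate is making rigorous the claim that deleting a w-arrow multiplies the relevant Fox-derivative entries by something of the shape $1 + O(1-t)$ \emph{uniformly}, so that the $(k+1)$-fold alternating sum genuinely annihilates the degree-$\le k$ part — in the classical case this is handled by the skein behavior of the Conway/Alexander polynomial under crossing changes, and one must check the welded analogue of that skein relation (or, equivalently, that virtualization induces the same first-order change on $\tilde\Delta$ as a crossing change does classically). Since the excerpt attributes the lemma to \cite{HKS}, I would lean on that reference for this technical core and keep the proof here to a short paragraph indicating the reduction and the normalization bookkeeping, rather than reproving the skein estimate from scratch.
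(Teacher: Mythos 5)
The paper offers no proof of this lemma at all: it is quoted from \cite{HKS}, where it is established for ribbon $2$-knots and transported to welded long knots via the Tube map (Remarks \ref{rem:faithful} and \ref{rem:ftitube}), so your ultimate deferral of the technical core (the degree $\le k$ bound) to the same reference matches the paper's approach. One caveat: your ``pair of diagrams differing by $\le k$ virtualizations'' justification of the degree being exactly $k$ is not valid as stated, since non-agreement on such a pair says nothing about the $(k)$-fold alternating sums in the definition; the clean argument available inside the paper is that $L_k$ is by construction $\w_k$-equivalent to the trivial long knot while $\alpha_k(L_k)=1$ (Lemma \ref{lem:wkAlex}), so Proposition \ref{prop:ftiwk} forbids $\alpha_k$ from having degree $\le k-1$.
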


It is known that classical Milnor invariants are of finite type \cite{BNh,Lin}. 
Using essentially the same arguments, it can be shown that, for each $k\ge 1$, length $k+1$ welded Milnor invariants of string links are finite type invariants of degree $k$. 
The key point here is that a virtualization, just a like a crossing change, corresponds to conjugating or not at the virtual knot group level. 
Since we will not make use of this fact in this paper, we will not provide a full and rigorous proof here. 
Indeed, formalizing the above very simple idea, as done by D.~Bar-Natan in \cite{BNh} in the classical case, turns out to be rather involved. 
Note, however, that we will use a consequence of this fact which, fortunately, can  easily be proved directly, see Remark \ref{rem:Milnorwk}.

\begin{remark}\label{rem:ftitube}
The Tube map recalled in Section \ref{sec:ribbon} is also compatible with this finite type invariant theory, 
in the following sense. 
Suppose that some invariant of welded knotted objects $v$ extends naturally to an invariant $v^{(4)}$ of ribbon knotted objects, so that 
$$ v^{(4)}(\Tube(D))=v(D), $$
for any diagram $D$. 
Note that this is the case for the virtual knot group, the normalized Alexander polynomial and welded Milnor invariants, essentially by Remark \ref{rem:faithful}. 
Then, if $v$ is a degree $k$ finite type invariant, then so is $v^{(4)}$, in the sense of the finite type invariant theory of \cite{HKS,KS}. 
Indeed, if two diagrams differ by a virtualization move, then their images by Tube differ by a `crossing changes at crossing circles', 
which is a local move that generates the finite type filtration for ribbon knotted objects, see \cite{KS}.  
\end{remark}

\section{$\w_k$-equivalence}\label{sec:wkequiv}

We now define and study a family of equivalence relations on welded knotted objects, using w-trees. 
We explain the relation with finite type invariants, and give several supplementary technical lemmas for w-trees. 

\subsection{Definitions}\label{sec:wk}

\begin{definition}
 For each $k\ge 1$, the $\w_k$-equivalence is the equivalence relation on welded knotted objects generated by generalized Reidemeister  moves and surgery along $\w_l$-trees, $l\ge k$.
More precisely, two welded knotted objects $W$ and $W'$ are $\w_k$-equivalent if there exists a finite sequence $\{W^i\}_{i=0}^n$ 
 of welded knotted objects such that, for each $i\in\{1,\cdots,n\}$, $W^i$ is obtained from $W^{i-1}$ either by a generalized Reidemeister move or by surgery along a $\w_l$-tree,  for some $l\ge k$.
\end{definition}

By definition, the  $\w_k$-equivalence becomes finer as the degree $k$ increases, in the sense that the 
$\w_{k+1}$-equivalence implies the $\w_k$-equivalence. 

The notion of $\w_k$-equivalence is a bit subtle, in the sense that it involves both moves on diagrams and on w-tree presentations. 
Let us try to clarify this point by introducing the following.
\begin{notation}\label{not:wk}
 Let $(V,T)$ and $(V,T')$ be two w-tree presentations of some diagrams, and let $k\ge 1$ be an integer.
 Then we use the notation 
  $$ (V,T) \lr{k} (V,T') $$ 
 if there is a union $T''$ of w-trees for $V$ of degree $\ge k$ such that $(V,T)=(V,T'\cup T'')$.
 \end{notation}

Note that we have the implication
$$  \left((V,T) \lr{k} (V,T')\right) \, \Rightarrow \, \left(V_T \stackrel{k}{\sim} V_{T'}\right). $$ 
Therefore, statements given in the terms of Notation \ref{not:wk} will be given when possible. 

The converse implication, however, does not seem to hold in general. 
In other words, we do not know whether a $\w_k$--equivalence version of \cite[Prop.~3.22]{Habiro} holds.  

\subsection{Cases $k=1$ and $2$}\label{sec:w1}

We now observe that $\w_1$-moves and $\w_2$-moves are equivalent to familiar local moves on diagrams. 

We already saw in Figure \ref{fig:wcross} that surgery along a w-arrow is equivalent to a devirtualization move. 
Clearly, by the Inverse move (5), this is also true for a virtualization move. 
It follows immediately that any two welded links or string links of the same number of components are $\w_1$-equivalent. 

Let us now turn to the $\w_2$-equivalence relation. 
Recall that the right-hand side of Figure \ref{fig:moves} depicts the \emph{UC move}, 
which is the forbidden move in welded theory. 
We have 
\begin{lemma}
A $\w_2$-move is equivalent to a UC move. 
\end{lemma}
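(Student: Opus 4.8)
The plan is to unwind both sides of the claimed equivalence in terms of the surgery definition, using Arrow calculus to convert a $\w_2$-tree into a concrete diagrammatic move. First I would recall that, by the Expansion move (E), surgery along a single $\w_2$-tree $T$ (with its two tails attached to strands, say, $a$ and $b$) is surgery along its expansion, which is a union of four w-arrows whose heads are arranged along the diagram according to the commutator pattern $[1,2]$ described in Remark \ref{rem:commutator}; each of these w-arrows, by Figure \ref{fig:wcross}, is a devirtualization move. The goal is to show that performing this collection of four devirtualizations, read against a trivially-crossing portion of diagram, has the same effect on the equivalence class as a single UC move on the two strands carrying the tails (and conversely).

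The key steps, in order, would be: (1) normalize the local picture using Virtual Isotopy (1), Tail Reversal (2) and Head Reversal (2) so that the $\w_2$-tree is in a standard position with its head on a third strand and tails on the two strands to be modified, and so that there are no twists (twist involutivity plus Head Reversal); (2) apply (E) and then use Figure \ref{fig:wcross} to replace each of the four resulting w-arrows by an explicit classical crossing, producing a concrete local diagram; (3) simplify this local diagram via generalized Reidemeister moves and the OC move — here one expects most of the four crossings to cancel in pairs by Reidemeister II, leaving exactly the configuration differing from the input by a UC move; (4) conversely, start from a UC move, take canonical Arrow presentations of the two sides (Proposition \ref{prop:wApres}), and check via Lemma \ref{lem:equivwA} and the Head--Tail Exchange Lemma \ref{lem:ht} (which is precisely the tool that trades a head past a tail at the cost of an extra w-tree, and whose failure to be free is exactly what the UC move obstructs) that the two canonical presentations differ by a single $\w_2$-tree. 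Throughout, I would lean on Lemma \ref{lem:treemoves} and Lemmas \ref{lem:inverse}, \ref{lem:slide}, \ref{lem:jump}, \ref{lem:he}, \ref{lem:ht} so that the bookkeeping stays at the level of w-trees rather than raw diagrams.

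The main obstacle I anticipate is the direction ``UC move $\Rightarrow$ $\w_2$-move'': one must exhibit, canonically, the extra $\w_2$-tree that accounts for the difference between the two sides of a UC move, and verify it has degree exactly $2$ and not higher or lower. Concretely, applying canonical Arrow presentations to both sides of the UC picture yields two collections of w-arrows whose heads sit in a different cyclic order along one strand; rearranging one into the other requires a Head--Tail Exchange (Lemma \ref{lem:ht}), which introduces precisely one new w-tree, and one must check that this new tree is a $\w_2$-tree (it has two tails, inherited from the two w-arrows being commuted past each other). Showing that no genuine classical crossing change is hidden in this rearrangement — i.e. that the discrepancy is \emph{entirely} captured by that single $\w_2$-tree and a sequence of moves (1)--(4) — is the delicate point; the easier direction ``$\w_2$-move $\Rightarrow$ UC move'' should follow fairly mechanically from (E) and Reidemeister-II cancellations as in step (3). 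If the direct diagrammatic computation in step (3) becomes unwieldy, an alternative is to argue via the group: a $\w_2$-tree surgery changes a Wirtinger generator by a commutator $[\,\cdot\,,\,\cdot\,]$ of meridians (Section \ref{sec:wirtinger}), which is exactly the effect of a UC move, and invoke faithfulness-type statements; but I would prefer to keep the proof diagrammatic and self-contained, citing only the lemmas already established above.
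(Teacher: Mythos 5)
Your overall two-way strategy is sound, and for the direction ``$\w_2$-move $\Rightarrow$ UC move'' it essentially reproduces the paper's argument: the paper's Figure \ref{fig:uc2} is exactly your steps (1)--(3), i.e.\ expand the $\w_2$-tree, perform the surgery, and observe that a single UC move is precisely what is needed before the four resulting classical crossings cancel in pairs by detour/Reidemeister II moves. For the converse the paper is more pedestrian than you are: Figure \ref{fig:uc} simply exhibits, by a direct surgery computation, a $\w_2$-tree on one side of the UC move whose surgery yields the other side (other choices of local orientation being handled by inserting twists near the tails). Your route via canonical Arrow presentations and an exchange lemma is a legitimate, arguably more conceptual alternative, and is in the spirit of the remark the paper makes just after this lemma in Section \ref{sec:w1}.

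However, in that second direction you invoke the wrong lemma at the key step. In the UC move the two classical crossings share the same under-strand, so in the canonical Arrow presentations both w-arrow \emph{heads} lie on that strand, with the tails on the two over-strands; the two sides of the move differ by exchanging the order of these two \emph{heads} along the under-strand (the tails merely slide across a virtual crossing, which is free by Virtual Isotopy (1), and exchanging tails is free by move (3)). The tool for this is the Heads Exchange Lemma \ref{lem:he}, not the Head--Tail Exchange Lemma \ref{lem:ht}: as stated, Lemma \ref{lem:ht} commutes a head past a w-arrow \emph{tail}, a configuration that does not occur here, so the step would fail as literally written. Your own parenthetical (commuting the two w-arrows at the cost of one extra tree with two tails) is exactly the content of Lemma \ref{lem:he}, whose correction term is indeed a $\w_2$-tree, so the slip is immediately repairable. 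A smaller point: Lemma \ref{lem:equivwA} is of no help in that direction, since the two sides of a UC move are not equivalent as welded diagrams; all you need is to take canonical Arrow presentations of both sides and compare them directly using the moves of Lemma \ref{lem:treemoves} together with Lemma \ref{lem:he}.
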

\begin{proof}
Figure \ref{fig:uc} below shows that the UC move is realized by surgery along a $\w_2$-tree. 
Note that, in the figure, we had to choose several local orientations on the strands: we leave it to the reader to check that the other cases of local orientations 
follow from the same argument, by simply inserting twists near the corresponding tails.  
\begin{figure}[!h]
   \includegraphics[scale=1.05]{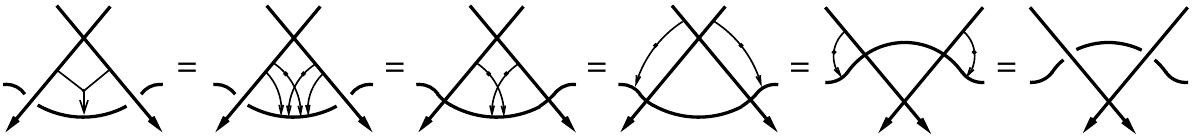}
  \caption{Surgery along a $\w_2$-tree implies the UC move}\label{fig:uc}
\end{figure}

Conversely, Figure \ref{fig:uc2} shows that surgery along a $\w_2$-tree is achieved by the UC move, hence that these two local moves are equivalent. 
\begin{figure}[!h]
   \includegraphics[scale=0.9]{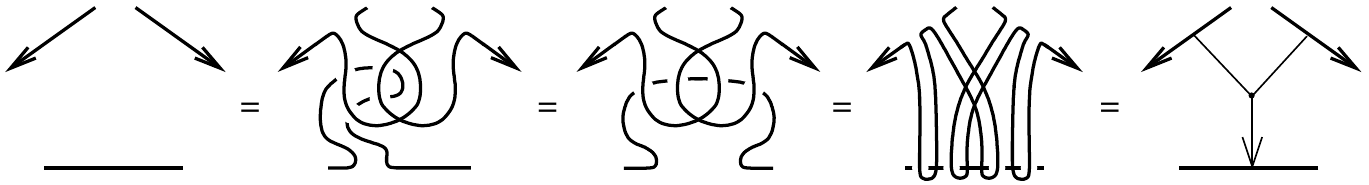}
  \caption{The UC move implies surgery along a $\w_2$-tree}\label{fig:uc2}
\end{figure}
\end{proof}
It was shown in \cite{ABMW3} that two welded (string) links are related by a sequence of UC move, i.e. are $\w_2$-equivalent, if and only if they have same welded Milnor invariants $\mu^w_{ij}$.  
In particular, any two welded (long) knots are $\w_2$-equivalent. 

\begin{remark}
The fact that any two welded (long) knots are $\w_2$-equivalent can also easily be checked directly using arrow calculus. 
Starting from an Arrow presentation of a welded (long) knot, one can use the (Tails, Heads and Head--Tail) Exchange 
move (3) and Lemmas \ref{lem:he} and \ref{lem:ht} to separate and isolate all w-arrows, as in the figure of the Isolated move (4), up to addition of higher order w-trees. 
Each w-arrow is then equivalent to the empty one by move (4). 
\end{remark}

\subsection{Relation to finite type invariants} \label{sec:noteworthy}

One of the main point in studying welded (and classical) knotted objects up to $\w_k$-equivalence is the following.
\begin{proposition}\label{prop:ftiwk}
Two welded knotted objects that are $w_k$-equivalent ($k\ge 1$) cannot be distinguished by finite type invariants of degree $<k$. 
\end{proposition}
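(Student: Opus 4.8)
The plan is to reduce the statement to the effect of a single $\w_k$-tree surgery on a finite type invariant, and then exploit the Brunnian-type property of w-tree expansions (Remark~\ref{rem:expansion}). First I would observe that, since $\w_k$-equivalence is generated by generalized Reidemeister moves (which preserve all welded invariants) and surgeries along $\w_l$-trees with $l\ge k$, it suffices to show that if $W'=W_T$ for a single $\w_l$-tree $T$ with $l\ge k$, then $v(W)=v(W')$ for every finite type invariant $v$ of degree $<k$. Moreover, since a $\w_l$-tree with $l>k$ can be expanded by (E) into a union of higher-degree w-trees — or, more simply, since $\w_l$-equivalence implies $\w_k$-equivalence — it is enough to treat the case $l=k$, i.e. surgery along a single $\w_k$-tree $T$ with exactly $k$ tails.

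Next I would unwind the expansion of $T$. By Remark~\ref{rem:commutator}, the expansion of $T$ is a union of w-arrows whose heads are arranged along $D$ according to a $k$-fold iterated commutator in the tails $1,\dots,k$ of $T$. Label the tails $1,\dots,k$; for each tail $i$ there is a neighborhood of $i$ where some collection $A_i$ of w-arrow-tails sit, and each such w-arrow, viewed through the devirtualization picture of Figure~\ref{fig:wcross}, corresponds to a classical crossing. The key point is the Inverse move (5), as invoked in Remark~\ref{rem:expansion}: deleting \emph{all} w-arrows of the expansion whose tails lie on the component carrying tail $i$ produces a union of w-arrows that cancel in pairs and hence yield a trivial surgery. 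Translated into crossing language, this says precisely that there is a set $S_i$ of classical crossings in the diagram $W'=W_T$ — namely those coming from the w-arrow tails near $i$ — such that virtualizing all of $S_i$ recovers $W$; and more generally virtualizing $S_i$ for any nonempty subset of the tails already recovers $W$. Setting $S=S_1\cup\cdots\cup S_k$, this gives a set of at least $k$ crossings such that $W'_{S'}=W$ for every $S'$ meeting all of $S_1,\dots,S_k$... but I want the sharper statement that $W'_{S'}=W$ as soon as $S'$ contains at least one $S_i$ entirely, which is exactly the Brunnian cancellation.

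I would then feed this into the finite type relation \eqref{eq:fti}. Pick one crossing from each block, $c_i\in S_i$ for $i=1,\dots,k$, and consider the defining alternating sum of $v$ over all $2^k$ subsets of $\{c_1,\dots,c_k\}$. For any subset $S'$ that is nonempty, say $c_i\in S'$, virtualizing $c_i$ (together with the rest of $S'$) kills the $i$th "arm" of the commutator and hence, by the Brunnian property, yields a diagram equivalent to $W$; wait — more care is needed here, since virtualizing a \emph{single} crossing $c_i$ need not virtualize all of $S_i$. The clean way around this is the standard trick: instead of one crossing per block, form the full alternating sum over subsets of $S$ itself, or better, argue that $v(W_T)-v(W)$ equals an alternating sum of $v$ over subsets of $\{c_1,\dots,c_k\}$ in which every nonempty term equals $v$ of something obtained from $W$ by surgery along a w-tree of degree $>$ (number of chosen arms), so that an induction on $k$ (descending in the number of tails, or ascending in degree $<k$) closes the loop — precisely the mechanism by which clasper/claspers-style arguments show $C_k$-equivalent knots share degree-$<k$ invariants. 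Concretely: expanding $T$ once by (E) at the terminal vertex writes $W_T$ in terms of two w-trees of degree $k-1$ sharing all but one tail, and the finite type relation applied to the crossings near those shared-vs-differing tails expresses $v(W_T)-v(W)$ as a $\modZ$-combination of $v(W_{T'})$ for w-trees $T'$ of degree $<k$ with fewer free "difference" crossings; iterating $k$ times and using that $v$ has degree $<k$ forces the total to vanish. The main obstacle is bookkeeping: making the correspondence "w-arrow tails near tail $i$" $\leftrightarrow$ "a block $S_i$ of classical crossings that is virtualized en bloc" fully precise, and organizing the multi-variable alternating sum so that the Brunnian cancellations line up with the $2^k$ terms of \eqref{eq:fti}. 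Once that combinatorial matching is set up, the vanishing is immediate from the degree hypothesis.
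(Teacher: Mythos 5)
Your overall strategy is the same as the paper's: reduce to surgery along a single tree, group the w-arrows of the expansion into blocks $S_1,\dots,S_k$ according to the tails, invoke the Brunnian-type property of Remark~\ref{rem:expansion}, and feed an alternating sum into the finite type condition. But the one step that makes this work is missing, and your substitutes for it do not hold up. The defining relation~\eqref{eq:fti} concerns \emph{single} crossings, whereas the Brunnian property only controls virtualization of \emph{entire} blocks: $W'_{\cup_{i\in I}S_i}=W$ for every nonempty $I$, and nothing more. The ``sharper statement'' you ask for ($W'_{S'}=W$ whenever $S'$ contains some $S_i$ entirely, or merely meets every block) is false: already for a $\w_2$-tree the expansion reads as a commutator $1\,2^{-1}\,1^{-1}\,2$ of w-arrows, and deleting all of $S_1$ together with only one arrow of $S_2$ leaves a single w-arrow, i.e.\ a diagram with one extra classical crossing, not $W$. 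This is exactly why picking one crossing $c_i$ per block cannot work, as you yourself noticed.

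The missing ingredient is the elementary but essential ``multilinearity'' of the bracket $[L;W_1,\dots,W_k]:=\sum_{I\subset\{1,\dots,k\}}(-1)^{|I|}\,L_{\cup_{i\in I}W_i}$: if $W_1=W_1^1\cup\dots\cup W_1^n$ is a union of w-arrows, then $[L;W_1,\dots,W_k]=\sum_{j=1}^n [L_{W_1^1\cup\cdots\cup W_1^{j-1}};W_1^j,W_2,\dots,W_k]$, so the vanishing of a degree-$<k$ invariant on single-crossing brackets of length $k$ (which is precisely~\eqref{eq:fti}) propagates to brackets whose entries are arbitrary unions of w-arrows, virtualized en bloc. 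With that lemma the proof closes in one line: by the Brunnian property $L_T-L=(-1)^k[L;W_1,\dots,W_k]$, and $v$ of degree $<k$ kills the right-hand side; this argument also handles trees of any degree $l\ge k$ directly, so your reduction to $l=k$ is unnecessary (and note in passing that (E) produces \emph{lower}-degree trees, not higher-degree ones). Your alternative patch --- expanding $T$ once by (E) and running an induction mixing the finite type relation with ``difference crossings'' --- is left entirely unspecified and is not the mechanism by which the cancellation is organized; as written, the proposal identifies the right objects but does not constitute a proof.
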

\begin{proof}
The proof is formally the same as Habiro's result relating $C_n$-equivalence (see Section \ref{sec:cneq}) to Goussarov-Vassiliev finite type invariants \cite[\S 6.2]{Habiro}, and is summarized below.  

First, recall that, given a diagram $L$ and $k$ unions $W_1,\cdots,W_k$ of 
w-arrows for $L$, the bracket $[L;W_1,\cdots,W_k]$ stands for the formal linear combination of diagrams
 $$ [L;W_1,\cdots,W_k] := \sum_{I\subset\{1,\cdots,k\}} (-1)^{\vert I\vert} L_{\cup_{i\in I} W_i}. $$

Note that, if each $W_i$ consists of a single w-arrow, then the defining equation (\ref{eq:fti}) of finite type invariants can be reformulated 
as the vanishing of (the natural linear extension of) a welded invariant on such a bracket.  
Note also that if, say, $W_1$ 
is a union of w-arrow $W_1^1,\cdots,W_1^{n}$, then we have the equality
 $$ [L;W_1,W_2,\cdots,W_k] = \sum_{j=1}^{n} [L_{W_1^1\cup\cdots \cup W_1^{j-1}};W_1^j,W_2,\cdots,W_k]. $$
Hence an invariant is of degree $<k$ vanishes on $[L;W_1,\cdots,W_k]$. 

Now, suppose that $T$ is a $\w_k$-tree for some diagram $L$, and label the tails of $T$ from $1$ to $k$. 
Consider the expansion of $T$, and denote by $W_i$ the union of all w-arrows running from (a neighborhood of) tail $i$ to (a neighborhood of) the head of $T$. Then $L_T=L_{\cup_{i=1}^k W_i}$ and, according to the Brunnian-type property of w-trees noted in Remark \ref{rem:expansion}, we have $L_{\cup_{i\in I} W_i}=L$ for any $I\subsetneq \{1,\cdots,k\}$. 
Therefore, we have 
$$ L_T-L= (-1)^k[L;W_1,\cdots ,W_k], $$
which, according to the above observation, implies Proposition \ref{prop:ftiwk}.
\end{proof}

We will show in Section \ref{sec:wk_knots} that the converse of Proposition \ref{prop:ftiwk} holds for welded knots and long knots. 

\begin{remark}\label{rem:Milnorwk}
It follows in particular from Proposition \ref{prop:ftiwk} that Milnor invariants of length $\le k$ are invariants under $w_k$-equivalence. 
This can also be shown directly by noting that, if we perform surgery on a diagram $L$ along some $\w_k$-tree, this can only change elements of $G(L)$ by terms in $\Gamma_k F$. 
\end{remark}

\subsection{Some technical lemmas}\label{sec:tech}

We now collect some supplementary technical lemmas, in terms of $\w_k$-equivalence. 
The next result allows to move twists across vertices. 
\begin{lemma}[Twist]\label{lem:twist}
Let $k\ge 2$. The following holds for a $\w_{k}$-tree. 
 \[ \textrm{\includegraphics[scale=1]{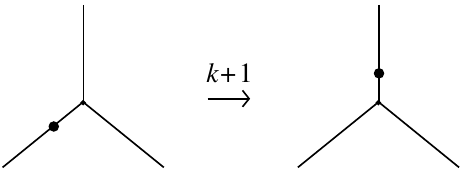}} \]
\end{lemma}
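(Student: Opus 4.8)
The statement to prove, Lemma \ref{lem:twist} (the Twist move), asserts that for a $\w_k$-tree with $k\ge 2$, a twist on one of the edges incident to a trivalent vertex can be pushed across that vertex, at the cost of twists on the other two incident edges (and, presumably, modulo higher-degree w-trees, since the statement is phrased in $\w_k$-equivalence terms).

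My plan is to prove this by induction on the number of edges between the head of the w-tree and the trivalent vertex where the twist is being moved, mirroring the structure of the proofs of the Inverse Lemma \ref{lem:inverse}, the Slide Lemma \ref{lem:slide}, the Antisymmetry Lemma \ref{lem:as} and the Fork Lemma \ref{lem:fork}. The base case is when the trivalent vertex in question is incident to the terminal edge, i.e. when the two ingoing edges at the vertex lead directly to subtrees and the outgoing edge is terminal. Here I would apply the Expansion move (E) to this vertex. Recall that (E) replaces the w-tree by four parallel w-trees of lower degree, whose heads are arranged in a commutator pattern and whose terminal edges carry prescribed twists. The effect of a twist on one of the two ingoing edges is, via the algebraic reformulation of Section \ref{sec:algebra}, to replace the corresponding label $A$ or $B$ in the bracket $[A,B]=A\ov B\ov A B$ by its inverse; I would then check the identity $[\,\ov A,B\,]$, $[\,A,\ov B\,]$ versus $\ov{[A,B]}$ together with twists on the appropriate edges, which amounts to elementary commutator manipulations (this is essentially the same computation underlying the Antisymmetry Lemma, restated as $[B,A]=\ov{[\ov A,\ov B]}$). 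Converting back via (E) gives the claimed relation, up to w-trees of degree $\ge k$ which arise from the reassembly. Concretely I would present the base case diagrammatically, as in Figure \ref{fig:asproof1}, using (E), isotopy and involutivity of twists, and (E) again.

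For the inductive step, suppose the trivalent vertex in question is at distance $d\ge 2$ from the head. I would apply (E) at the \emph{first} vertex encountered from the head, obtaining a union of lower-degree w-trees; in each of these the relevant trivalent vertex now sits at distance $d-1$ from the (new) head, so the induction hypothesis applies to move the twist across it in each piece. Then apply (E) in reverse to reassemble. The only care needed is bookkeeping: the twists produced on the three incident edges must be compatible across the parallel copies, which follows from Convention \ref{conv:parallel} together with twist involutivity, exactly as in the proofs of Lemmas \ref{lem:inverse} and \ref{lem:slide}. Any w-trees of degree $\ge k$ generated along the way are absorbed into the $\lr{k}$ / $\w_k$-equivalence, which is why the hypothesis $k\ge 2$ is needed (for $k=1$ there is no trivalent vertex, so the statement is vacuous, and the degree-$<k$ pieces produced by (E) on a $\w_k$-tree have degree $\ge 1$).

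The main obstacle I anticipate is not conceptual but organizational: verifying that the twist pattern on the three edges transforms correctly under (E) requires carefully tracking which of the two ingoing edges at each created vertex inherits a twist and which terminal twists (E) introduces, across all four copies, and confirming this matches the stated picture for every choice of orientation; the other cases (twist on a different incident edge, or terminal edge already twisted) should follow by the same argument combined with the Head Reversal move (2) and involutivity, as in the other lemmas of this section, and I would simply indicate this rather than redo each case.
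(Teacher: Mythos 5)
Your induction scheme (on the distance from the head to the vertex carrying the twist) is the same as the paper's, but both the base case and the inductive step, as you describe them, have real gaps. For the base case, the analogy with the Antisymmetry Lemma \ref{lem:as} is misleading: Antisymmetry is an \emph{exact} identity ($[B,A]=\ov{[\ov{A},\ov{B}]}$ holds on the nose in $F$), so ``(E), isotopy, involutivity, (E) back'' suffices there; by contrast $[\ov{A},B]=\ov{A}\,\ov{B}\,A\,B$ and $\ov{[A,B]}=\ov{B}\,A\,B\,\ov{A}$ are \emph{not} equal, they differ by a conjugation, $[\ov{A},B]=\ov{A}\,\ov{[A,B]}\,A$. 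Converting that conjugating pair into a separate correction tree is the whole content of the base case, and it requires the Heads Exchange Lemma \ref{lem:he} (its variants in Remark \ref{cor:hh}) together with the Inverse Lemma \ref{lem:inverse}, giving $[\ov{A},B]=[\ov{A},[A,B]]\,\ov{[A,B]}$ with a correction of degree strictly greater than $k$. Your proposal never names this mechanism and instead attributes the corrections to ``reassembly'' via (E), which by itself produces none. Relatedly, the statement needs corrections of degree $>k$ placed in a controlled position; ``up to w-trees of degree $\ge k$'' absorbed into $\w_k$-equivalence is strictly weaker and essentially vacuous, since the $\w_k$-tree itself has degree $k$.

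The inductive step has the more serious gap. After expanding at the vertex nearest the head, the two parallel copies containing the twisted vertex have degree $m<k$, so the induction hypothesis applied inside each copy yields corrections of degree $>m$ only, which can be well below $k$; moreover, once each copy carries its own corrections the four pieces are no longer parallel copies differing only by a terminal twist, so ``apply (E) in reverse to reassemble'' is not available. This is exactly why the paper proves the stronger Claim \ref{claim:inverse}, which pins the correction $S$ adjacent to the head on a prescribed side: in the inductive step (algebraically, $[C,[\mydot,[\ov{A},B]\mydots]]$ with $C\in\Gamma_lF$, inner part in $\Gamma_mF$, $l+m=k$) the two copies contribute $\ov{S}$ and $S$, which cancel after commuting one of them past $\ov{C}$ by Heads Exchange, at the cost only of terms in $\Gamma_{k+1}F$. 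Your appeal to ``exactly as in Lemmas \ref{lem:inverse} and \ref{lem:slide}'' does not transfer, because in those proofs the induction hypothesis is an exact move with no correction terms, so (E)-reassembly is immediate; here the low-degree corrections and their cancellation are the entire difficulty. To fix the proof you should state and prove the strengthened claim with the corrections located next to the head, run the base case through Heads Exchange plus Inverse as above, and carry out the depth induction with the cancellation argument.
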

\noindent Note that this move implies the converse one, by using the Antisymmetry Lemma \ref{lem:as} and the twist involutivity.

\begin{proof}
Denote by $n$ the number of edges of $T$ in the unique path connecting the trivalent vertex shown in the statement to the head. 
 Note that $1\le n\le k-1$. 
 We will prove by induction on $d$ the following claim, which is a stronger form of the desired statement. 
 \begin{claim} \label{claim:inverse}
  For all $k\ge 2$, and for any $\w_{k}$-tree $T$, the following equalities hold 
   \[ \textrm{\includegraphics[scale=1]{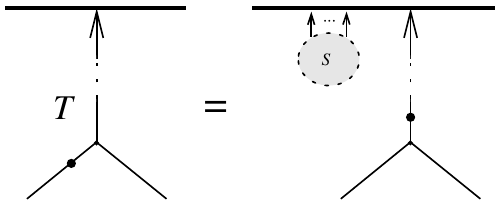}} \]
  \noindent where $S$ is a union of $\w$-trees of degree $>k$ (using the graphical Convention \ref{rem:inverse2}). 
 \end{claim}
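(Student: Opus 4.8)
The plan is to prove the claim by induction on $n$, the number of edges in the path from the trivalent vertex in question to the head. For the base case $n=1$ (the trivalent vertex is incident to the terminal edge), I would first apply the Expansion move (E) to the terminal vertex. This splits $T$ into four w-trees of degree $<k$: two copies of the ``left'' subtree and two copies of the ``right'' subtree, arranged as an iterated commutator pattern according to Convention \ref{conv:parallel}, differing by a twist on the terminal edge. The twist on the edge below the vertex (whose displacement we want to track) becomes a twist on the corresponding edges of two of these four subtrees, and I would use the Antisymmetry Lemma \ref{lem:as} together with twist involutivity to rewrite these; the key computation here is just the algebraic identity $[A,B] = \overline{[\overline{A},\overline{B}]}$ combined with the effect of inserting a twist (which replaces a label by its inverse), yielding the claimed relation up to a controlled error term $S$ of strictly higher degree. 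Using the algebraic formalism of Section \ref{sec:algebra} to bookkeep the words associated to subtrees makes this step routine rather than picture-heavy.

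For the inductive step $n>1$, I would apply (E) at the vertex adjacent to the head. This produces four w-trees of degree $<k$, and in (at least two of) them the distinguished trivalent vertex is now closer to the head — the path length has dropped from $n$ to $n-1$ relative to each of the smaller trees — so the induction hypothesis applies to each. The error terms $S$ produced at each application have degree strictly greater than the degree of the small trees they came from; since those small trees have degree $<k$, I need to check that after re-expanding (applying (E) back again) all accumulated error terms have total degree $>k$. This is a degree count: applying (E) to a $\w_k$-tree produces $\w_j$-trees with $j<k$ but summing to the right total, and the higher-order errors from the sub-applications, when grouped back up via (E), sit at degree $>k$. I would then reassemble the four (modified) smaller trees via (E) to recover the original $\w_k$-tree with the twist moved as desired, absorbing all errors into a single $S$ of degree $>k$.

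The main obstacle I anticipate is the bookkeeping of the higher-order error terms $S$ through the double use of (E) (expand, apply hypothesis, re-contract), in particular making sure that (a) the error terms genuinely have degree $>k$ and not merely $\ge k$, and (b) they can be collected to one side of the diagram using the Tails/Heads/Head--Tail Exchange moves and the Inverse Lemma \ref{lem:inverse} without generating yet more terms of degree $\le k$. The Brunnian-type property (Remark \ref{rem:expansion}) and the fact that w-trees of degree $>k$ are ``invisible'' modulo $\w_{k+1}$-equivalence should make this manageable, but it requires care. The rest — the base case picture and the degree arithmetic for (E) — is mechanical. Once the Claim is established, the Lemma \ref{lem:twist} statement follows by simply reading off the $S=\emptyset$ shadow of the claim, i.e. working modulo $\w_{k+1}$-equivalence, and the converse direction follows as noted from Antisymmetry and twist involutivity.
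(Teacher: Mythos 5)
Your overall induction scheme (induct on the number of edges from the distinguished vertex to the head, expand at the head-adjacent vertex, apply the induction hypothesis to the two parallel copies of the subtree containing that vertex, then reassemble) is the same as the paper's, but the two places where the error term $S$ is actually created are not handled correctly. First, the base case: the Antisymmetry Lemma \ref{lem:as} is an \emph{exact} move, producing no higher-degree correction, so the identity $[A,B]=\ov{[\ov{A},\ov{B}]}$ together with twist involutivity cannot by itself yield the claim. In the algebraic formalism the base case is the passage from $[\ov{A},B]=\ov{A}\,\ov{B}\,A\,B$ to $S\,\ov{[A,B]}$ with $\ov{[A,B]}=\ov{B}\,A\,B\,\ov{A}$: these two words are conjugate, not equal, and the whole content of the claim is that this conjugation can be traded for multiplication by a w-tree of degree $>k$. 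That is exactly what the Heads Exchange Lemma \ref{lem:he} (in the variants of Remark \ref{cor:hh}) combined with the Inverse Lemma \ref{lem:inverse} provides; the paper's base case is $[\ov{A},B]=\ov{A}\,\ov{B}\,A\,B=\ov{A}\,\ov{[A,B]}\,A=[\ov{A},[A,B]]\,\ov{[A,B]}$, with $S=[\ov{A},[A,B]]$ of degree $\deg A+k>k$. Antisymmetry plays no role here; in the paper it only serves, afterwards, to deduce the converse version of the Twist move.

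Second, the inductive step: your degree count does not work as stated. Writing the tree as $[C,I']$ with $\deg C=l$, $\deg I'=m$, $l+m=k$, the induction hypothesis applied to the copies of $I'$ produces error trees of degree only $>m$, hence possibly of degree $\le k$; they do not acquire degree $>k$ by being ``grouped back up via (E)'', since (E) only regroups the four main pieces, and if you merely collect these errors to one side they remain in the final presentation and violate the claim. What saves the argument is that they occur as an inverse pair $\ov{S},\,S$ flanking the copy $\ov{C}$: one Heads Exchange across $\ov{C}$ lets them cancel by the Inverse Lemma, and it is this exchange that creates the genuinely higher-degree terms, of degree at least $(m+1)+l=k+1>k$, which are then pushed to the side to form the final $S$ (this is the computation $C\,\ov{J'}\,\ov{S}\,\ov{C}\,S\,J'=G\,C\,\ov{J'}\,\ov{C}\,J'=G\,[C,J']$ with $G\in\Gamma_{k+1}F$ in the paper). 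This pairwise-cancellation mechanism is the missing idea; it is precisely the obstacle you flag at the end, and without it the proof does not close.
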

  The case $n=1$ of the claim is given in Figure \ref{fig:twistproof}. There, the first equality uses (E) and  the second equality follows from the 
 Heads Exchange Lemma \ref{lem:he} (actually, from  Remark \ref{cor:hh}) applied at the two rightmost heads, and the Inverse Lemma \ref{lem:inverse}. 
 The third equality also follows from Remark \ref{cor:hh} and the Inverse Lemma. 
   \begin{figure}[!h]
   \includegraphics[scale=1]{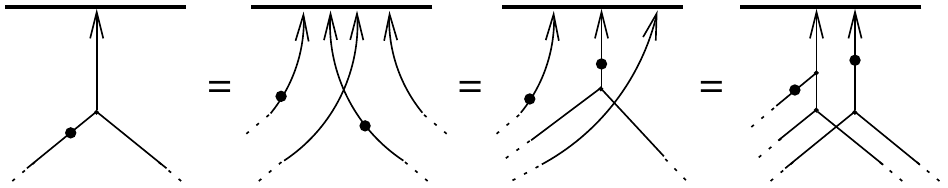} 
   \caption{Proof of the Twist Lemma: case $n=1$}\label{fig:twistproof}
   \end{figure}
 \noindent Note that this can equivalently 
 be shown using the algebraic formalism of Section \ref{sec:algebra}; more precisely, 
 the above figure translates to the simple equalities 
 $$ [\ov{A},B] = \ov{A}\, \ov{B}\, AB\,  = \ov{A}\, \ov{[A,B]}\, A = [\ov{A},[A,B]]\, \ov{[A,B]}. $$
 
 Observe that, in this algebraic setting, $d=n-1$ is the \emph{depth} of $[\ov{A},B]$ in an iterated commutator $[\mydot ,[\ov{A},B]\mydots ]\in \Gamma_{m} F$, 
 which is defined  as the number of elements $D_i\in F$ such that $[\mydot ,[\ov{A},B]\mydots ]=[D_d,[D_{d-1}, \mydot ,[D_1,[\ov{A},B]]\mydots ]]$. 
 For the inductive step, consider an element 
 $\left[ C,[\mydot ,[\ov{A},B]\mydots ]\right]\in \Gamma_k F$, where $C\in \Gamma_l F$ and  
 $[\mydot ,[\ov{A},B]\mydots ]\in \Gamma_m F$ for some integers $l,m$ such that $l+m=k$. 
Observe also that the induction hypothesis gives the existence of some $S\in \Gamma_{m+1} F$ such that 
 $$ [\mydot ,[\ov{A},B]\mydots ]= S\, [\mydot ,\ov{[A,B]}\mydots]. $$
 The inductive step is then given by 
 \begin{align*}
 \left[ C,[\mydot ,[\ov{A},B]\mydots ]\right] & =  C\, \ov{[\mydot ,[\ov{A},B]\mydots]}\, \ov{C}\, [\mydot ,[\ov{A},B]\mydots] & \\
  & =  C\, \ov{[\mydot ,\ov{[A,B]}\mydots]}\, \ov{S}\, \ov{C}\, S\, [\mydot ,\ov{[A,B]}\mydots] & \textrm{ (induction hypothesis)} \\
  & =  G\, C\, \ov{[\mydot ,\ov{[A,B]}\mydots]}\, \ov{C}\, [\mydot ,\ov{[A,B]}\mydots] & \textrm{ (Heads Exch. Lem.~\ref{lem:he})} \\ 
  & =  G\, \left[ C,[\mydot,\ov{[A,B]}\mydots ]\right], 
 \end{align*}
 where $G$ is some term in $\Gamma_{k+1} F$. 
 (The reader is invited to draw the corresponding diagrammatic argument.)
\end{proof}

\begin{remark}
By a symmetric argument, we can prove a variant of Claim \ref{claim:inverse} where the heads of $S$ are to the right-hand side of the w-tree in the figure. 
\end{remark}

Next, we address the move exchanging a head and a tail of two w-trees of arbitrary degree.  
\begin{lemma}\label{lem:th}
The following holds. 
 \[ \textrm{\includegraphics[scale=1]{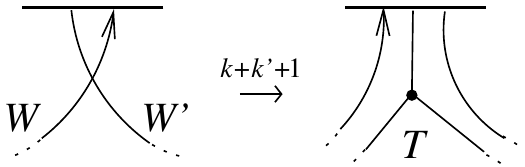}} \]
\noindent Here, $W$ and $W'$ are a $\w_{k}$-tree and a $\w_{k'}$-tree, respectively, for some $k,k'\ge 1$, and $T$ is a $\w_{k+k'}$-tree as shown. 
\end{lemma}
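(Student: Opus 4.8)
The natural approach is induction on the degree $k+k'$, mirroring the structure of the proofs of the Inverse Lemma \ref{lem:inverse} and the Slide Lemma \ref{lem:slide}. The base case $k=k'=1$ is exactly the Head--Tail Exchange Lemma \ref{lem:ht} (exchanging a w-tree head with a w-arrow tail), so the statement already holds when both $W$ and $W'$ are w-arrows; in fact I would first dispose of the case $k'=1$ (i.e. $W'$ a w-arrow, $W$ of arbitrary degree) by expanding $W$ via (E) and applying Lemma \ref{lem:ht} to each resulting lower-degree head together with the single tail of $W'$, then reassembling with (E); the correction w-trees produced this way are each of degree $(\deg\text{-piece of }W)+1$, and collecting them via (E) gives a single $\w_{k+1}$-tree, consistent with the claimed $T$.

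First I would set up the induction so that the inductive step reduces the degree of $W'$. Assuming the result for all pairs of total degree $<k+k'$ with $k'\ge 2$, apply the Expansion move (E) to $W'$: this replaces the tail of $W'$ that is being exchanged by the pair of tails of the two parallel subtrees $A,B$ coming out of the relevant trivalent vertex of $W'$ — more precisely, (E) turns $W'$ into a union of (four, with twists) w-trees, among which the ones relevant to the exchanged tail have strictly smaller degree. Now exchange the head of $W$ past each of these tails, one at a time, using the induction hypothesis; each exchange costs one correction w-tree of the appropriate (higher) degree. Then apply (E) in reverse to recombine the pieces of $W'$ back into $W'$. The correction w-trees accumulated during this process must be shown to combine, again via (E), into the single $\w_{k+k'}$-tree $T$ depicted in Figure \ref{fig:th}; this uses the fact that the Expansion move is well-defined and that the head of $T$ is attached to the correct portion of diagram, together with the Inverse Lemma \ref{lem:inverse} to cancel the twisted/untwisted pairs that (E) introduces on terminal edges. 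The various twist cases on $W$, $W'$, and the incident edges reduce to the untwisted one by the Antisymmetry Lemma \ref{lem:as} and twist involutivity, as in the earlier proofs, so it suffices to treat a single representative configuration.

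The main obstacle I expect is bookkeeping the correction w-trees: when $W$ has degree $k>1$ one must also expand $W$ (since Lemma \ref{lem:ht} only handles a w-arrow tail on the moving side, or rather a w-tree head and w-arrow tail), so both sides get expanded and one is exchanging many small heads past many small tails, each exchange spawning a correction term. The bulk of the work is verifying that this pile of correction terms is exactly the expansion of one $\w_{k+k'}$-tree $T$ — i.e. that the ``iterated commutator'' structure of the corrections matches the expansion of a single tree with the right tail set (the union of the tail sets of $W$ and $W'$) and the right head position. Here the algebraic formalism of Section \ref{sec:algebra} is the cleanest tool: the statement becomes an identity of the form $w(W)\,w(W')\,\cdots = \cdots\,w(W')\,w(W)\,w(T)$ in the free group $F$, obtained by iterating the base-case identity $AB = BA[\ov{A},B]$ (a variant from Figure \ref{fig:head2}) and collecting all the resulting commutator factors; checking that these collect into a single word $w(T)$ of the claimed shape is a routine but slightly tedious commutator-calculus verification, which I would carry out algebraically and then translate back into the diagrammatic picture. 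A secondary (minor) obstacle is making sure, throughout, that all heads involved stay adjacent so that Convention \ref{rem:inverse2} and the Tails Exchange move for w-trees (Lemma \ref{lem:treemoves}) apply without orientation restrictions; this is handled exactly as in the remark following Lemma \ref{lem:treemoves}.
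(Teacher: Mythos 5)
Your overall route --- expand $W'$ by (E), induct, use the Head--Tail Exchange Lemma \ref{lem:ht} as the base case, and recombine --- is essentially the paper's, which inducts on the number of edges from the exchanged tail to the head of $W'$ rather than on $k+k'$; that difference is harmless. The genuine gap is in the accounting of the correction terms, which you defer as a ``routine'' commutator verification. After applying (E) to $W'$, the exchanged tail sits on two parallel copies of a subtree of degree $a<k'$, so each application of your induction hypothesis produces a correction tree of degree only $k+a<k+k'$; these must then be merged with the remaining pieces of the expansion (and with each other) to reconstitute a single tree of degree $k+k'$. Every such merging step --- bringing heads into the adjacent positions needed to reverse (E) --- is a Heads Exchange (Lemma \ref{lem:he}) and therefore itself spawns a new tree of degree $>k+k'$. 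Consequently the corrections do not assemble into the expansion of one $\w_{k+k'}$-tree on the nose: one only gets $T$ together with a union of trees of strictly higher degree. With your inductive statement (exactly one correction tree and nothing else) the induction does not close, and the exact identity you propose to check algebraically fails in general: already $AB=BA[\ov{A},B]$ shows that each head swap leaves behind an exact commutator factor, and collecting several such factors into a single tree word requires further swaps, each leaving new factors; the process terminates only because the new factors have ever-increasing degree, i.e.\ precisely because one works modulo trees of degree $>k+k'$. This is why the paper proves the stronger Claim \ref{claim:th}, an exact equality featuring an explicit union $S$ of w-trees of degree $>k+k'$ with controlled head positions, carries $S$ through the induction (base case from Lemma \ref{lem:ht} together with Claim \ref{claim:inverse}), and this is also the form used later (Corollary \ref{lem:exchange} only needs the exchange up to trees of degree $\ge k+k'$). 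Your argument becomes correct once the induction hypothesis is reformulated in this way.

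A minor point: the case $k'=1$ needs no work at all, since Lemma \ref{lem:ht} already exchanges the head of a $\w_k$-tree of arbitrary degree $k$ with a w-arrow tail; expanding $W$ there is superfluous and would re-introduce exactly the bookkeeping problem described above.
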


\begin{proof}
Consider the path of edges of $W'$ connecting the tail shown in the figure to the head, and denote by $n$ the number of edges in  this path: we have $1\le n\le k'$.  
The proof is by induction on $n$. 
More precisely, we prove by induction on $n$ the following stronger statement. 
 \begin{claim} \label{claim:th}
  Let $k,k'\ge 2$. Let $W$, $W'$ and $T$ be as above. The following equality holds. 
   \[ \textrm{\includegraphics[scale=0.9]{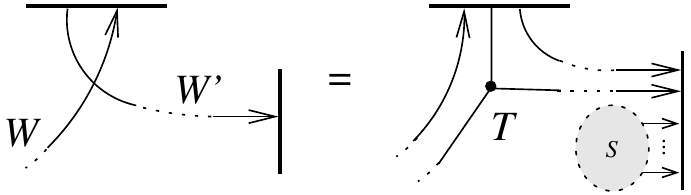}} \]
  \noindent where $S$ denotes a union of $\w$-trees of degree $>k+k'$.  
 \end{claim}
\noindent 
The case $n=1$ of the claim is a consequence of the Head--Tails Exchange Lemma \ref{lem:ht}, Claim \ref{claim:inverse} and the involutivity of twists. The proof of the inductive step is illustrated in Figure \ref{fig:thproof} below.  
\begin{figure}[!h]
   \includegraphics[scale=1]{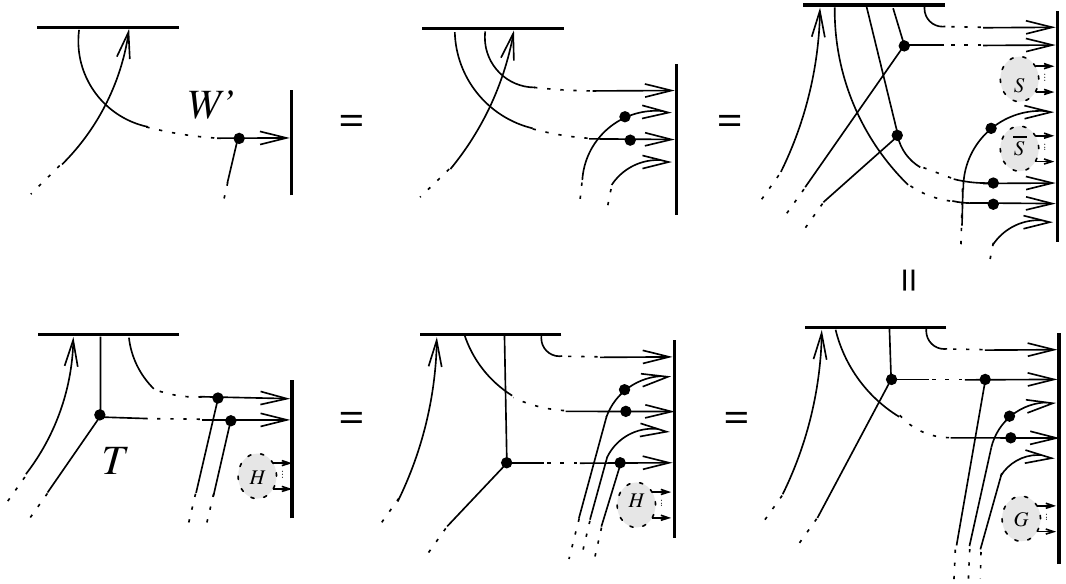}
  \caption{Here, $S$ (resp. $G$ and $H$) represent a union of w-trees of degree $>k+k'$ (resp. degree $>k+k'+1$) 
  }\label{fig:thproof}
\end{figure}
\noindent The first equality in Figure \ref{fig:thproof} is an application of (E) to the $\w_{k'}$-tree $W'$, while the second equality uses the induction hypothesis. The third (vertical) equality then follows from recursive applications of the Heads Exchange Lemma \ref{lem:he}, and uses also Convention \ref{rem:inverse2}. Further Heads Exchanges give the fourth equality, and the final one is given by (E). 
\end{proof}

We note the following consequence of Lemma \ref{lem:he} and Claim \ref{claim:th}. 
\begin{corollary}\label{lem:exchange}
Let $T$ and $T'$ be two w-trees, of degree $k$ and $k'$. 
We can exchange the relative position of two adjacent endpoints of $T$ and $T'$,  
at the expense of additional w-trees of degree $\ge k+k'$. 
\end{corollary}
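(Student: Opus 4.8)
The plan is to reduce Corollary~\ref{lem:exchange} to the exchange lemmas already established, by treating separately the type of the two endpoints involved. Since we must exchange two \emph{adjacent} endpoints of $T$ and $T'$, each of these endpoints is either a head or a tail, giving four (up to the obvious symmetry, three) cases to consider: head--head, head--tail, and tail--tail.

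First I would dispose of the head--head case. If both adjacent endpoints are heads, then the Heads Exchange Lemma~\ref{lem:he} (together with its variants in Remark~\ref{cor:hh}) applies directly: it exchanges the two heads at the expense of an additional w-tree, which is obtained from $T$ and $T'$ by the Expansion move and hence has degree $k+k'$. So here the extra w-trees have degree exactly $k+k'\ge k+k'$, as required. Next, the head--tail (equivalently tail--head) case: this is precisely the content of Claim~\ref{claim:th} inside the proof of Lemma~\ref{lem:th}, which states that a head of $W$ (degree $k$) and a tail of $W'$ (degree $k'$) can be exchanged at the cost of a $\w_{k+k'}$-tree plus a union $S$ of w-trees of degree $>k+k'$; all of these have degree $\ge k+k'$.

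The remaining case is tail--tail, and this is where a small amount of care is needed. If both adjacent endpoints are tails, we cannot invoke any of the above lemmas verbatim. However, by the Tails Exchange move~(3) for w-trees (Lemma~\ref{lem:treemoves}), two tails --- whether belonging to the same or to different w-trees --- may be exchanged \emph{freely}, i.e.\ with \emph{no} additional w-trees at all. This is even stronger than what is claimed (no correction terms are needed), so this case is immediate. I expect this to be the only point worth spelling out, precisely because it does not follow from Lemmas~\ref{lem:he} and~\ref{claim:th} but rather from move~(3); the phrasing of the corollary ("consequence of Lemma~\ref{lem:he} and Claim~\ref{claim:th}") slightly understates the ingredients, and the tail--tail case must be acknowledged.

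Assembling the three cases gives the statement: in every case the endpoints of $T$ and $T'$ can be exchanged at the expense of additional w-trees, each of degree $\ge k+k'$ (degree $k+k'$ in the head--head case, degree $\ge k+k'$ in the head--tail case, and degree $\ge k+k'$ vacuously in the tail--tail case since no new w-trees appear). The main --- indeed only --- obstacle is simply the bookkeeping of which lemma covers which configuration of endpoints; once the cases are separated there is no real computation, as each is quoted from an already-proven result.
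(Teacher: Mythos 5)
Your proof is correct and follows essentially the same route as the paper's: the paper's own argument also splits into the tail--tail case (free by the Tails Exchange move (3)), the head--head case (one $\w_{k+k'}$-tree via the Heads Exchange Lemma \ref{lem:he}), and the head--tail case (Claim \ref{claim:th}, with correction terms of degree $\ge k+k'$). Your remark that the tail--tail case needs move (3) in addition to the two cited results is accurate but minor, since the paper's proof explicitly includes it.
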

\begin{proof}
There are three types of moves to be considered. 
First, exchanging two tails can be freely performed by the Tails Exchange move (3). 
Second, it follows from the Heads Exchange Lemma \ref{lem:he} 
that exchanging the heads of these two $\w$-trees can be performed at the cost of one $\w_{k+k'}$-tree.  
Third, by Claim \ref{claim:th}, exchanging a tail of one of these w-trees and the head of the other can be achieved up to addition of $\w$-trees of degree $\ge k+k'$. 
\end{proof}

Let us also note, for future use, the following consequence of these Exchange results. 
We denote by $\1_n$ the trivial $n$-component string link diagram.
\begin{lemma}\label{cor:separate}
Let $k,~l$ be integers such that $k\ge l\ge 1$. 
Let $W$ be a union of w-trees for $\1_n$ of degree $\ge l$. 
Then 
 $$ (\1_n)_{W} \stackrel{k+1}{\sim} (\1_n)_{T_1}\cdot \ldots \cdot (\1_n)_{T_{m}}\cdot (\1_n)_{W'}, $$
 where $T_1, \ldots, T_m$ are $\w_l$-trees and $W'$ is a union of w-trees of degree in $\{l+1,\cdots,k\}$. 
\end{lemma}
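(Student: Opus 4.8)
The plan is to first throw away all the w-trees of $W$ of degree $\ge k+1$, and then use the Exchange results of Section~\ref{sec:tech} to \emph{sort} the remaining w-trees: push the $\w_l$-trees into disjoint blocks, isolated one from another, and collect everything of higher degree (whether originally present or produced by the sorting) into a single block $W'$.

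First I would note the following reduction. If $T$ is a w-tree of $W$ of degree $\ge k+1$, then, surgering $\1_n$ first along $W\setminus T$ and viewing $T$ as a w-tree for the resulting diagram $(\1_n)_{W\setminus T}$, we see that $(\1_n)_W$ is obtained from $(\1_n)_{W\setminus T}$ by surgery along the single $\w_{\ge k+1}$-tree $T$, which is a $\w_{k+1}$-move. Iterating, we may assume up to $\w_{k+1}$-equivalence that every w-tree in $W$ has degree in $\{l,l+1,\dots,k\}$; write $W=W_l\cup W_{>l}$, where $W_l$ is the union of the $\w_l$-trees in $W$, say $m$ of them, and $W_{>l}$ collects the trees of degree in $\{l+1,\dots,k\}$.

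The core step is the separation. Using the Tails Exchange move (3), the Heads Exchange Lemma~\ref{lem:he} and Claim~\ref{claim:th} — that is, Corollary~\ref{lem:exchange} — together with the Virtual Isotopy moves (1) (to confine a w-tree to a small disk once all its endpoints have been made consecutive along $\1_n$), I would move the $\w_l$-trees one at a time so that each of them, say $T_1,\dots,T_m$, ends up supported in its own block near the bottom of the string link, below all endpoints of $W_{>l}$ and below all endpoints of the w-trees created along the way. The crucial bookkeeping is that exchanging an endpoint of a degree-$d$ tree with an endpoint of a degree-$d'$ tree, with $d,d'\ge l$, produces by Corollary~\ref{lem:exchange} \emph{only} w-trees of degree $\ge d+d'\ge 2l\ge l+1$ (recall $l\ge1$); in particular no new $\w_l$-tree is ever created, so the number of $\w_l$-trees stays equal to $m$ throughout, while the created trees of degree $\ge k+1$ are discarded as above and those of degree in $\{l+1,\dots,k\}$ are pushed into the top block. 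Since every exchange strictly raises the degrees of the trees involved and all degrees remain bounded by $k$, this process terminates. At the end $(\1_n)_W$ is, up to $\w_{k+1}$-equivalence, the stacking $(\1_n)_{T_1}\cdot\ldots\cdot(\1_n)_{T_m}\cdot(\1_n)_{W'}$, where $W'$ is the union of all remaining w-trees, of degree in $\{l+1,\dots,k\}$; note that $W'$ need not itself be sorted, so no further rearranging is required.

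The main obstacle I expect is precisely this last bookkeeping in the separation step: one must check carefully that confining each freshly isolated $\w_l$-tree (and each error term) to its block can be done by Virtual Isotopy and the Exchange moves without reintroducing interactions, and that the cascade of error terms genuinely terminates — and this is where the two observations, that exchanges strictly increase degrees and never create new $\w_l$-trees, together with the degree cap $k$, do all the work. Once the supports of $T_1,\dots,T_m$ and $W'$ are disjoint and suitably stacked, identifying the resulting diagram with the composition $(\1_n)_{T_1}\cdot\ldots\cdot(\1_n)_{T_m}\cdot(\1_n)_{W'}$ is immediate from the definition of composition of string links.
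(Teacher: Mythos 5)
Your proof is correct and follows essentially the same route as the paper's: repeated use of Corollary \ref{lem:exchange} to sort the $\w_l$-trees into disjoint blocks, with the error terms of strictly increasing degree pushed aside at each round and discarded (as $\w_{k+1}$-moves) once their degree exceeds $k$. Your extra bookkeeping — the initial removal of degree $\ge k+1$ trees, the observation that no new $\w_l$-trees are created, and the termination argument via the degree cap — just makes explicit what the paper leaves implicit.
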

\begin{proof}
 This is shown by repeated applications of Corollary \ref{lem:exchange}. 
 More precisely, we use Exchange moves to rearrange the $\w_l$-trees $T_1, \ldots, T_{m}$ in $W$ so that they sit in disjoint disks $D_i$ ($i=1,\ldots,m$),  which intersects each component of $\1_n$ at a single trivial arc,  
 so that $(\1_n)_{\cup_i T_i} = (\1_n)_{T_1}\cdot \ldots \cdot (\1_n)_{T_{m}}$.  
 By Corollary \ref{lem:exchange}, this is achieved at the expense of w-trees of degree $\ge l+1$, which may intersect those disks. 
 But further Exchange moves allow to move all higher degree w-trees \emph{under} $\cup_i D_i$, according to the orientation of $\1_n$,  now at the cost of additional w-trees of degree $\ge l+2$, which possibly intersect $\cup_i D_i$. 
 We can repeat this procedure until the only higher degree w-trees intersecting $\cup_i D_i$ have degree $> k$, 
 which gives the desired equivalence. 
\end{proof}

Finally, we give a w-tree version of the IHX relation.
\begin{lemma}[IHX] \label{lem:ihx}
The following holds. 
\begin{figure}[!h]
   \includegraphics[scale=1]{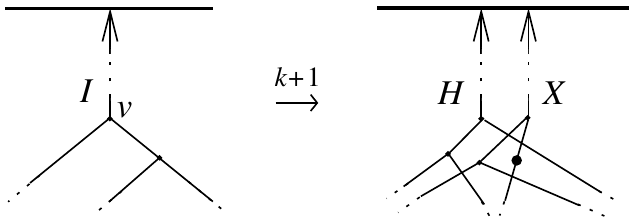}
  \caption{The IHX relation for w-trees}\label{fig:ihx}
\end{figure}

\noindent Here, $I$, $H$ and $X$ are three $\w_{k}$-tree for some $k\ge 3$.
\end{lemma}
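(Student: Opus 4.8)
The plan is to reduce the IHX relation to the algebraic IHX (Jacobi) identity at the level of the words $w(T)$ attached to terminal edges, using the formalism of Section~\ref{sec:algebra}, and then to promote this algebraic identity back to a w-tree equivalence up to higher degree. First I would set up notation: write $A$, $B$, $C$ for the words associated to the three subtrees hanging off the ``internal edge'' of the $I$-configuration, arranged so that the three diagrams $I$, $H$, $X$ correspond to the three ways of bracketing these subtrees. Unwinding the recursive rule $w(T)=[\cdot,\cdot]$ for outgoing edges at trivalent vertices, the terminal words of $I$, $H$, $X$ become the three iterated commutators $[[A,B],C]$, $[[B,C],A]$, $[[C,A],B]$ (up to orientation and twist conventions, which are bookkeeping). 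The Hall--Witt identity then shows that a suitable product of these three commutators lies in $\Gamma_{3k+1}F$ — i.e. the three w-trees, placed with adjacent heads in the appropriate cyclic order, compose to a union of w-trees of degree $>3k$.

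The technically delicate part is passing from this word identity back to a genuine statement about surgery. Here I would argue by induction on $k$ exactly as in the proofs of Lemmas~\ref{lem:inverse} and~\ref{lem:slide}: the base case $k=3$ (three $\w_3$-trees) can be checked by brute force by applying (E) to reduce to w-arrows and then using the Wirtinger/Magnus bookkeeping together with the Inverse Lemma~\ref{lem:inverse} to cancel terms in pairs; for the inductive step, apply the Expansion move (E) to all three $\w_k$-trees, obtaining unions of $\w$-trees of degree $<k$ on which the induction hypothesis applies, then apply (E) backwards. Throughout, one repeatedly invokes the Exchange results (Heads Exchange Lemma~\ref{lem:he}, Head--Tail Exchange, Corollary~\ref{lem:exchange}) to bring the relevant heads into adjacency before combining, and the Antisymmetry Lemma~\ref{lem:as} and Twist Lemma~\ref{lem:twist} to normalize twists and cyclic orders at the internal vertices; each such move is ``free'' modulo w-trees of degree $\ge 2k > 3k$... — in fact of the degree needed for the statement.

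The main obstacle I anticipate is the orientation/twist bookkeeping: the IHX relation as drawn in Figure~\ref{fig:ihx} fixes particular orientations of the three terminal edges and a particular cyclic order at each of the two trivalent vertices appearing, and one must check that these choices are precisely the ones under which the Hall--Witt identity (rather than some sign-twisted variant that would not close up) applies. I would handle this by first proving the cleanest oriented version — the one matching the algebraic identity $[[A,B],C]\cdot[[B,C],A]\cdot[[C,A],B] \in \Gamma_{3k+1}F$ literally — and then deriving the stated version from it using Antisymmetry, Twist, and the Reversal moves, noting that each correction term introduced is a w-tree of degree $>3k$ (since it arises from an Exchange or Inverse operation on w-trees of total degree $\ge 2k$, hence $> 3k$ when $k\ge 3$... more care: one wants degree $\ge k+2k = 3k$, with the strict inequality coming from the commutator structure). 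A secondary, purely technical obstacle is the verification of the base case, which is the most computation-heavy step, but it is entirely routine given the Wirtinger presentation of Section~\ref{sec:wirtinger} and the Magnus expansion.
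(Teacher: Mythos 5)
Your overall instinct—work in the commutator formalism of Section \ref{sec:algebra} and realize the algebraic IHX by Exchange/Twist moves, discarding corrections as higher-degree trees—is the right one, but the proposal has a structural gap in how the reduction is organized. The essential case of this lemma is the configuration where the IHX vertex is \emph{adjacent to the head}, with the three branches $A$, $B$, $C$ arbitrary subtrees; there the statement is the identity $[A,[B,C]] = S\,[[A,B],C]\,[[A,C],\ov{B}]$ with $S\in\Gamma_{k+1}F$, and it must be derived by an explicit manipulation in which every step is a named w-tree move (Heads Exchange, Lemma \ref{lem:he}, and the Twist Lemma \ref{lem:twist}); the remaining cases then follow by induction on the \emph{depth} of that vertex (its distance to the head), not on the degree. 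Your induction on $k$ with base case $k=3$ does not reduce this case: when the IHX vertex of a degree-$k$ tree sits next to the head, applying (E) expands exactly at that vertex and destroys the configuration, so there is no lower-degree IHX to which your induction hypothesis applies, and the true base case is ``depth $0$ with arbitrary branches,'' which is an infinite family and cannot be dispatched by a brute-force check at $k=3$. Moreover, your proposed verification of the base case by ``Wirtinger/Magnus bookkeeping'' only shows that the two surgered diagrams have matching group-theoretic invariants; it does not show they are equivalent diagrams, let alone equivalent up to surgery along w-trees of higher degree. In the paper the algebraic formalism is nothing but shorthand for sequences of legitimate w-tree moves; a free-standing word identity such as Hall--Witt has no force on diagrams until each conjugation in it is realized by a Heads Exchange with an explicit higher-degree correction tree.

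The degree bookkeeping is also incorrect. Each of $I$, $H$, $X$ is a single $\w_k$-tree whose branches $A$, $B$, $C$ have degrees summing to at most $k$, so the associated words lie in $\Gamma_kF$ and the corrections one can (and need) obtain lie in $\Gamma_{k+1}F$—not $\Gamma_{3k+1}F$. Indeed, a Heads Exchange between the degree-$k$ tree $[[A,B],C]$ and a branch of degree $1$ produces a correction of degree exactly $k+1$ in general, so the claim that all error terms have degree $>3k$ is false, and the supporting estimates (``total degree $\ge 2k$, hence $>3k$''; ``$2k>3k$'') do not hold. Since your justification for discarding the error terms rests on these bounds, it needs to be replaced: the corrections should simply be tracked as trees of degree $\ge k+1$, which is the bound the statement requires, and establishing the identity modulo $\Gamma_{k+1}F$ by explicit moves in the head-adjacent case is precisely the content that your outline leaves unproved.
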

\begin{proof}
We prove this lemma using the algebraic formalism of Section \ref{sec:algebra}, for simplicity. 
We prove the following stronger version.
 \begin{claim} \label{claim:ihx}
 For all $k\ge 2$, we have 
 $$ \underbrace{[\mydot ,[A,[B,C]]\mydots ]}_{\in\, \Gamma_{k} F}  = S\, [\mydot ,[[A,B],C]\mydots ][\mydot ,[[A,C],\ov{B}]\mydots ], $$
 \noindent for some $S\in \Gamma_{k+1} F$. 
 \end{claim}
The proof is by induction on the depth $d$ of $[A,[B,C]]$ in the iterated commutator $[\mydot ,[A,[B,C]]\mydots ]$, as defined in the proof of Claim \ref{claim:inverse}. 
Recall that, diagrammatically, the depth of $[A,[B,C]]$ is the number of edges connecting the vertex $v$ in Figure \ref{fig:ihx} to the head. 
The case $d=0$ is given by 
\begin{align*}
[A,[B,C]] & = A\, \ov{C}\, B\, C\, \ov{B}\, \ov{A}\, B\, \ov{C}\, \ov{B}\, C  & \\
              & = A\, \ov{C}\, B\, C\, \ov{A}\, [A,B]\, \ov{C}\, \ov{B}\, C & \\ 
              & = A\, \ov{C}\, B\, C\, \ov{A}\, \left[[A,B],C\right]\, \ov{C}\, [A,B]\, \ov{B}\, C & \\ 
              & = R'\, \left[[A,B],C\right]\, A\, \ov{C}\, B\, C\, \ov{A}\, \ov{C}\, [A,B]\, \ov{B}\, C & \textrm{(Heads Exchange Lem.~\ref{lem:he})}\\
              & = R'\, \left[[A,B],C\right]\, A\, \ov{C}\, B\, C\, \ov{A}\, \ov{C}\, A\, \ov{B}\, \ov{A}\, C & \\ 
              & = R'\, \left[[A,B],C\right]\, A\, \ov{C}\, B\, [C,A]\, \ov{B}\, \ov{A}\, C & \\ 
              & = R'\, \left[[A,B],C\right]\, A\, \ov{C}\, [C,A]\, \left[[\ov{A},\ov{C}],\ov{B}\right]\, \ov{A}\, C & \\ 
              & = R\, \left[[A,B],C\right]\, \left[[\ov{A},\ov{C}],\ov{B}\right] & \textrm{(Heads Exchange Lem.~\ref{lem:he})}\\
              & = R\, \left[[A,B],C\right]\, S'\, \left[[A,C],\ov{B}\right], & \textrm{(Twist Lem.~\ref{lem:twist})} \\
              & = S\, \left[[A,B],C\right]\, \left[[A,C],\ov{B}\right], & \textrm{(Heads Exchange Lem.~\ref{lem:he})}
\end{align*}
where $R,R',S'$ and $S$ are some elements of $\Gamma_{k+1} F$. 

For the inductive step, 
let $I'=[\mydot ,[A,[B,C]]\mydots ]$ be an element of $\Gamma_m F$, for some $m\ge 3$, such that $[A,[B,C]]$ has depth $d$, 
and set $H'=[\mydot ,[[A,B],C]\mydots ]$ and $X'=[\mydot ,[[A,C],\ov{B}]\mydots ]$. 
By the induction hypothesis, there exists an element $S\in \Gamma_{m+1}$ such that $I'= S\, H'\, X'$. 
Let $D\in \Gamma_l F$ such that $l+m=k$. 
Then we have 
\begin{align*}
 [D,I'] = D\, \ov{I'}\, \ov{D} I' 
         & = D\, \ov{X'}\, \ov{H'}\, \ov{S}\, \ov{D}\, S\, H'\, X' & \textrm{(induction hypothesis)} \\
         & = R'\, D\, \ov{X'}\, \ov{H'}\, \ov{D}\, H'\, X' & \textrm{(Heads Exchange Lem.~\ref{lem:he})} \\
         & = R'\, D\, \ov{X'}\, \ov{D}\, [D,H']\, X' &   \\
         & = R\,[D,H']\, D\, \ov{X'}\, \ov{D}\, X' & \textrm{(Heads Exchange Lem.~\ref{lem:he})} \\
         & = R\,[D,H']\, [D,X'],  &  
\end{align*}
where $R,R'$ are some elements in $\Gamma_{k+1} F$. 
\end{proof}

\section{Finite type invariants of welded knots and long knots}\label{sec:wk_knots}

We now use the $\w_k$-equivalence relation to characterize finite type invariants of welded (long) knots. 
Topological applications for surfaces in $4$-space are also given.  

\subsection{$\w_k$-equivalence for welded knots}

The fact, noted in Section \ref{sec:w1}, that any two welded knots are $\w_i$-equivalent for $i=1,2$, generalizes widely as follows.  
\begin{theorem}\label{thm:weldedknots}
\label{thm:wk}
 Any two welded knots are $\w_k$-equivalent, for any $k\ge 1$.
\end{theorem}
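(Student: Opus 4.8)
The plan is to start from an arbitrary Arrow presentation of a welded knot and progressively simplify it, degree by degree, using the w-tree calculus developed in Sections \ref{sec:warrsurg}--\ref{sec:wkequiv}. Fix $k\ge 1$; it suffices to show that any welded knot $K$ admits a w-tree presentation $(U,T)$ where $U$ is a trivial (crossingless) diagram and every component of $T$ has degree $\ge k$, since then $K$ is $\w_k$-equivalent to the unknot. We already know from Section \ref{sec:w1} that $K$ is $\w_1$-equivalent, and in fact $\w_2$-equivalent, to the unknot, so the base case is in hand; the real content is the inductive step, pushing from $\w_l$-equivalence to $\w_{l+1}$-equivalence for $l\ge 2$.

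First I would set up the induction. Assume $K$ is $\w_l$-equivalent to the unknot, so $K=(\1_1)_W$ for some union $W$ of w-trees of degree $\ge l$ on the trivial long knot / knot diagram (working with long knots is convenient and harmless for this statement). By Lemma \ref{cor:separate}, up to $\w_{l+1}$-equivalence we may arrange $W$ so that its degree-$l$ part consists of disjoint $\w_l$-trees $T_1,\dots,T_m$ sitting in disjoint disks, each meeting the knot in a single trivial arc, plus higher-degree junk. So modulo $\w_{l+1}$-equivalence, $K$ is a "composite" of the elementary pieces $(\1_1)_{T_i}$. The crux is then to show that each such elementary piece $(\1_1)_{T}$, obtained by surgery on a single $\w_l$-tree $T$ all of whose tails lie on the \emph{same} component, is itself $\w_{l+1}$-equivalent to the trivial knot.

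The key step — and the one I expect to be the main obstacle — is exactly this: \emph{for a single $\w_l$-tree $T$ with all $l$ tails on one strand, surgery is trivial up to degree $l+1$}. The natural approach is to use the Fork Lemma \ref{lem:fork} together with the Exchange results (Corollary \ref{lem:exchange}, Lemma \ref{lem:th}, the Heads Exchange Lemma \ref{lem:he}) and the IHX Lemma \ref{lem:ihx}. Concretely, since all tails of $T$ lie on the same arc of $\1_1$, they are linearly ordered along the strand; one can try to use Tails Exchange and the IHX relation to rewrite $T$, modulo degree-$(l+1)$ trees, as a combination of w-trees each containing a \emph{fork} (two adjacent tails on the same vertex), which then die by the Fork Lemma. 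This is a "all tails on one component $\Rightarrow$ expressible via antisymmetry/IHX as forked trees" argument, analogous to the classical clasper fact that a $C_l$-tree with a repeated leaf on a knot is $C_{l+1}$-trivial; the IHX and Antisymmetry lemmas are precisely the tools that let one collect the tree into forked pieces. The delicate point is bookkeeping: each rewriting move spawns correction trees of degree $\ge l+1$, and one must be sure these can be absorbed into the error term rather than obstructing the reduction, which is why one wants the sharpened, depth-indexed claims (Claims \ref{claim:inverse}, \ref{claim:th}, \ref{claim:ihx}) rather than their bare statements.

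Assembling these: the elementary pieces $(\1_1)_{T_i}$ are each $\w_{l+1}$-trivial, hence by the additivity/composition structure and Lemma \ref{cor:separate} the whole of $K$ is $\w_{l+1}$-equivalent to the unknot. Induction on $l$ then gives $\w_k$-equivalence for every $k$, completing the proof. I would expect the write-up to spend essentially all its effort on the single-tree-on-one-component reduction, with the rest being routine application of the lemmas already established.
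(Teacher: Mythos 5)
There is a genuine gap, and it lies exactly at the step you flagged as the crux. Your claimed key lemma --- that surgery along a single $\w_l$-tree with all tails on one strand is trivial up to $\w_{l+1}$-equivalence, provable by Tails Exchange, Antisymmetry, IHX and the Fork Lemma --- is false in the long-knot setting you chose to work in. The welded long knot $L_l$ of Figure \ref{fig:K0} is obtained from the trivial long knot by surgery along exactly such a tree, yet $\alpha_l(L_l)=1$ by Lemma \ref{lem:wkAlex}; since $\alpha_l$ is a finite type invariant of degree $l$ (Lemma \ref{lem:Alexfti}) and hence invariant under $\w_{l+1}$-equivalence (Proposition \ref{prop:ftiwk}), $L_l$ is not $\w_{l+1}$-trivial. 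The obstruction is concrete: when the head separates the two tails attached to the external vertex, no amount of Tails Exchange/Antisymmetry/IHX will produce a fork modulo higher degree --- the best one can do is Lemma \ref{lem:wklong}, which reduces such a tree to a power of $L_l$, not to the trivial diagram. For the same reason the opening reduction ``working with long knots is convenient and harmless'' is not harmless: the theorem is simply false for welded long knots, which are classified up to $\w_k$-equivalence by the coefficients $\alpha_i$ (Theorem \ref{thm:ftialexlong}, Corollary \ref{cor:ftiwk}).

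The missing idea, which is the whole point of the paper's proof (Lemma \ref{lem:separate}), is to exploit the closedness of the knot. After isolating a $\w_l$-tree $T$ in a disk $B$ meeting the knot $O$ in a single arc (Lemma \ref{cor:separate}), one argues as follows: if, travelling along $O$ inside $B$, the head of $T$ is met first, then all tails lie on one side of the head, Tails Exchange and Antisymmetry produce a fork, and the Fork Lemma \ref{lem:fork} deletes $T$; if instead a tail is met first, one slides that tail out of $B$ and \emph{all the way around the closed component}, at the cost only of additional w-trees of degree $\ge l+1$ (Corollary \ref{lem:exchange}), bringing it back into $B$ on the other side of the head, and iterates until the head comes first. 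This circumnavigation step has no analogue for long knots (the endpoints block it), which is precisely why welded knots are $\w_k$-trivial for all $k$ while welded long knots are not. Your surrounding induction scaffolding (reduce to isolated degree-$l$ trees via Lemma \ref{cor:separate}, absorb correction terms of degree $\ge l+1$) matches the paper's, but without the closed-component trick the elementary pieces cannot be killed, so the proof as proposed does not go through.
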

An immediate consequence is the following.
\begin{corollary}\label{cor:ftiwklong}
There is no non-trivial finite type invariant of welded knots. 
\end{corollary}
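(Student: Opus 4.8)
\textbf{Proof proposal for Theorem~\ref{thm:wk} (any two welded knots are $\w_k$-equivalent).}

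The plan is to prove the stronger statement that every welded knot is $\w_k$-equivalent to the trivial knot, by induction on $k$. The base case $k=1$ (indeed $k=2$) is already recorded in Section~\ref{sec:w1}. For the inductive step, assume every welded knot is $\w_k$-equivalent to the unknot; we must promote this to $\w_{k+1}$-equivalence. Fix a welded knot $K$. By the inductive hypothesis, there is a w-tree presentation $(U,T)$ of $K$ in which $U$ is the trivial diagram and $T$ is a union of w-trees of degree exactly $k$ (degrees $>k$ can be absorbed; lower degrees have been cleared by hypothesis). Thus it suffices to show that surgery along any single $\w_k$-tree $T_0$ on the trivial long knot, performed in the presence of the other w-trees, can be undone modulo $\w_{k+1}$-equivalence.

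The key point is that, since $U$ is a \emph{knot} (one component), all $k$ tails of $T_0$ lie on the same strand, and one has a lot of freedom to move them. First I would use the Tails Exchange move~(3), together with the Exchange results of Corollary~\ref{lem:exchange} and Lemma~\ref{cor:separate}, to bring the $k$ tails of $T_0$ into adjacent position along the strand — this is legitimate up to addition of w-trees of degree $\ge k+1$. Once the tails are adjacent, I would produce a \emph{fork}: two of the adjacent tails are connected (after applying Expansion~(E) to isolate the relevant trivalent vertex, or by a direct local analysis) to a common trivalent vertex, so the Fork Lemma~\ref{lem:fork} applies and the surgery is trivial. The mechanism behind this is exactly the commutator picture of Remark~\ref{rem:commutator}: the expansion of $T_0$ is an iterated commutator in w-arrows $1,\dots,k$ all of whose heads are adjacent and all of whose tails are adjacent on the strand, and bringing two equal inputs together (the fork) kills the commutator. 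Concretely, one uses the Head--Tail Exchange Lemma~\ref{lem:ht} and the Twist/IHX lemmas (Lemmas~\ref{lem:twist}, \ref{lem:ihx}) to rearrange $T_0$ into one whose root commutator has a repeated argument, hence vanishes by Lemma~\ref{lem:fork}, all modulo degree~$\ge k+1$ corrections.

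Having removed $T_0$ this way, one repeats for each degree-$k$ w-tree in $T$, each time at the cost of w-trees of degree $\ge k+1$; after finitely many steps $K$ is $\w_{k+1}$-equivalent to $(U', T')$ with $U'$ trivial and $T'$ of degree $\ge k+1$, i.e.\ $\w_{k+1}$-equivalent to the unknot. This closes the induction, and since $\w_k$-equivalence is symmetric and transitive, any two welded knots are $\w_k$-equivalent for every $k$.

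\textbf{Main obstacle.} The delicate point is the bookkeeping of the lower-degree corrections when rearranging the tails: moving tails and heads around (Corollary~\ref{lem:exchange}, Claim~\ref{claim:th}, Claim~\ref{claim:inverse}) introduces new w-trees, and one must be sure these are \emph{strictly} of degree $\ge k+1$ so that they are invisible modulo $\w_{k+1}$-equivalence and do not feed back into the degree-$k$ part. The fact that $U$ has a single component is what makes this work — all tails live on one strand, so the Tails Exchange move~(3) (which for a single component is free, with no extra w-trees) does the heavy lifting, and only the head-moves cost higher-degree trees. I expect the actual write-up to consist of carefully chaining Lemmas~\ref{lem:ht}, \ref{lem:twist}, \ref{lem:ihx}, \ref{lem:fork} and Corollary~\ref{lem:exchange} in the right order, with the fork creation being the conceptual heart.

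\medskip
\noindent\emph{Proof of Corollary~\ref{cor:ftiwklong}.} By Theorem~\ref{thm:wk}, any welded knot is $\w_k$-equivalent to the unknot for every $k\ge 1$. By Proposition~\ref{prop:ftiwk}, a finite type invariant of degree $<k$ cannot distinguish $\w_k$-equivalent objects, so any finite type invariant of degree $<k$ takes the same value on every welded knot as on the unknot. Since this holds for all $k$, every finite type invariant of welded knots is constant. \qed
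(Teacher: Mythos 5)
Your final deduction of the corollary (Theorem~\ref{thm:wk} plus Proposition~\ref{prop:ftiwk}, letting $k$ grow) is exactly the paper's, and the geometric core of your sketch of Theorem~\ref{thm:wk} (isolate a degree-$k$ tree, make two tails of a common trivalent vertex adjacent, kill it by the Fork Lemma~\ref{lem:fork}, absorb the cost in degree $\ge k+1$) is indeed the mechanism used in the paper. But your induction set-up has a genuine gap: from the hypothesis that every welded knot is $\w_k$-equivalent to the unknot you infer that $K$ admits a w-tree presentation over the trivial diagram by trees of degree (exactly) $k$, ``lower degrees having been cleared by hypothesis''. Passing from $\w_k$-equivalence of diagrams back to such a presentation-level statement is precisely the converse implication which the authors say is \emph{not known to hold} (Section~\ref{sec:wk}, after Notation~\ref{not:wk}: the $\w_k$-analogue of Habiro's Prop.~3.22). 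The paper avoids this entirely by staying at the presentation level: it fixes the ambient degree $k$ once and for all and proves the two-index Lemma~\ref{lem:separate} by induction on the threshold $l\le k$, keeping every tree of degree $\le k$ explicitly in the presentation and discarding only degree $>k$ trees under the fixed relation $\stackrel{k+1}{\sim}$. To repair your argument you must either reformulate your inductive statement in these presentation terms, or prove directly that $D_T\stackrel{k+1}{\sim}D$ for every welded knot diagram $D$ and every $\w_k$-tree $T$, and then chain $\w_1\Rightarrow\w_2\Rightarrow\cdots$.

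Second, the decisive point distinguishing knots from long knots is misattributed. You justify gathering the tails of $T_0$ by ``all tails live on one strand, so Tails Exchange is free''; Tails Exchange is free in all cases (Lemma~\ref{lem:treemoves}), and your justification applies verbatim to welded long knots, for which the conclusion is false: the long knot $L_k$ of Figure~\ref{fig:K0} is obtained from $\1$ by surgery along a single $\w_k$-tree yet is not $\w_{k+1}$-equivalent to $\1$, since $\alpha_k(L_k)=1$ (Lemmas~\ref{lem:wkAlex}, \ref{lem:Alexfti} and Proposition~\ref{prop:ftiwk}) --- and you even write ``trivial long knot'' at the key step. What actually makes the closed case work is that the complement of the head of $T_0$ in the circle is connected, so two tails attached to a common trivalent vertex can be made adjacent without ever crossing the head; equivalently, in the paper's proof of Lemma~\ref{lem:separate}, a tail is slid all the way around $O$ using Corollary~\ref{lem:exchange}, at the cost of trees of degree $\ge l+1$, until the head is met first and a fork appears. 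None of the moves you invoke allows a tail to pass the head of its \emph{own} tree, so this closedness observation must be made explicit; with it, and with the presentation-level induction above, your sketch becomes the paper's argument.
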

This was already noted for rational-valued finite type invariants by D.~Bar-Natan and S.~Dancso \cite{WKO1}. 
Also, we have the following topological consequence, which we show in Section \ref{sec:watanabe}.
\begin{corollary}\label{cor:topo1}
There is no non-trivial finite type invariant of ribbon torus-knots. 
\end{corollary}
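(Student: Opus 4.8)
The plan is to obtain Corollary \ref{cor:topo1} as a direct consequence of Corollary \ref{cor:ftiwklong} (there is no non-trivial finite type invariant of welded knots) together with the compatibility of Satoh's Tube map with the two finite type filtrations, as recorded in Remark \ref{rem:ftitube}. Concretely, I would pull back a putative finite type invariant of ribbon torus-knots along the Tube map and show that it must already be a finite type invariant of welded knots of the same degree, hence trivial.

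First I would set up the pull-back. Let $v^{(4)}$ be a finite type invariant of ribbon torus-knots of some degree $\le k$, valued in an abelian group, in the sense of \cite{HKS,KS}; define $v(D):=v^{(4)}(\Tube(D))$ for every welded knot diagram $D$. The key input is that a virtualization move on a welded diagram is sent by $\Tube$ to a ``crossing change at a crossing circle'', the local move generating the $4$-dimensional finite type filtration (Remark \ref{rem:ftitube}, which itself rests on Satoh's construction of $\Tube$ and the analysis of crossing circles in \cite{KS}). Using this, for any welded knot diagram $D$ and any set $S$ of $k+1$ classical crossings of $D$, applying $\Tube$ termwise to $\sum_{S'\subset S}(-1)^{\vert S'\vert}D_{S'}$ produces exactly the defining alternating sum for $v^{(4)}$ at the $k+1$ crossing circles of $\Tube(D)$ determined by $S$, so that
$$ \sum_{S'\subset S}(-1)^{\vert S'\vert}v(D_{S'}) = \sum_{S'\subset S}(-1)^{\vert S'\vert}v^{(4)}(\Tube(D_{S'})) = 0. $$
Hence $v$ is a finite type invariant of welded knots of degree $\le k$, and Corollary \ref{cor:ftiwklong} forces $v$ to be constant on welded knots.

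Finally, I would invoke the surjectivity of the Tube map onto ribbon torus-knots (Section \ref{sec:ribbon}): every ribbon torus-knot equals $\Tube(D)$ for some welded knot diagram $D$, and $v^{(4)}(\Tube(D))=v(D)$ is independent of $D$ by the previous step, so $v^{(4)}$ is constant on ribbon torus-knots, i.e. trivial. I expect the only genuinely delicate point to be the compatibility claim used in the second paragraph -- that virtualization corresponds under $\Tube$ to the move generating the ribbon finite type filtration, and that the relevant crossing circles behave as asserted -- but this is precisely the content of Remark \ref{rem:ftitube}, so once it is granted the rest of the argument is formal. The same reasoning applies verbatim to $k$-dimensional ribbon knotted objects, cf.~Remark \ref{rem:iwanttotakeyouhigher}.
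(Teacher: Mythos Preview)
Your argument is correct, but it takes a different route from the paper's own proof. The paper argues on the level of equivalence relations: given a ribbon torus-knot $T=\Tube(K)$, Theorem~\ref{thm:wk} says $K$ is $\w_k$-equivalent to the unknot for every $k$, and Fact~\ref{fact2} then promotes this to an $RC_k$-equivalence between $T$ and the trivial torus-knot; since $RC_k$-equivalent ribbon $2$-knotted objects share all finite type invariants of degree $<k$ (Watanabe, \cite[Lem.~5.7]{watanabe}), $T$ is indistinguishable from the trivial torus-knot by finite type invariants. You instead pull back $v^{(4)}$ along $\Tube$ and use the local-move correspondence from Remark~\ref{rem:ftitube} to see directly that $v=v^{(4)}\circ\Tube$ is a finite type invariant of welded knots, then invoke Corollary~\ref{cor:ftiwklong} and the surjectivity of $\Tube$. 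Note that Remark~\ref{rem:ftitube} is literally stated in the opposite direction (welded FTI $\Rightarrow$ ribbon FTI), but you correctly extract and use the underlying fact --- that a virtualization is sent by $\Tube$ to a crossing change at a crossing circle --- which immediately gives your direction as well. Your approach has the advantage of bypassing the $RC_k$-equivalence machinery entirely; the paper's approach, on the other hand, stays at the level of geometric equivalence relations and fits into the broader picture of Section~\ref{sec:watanabe}, where Fact~\ref{fact2} is set up once and reused for Corollaries~\ref{cor:topo1} and~\ref{cor:topo2} and Theorem~\ref{thm:watanabe}.
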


Theorem \ref{thm:weldedknots} is a consequence of the following, stronger statement. 
\begin{lemma}\label{lem:separate} 
Let $k,~l$ be integers such that $k\ge l\ge 1$, and let 
$W$ be a union of $\w$-trees for $O$.
There is a union $W_l$ of $\w$-trees of degree $\ge l$ such that 
$O_W \stackrel{k+1}{\sim} O_{W_l}$.
\end{lemma}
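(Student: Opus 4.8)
The plan is to prove Lemma~\ref{lem:separate} by induction on $l$, keeping $k$ fixed, starting from the trivial base case $l=1$ (where $W_1=W$ works since $W$ is already a union of w-trees of degree $\ge 1$). For the inductive step, suppose the result holds for some $l$ with $1\le l<k$, so that $O_W\stackrel{k+1}{\sim} O_{W_l}$ with $W_l$ a union of w-trees of degree $\ge l$. Since the unknot $O$ is a single component, all tails and the head of each w-tree in $W_l$ are attached to the same circle, and we may isolate the lowest-degree part: write $W_l = W_l^{(l)} \cup W_l^{(>l)}$ where $W_l^{(l)}$ consists of the degree-exactly-$l$ w-trees. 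The goal is to show that surgery along each $\w_l$-tree in $W_l^{(l)}$ can be traded, up to $\stackrel{k+1}{\sim}$, for surgery along w-trees of degree $\ge l+1$, so that after absorbing everything we obtain $W_{l+1}$ of degree $\ge l+1$.

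The key mechanism for the inductive step is that a single $\w_l$-tree $T$ whose tails are all on one component of $O$ contains a fork after suitable rearrangement: indeed, since the unknot is connected, among the $l\ge 2$ tails of $T$ there must be two that are attached to $T$ via edges meeting a common trivalent vertex only after we use the Tails Exchange move to make them adjacent along $O$. More carefully, I would argue as follows: for any $\w_l$-tree $T$ for $O$ with $l\ge 2$, using the Tails Exchange move~(3) and Corollary~\ref{lem:exchange} (at the cost of w-trees of degree $\ge 2l > l$), one can bring two tails sharing a trivalent vertex into adjacent position, thereby creating a fork; then by the Fork Lemma~\ref{lem:fork}, surgery along $T$ is trivial up to the error terms. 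This shows $O_T\stackrel{\ge l+1}{\sim}$ trivial for every individual $\w_l$-tree. Applying this to each of the finitely many $\w_l$-trees in $W_l^{(l)}$, and using Corollary~\ref{lem:exchange} repeatedly to separate them (each separation costing only w-trees of degree $\ge 2l\ge l+1$, hence fine as long as we only track degrees up to $k$ and discard those of degree $>k$), we replace $W_l^{(l)}$ by w-trees of degree in $\{l+1,\dots,k\}$ (with possibly more of degree $>k$, which we drop since they do not affect $\stackrel{k+1}{\sim}$). Combining with $W_l^{(>l)}$ gives the desired $W_{l+1}$ of degree $\ge l+1$, and transitivity of $\stackrel{k+1}{\sim}$ finishes the step. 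Iterating from $l=1$ up to the target value $l$ yields the lemma.

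The main obstacle I anticipate is making precise the claim that \emph{every} $\w_l$-tree on a single component can be made to contain a fork, and ensuring the bookkeeping of error terms stays controlled. The fork-creation argument needs the observation that the head of $T$ is also on $O$, so as one travels around the circle one encounters the $l$ tails and the single head in some cyclic order; picking the two tails that are the two inputs of the trivalent vertex adjacent to the ``deepest'' leaves of the tree, the Tails Exchange move lets us slide one of them next to the other along $O$ without obstruction (it only produces w-trees of strictly higher degree via Corollary~\ref{lem:exchange} when a tail must pass a head or an endpoint of another w-tree). One must check that this sliding does not require passing the head of $T$ itself in a way that would break the tree structure — but since Tails Exchange for w-trees (Lemma~\ref{lem:treemoves}) applies to tails of a single component freely, and Head--Tail Exchange (Claim~\ref{claim:th}) handles crossings with heads at the cost of higher-degree trees, this goes through. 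A secondary subtlety is that creating the fork via rearrangement inside $T$ itself (not between $T$ and other trees) uses only moves~(3) and isotopy, with no degree cost at all, so the only genuine error terms come from separating distinct w-trees of $W_l^{(l)}$ from each other and from $W_l^{(>l)}$; these are all of degree $\ge l+1$ by Corollary~\ref{lem:exchange}, which is exactly what we need. I would write this step carefully, perhaps as a standalone sub-claim, since it is the crux; the rest is routine induction and invocation of the Exchange and Fork lemmas.
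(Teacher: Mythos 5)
Your overall strategy (induct on $l$ with $k$ fixed, separate the degree-$l$ trees, kill each one by bringing two tails attached to a common trivalent vertex into adjacent position and invoking the Fork Lemma~\ref{lem:fork}, with all error terms of degree $\ge l+1$) is the same as the paper's, but the crux step has a genuine gap: the case where the head of $T$ lies between the two tails you want to make adjacent. You acknowledge that the sliding might require passing the head of $T$ itself, and you dispose of this by citing the Head--Tail Exchange (Lemma~\ref{lem:th}/Claim~\ref{claim:th}) and Corollary~\ref{lem:exchange}. But those statements exchange a tail of one tree with the head of a \emph{different} tree, the correction term being a new tree obtained by grafting one onto the other; they say nothing about moving a tail of $T$ past the head of the \emph{same} tree $T$. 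Moreover, no such ``same-tree'' exchange with only higher-degree corrections can exist in general: if it did, your fork argument would apply verbatim to welded \emph{long} knots and show they are all $\w_k$-trivial, contradicting Lemma~\ref{lem:wkAlex} and Theorem~\ref{thm:ftialexlong} (for instance $\alpha_k(L_k)=1$, and $\alpha_k$ is a finite type invariant of degree $k$, hence unchanged under $\w_{k+1}$-equivalence by Proposition~\ref{prop:ftiwk}). The long knots $L_k$ of Figure~\ref{fig:K0} are precisely trees whose external vertex has its two tails separated by the head, with no way around on an open component.

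The missing idea --- and the place where closedness of the knot is genuinely used --- is the paper's: after isolating the $\w_l$-tree $T$ in a disk $B$ meeting $O$ in one arc (Lemma~\ref{cor:separate}), if the first endpoint of $T$ met along this arc is a tail, slide that tail out of $B$ and all the way \emph{around} the closed component, using Corollary~\ref{lem:exchange} only against endpoints of \emph{other} trees (cost: trees of degree $\ge l+1$, which Lemma~\ref{cor:separate} lets you push away from $B$), and bring it back into $B$ from the other side; iterating, one reaches the configuration where the head is met first, so that all tails lie on one side of the head and Tails Exchange (with Antisymmetry, Lemma~\ref{lem:as}) creates a fork without ever crossing the head of $T$. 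Equivalently: the two tails of an external vertex cut $O$ into two arcs, exactly one of which avoids the head of $T$, and the sliding must be routed along that arc --- an option available only because the component is closed. Your writeup gestures at connectedness of the unknot but never makes this routing argument; replacing the appeal to Claim~\ref{claim:th} by it (and noting that, as a consequence, your side remark that fork creation costs nothing is only true after this routing) repairs the proof and turns it into the paper's.
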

\begin{proof}
The proof is by induction on $l$. 
The initial case, i.e., $l=1$ for any fixed integer $k\ge 1$, was given in Section \ref{sec:w1}.
Assume that $W$ is a union of $\w$-trees of degree $\ge l$. 
Using Lemma \ref{cor:separate}, we have that 
$O_{W}\stackrel{k+1}{\sim} O_{W'}$, where $W'$ a union of isolated $\w_l$-trees, and w-trees of degree in $\{l+1,\cdots,k\}$.
Here, a $\w_l$-tree for $O$ is called \emph{isolated} if it is contained in a disk $B$ which is disjoint from all other w-trees 
and intersects $O$ at a single arc. 

Consider such an isolated $\w_l$-tree $T$.
Suppose that, when traveling along $O$, the first endpoint of $T$ which is met in the disk $B$ is its head; then, up to applications of the Tails Exchange move (3) and Antisymmetry Lemma \ref{lem:as}, we have that $T$ contains a fork, 
so that it is equivalent to the empty w-tree by the Fork Lemma \ref{lem:fork}. 
Note that these moves can be done in the disk $B$.
If we first meet some tail when traveling along $O$ in $B$, we can slide this tail outside $B$ and use Corollary \ref{lem:exchange} 
to move it around $O$, up to addition of $\w$-trees of degree $\ge l+1$, until we can move it back in $B$. 
In this case, by Lemma \ref{cor:separate}, we may assume that the new w-trees of degree $\ge l+1$ do not intersect $B$ 
up to $\w_{k+1}$-equivalence. 
Using this and the preceding argument, we have that $T$ can be deleted up to $\w_{k+1}$-equivalence. 
This completes the proof.      
\end{proof}

\subsection{$\w_k$-equivalence for welded long knots}

We now turn to the case of long knots. 
In what follows, we use the notation $\1$ for the trivial long knot diagram (with no crossing).

As recalled in Section \ref{sec:w1}, it is known that any two welded long knots are $\w_i$-equivalent for $i=1,2$.  
The main result of this section is the following generalization. 
\begin{theorem}\label{thm:ftialexlong}
For each $k\ge 1$, welded long knots are classified up to $\w_k$-equivalence by the first $k-1$ normalized coefficients $\{ \alpha_i \}_{2\le i\le k}$ of the Alexander polynomial.
\end{theorem}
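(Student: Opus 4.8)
The plan is to prove Theorem~\ref{thm:ftialexlong} in two stages: first the invariance direction (which is the easy one), then the completeness direction (which is the main work), and then combine both with a structural description of the set of welded long knots up to $\w_k$-equivalence. For the invariance direction, Lemma~\ref{lem:Alexfti} shows that $\alpha_i$ is a finite type invariant of degree $i$, hence of degree $<k$ for $i\le k-1$ indeed $i\le k$ is slightly subtle, so one should restate this carefully: by Proposition~\ref{prop:ftiwk}, $\w_k$-equivalent objects agree on all finite type invariants of degree $<k$, and since $\alpha_i$ has degree $i\le k-1<k$ for $2\le i\le k-1$; the case $i=k$ requires the sharper observation that surgery along a single $\w_k$-tree changes a degree-$k$ invariant only by a controlled amount, or one invokes Lemma~\ref{lem:wkAlex} and Corollary~\ref{cor:additive} directly. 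Concretely, I would argue: if $K\stackrel{\w_k}{\sim}K'$, write the equivalence as a sequence of surgeries along $\w_l$-trees with $l\ge k$, and show each such move preserves $\{\alpha_i\}_{2\le i\le k}$ by Remark-style reasoning that a $\w_l$-tree surgery with $l\ge k$ does not affect finite type invariants of degree $\le k-1$, and that $\alpha_k$ is unaffected because its only potential change would come from a $\w_k$-tree, which by Lemma~\ref{lem:wkAlex} and the Brunnian-type property contributes to $\alpha_k$ only through the `linear' term that is already killed once we work modulo $\w_{k+1}$-equivalence. (This needs care; see the obstacle paragraph.)

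For the completeness direction I would proceed by induction on $k$, the cases $k=1,2$ being recalled in Section~\ref{sec:w1}. Assume welded long knots are classified up to $\w_{k-1}$-equivalence by $\{\alpha_i\}_{2\le i\le k-1}$. Given two welded long knots $K,K'$ with $\alpha_i(K)=\alpha_i(K')$ for all $2\le i\le k$, the inductive hypothesis gives $K\stackrel{\w_{k-1}}{\sim}K'$, so $K' \stackrel{\w_k}{\sim} K\cdot J$ where $J$ is obtained from $\1$ by surgery along a union of $\w_{k-1}$-trees (up to higher-order corrections). Using Lemma~\ref{cor:separate}, Corollary~\ref{lem:exchange}, and the fact that connected sum of long knots is a well-defined group-like operation compatible with $\w_k$-equivalence, I would reduce $J$ modulo $\w_k$-equivalence to a product of surgeries along isolated single $\w_{k-1}$-trees $T_1,\dots,T_m$. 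The IHX Lemma~\ref{lem:ihx}, the Antisymmetry Lemma~\ref{lem:as}, and the Fork Lemma~\ref{lem:fork} then let me bring each such $\w_{k-1}$-tree into a standard form; since a long knot has a single component, all tails and the head of each $T_j$ lie on that one strand, and the tree-moves (together with forks arising from adjacent like-endpoints) should collapse every isolated $\w_{k-1}$-tree on a long knot to either the trivial one or the standard generator $L_{k-1}$ (or $\overline{L_{k-1}}$) of Figure~\ref{fig:K0}. Hence $J\stackrel{\w_k}{\sim} L_{k-1}^{\cdot a}$ for some integer $a$ (using $\overline{L_{k-1}}$ for negative $a$). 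Now Lemma~\ref{lem:wkAlex} and Corollary~\ref{cor:additive} give $\alpha_{k-1}(K\cdot L_{k-1}^{\cdot a}) = \alpha_{k-1}(K)+a$ hmm, here one must track degrees carefully: actually $L_{k-1}$ affects $\alpha_{k-1}$, and since $\alpha_{k-1}(K)=\alpha_{k-1}(K')$ we get $a=0$, so $J\stackrel{\w_k}{\sim}\1$ and thus $K\stackrel{\w_k}{\sim}K'$. (If instead the reduction naturally produces $\w_k$-trees realizing $\alpha_k$, then one matches $\alpha_k(K)=\alpha_k(K')$ at this stage.)

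The main obstacle, and the part requiring the most care, is the structural reduction step: showing that every isolated $\w_{k-1}$-tree (or $\w_k$-tree) for the \emph{long knot} $\1$ can be brought, modulo one-higher-degree equivalence, into a bounded list of standard generators — essentially that the abelian group of welded long knots up to $\w_k$-equivalence modulo $\w_{k+1}$-equivalence is generated by the single tree $L_k$ of Figure~\ref{fig:K0}. This is where the one-component constraint does the real work: on a long knot, any tree with two adjacent tails gives a fork (Fork Lemma), and the IHX and Antisymmetry relations cut the a priori large space of $\w_k$-trees down dramatically; but carrying this out rigorously requires a careful bookkeeping of which tail/head orderings along the single strand are realizable and using the Exchange corollaries to normalize them, always absorbing the resulting higher-degree trees via Lemma~\ref{lem:separate}-type arguments into the $\w_{k+1}$ error term. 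A secondary subtlety is the precise degree bookkeeping for $\alpha_k$ itself in the invariance direction, since $\alpha_k$ has degree exactly $k$ and not $<k$, so Proposition~\ref{prop:ftiwk} does not apply verbatim; one must instead use the explicit computation of Lemma~\ref{lem:wkAlex} together with additivity (Corollary~\ref{cor:additive}) and the observation that a $\w_l$-tree surgery with $l>k$ leaves $\alpha_k$ unchanged while a $\w_k$-tree surgery changes $\alpha_k$ by $\pm1$ precisely — but the latter is exactly what lets $\alpha_k$ separate $\w_k$-equivalence classes from $\w_{k+1}$-equivalence classes, which is consistent. Once the generator statement is in hand, the theorem follows, and as a byproduct one reads off that welded long knots modulo $\w_k$-equivalence form a finitely generated free abelian group (Corollary~\ref{cor:abelian}) with basis detected by $\{\alpha_i\}_{2\le i\le k}$.
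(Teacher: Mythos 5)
Your overall strategy --- reduce a welded long knot to a normal form built from the generators $L_i$ of Figure \ref{fig:K0} and pin down the exponents using the coefficients $\alpha_i$ via multiplicativity (Corollary \ref{cor:additive}) and their finite-type property (Lemma \ref{lem:Alexfti}, Proposition \ref{prop:ftiwk}) --- is the same as the paper's, but the step that carries all the weight is asserted rather than proved. What you flag as ``the main obstacle'', namely that every isolated $\w_l$-tree on a long knot collapses, modulo trees of higher degree, to $L_l^{\pm 1}$, is exactly Lemma \ref{lem:wklong}, and the Fork, Antisymmetry and IHX moves alone do not get you there: after IHX reduces to a single external vertex, and Tails Exchange/Fork either kill the tree or place its two external tails at the extreme positions separated by the head, one still has to convert this tree into the linear one of Figure \ref{fig:K0}. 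The paper does this with the auxiliary-arrow computation of Figures \ref{fig:Twk} and \ref{fig:Twk2}: insert a pair $A\cup F$ (a w-arrow plus a forked tree) whose surgery is trivial, move the head of $A$ across a tail and across the head of $F$ by the Head--Tail Exchange Lemma \ref{lem:th}, and absorb the resulting higher-degree trees using Lemmas \ref{cor:separate} and \ref{lem:twist}. Writing that the tree-moves ``should collapse'' every isolated tree to the standard generator is precisely the missing content; without it there is no proof.

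Two further problems. First, your step ``$K\stackrel{k-1}{\sim}K'$, so $K'\stackrel{k}{\sim}K\cdot J$ with $J$ a surgery on $\1$ along $\w_{k-1}$-trees'' is a difference-splitting statement of the type of Habiro's Proposition 3.22, and the paper explicitly notes in Section \ref{sec:wk} that the analogous converse for $\w_k$-equivalence is not known. The paper never needs it: its induction is on the degree $l$ applied to a single knot, starting from an Arrow presentation of $K$ over $\1$ and using Lemma \ref{lem:wklong} to convert the degree-$l$ part into $L_l^{x}$ at the cost of trees of degree $\ge l+1$, which yields directly $K\stackrel{k+1}{\sim}\prod_{i} L_i^{x_i(K)}$ with $x_i(K)$ determined recursively by the $\alpha_j$; equal invariants then force equal normal forms, with no need to ``divide'' $K'$ by $K$. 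Second, your attempt to make $\alpha_k$ invariant under $\w_k$-equivalence cannot succeed: by Lemma \ref{lem:wkAlex}, $L_k$ is obtained from $\1$ by a single $\w_k$-tree surgery yet $\alpha_k(L_k)=1$, so $\alpha_k$ is not a $\w_k$-equivalence invariant. The consistent formulation (cf.\ Corollary \ref{cor:ftiwk}) is that the $\alpha_i$ with $i<k$ classify up to $\w_k$-equivalence, equivalently $\alpha_2,\dots,\alpha_k$ classify up to $\w_{k+1}$-equivalence, which is exactly what the normal form above establishes; your argument should target that statement rather than try to repair the invariance of $\alpha_k$ under $\w_k$-moves.
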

Since the normalized coefficients of the Alexander polynomial are of finite type, we obtain the following, which in particular gives the converse to Proposition \ref{prop:ftiwk} for welded long knots. 
\begin{corollary}\label{cor:ftiwk}
The following assertions are equivalent, for any integer $k\ge 1$: 
\begin{enumerate}
\item[$\circ$]  two welded long knots are $\w_k$-equivalent, 
\item[$\circ$]  two welded long knots share all finite type invariants of degree $<k$, 
\item[$\circ$]  two welded long knots have same invariants $\{ \alpha_i \}$ for $i< k$.
\end{enumerate}
\end{corollary}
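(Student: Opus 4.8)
The plan is to derive Corollary \ref{cor:ftiwk} almost formally from the results already assembled in the excerpt, treating it as a bookkeeping exercise that chains together Proposition \ref{prop:ftiwk}, Lemma \ref{lem:Alexfti}, and Theorem \ref{thm:ftialexlong}. The three assertions will be shown equivalent by proving the cyclic implications (1)$\Rightarrow$(2)$\Rightarrow$(3)$\Rightarrow$(1).

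\medskip

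First I would establish (1)$\Rightarrow$(2): if two welded long knots are $\w_k$-equivalent, then by Proposition \ref{prop:ftiwk} they cannot be distinguished by any finite type invariant of degree $<k$, since Proposition \ref{prop:ftiwk} is stated for arbitrary welded knotted objects and long knots are a special case. Next, (2)$\Rightarrow$(3) is immediate from Lemma \ref{lem:Alexfti}: for each $i$ with $2\le i\le k-1$, the normalized coefficient $\alpha_i$ is a finite type invariant of degree $i$, hence of degree $<k$; so if two welded long knots share all finite type invariants of degree $<k$, they in particular agree on every $\alpha_i$ for $i<k$. (One should be slightly careful about the indexing convention in the statement — the range ``$i<k$'' together with the implicit $i\ge 2$ coming from the definition of $\alpha_i$ — but this matches exactly the range $\{\alpha_i\}_{2\le i\le k}$ appearing in Theorem \ref{thm:ftialexlong} once one notes the classification there is ``up to $\w_k$'' and uses the first $k-1$ coefficients, i.e. $\alpha_2,\dots,\alpha_k$; I would just align the bookkeeping so that (3) refers to the same finite list of invariants that Theorem \ref{thm:ftialexlong} uses.) Finally, (3)$\Rightarrow$(1) is precisely the content of Theorem \ref{thm:ftialexlong}: if the two welded long knots have the same normalized Alexander coefficients in the relevant range, then that theorem asserts they are $\w_k$-equivalent.

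\medskip

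The only genuine subtlety — and the step I expect to need the most care — is reconciling the index ranges between the statement of the corollary (``$\{\alpha_i\}$ for $i<k$'') and the statement of Theorem \ref{thm:ftialexlong} (``the first $k-1$ normalized coefficients $\{\alpha_i\}_{2\le i\le k}$''); these superficially look off by one, so I would spell out explicitly that ``the first $k-1$'' coefficients beyond the normalization $\alpha_0=\alpha_1$-type constraints are $\alpha_2,\dots,\alpha_k$ when indexing up to $\w_k$-equivalence, and cross-check against Lemma \ref{lem:wkAlex} (which realizes $\alpha_i(L_k)=\delta_{ik}$, confirming that a $\w_k$-tree first affects $\alpha_k$). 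Apart from this, everything is a direct citation, so the proof is short; there is no new obstacle beyond making sure the three cited results are invoked with compatible conventions, and in particular that Theorem \ref{thm:ftialexlong} is genuinely a ``classification'' statement (both injectivity, giving (3)$\Rightarrow$(1), and the realization giving the converse direction implicit in the other implications) rather than merely an invariance statement.
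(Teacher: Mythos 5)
Your citation chain is exactly the paper's (implicit) proof: the corollary is obtained there by combining Proposition \ref{prop:ftiwk} for (1)$\Rightarrow$(2), Lemma \ref{lem:Alexfti} for (2)$\Rightarrow$(3), and Theorem \ref{thm:ftialexlong} for (3)$\Rightarrow$(1), so in approach you are doing the same thing. The one place where your write-up goes wrong is the direction in which you propose to settle the off-by-one: you suggest reading (3) as agreement on the list $\alpha_2,\dots,\alpha_k$ appearing in the statement of Theorem \ref{thm:ftialexlong}. That alignment cannot be the right one. By Lemma \ref{lem:wkAlex} we have $\alpha_k(L_k)=1$, while $L_k$ is by construction obtained from $\1$ by surgery along a single $\w_k$-tree, hence is $\w_k$-equivalent to $\1$; so $\alpha_k$ is \emph{not} a $\w_k$-equivalence invariant, and consistently (2)$\Rightarrow$(3) would fail for your reading, since $\alpha_k$ has degree exactly $k$, not $<k$. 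The correct reconciliation is the opposite one: (3) must be read as in the corollary, i.e.\ agreement on $\alpha_i$ for $2\le i\le k-1$, and (3)$\Rightarrow$(1) should be extracted from the \emph{proof} of Theorem \ref{thm:ftialexlong} rather than its literal wording: Equation (\ref{eq:wk}) together with Lemma \ref{lem:wklong} yields $K \stackrel{k}{\sim} \prod_{i=2}^{k-1} L_i^{x_i(K)}$ with exponents $x_i(K)$ determined by $\alpha_2(K),\dots,\alpha_{k-1}(K)$, so two welded long knots sharing these coefficients are $\w_k$-equivalent to a common representative. In other words, the index range in the statement of Theorem \ref{thm:ftialexlong} (and in Theorem \ref{thm3} of the introduction) is the one carrying the slip, not the range ``$i<k$'' in the corollary, and your sanity check via Lemma \ref{lem:wkAlex} actually proves this rather than the alignment you drew from it.
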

Theorem \ref{thm:ftialexlong} also implies the following, which was first shown by K.~Habiro and A.~Shima \cite{HS}.  
\begin{corollary}\label{cor:topo2}
Finite type invariant of ribbon $2$-knots are determined by the (normalized) Alexander polynomial. 
\end{corollary}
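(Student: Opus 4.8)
The plan is to deduce this purely topological statement from Theorem \ref{thm:ftialexlong} (the $\w_k$-classification of welded long knots by the normalized coefficients $\alpha_i$) together with the compatibility of the Tube map with finite type invariant theory, recorded in Remark \ref{rem:ftitube}. First I would recall that a ribbon $2$-knot is, by Satoh's construction, of the form $\Tube(L)$ for some welded long knot diagram $L$, so that an invariant of ribbon $2$-knots pulls back to an invariant of welded long knots. Concretely, given a finite type invariant $v^{(4)}$ of degree $\le k$ of ribbon $2$-knots (in the sense of \cite{HKS,KS}), the composite $v := v^{(4)}\circ \Tube$ is an invariant of welded long knots; the key point, which I would justify via Remark \ref{rem:ftitube}, is that $v$ is then a welded finite type invariant of degree $\le k$, since a virtualization move is sent by $\Tube$ to a ``crossing change at a crossing circle'', the move generating the finite type filtration for ribbon knotted objects.

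Next I would invoke the equivalence (first $\circ$) $\Leftrightarrow$ (second $\circ$) $\Leftrightarrow$ (third $\circ$) of Corollary \ref{cor:ftiwk}: two welded long knots sharing all finite type invariants of degree $<k+1$ have the same $\alpha_i$ for $i\le k$, and conversely. Hence the degree $\le k$ welded finite type invariant $v$ above is a function of $\alpha_2,\dots,\alpha_{k}$ alone. Since by Lemma \ref{lem:Alexfti} each $\alpha_i$ is itself a finite type invariant (of degree $i$) of welded long knots, and since $\alpha_i$ extends to a finite type invariant $\alpha_i^{(4)}$ of ribbon $2$-knots via the (normalized) Alexander polynomial of the ribbon $2$-knot group (using the faithfulness of the Tube map on the group, Remark \ref{rem:faithful}), it follows that any finite type invariant $v^{(4)}$ of ribbon $2$-knots is determined by the values of $\alpha_2^{(4)},\dots,\alpha_{k}^{(4)}$, i.e. by the normalized Alexander polynomial. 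This is exactly the Habiro--Shima statement.

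The main obstacle I anticipate is the bookkeeping needed to make precise the assertion that $v^{(4)} = v^{(4)}\circ \Tube \circ \Tube^{-1}$ makes sense despite the Tube map not being injective on welded long knots: one must check that two welded long knots with the same image under $\Tube$ cannot be separated by $v$, which is automatic here because such knots have isomorphic groups and hence the same normalized Alexander polynomial, hence the same $\alpha_i$, hence (by the above) the same value of $v$. A secondary technical point is verifying that the finite type filtration on ribbon $2$-knots used in \cite{HKS} is indeed the one pulled back along $\Tube$ from the virtualization filtration; I would handle this by citing Remark \ref{rem:ftitube} and \cite{KS} rather than reproving it. Everything else is a formal consequence of Theorem \ref{thm:ftialexlong} and the realization Lemmas \ref{lem:wkAlex} and \ref{lem:Alexfti}.
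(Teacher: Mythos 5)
Your proof is correct, but it takes a genuinely different route from the paper's. The paper deduces Corollary \ref{cor:topo2} from its reproof of Watanabe's theorem: ribbon $2$-knots with the same $\alpha_i$ are represented (Fact \ref{fact1}) by welded long knots with the same $\alpha_i$ (Remark \ref{rem:faithful}), which are $\w_k$-equivalent by Corollary \ref{cor:ftiwk}; then Fact \ref{fact2} converts $\w_k$-equivalence into Watanabe's topological $RC_k$-equivalence, and the conclusion is obtained by citing \cite[Lem.~5.7]{watanabe}, which says $RC_k$-equivalent ribbon $2$-knots share all finite type invariants of degree $<k$. You instead bypass $RC_k$-equivalence entirely: you pull a degree $\le k$ invariant $v^{(4)}$ of ribbon $2$-knots back along the Tube map to a welded invariant $v=v^{(4)}\circ\Tube$, argue it is of finite type of degree $\le k$ because a virtualization is sent by Tube to a crossing change at a crossing circle (the generating move of the filtration of \cite{HKS,KS}), apply Corollary \ref{cor:ftiwk} at the welded level, and descend via surjectivity of Tube and its faithfulness on groups. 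Both arguments share the core inputs (Fact \ref{fact1}, Remark \ref{rem:faithful}, Corollary \ref{cor:ftiwk}); what differs is the transfer step back to the $4$-dimensional setting. Your version is more self-contained with respect to Watanabe's calculus (no $RC_k$-equivalence, no \cite[Lem.~5.7]{watanabe}), at the cost of using the compatibility of Tube with the two finite type filtrations in the direction ``ribbon finite type $\Rightarrow$ welded finite type'', which is the opposite of how Remark \ref{rem:ftitube} is phrased — though the justification given there (virtualization $\mapsto$ crossing change at a crossing circle) is exactly what your direction needs, so this is a matter of stating it explicitly rather than a gap. The paper's route has the side benefit of establishing Theorem \ref{thm:watanabe} along the way; your worry about non-injectivity of Tube is indeed a non-issue, since you only ever use Tube as a (surjective) map and never an inverse, and your argument handles it correctly.
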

\noindent Actually, we also recover a topological characterization of finite type invariants of ribbon $2$-knots due to T.~Watanabe, see Section \ref{sec:watanabe}. 

Moreover, by the multiplicative property of the normalized Alexander polynomial (Lemma \ref{lem:additive}), we have the following consequence. 
\begin{corollary}\label{cor:abelian}
Welded long knots up to $\w_k$-equivalence form a finitely generated free abelian group, for any $k\ge 1$. 
\end{corollary}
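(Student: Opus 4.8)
The plan is to make the set $\mathcal{K}_k$ of $\w_k$-equivalence classes of welded long knots into a group under stacking (connected sum), and then to identify this group, via Theorem \ref{thm:ftialexlong} and the multiplicativity of the normalized Alexander polynomial (Lemma \ref{lem:additive}), with a group of truncated polynomials in $1-t$ that is visibly finitely generated and free abelian. First one checks that stacking descends to $\mathcal{K}_k$: concatenation of welded long knot diagrams is strictly associative with the trivial long knot $\1$ as two-sided unit, and since surgery along a w-tree is a local operation, a w-tree for one factor remains a valid w-tree for the composite, so $K_1\stackrel{k}{\sim}K_1'$ and $K_2\stackrel{k}{\sim}K_2'$ imply $K_1\cdot K_2\stackrel{k}{\sim}K_1'\cdot K_2'$. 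Thus $\mathcal{K}_k$ is a monoid.

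Next I would pass to the Alexander side. By Theorem \ref{thm:ftialexlong}, reduction of $\tilde\Delta_K$ modulo a suitable power of $(1-t)$ gives a well-defined injective map from $\mathcal{K}_k$ to the set $G_k$ of truncated integral polynomials of the form $1+a_2(1-t)^2+\cdots$, and by Lemma \ref{lem:additive} it is a homomorphism for stacking on the source and multiplication on the target. For surjectivity I would use the realization Lemma \ref{lem:wkAlex}: any $h\in G_k$ can be reduced to $1$ by successively multiplying, in order of increasing degree, by factors $1-(1-t)^i$ or $1+(1-t)^i$ according to the sign of the relevant coefficient, that is, by $\tilde\Delta_{\overline{L_i}}$ or $\tilde\Delta_{L_i}$; applying this to $h^{-1}$ and using multiplicativity expresses $h$ itself as the truncation of $\tilde\Delta_K$ for a suitable stack $K$ of the welded long knots $L_i$ and $\overline{L_i}$. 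Hence $[K]\mapsto\tilde\Delta_K$ (truncated) is a monoid isomorphism $\mathcal{K}_k\cong G_k$; in particular stacking on $\mathcal{K}_k$ is commutative, and it is a group law, since every element $1+n$ of $G_k$ has $n$ nilpotent and is therefore invertible in the truncated polynomial ring, with inverse again of the form $1+a_2(1-t)^2+\cdots$ — lying in $G_k$, hence realized, so inverses exist in $\mathcal{K}_k$.

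It remains to see that $G_k$ is finitely generated and free abelian. Being (by the reduction above) generated as a monoid by the finitely many residues $1\pm(1-t)^i$, it is a finitely generated group; it is abelian because $\mathbf{Z}[1-t]$ is; and it is torsion-free, for if $g=1+a_j(1-t)^j+\cdots$ with $a_j\neq 0$ then $g^n=1+na_j(1-t)^j+\cdots$, so $g^n=1$ forces $n=0$. A finitely generated torsion-free abelian group being free, the corollary follows. (Alternatively, dropping the top-degree term yields a split short exact sequence $0\to\mathbf{Z}\to G_k\to G_{k-1}\to 0$, from which $G_k\cong G_{k-1}\oplus\mathbf{Z}$ by induction on $k$.)

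The argument is essentially bookkeeping on top of Theorem \ref{thm:ftialexlong} and Lemma \ref{lem:wkAlex}; the step I expect to be the main — though mild — obstacle is verifying that the monoid $\mathcal{K}_k$ has inverses, i.e. that the inverse in the truncated polynomial ring of a realizable residue is again realized by a welded long knot, which is precisely where surjectivity of the realization map through the $L_i$ and $\overline{L_i}$ is used.
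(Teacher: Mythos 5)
Your argument is correct and follows essentially the route the paper intends: use Theorem \ref{thm:ftialexlong} for injectivity of the truncated normalized Alexander polynomial on $\w_k$-classes, Lemma \ref{lem:additive} for multiplicativity under stacking, and the realization of $1\pm(1-t)^i$ by $L_i$, $\overline{L_i}$ (Lemma \ref{lem:wkAlex}, which also underlies Equation (\ref{eq:wk}) in the proof of the theorem) for surjectivity onto the finitely generated free abelian group of truncated polynomials. Nothing essential is missing.
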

\noindent 

The proof of Theorem \ref{thm:ftialexlong} uses the next technical lemma, 
which refer to the welded long knots $L_k$ or $\overline{L_k}$ defined in Figure \ref{fig:K0}. Here we set $L_k^{-1}:=\overline{L_k}$.  
\begin{lemma} \label{lem:wklong}
Let $k,~l$ be integers such that $k\ge l\ge 1$,  
and let $W$ be a union of $\w$-trees of degree $\ge l$ for $\1$. 
Then 
$$ \1_W \stackrel{k+1}{\sim} (L_l)^x\cdot \1_{W'}, $$ 
for some $x\in \mathbb{Z}$, where $W'$ is a union of $\w$-trees of degree $\ge l+1$. 
\end{lemma}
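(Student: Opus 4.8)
The plan is to prove Lemma \ref{lem:wklong} by induction on $l$, exactly mirroring the structure of Lemma \ref{lem:separate}, but keeping track of the Alexander-polynomial contribution via the model knots $L_l$. Throughout I would work up to $\w_{k+1}$-equivalence, so all bookkeeping of higher-degree w-trees is absorbed silently.

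First I would reduce to the isolated case. Applying Lemma \ref{cor:separate} to the union $W$ of $\w$-trees of degree $\ge l$, we may write $\1_W \stackrel{k+1}{\sim} \1_{T_1}\cdot \cdots \cdot \1_{T_m}\cdot \1_{W''}$, where each $T_i$ is an isolated $\w_l$-tree (contained in a disk meeting $\1$ in a single arc, disjoint from the other w-trees) and $W''$ has degree in $\{l+1,\dots,k\}$. Thus it suffices to analyze a single isolated $\w_l$-tree $T$ for $\1$, and show $\1_T \stackrel{k+1}{\sim} (L_l)^{\pm 1}\cdot \1_{W'}$ or $\1_T \stackrel{k+1}{\sim} \1_{W'}$, with $W'$ of degree $\ge l+1$; the general case then follows by concatenation and re-applying Lemma \ref{cor:separate} to push the accumulated higher-degree w-trees apart and collect the $L_l$-factors (using that the $L_l$ commute up to $\w_{k+1}$-equivalence, again by Corollary \ref{lem:exchange}).

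Next, for an isolated $\w_l$-tree $T$ I would distinguish two cases according to what is met first when travelling along $\1$ inside the disk $B$ containing $T$. If the head is met first, then — as in the proof of Lemma \ref{lem:separate} — using the Tails Exchange move (3) and the Antisymmetry Lemma \ref{lem:as}, $T$ can be arranged to contain a fork, hence is equivalent to the empty w-tree by the Fork Lemma \ref{lem:fork}, all inside $B$; this contributes nothing, i.e. $x=0$ for this factor. If some tail is met first, I would slide that tail out of $B$, carry it around $\1$ using Corollary \ref{lem:exchange} (at the cost of w-trees of degree $\ge l+1$, which by Lemma \ref{cor:separate} may be assumed disjoint from $B$ up to $\w_{k+1}$-equivalence), and reinsert it, iterating until $T$ has been brought into one of finitely many standard local forms. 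The key point is that, after normalizing the cyclic orders at vertices (Antisymmetry) and the twists on internal edges (using the Twist Lemma \ref{lem:twist}, which only introduces w-trees of degree $>l$), the resulting standard $\w_l$-tree is, up to a twist on the terminal edge and up to $\w_{k+1}$-equivalence, exactly the defining $\w_l$-tree of $L_l$ in Figure \ref{fig:K0} — so $\1_T \stackrel{k+1}{\sim} L_l$ or $\overline{L_l} = L_l^{-1}$.

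The main obstacle will be this last normalization step: showing that \emph{every} isolated $\w_l$-tree for $\1$ whose first endpoint in $B$ is a tail is $\w_{k+1}$-equivalent to $L_l^{\pm 1}$, independently of the combinatorial shape of the tree. A clean way to handle this is to first use the Expansion move (E) to reduce to a union of w-arrows, then re-cluster: all tails of the (single-component) diagram sit on the one arc, so by the Tails Exchange move (3) their cyclic order is immaterial and the expansion is determined by the iterated-commutator word $R_k$ of Remark \ref{rem:commutator}; then apply the IHX Lemma \ref{lem:ihx} and the Antisymmetry Lemma \ref{lem:as} to rewrite any length-$l$ iterated commutator in the two generators $l,r$ (with twists) into the left-normed commutator $\big[[\cdots[[l,r^{-1}],r^{-1}],\cdots],r^{-1}\big]$ defining $L_l$, up to commutators of higher length — i.e. up to w-trees of degree $\ge l+1$. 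Once the word is in this normal form up to $\w_{k+1}$-equivalence, Lemma \ref{lem:wkAlex} identifies the resulting long knot as $L_l$ (or $\overline{L_l}$ when a terminal twist survives), and reassembling via (E) and concatenation finishes the argument.
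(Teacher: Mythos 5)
Your overall skeleton matches the paper's: reduce to isolated $\w_l$-trees via Lemma \ref{cor:separate}, normalize each one to $L_l^{\pm 1}$ modulo trees of degree $\ge l+1$, and recombine. You also correctly identify where the real difficulty lies. But your resolution of that difficulty has a genuine gap. You claim that, after expansion, the tree is governed by a weight-$l$ iterated commutator in the two Wirtinger generators $l,r$, and that the IHX Lemma \ref{lem:ihx} and Antisymmetry Lemma \ref{lem:as} (plus Tails Exchange) rewrite \emph{any} such commutator as the left-normed commutator $\big[[\cdots[[l,r^{-1}],r^{-1}],\cdots],r^{-1}\big]$ up to higher-length commutators. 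This is false: IHX and AS are exactly the defining relations of the free Lie algebra, and the degree-$l$ homogeneous part of the free Lie algebra on two generators has rank $>1$ for $l\ge 3$ (e.g.\ $[[l,r],l]$ and $[[l,r],r]$ are independent in degree $3$), so these relations alone cannot collapse everything onto a single generator. Adding the Fork Lemma \ref{lem:fork} (with Tails Exchange) only kills brackets with two same-letter tails at a common vertex; it still does not convert, say, $[[l,r^{-1}],l]$ into $[[l,r^{-1}],r^{\pm 1}]$ modulo higher degree. Also, your intermediate step of ``carrying a tail around $\1$'' is imported from the closed-knot argument of Lemma \ref{lem:separate} and is unavailable here: a long knot has endpoints, and the impossibility of cycling a tail around is precisely why welded long knots are not all trivial.

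The paper supplies exactly the ingredient you are missing. After using IHX (this is its correct role here) to reduce to a tree with a single external vertex, i.e.\ a caterpillar, and Fork/Tails Exchange to dispose of the case where the two extreme tails are not separated by the head, the remaining normalization is done by the relation of Figure \ref{fig:Twk}. That relation is proved (Figure \ref{fig:Twk2}) by introducing an auxiliary configuration $A\cup F$ of a w-arrow and a forked $\w_{l-1}$-tree: on one hand $\1_{A\cup F}=\1$ by the Fork Lemma and the Isolated move, and on the other hand Head--Tail Exchange (Lemma \ref{lem:th}), Tails Exchange, Lemma \ref{cor:separate}, Antisymmetry and the Twist Lemma turn $A\cup F$ into a product involving the standard tree; comparing the two yields an identity that trades a tail on one side of the head for a tail on the other side (with a twist), modulo trees of degree in $\{l+1,\dots,k\}$. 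This step genuinely uses the geometry of the presentation (it is the same mechanism that makes trees with head adjacent to an endpoint of $\1$ removable) and cannot be replaced by purely Lie-algebraic manipulations with IHX/AS/Fork. Without it, your argument does not close.
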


Let us show how Lemma \ref{lem:wklong} allows to prove Theorem \ref{thm:ftialexlong}. 
\begin{proof}[Proof of Theorem \ref{thm:ftialexlong} assuming Lemma \ref{lem:wklong}]
We prove that, for any $k,l$ such that $k\ge l\ge 1$, a welded long knot $K$ satisfies 
\begin{equation}\label{eq:wk}
 K \stackrel{k+1}{\sim} \left(\prod_{i=2}^{l-1} L_i^{x_i(K)}\right)\cdot \1_{W_l}, 
\end{equation} 
where 
$(\1,W_l)\lr{l}(\1,\emptyset)$, and where 
$$x_i(K)=\left\{\begin{array}{ll}
                                         \alpha_i(K) & \textrm{ if $i=2$, }\\
                                         \alpha_i(K) - \alpha_i\left(\prod_{j=2}^{i-1} L_j^{x_j(K)}\right) & \textrm{ if $i>2$. }
                                        \end{array}
   \right.$$ 

\noindent We proceed by induction on $l$. Assume Equation (\ref{eq:wk}) for some $l\ge 1$
and any fixed $k\ge l$. 
By applying Lemma \ref{lem:wklong} to the welded long knot $\1_{W_l}$, we have 
$\1_{W_l}\stackrel{k+1}{\sim} (L_l)^{x}\cdot \1_{W_{l+1}}$, 
where $(\1,W_{l+1})\stackrel{\scriptsize{\raisebox{-.3ex}[0pt][-.3ex]{$l+1$}}}{\rightarrow}(\1,\emptyset)$. 
Using the additivity (Corollary \ref{cor:additive}) 
and finite type (Lemma \ref{lem:Alexfti} and Proposition \ref{prop:ftiwk}) properties of the normalized coefficients of the Alexander polynomial,  
we obtain that $x=x_l(K)$, thus completing the proof.
\end{proof}

\begin{proof}[Proof of Lemma \ref{lem:wklong}]
By Lemma \ref{cor:separate}, 
we may assume that 
 $$ 1_{W} \stackrel{k+1}{\sim} \1_{T_1}\cdot \ldots \cdot \1_{T_{m}}\cdot \1_{W'}, $$
where each $T_i$ is a single $\w_l$-tree and $W'$ is a union of  w-trees of degree in $\{l+1,\cdots,k\}$.

Consider such a $\w_l$-tree $T_i$. 
Let us call `external' any vertex of $T_i$ that is connected to two tails. In general, $T_i$ might contain several external vertices, 
but by the IHX Lemma \ref{lem:ihx} and Lemma \ref{cor:separate}, 
we can freely assume that $T_i$ has only one external vertex, up to $\w_{k+1}$-equivalence. 

By the Fork Lemma \ref{lem:fork} and the Tails Exchange move (3), if the two tails connected to this vertex are not separated by the head, then $T_i$ is equivalent to the empty w-tree. Otherwise, using the Tails Exchange move, 
we can assume that these two tails are at the leftmost and rightmost positions among all endpoints of $T_i$ along $\1$, 
as for example for the $\w_l$-tree shown in Figure \ref{fig:Twk}. 
The result then follows from the observation shown in this figure. 
\begin{figure}[!h]
    \includegraphics[scale=0.9]{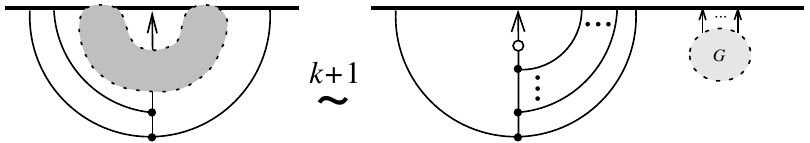}
    \caption{The shaded part contains all unrepresented edges of the $\w_l$-tree, and $G$ is a union of w-trees of degree in $\{l+1,\cdots,k\}$
    }\label{fig:Twk}
\end{figure}
\noindent Indeed, combining these equalities with the involutivity of twists and the Twist Lemma \ref{lem:twist}, 
we have that $T_i$ can be deformed, up to $\w_{k+1}$-equivalence, into one of the two $\w_l$-trees of Figure \ref{fig:K0}, at the cost of adding a union of w-trees of degree in $\{l+1,\cdots,k\}$. 

Let us prove the equivalence of Figure \ref{fig:Twk}.  
To this end, consider the union $A\cup F$ of a w-arrow $A$ and a $\w_{l-1}$-tree $F$ as shown on the left-hand side of Figure \ref{fig:Twk2}. 
On one hand, by the Fork Lemma \ref{lem:fork}, followed by the the Isolated move (4), we have that $\1_{A\cup F}=\1$. 
\begin{figure}[!h]
   \includegraphics[scale=1]{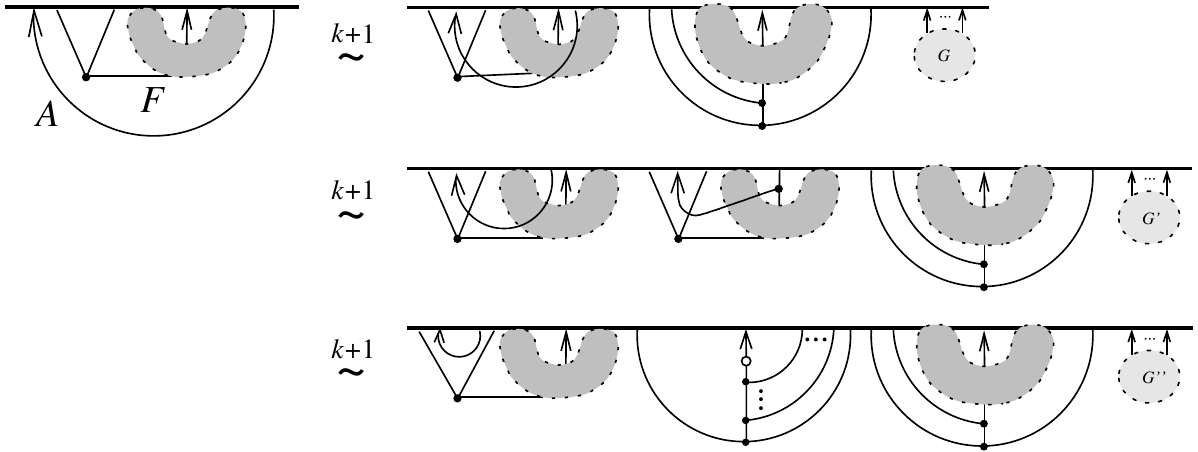}
  \caption{ \\ Here, $G, G', G''$ are unions of w-trees of degree in $\{l+1,\cdots,k\}$
  }\label{fig:Twk2}
\end{figure}
On the other hand, we can use the Head--Tail Exchange Lemma \ref{lem:th} to move the head of $A$ across the adjacent tail of $F$, 
and apply the Tails Exchange move (3) to move the tail of $A$ towards the head of $F$, thus producing, by Lemma \ref{cor:separate}, the first equivalence
of Figure \ref{fig:Twk2}.  
We can then apply the Head--Tail Exchange Lemma to move the head of $A$ across the head of $F$, which by Lemma \ref{cor:separate} yields the second equivalence. 
Further applications of Lemma \ref{cor:separate}, together with the Antisymmetry and Twist Lemmas \ref{lem:as} and \ref{lem:twist},
give the third equivalence. Finally, the first term in the right-hand side of this equivalence is trivial by the Isolated move (4) and the Fork Lemma. 
The equivalence 
of Figure \ref{fig:Twk} is then easily deduced, using the Inverse Lemmas \ref{lem:inverse} and \ref{cor:separate}.  
\end{proof}

\section{Homotopy arrow calculus}\label{sec:homotopy}

The previous section shows how the study of welded knotted objects of one components is well-understood when working up to $\w_k$-equivalence. 
The case of several components (welded links and string links), though maybe not out of reach, is significantly more involved. 

One intermediate step towards a complete understanding of knotted objects of several components is to study these objects `modulo knot theory'. 
In the context of classical (string) links, this leads to the notion of \emph{link-homotopy}, were each individual component is allowed to cross itself; 
this notion was first introduced by Milnor \cite{Milnor}, and culminated with the work of Habegger and Lin \cite{HL} who used Milnor invariants to classify string link up to link-homotopy.  
In the welded context, the analogue of this relation is generated by the \emph{self-virtualization move}, where a crossing involving two strands of a same component 
can be replaced by a virtual one. 
In what follows, we simply call \emph{homotopy} this equivalence relation on welded knotted objects, which we denote by $\stackrel{\textrm{h}}{\sim}$. 
This is indeed a generalization of link-homotopy, 
since a crossing change between two strands of a same component can be generated by two self-(de)virtualizations. 

We have the following natural generalization of \cite[Thm.~8]{Milnor2}. 
\begin{lemma}\label{lem:milnorhomotopy}
If $I$ is a sequence of non repeated indices, then $\mu^w_I$ is invariant under homotopy. 
\end{lemma}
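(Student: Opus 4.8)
The plan is to reduce the statement to the combinatorial behavior of the combinatorial longitudes $\lambda_i^k$ under the self-virtualization move, and to argue exactly as in the classical case. Recall that $\mu^w_I(L)$ is read off as a coefficient of a monomial $X_{i_1}\cdots X_{i_{m-1}}$ in the Magnus expansion $E(\lambda_{i_m}^k)$ of the $i_m$-th longitude, for $k\ge m=|I|$. The key observation, already noted in the paper after Remark \ref{rem:ftitube}, is that a self-virtualization move — replacing a classical crossing between two strands of the \emph{same} component $i_m$ by a virtual one — affects the Wirtinger presentation of $G(L)$ only by conjugating the longitude $\lambda_{i_m}^k$ by (a power of) a meridian $m_{i_m}$ of that same component: the crossing being changed has both its over- and under-strand on component $i_m$, so the only Wirtinger relation that changes replaces some generator by a conjugate of another generator \emph{of the same component}. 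Hence a self-virtualization on component $i_m$ changes $\lambda_{i_m}^k$ to $m_{i_m}^{\pm 1}\,\lambda_{i_m}^k\,m_{i_m}^{\mp 1}$ modulo $\Gamma_k F_n$, and leaves all other longitudes $\lambda_j^k$ ($j\ne i_m$) unchanged modulo $\Gamma_k F_n$ up to a similar conjugation by $m_j$.

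Next I would plug this into the Magnus expansion. We have $E(m_{i_m})=1+X_{i_m}$, so conjugating $\lambda_{i_m}^k$ by $m_{i_m}$ multiplies $E(\lambda_{i_m}^k)$ on the left by $1+X_{i_m}$ and on the right by its inverse $1-X_{i_m}+X_{i_m}^2-\cdots$. The point is that the monomials appearing in the defining coefficients of $\mu^w_I$ — namely $X_{i_1}\cdots X_{i_{m-1}}$ for sequences $I=i_1\cdots i_m$ with \emph{non-repeated} indices — involve each index at most once, and in particular do \emph{not} involve the index $i_m$ (since $i_m$ already occurs as the last index and the $i_1,\dots,i_{m-1}$ are distinct from it). Multiplying $E(\lambda_{i_m}^k)$ by $1\pm X_{i_m}+\cdots$ on either side can only create monomials that contain the letter $X_{i_m}$ as a prefix or suffix; such monomials are never of the form $X_{i_1}\cdots X_{i_{m-1}}$ with all indices distinct from $i_m$. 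The same reasoning, applied to a self-virtualization on a component $j\ne i_m$ which conjugates $\lambda_{i_m}^k$ (if at all) by $m_j$: the resulting correction terms all contain the letter $X_j$, but $j$ is repeated among $i_1,\dots,i_{m-1}$ only if $j=i_t$ for some $t$, in which case the correction would need a monomial $X_{i_1}\cdots X_{i_{m-1}}$ that is \emph{not} of that shape with the extra $X_j$ at an end — more carefully, since $I$ has non-repeated entries and the relevant monomial has length $m-1$, adjoining one more letter $X_j$ at the front or back produces a length-$m$ word, of the wrong degree to contribute to a length-$m$ invariant. Thus the coefficient of $X_{i_1}\cdots X_{i_{m-1}}$ in $E(\lambda_{i_m}^k)$ is unchanged, i.e. $\mu^w_I$ is invariant under one self-virtualization move, and hence under homotopy.

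The step I expect to require the most care is making the conjugation claim precise: namely that a self-virtualization on component $c$ changes every longitude $\lambda_j^k$ only by conjugation by a power of $m_c$, modulo $\Gamma_k F_n$, and does not alter the meridian-to-conjugate structure in a more serious way. This is the welded analogue of Milnor's observation that a self-crossing-change is a conjugation of the corresponding longitude, and it should follow from a direct inspection of the Wirtinger presentation via a w-tree presentation as in Section \ref{sec:wirtinger}, together with the isomorphisms $G(L)/\Gamma_k G(L)\simeq F_n/\Gamma_k F_n$ used to define $\varphi_k$; one must check that the self-virtualization changes the image of the relevant generator only within its conjugacy class under $\langle m_c\rangle$. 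Once this is in hand, the Magnus-expansion bookkeeping above is routine, and one should also invoke the additivity Lemma \ref{lem:Madditive} only if one wishes to phrase the argument multiplicatively, though it is not strictly needed here. A clean alternative, which I would mention, is to cite the fact (noted in Remark \ref{rem:classico}) that $\mu^w_I$ restricted to classical string links agrees with Milnor's $\mu_I$, together with Milnor's original link-homotopy invariance \cite[Thm.~8]{Milnor2}, and then observe that the combinatorial proof goes through verbatim in the welded setting because the only input is the conjugation behavior of longitudes, which is identical.
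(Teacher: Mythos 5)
Your overall strategy (track the effect of a single self-(de)virtualization on the combinatorial longitude and do bookkeeping in the Magnus expansion, exploiting that $I$ has no repeated index) is the same as the paper's, and your treatment of the case where the move occurs on the component $i_m$ carrying the longitude is essentially sound: there the longitude changes by multiplication by an element of the normal subgroup $N_{i_m}$ generated by $m_{i_m}$ (global conjugation by $m_{i_m}^{\pm 1}$ is only a special case, and the ``prefix or suffix'' claim is not needed), and since every nontrivial Magnus term of such an element contains $X_{i_m}$ while the monomial $X_{i_1}\cdots X_{i_{m-1}}$ does not, the coefficient is unchanged.

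The genuine gap is in the case of a self-virtualization on a component $j\neq i_m$. You model its effect as a conjugation of $\lambda_{i_m}^k$ by $m_j$ and then argue that the correction terms are ``of the wrong degree to contribute''; this degree count is false. Multiplying the power series $E(\lambda_{i_m}^k)$ by $1\pm X_j+\cdots$ mixes the degree-$(m-2)$ part of $E(\lambda_{i_m}^k)$ with the letter $X_j$ and produces degree-$(m-1)$ monomials; if $j=i_1$ (or $j=i_{m-1}$, which is allowed since $I$ non-repeated only forbids $j=i_m$ from being among $i_1,\dots,i_{m-1}$ twice, not $j=i_t$ for $t<m$), these can be exactly $X_{i_1}\cdots X_{i_{m-1}}$, so even under your conjugation model the coefficient would a priori change. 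What actually saves the argument, and what your proposal is missing, is the sharper structural fact used in the paper (the welded analogue of Milnor's Theorem 8): a self-(de)virtualization on component $j\neq i_m$ alters $\lambda_{i_m}^k$ only by multiplication by an element of the commutator subgroup $[N_j,N_j]$, every nontrivial Magnus term of which contains $X_j$ at least \emph{twice}; since a non-repeated monomial $X_{i_1}\cdots X_{i_{m-1}}$ contains $X_j$ at most once, no correction can hit it. Without establishing this ``$X_j$ at least twice'' input (which is precisely the step you flagged as delicate but then replaced by an incorrect degree argument), the case $j\neq i_m$ does not close. Your suggested shortcut via Remark \ref{rem:classico} also does not help: agreement with classical $\mu$-invariants on classical string links says nothing about invariance under self-virtualization of genuinely welded objects, so the combinatorial argument, with the $[N_j,N_j]$ estimate, cannot be avoided.
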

\begin{proof}
The proof is essentially the same as in the classical case. 
Set $I=i_1\cdots i_m$, such that $i_j\neq i_k$ if $j\neq k$. 
It suffices to show that $\mu^w_I$ remains unchanged when a self-(de)virtualization move is performed on the $i$th component, which is done by distinguishing two cases. 
If $i=i_m$, then the effect of this move on the combinatorial $i_m$th longitude is multiplication by an element of the normal subgroup $N_{i}$ generated by $m_{i}$; each (non trivial) term in the Magnus expansion of such an element necessarily contains 
$X_{i_m}$ at least once, and thus  $\mu^w_I$ remains unchanged. 
If $i\neq i_m$, then this move can only affect the combinatorial $i_m$th longitude by multiplication by an element of $[N_i,N_i]$: any non trivial term in the Magnus expansion of such an element necessarily contains $X_{i}$ at least twice. 
\end{proof}

\subsection{w-tree moves up to homotopy}\label{sec:hw}

Clearly, the w-arrow incarnation of a self-virtualization move is the deletion of a w-arrow whose tail and head are attached to a same component. In what follows, we will call such a w-arrow a \emph{self-arrow}. 
More generally, a \emph{repeated} w-tree is a w-tree having two endpoints attached to a same component of a diagram. 
\begin{lemma}\label{lem:h}
 Surgery along a repeated w-tree does not change the homotopy class of a diagram. 
\end{lemma}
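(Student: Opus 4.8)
The statement generalizes Lemma \ref{lem:h}'s w-arrow case (self-virtualization) to arbitrary degree, and the natural strategy is induction on the degree $k$ of the repeated w-tree $T$, with the base case $k=1$ being exactly the self-arrow deletion recalled just before the statement. So suppose $T$ is a repeated $\w_k$-tree, $k\ge 2$, with two of its endpoints attached to a single component $C$ of the diagram. First I would apply the Expansion move (E) at the trivalent vertex nearest the head of $T$; this replaces $T$ by a union of four w-trees of degree $<k$, obtained by combining the two subtrees $S_1,S_2$ at the vertex according to the commutator pattern of Remark \ref{rem:commutator}, namely two parallel copies of (roughly) $S_1$ and two of $S_2$, with twists on terminal edges. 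The key point is a case analysis on where the two ``repeated'' endpoints of $T$ sit relative to the expansion.

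\textbf{Case analysis after expansion.} If the two repeated endpoints are \emph{both} among the tails of a single subtree, say both in (copies of) $S_1$, then each of the four resulting w-trees is itself repeated (its two copies of the $S_1$-tails land on $C$), so all four are of degree $<k$ and the induction hypothesis kills each of them in turn — here one must be slightly careful to delete them one at a time, using the Tails Exchange move (3) and the various Exchange lemmas (Lemma \ref{lem:he}, Corollary \ref{lem:exchange}) to isolate each repeated subtree before deleting it, but since deletions only produce trees of \emph{strictly larger} degree these can be absorbed or, if one works up to $\w_{k}$-equivalence is not enough, one iterates. If instead one repeated endpoint lies in $S_1$ and the other in $S_2$ (or one of them is the head of $T$), then after (E) the two marked endpoints get distributed so that each of the four trees has exactly one endpoint on $C$ coming from one marked endpoint and, paired appropriately, one sees the four trees cancel in parallel pairs \emph{differing by a twist on the terminal edge}, hence are killed two at a time by the Inverse Lemma \ref{lem:inverse} — this is precisely the mechanism used in the proofs of Lemmas \ref{lem:inverse}, \ref{lem:slide} and \ref{lem:fork}, and the Fork Lemma \ref{lem:fork} itself handles the sub-subcase where a repeated pair of tails ends up adjacent at a common vertex.

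\textbf{Reducing to a normal form for $T$.} Rather than a brute case split, a cleaner route I would pursue is: before expanding, use the w-tree moves to put $T$ in a convenient position. Using the Tails Exchange move (3), the Head/Tail Exchange (Lemma \ref{lem:ht}, Claim \ref{claim:th}) and the Antisymmetry Lemma \ref{lem:as}, arrange that the two endpoints attached to $C$ are \emph{adjacent} and, if possible, are the two tails of a single trivalent vertex. If they are the two tails of a vertex, $T$ contains a fork and we are done immediately by the Fork Lemma \ref{lem:fork}. If one of the two $C$-endpoints is the head of $T$ and the other is a tail, then (as in the ``isolated'' argument in the proof of Lemma \ref{lem:separate}) slide the head along $C$ towards that tail using the Head Traversal Lemma \ref{lem:jump} and the Head--Tail Exchange; this produces, up to w-trees of strictly higher degree handled inductively, a configuration where the head and a tail of $T$ bound a trivial arc of $C$, allowing a Reidemeister-type cancellation. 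The remaining case — two tails of $T$ on $C$ but not sharing a vertex — is reduced to the fork case by repeatedly pushing one tail towards the other across trivalent vertices, which is exactly what (E) followed by Tails Exchange accomplishes, reducing degree each time.

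\textbf{Main obstacle.} The delicate point is the bookkeeping of the \emph{extra w-trees of higher degree} generated by the Exchange and Head--Tail moves (Lemma \ref{lem:ht}, Claim \ref{claim:th}, Corollary \ref{lem:exchange}): these are \emph{not} obviously repeated, so one cannot simply feed them back into the induction on ``repeated w-tree''. The resolution, as in Habiro-type arguments and in Lemma \ref{lem:separate}, is to run the argument up to $\w_{k+1}$-equivalence for an arbitrary but fixed bound, so that the newly created higher-degree trees can be pushed off (by further Exchange moves, à la Lemma \ref{cor:separate}) until they have degree exceeding the bound and are thus negligible; letting the bound go to infinity gives genuine equivalence. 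So the honest statement one proves by induction is: \emph{for all $k\ge 1$ and all $l\ge 1$, surgery along a repeated $\w_l$-tree changes a diagram only up to $\w_{k+1}$-equivalence, by a union of w-trees of degree $\ge l+1$}, and Lemma \ref{lem:h}'s conclusion follows. I expect this interplay — controlling higher-degree debris while performing the degree-reducing expansion — to be the only real subtlety; everything else is an application of the w-tree calculus already assembled in Sections \ref{sec:wtreesurg} and \ref{sec:tech}.
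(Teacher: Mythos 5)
There is a genuine gap, and it sits exactly at the hard case of the lemma. When the two repeated endpoints are \emph{tails} lying in the two different subtrees $S_1$ and $S_2$ at the expanded vertex (or more generally two tails of $T$, which is the paper's Case 2), your claimed mechanism fails: after one application of (E) the four resulting trees' heads occur in the commutator order $A\,\ov{B}\,\ov{A}\,B$, so neither pair of parallel trees has \emph{adjacent} heads, and the Inverse Lemma \ref{lem:inverse} does not apply to kill them ``two at a time''; moreover none of the four trees need be repeated, so the induction hypothesis does not apply either. (If all four cancelled as you assert, every w-tree surgery would be trivial.) This is precisely the case the paper handles with a dedicated sliding argument: it expands the $T_2$-side fully into w-arrows, slides the remaining arrows and then the pair $T_1\cup\ov{T_1}$ along the ``$t_2$-arrows'' using the generalized Slide move (Remark \ref{rem:genslide}), which converts $T_1,\ov{T_1}$ into repeated trees of head--tail type (Case 1, treated by full expansion, deletion of self-arrows and the Brunnian property of Remark \ref{rem:expansion}); after deleting those up to homotopy and undoing the slides, the leftover $T_2\cup\ov{T_2}$ has adjacent heads and only then cancels by the Inverse Lemma. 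Your alternative ``normal form'' route — bringing the two tails to a common trivalent vertex by ``pushing a tail across vertices'' so as to invoke the Fork Lemma — does not correspond to any available w-tree move (Tails Exchange only permutes attaching points along the diagram; it never changes the tree's combinatorics), so that reduction is not justified.

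The second gap is your resolution of the ``main obstacle''. Proving only that, for every fixed $k$, surgery along a repeated tree changes the diagram up to $\w_{k+1}$-equivalence, and then ``letting the bound go to infinity'', does not give the statement to be proved: the conclusion is about the homotopy relation $\stackrel{\textrm{h}}{\sim}$, generated by self-(de)virtualization, and deleting non-repeated trees of high degree is not a homotopy; nor does $\w_k$-equivalence for all $k$ imply equivalence (or homotopy) of the diagrams. Relatedly, the debris produced by the Exchange and Head--Tail Exchange moves you invoke is not shown to be repeated, so it cannot be absorbed into the induction. The paper's proof is engineered to avoid this issue altogether: it uses only exact moves — (E), the (generalized) Slide, the Inverse Lemma — together with genuine homotopy deletions of self-arrows and head--tail repeated trees, so no uncontrolled higher-degree debris ever appears.
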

\begin{proof}
Let $T$ be a w-tree having two endpoints attached to a same component. 
We must distinguish between two cases, depending on whether these two endpoints contain the head of $T$ or not.\\
\textbf{Case 1:} The head and some tail $t$ of $T$ are attached to a same component.
Then we can simply expand $T$: the result contains a bunch of self-arrows, joining (a neighborhood of) $t$ to (a neighborhood of) the head of $T$. By the Brunnian-type property of w-trees (Remark \ref{rem:expansion}), deleting all these self-arrows yields a union of w-arrows which is equivalent to the empty one.\\
\textbf{Case 2:} Two tails $t_1$ and $t_2$ of $T$ are attached to a same component. 
Consider the path of edges connecting these two tails, 
and denote by $n$ the number of edges connecting this path to the head: 
we proceed by induction on this number $n$. 
The case $n=1$ is illustrated in Figure \ref{fig:hproof}.  
As the first equality shows, one application of $(E)$ yields four w-trees $T_1, \ov{T_1},T_2, \ov{T_2}$. 
\begin{figure}[!h]
   \includegraphics[scale=1]{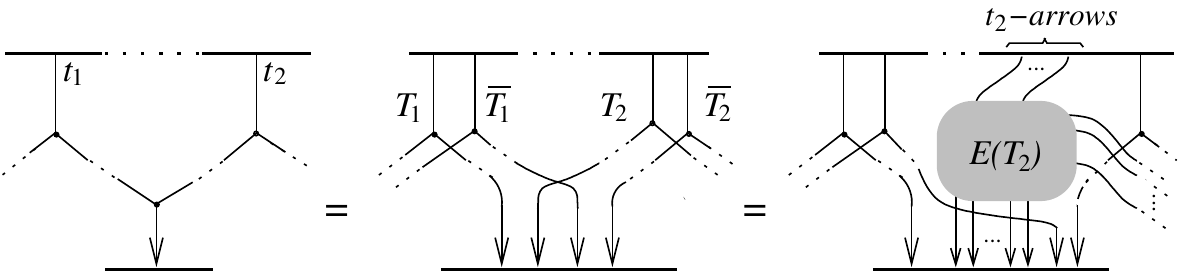}
  \caption{Proof of Lemma \ref{lem:h}}\label{fig:hproof}
\end{figure}
For the second equality, expand the w-tree $T_2$, and denote by $E(T_2)$ the result of this expansion. 
Let us call `$t_2$-arrows' the w-arrows in $E(T_2)$ whose tail lie in a neighborhood of $t_2$. We can successively slide all other w-arrows in   $E(T_2)$ along the $t_2$-arrows, and next slide the two w-trees $T_1$ and $\ov{T_1}$, using Remark \ref{rem:genslide}: 
the result is a pair of repeated w-trees as in Case 1 above, which we can delete up to homotopy. 
Reversing the slide and expansion process in $E(T_2)$, we then recover $T_2\cup \ov{T_2}$, which can be deleted by the Inverse Lemma \ref{lem:inverse}. 
The inductive step is clear, using (E) and the Inverse Lemma \ref{lem:inverse}. 
\end{proof}

\begin{remark}
Thanks to the previous result, the lemmas given in Section \ref{sec:tech} for w-tree presentations still hold when working up to homotopy. 
More precisely, Lemmas \ref{lem:twist}, \ref{lem:th} and \ref{lem:ihx} remain valid when using, in the statement, the notation $\stackrel{\textrm{h}}{\sim}$. 
This is a consequence of the proofs of Claims \ref{claim:inverse}, \ref{claim:th} and \ref{claim:ihx}, which show that the equality in these lemmas is achieved by surgery along repeated w-trees. 
In what follows, we will implicitly make use of this fact, and freely refer to the lemmas of the previous sections when using their homotopy versions. 
\end{remark}

\subsection{Homotopy classification of welded string links}\label{sec:hsl}

Let $n\ge 2$. 
For each integer $i\in \{1,\cdots,n\}$, 
denote by $\s_l(i)$ the set of all sequences $i_1\cdots i_l$ of $l$ distinct integers from $\{1,\cdots ,n\}\setminus \{i\}$ such that $i_j<i_l$ for all $j=1,\ldots,l-1$. Note that the lexicographic order endows the set $\s_l(i)$ with a total order. 

For any sequence $I=i_1\cdots i_{k-1}\in \s_{k-1}(i)$,  
consider the $\w_{k-1}$-trees $T_{Ii}$ and $\overline{T_{Ii}}$ for the trivial diagram $\mathbf{1}_n$ introduced in Lemma \ref{lem:Milnor}. 
Set 
$$ W_{Ii} := (\1_n)_{T_{Ii}}\quad \textrm{and} \quad W_{Ii}^{-1} := (\1_n)_{\overline{T_{Ii}}}. $$
We prove the following (compare with Theorem 4.3 of \cite{yasuhara}). 
This gives a complete list of representatives for welded string links up to homotopy. 
\begin{theorem}\label{thm:hrep}
Let $L$ be an $n$-component welded string link. 
Then $L$ is homotopic to $l_1\cdots l_{n-1}$, where for each $k$, 
$$ l_k = \prod_{i=1}^n \prod_{I\in \s_{k}(i)} \left(  W_{Ii} \right)^{x_I}, \textrm{ where }x_I=\left\{\begin{array}{ll}
\mu^w_{ji}(L) & \textrm{if $k=1$ and $I=j$,}\\
\mu^w_{Ii}(L) - \mu^w_{Ii}(l_1\cdots l_{k-1}) & \textrm{if $k>1$.} 
\end{array}
\right.$$
\end{theorem}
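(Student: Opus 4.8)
The plan is to build the claimed representative degree by degree, using the $\w_k$-equivalence machinery adapted to homotopy. First I would recall that, since we are working up to homotopy, Lemma \ref{lem:h} lets us discard all repeated w-trees, so every w-tree that survives has all of its endpoints on pairwise distinct components; in particular a $\w_{k}$-tree whose head lies on component $i$ has its $k$ tails on $k$ distinct components of $\{1,\dots,n\}\setminus\{i\}$. Starting from an arbitrary $n$-component welded string link $L$, Theorem \ref{thm:main1} gives an Arrow presentation, i.e. $L=(\1_n)_W$ for some union $W$ of w-arrows. By Lemma \ref{lem:separate} applied `up to homotopy' (which is legitimate by the Remark following Lemma \ref{lem:h}), together with repeated use of Lemma \ref{cor:separate} and Corollary \ref{lem:exchange}, I would arrange that, up to $\w_{k+1}$-equivalence (hence a fortiori up to homotopy once we push $k$ high enough), $L$ is a product of isolated $\w_l$-trees for $l=1,2,\dots$, each contained in a disk meeting the components in single trivial arcs.

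The core inductive step is to show that, after the reductions above, every isolated non-repeated $\w_k$-tree $T$ with head on component $i$ and tails on a set of distinct indices can be brought, modulo repeated w-trees and modulo w-trees of strictly higher degree, into a product of the standard models $T_{Ii}$ of Lemma \ref{lem:Milnor}. This is exactly the homotopy analogue of the argument proving Lemma \ref{lem:wklong}: using the IHX Lemma \ref{lem:ihx} one reduces $T$ to a left-normed (caterpillar) shape; the Antisymmetry Lemma \ref{lem:as} and the Twist Lemma \ref{lem:twist} normalize twists; the Tails Exchange move (3) reorders the tails into the prescribed increasing order $i_1<\cdots<i_{k-1}<i_k=i$ of a sequence in $\s_{k-1}(i)$, at the cost only of higher-degree w-trees; and any tail/head reordering needed between different trees is paid for by Corollary \ref{lem:exchange}, again only in higher degree. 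So each degree-$k$ layer becomes a product $\prod_i\prod_{I\in\s_k(i)} W_{Ii}^{\,x_I}$ for some integers $x_I$, plus a union of w-trees of degree $\ge k+1$, up to homotopy.

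It then remains to identify the exponents $x_I$, and this is where welded Milnor invariants enter. By Lemma \ref{lem:Milnor}, $\mu^w_{Ii}(W_{Ii})=1$ while $\mu^w_{\sigma(I)i}(W_{Ii})=0$ for every nontrivial permutation $\sigma$ of the first $k-1$ indices, and $\mu^w_{Ii}(W_{Ii}^{-1})=-1$; moreover any welded Milnor invariant of length $\le k$ is unchanged by surgery along a w-tree of degree $\ge k$ (Remark \ref{rem:Milnorwk}), and length-$k$ invariants indexed by non-repeated sequences are homotopy invariants (Lemma \ref{lem:milnorhomotopy}). Combining these facts with the additivity of welded Milnor invariants under stacking (Lemma \ref{lem:Madditive}) and computing $\mu^w_{Ii}$ of both sides of the claimed decomposition, one gets, inductively on $k$, that the exponent of $W_{Ii}$ in the $k$th layer must equal $\mu^w_{Ii}(L)-\mu^w_{Ii}(l_1\cdots l_{k-1})$ (and $\mu^w_{ji}(L)$ when $k=1$), which is precisely $x_I$ in the statement. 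Since there are only finitely many non-repeated sequences of indices in $\{1,\dots,n\}$, only finitely many layers $k=1,\dots,n-1$ can be nonempty, and the process terminates with $L\stackrel{\textrm{h}}{\sim} l_1\cdots l_{n-1}$.

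The main obstacle I expect is the bookkeeping in the inductive normalization step: keeping careful track, at each degree, of exactly which auxiliary higher-degree w-trees are created by the Exchange/Slide/IHX/Twist moves, verifying via Lemma \ref{cor:separate} that they can always be pushed up one degree without disturbing the already-normalized lower layers, and making sure the induction on degree is genuinely well-founded (which it is, because non-repeated w-trees on $n$ components have degree at most $n-1$, so the homotopy relation automatically kills everything beyond that level). The Milnor-invariant computation in the last paragraph is essentially the classical Habegger--Lin argument and should be routine given the additivity and realization lemmas already in hand.
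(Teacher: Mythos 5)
Your proposal follows essentially the same route as the paper's proof: separate the w-trees degree by degree via Lemma \ref{cor:separate} and Corollary \ref{lem:exchange}, delete repeated trees by Lemma \ref{lem:h}, normalize each non-repeated tree into the standard models $T_{Ii}$ using the Antisymmetry, IHX and Twist Lemmas (this is exactly the paper's Claim \ref{claim:sep}), and then identify the exponents by the realization, additivity, higher-degree invariance and homotopy invariance properties of welded Milnor invariants, with termination coming from the fact that w-trees of degree $\ge n$ on $n$ components are necessarily repeated. The argument is correct and matches the paper's, up to minor bookkeeping differences in how the degree-by-degree separation is phrased.
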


As a consequence, we recover the following classification results.
\begin{corollary}\label{thm:wsl}
Welded string links are classified up to homotopy by welded Milnor invariants indexed by non-repeated sequences. 
\end{corollary}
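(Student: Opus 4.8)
The plan is to deduce Corollary \ref{thm:wsl} from Theorem \ref{thm:hrep}. The corollary has two halves: (i) welded Milnor invariants indexed by non-repeated sequences \emph{are} homotopy invariants of welded string links, and (ii) they form a \emph{complete} invariant, i.e.\ two welded string links with the same such invariants are homotopic. Part (i) is already essentially in hand: Lemma \ref{lem:milnorhomotopy} states precisely that $\mu^w_I$ is homotopy-invariant whenever $I$ has no repeated index, so nothing further is needed there.

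For part (ii), the strategy is to use the explicit normal form provided by Theorem \ref{thm:hrep}. Given a welded string link $L$, that theorem produces a specific representative $L_0 = l_1\cdots l_{n-1}$, built as an iterated product of the model pieces $W_{Ii}$ and $W_{Ii}^{-1}$, with exponents $x_I$ determined recursively by the non-repeated Milnor invariants of $L$ (and of the partial products $l_1\cdots l_{k-1}$). The key point is that these exponents depend on $L$ \emph{only through} the numbers $\mu^w_{Ii}(L)$ for non-repeated sequences $Ii$: the recursion computes $x_I$ at stage $k$ from $\mu^w_{Ii}(L)$ and from $\mu^w_{Ii}(l_1\cdots l_{k-1})$, and the latter is a quantity intrinsic to the already-constructed lower pieces, hence itself ultimately a function of the $\mu^w$'s of $L$. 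Therefore, if two welded string links $L$ and $L'$ share all welded Milnor invariants indexed by non-repeated sequences, then the recursion yields the identical normal form $L_0 = L_0'$, and by Theorem \ref{thm:hrep} we get $L \stackrel{\textrm{h}}{\sim} L_0 = L_0' \stackrel{\textrm{h}}{\sim} L'$. Combined with part (i), this shows the map sending a welded string link to its tuple of non-repeated Milnor invariants is a well-defined complete invariant, which is exactly the statement of the corollary.

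The one genuinely substantive point to check — and the step I'd expect to require the most care — is that the recursive definition of the $x_I$ is well-founded and depends only on non-repeated Milnor data: concretely, one must verify that each partial product $l_1\cdots l_{k-1}$ appearing in the formula has only non-repeated Milnor invariants below the relevant length contributing, so that $\mu^w_{Ii}(l_1\cdots l_{k-1})$ is determined, and that no repeated-index invariant ever enters. This follows from the realization computation in Lemma \ref{lem:Milnor} (the $W_{Ji}$ realize exactly the non-repeated invariant $\mu^w_{Ji}$ and nothing lower) together with the additivity Lemma \ref{lem:Madditive}, which lets one compute the Milnor invariants of a product of the model pieces in terms of those of the factors. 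Once this bookkeeping is in place, the corollary is immediate.

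Thus the proof is short: cite Lemma \ref{lem:milnorhomotopy} for invariance; then, given $L$ and $L'$ with equal non-repeated welded Milnor invariants, observe that the data feeding the recursion in Theorem \ref{thm:hrep} coincide, conclude $L$ and $L'$ have the same normal form, and invoke Theorem \ref{thm:hrep} to obtain $L \stackrel{\textrm{h}}{\sim} L'$. No new constructions are needed beyond unwinding the statement of Theorem \ref{thm:hrep}.
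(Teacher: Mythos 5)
Your proposal is correct and matches the paper's intended derivation: the corollary is deduced from Theorem \ref{thm:hrep}, whose normal form depends only on the non-repeated welded Milnor invariants of $L$, together with Lemma \ref{lem:milnorhomotopy} for invariance. The paper gives no separate argument beyond this, so your write-up is essentially the same proof, just spelled out in more detail.
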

Corollary \ref{thm:wsl} was first shown by Audoux, Bellingeri, Wagner and the first author in \cite{ABMW}: 
their proof consists in defining a global map from welded string links up to homotopy to conjugating automorphisms of the reduced free group,  then to use Gauss diagram to build an inverse map. 
\begin{remark}
Corollary \ref{thm:wsl} is a generalization of the link-homotopy classification of string links by Habegger and Lin \cite{HL}: it is indeed shown in \cite{ABMW} that string links up to link-homotopy  embed in welded string links up to homotopy. 
However, Theorem \ref{thm:hrep} does not allow to recover the result of \cite{HL}.
By Remark \ref{rem:classico}, it only implies that two classical string link diagrams are related by a sequence of isotopies and self-(de)virtualizations if and only if they have same Milnor invariants. 
\end{remark}

\begin{proof}[Proof of Theorem \ref{thm:hrep}]
Let $L$ be an $n$-component welded string link. 
Pick an Arrow presentation for $L$. 
By Lemma \ref{cor:separate}, we can freely rearrange the w-arrows up to $\w_n$-equivalence, so that 
$$ L \stackrel{n}{\sim} \prod_{j\neq i} \left(  W_{ji} \right)^{x_{ji}}\cdot (\1_n)_{R_1}\cdot  (\1_n)_{S_{\ge 2}}, $$ 
for some integers $x_{ji}$, where $R_1$ is a union of self-arrows, and $S_{\ge 2}$ is a union of w-trees of degree in $\{2,\cdots,n-1\}$.  
Up to homotopy, we can freely delete all self-arrows, and using the properties of Milnor invariants 
(Lemmas \ref{lem:Madditive} and \ref{lem:Milnor}, Remark \ref{rem:Milnorwk}, and Lemma \ref{lem:milnorhomotopy}), 
we have that $x_{ji}=\mu^w_{ji}(L)$ for all $j\neq i$.  
Hence we have 
$$ L\stackrel{\textrm{h}}{\sim}  l_1\cdot  (\1_n)_{S_{\ge 2}}. $$ 
Next, we can separate, by a similar procedure, all $\w_2$-trees in $S_{\ge 2}$. 
Repeated $\w_2$-trees can be deleted thanks to  Lemma \ref{lem:h}. 
Next, we need the following general fact,\footnote{This is merely a w-tree version of the well-known fact that any Jacobi tree diagram can be written, up to AS and IHX, as a linear sum of Ôlinear' tree diagrams, see e.g. \cite[Fig.~3]{HaMe1}. }  which is easily checked using the Antisymmetry, IHX and Twist Lemmas \ref{lem:as}, \ref{lem:ihx} and \ref{lem:twist}. 
\begin{claim}\label{claim:sep}
Let $T$ be a non repeated $\w_k$-tree for $\1_n$ ($k\ge 2$), such that the head of $T$ is attached to the $i$th strand of $\1_n$. 
Then 
 $$ (\1_n)_T \stackrel{h}{\sim}  \prod_{i=1}^N (\1_n)_{T_i}, $$
for some $N\ge 1$, where each $T_i$ is a copy of either $T_{Ii}$ or $\overline{T}_{Ii}$ for some $I\in \mathcal{S}_{k-2}(i)$. 
\end{claim}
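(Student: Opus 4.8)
The plan is to prove Claim~\ref{claim:sep} by a double induction: an outer induction on the degree $k$ of the w-tree $T$, and, for fixed $k$, an inner induction that measures how far $T$ is from being one of the `standard' trees $T_{Ii}$. Recall that $T_{Ii}$ (from Lemma~\ref{lem:Milnor}, Figure~\ref{fig:wMilnor}) is a `caterpillar' or `linear' w-tree: all its trivalent vertices lie on a single path, the tails are attached in increasing index order $i_1,\dots,i_{k-1}$ from left to right along $\1_n$, the head is on the $i$th strand, and there are no twists. Since $T$ is non-repeated, its $k-1$ tails sit on $k-1$ distinct strands, all different from the strand $i$ carrying the head; this is exactly the index data of a sequence in $\s_{k-2}(i)$ once we account for which tail is `innermost'.

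The key tool is the standard fact — stated in the footnote to the claim — that any Jacobi tree can be rewritten, modulo AS and IHX, as a $\mathbb{Z}$-linear combination of linear (caterpillar) trees. The w-tree incarnations of AS and IHX are the Antisymmetry Lemma~\ref{lem:as}, the IHX Lemma~\ref{lem:ihx}, and the Twist Lemma~\ref{lem:twist} (the latter is needed because IHX and AS introduce twists, which Lemma~\ref{lem:twist} lets us push toward the terminal edge and then absorb via twist involutivity, at the cost of modifying the orientation/sign, i.e.\ replacing $T_{Ii}$ by $\overline{T}_{Ii}$). Crucially, all three of these lemmas hold \emph{up to homotopy} in the sense made precise in the Remark following Lemma~\ref{lem:h}: each is realized by surgery along repeated w-trees, which can be deleted up to homotopy. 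So here is how I would run the argument. First, using Antisymmetry repeatedly, arrange that at every trivalent vertex of $T$ the two incoming edges are as we like; then use IHX to convert any `non-caterpillar' branching into a sum of trees each with fewer branchings off the main path, strictly decreasing the inner-induction quantity. Each IHX application produces a sum of two terms of the \emph{same} degree $k$, plus — by the homotopy versions of Lemmas~\ref{lem:ihx} and \ref{lem:as}, which are only equalities modulo w-trees of degree $>k$ — possibly extra w-trees of degree $>k$; but these are handled by the outer induction on $k$ (a w-tree of degree $>n-1$ for $\1_n$ is automatically repeated, so the recursion terminates, and at intermediate degrees we re-apply the claim). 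Once $T$ is a caterpillar, use Tails Exchange (move (3), valid freely for tails on distinct components) together with Antisymmetry to sort the tails into increasing index order and to put the head in the position matching the definition of $T_{Ii}$; finally use the Twist Lemma and twist involutivity to either clear all twists (giving $T_{Ii}$) or reduce to a single twist on the terminal edge (giving $\overline{T}_{Ii}$). This expresses $(\1_n)_T$, up to homotopy, as a product of copies of $(\1_n)_{T_{Ii}}$ and $(\1_n)_{\overline{T}_{Ii}}$ — which is the statement, since a product of $W_{Ii}$ and $W_{Ii}^{-1}$ factors is exactly such a product.

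The main obstacle, and the point that deserves the most care, is bookkeeping the error terms of degree $>k$ that accumulate every time IHX, Antisymmetry, or Twist is applied. The clean way to organize this is: fix the ambient $\1_n$, so there is a maximal relevant degree $N_{\max}=n-1$ (any w-tree of strictly larger degree has repeated endpoints hence is deletable up to homotopy by Lemma~\ref{lem:h}, Case~2); then prove the claim by \emph{downward} induction on $k$ from $N_{\max}$ to $2$. In the inductive step for a given $k$, one first disposes of all degree-$(k+1)$ (and higher) garbage using the inductive hypothesis, then performs the caterpillar-normalization of the degree-$k$ part — generating only degree-$(>k)$ garbage, which is again absorbed. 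The second, more routine obstacle is matching the combinatorial index conventions: one must check that the sequences $I\in\s_{k-2}(i)$ parametrizing caterpillar trees with head on strand $i$ really do biject with the homotopy-distinct caterpillar w-trees after Tails Exchange has sorted the tails, and that the `$i_j<i_l$' condition in the definition of $\s_l(i)$ corresponds exactly to the freedom of reordering tails via move~(3) while the innermost tail (adjacent to the head's vertex) plays the distinguished role of $i_{k-1}$. This is purely a matter of unwinding definitions and need not be spelled out in detail.
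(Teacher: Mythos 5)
Your proposal is correct and takes essentially the same route as the paper, which settles this claim in one line as "easily checked using the Antisymmetry, IHX and Twist Lemmas" (with the footnote on writing Jacobi trees as linear trees modulo AS/IHX): one applies the homotopy versions of these moves — valid, as you note, because their correction terms are repeated w-trees deletable by Lemma \ref{lem:h} — to reduce the non-repeated tree $T$ to a product of the standard linear trees $T_{Ii}$, $\overline{T}_{Ii}$. The only superfluous (and slightly self-contradictory) part is your downward-induction bookkeeping of degree $>k$ garbage, which is unnecessary precisely because those corrections are repeated and vanish up to homotopy; your remaining slips (a $\w_k$-tree has $k$ tails, and reordering tails within the tree is achieved by IHX/AS rather than by Tails Exchange, which only moves attaching points along the strands) are minor and do not affect the argument.
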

Hence, we obtain
$$ L\stackrel{h}{\sim} l_1\cdot  \prod_{i=1}^n \prod_{I\in \s_{2}(i)} \left(  W_{Ii} \right)^{x_I} \cdot (\1_n)_{S_{\ge 3}}, $$ 
for some integers $x_I$, where $S_{\ge 3}$ is a union of w-trees of degree in $\{ 3\cdots,n-1\}$. 
By using the properties of Milnor invariants, we have 
\begin{eqnarray*}
\mu^w_{Ii}(L)  & = & \mu^w_{Ii}\left( l_1\cdot  \prod_{i=1}^n \prod_{I\in \s_{2}(i)} \left(  W_{Ii} \right)^{x_I} \right) \\
                       & = & \mu^w_{Ii}(l_1) + \sum_{i=1}^n \sum_{I\in \s_{2}(i)} {x_I} \mu^w_{Ii}\left(  W_{Ii} \right) \\
                       & = & \mu^w_{Ii}(l_1) + x_{I},  
\end{eqnarray*}
thus showing that 
$$ L\stackrel{\textrm{h}}{\sim} l_1\cdot  l_2\cdot  (\1_n)_{S_{\ge 3}}. $$ 
Iterating this procedure, using Claim \ref{claim:sep} and the same properties of Milnor invariants, we eventually obtain that 
$L\stackrel{n}{\sim} l_1\cdots l_{n-1}$. 
The result follows by Lemma \ref{lem:h}, since w-trees of degree $\ge n$ for $\1_n$ are necessarily repeated.
\end{proof}

\begin{remark}
It was shown in \cite{ABMW} that Corollary \ref{thm:wsl}, together with the Tube map, gives homotopy classifications of ribbon tubes and ribbon torus-links (see Section \ref{sec:ribbon}). 
Actually, we can deduce easily a homotopy classification of ribbon string links in codimension $2$, in any dimension, see \cite{AMW}. 
\end{remark}

\section{Concluding remarks and questions}\label{sec:thisistheend}

\subsection{Welded arcs}\label{sec:arcs}

There is yet another class of welded knotted object that we should mention here. 
A \emph{welded arc} is an immersed oriented arc in the plane, up to generalized Reidemeister moves, OC moves, and the additional move of Figure \ref{fig:arc} (left-hand side). There, we represent the arc endpoints by large dots. 
We emphasize that these large dots are `free' in the sense that they can be freely isotoped in the plane. 
It can be checked that welded arcs have a well-defined composition rule, 
given by gluing two arc endpoints, respecting the orientations.  
This is actually a very natural notion from the $4$-dimensional point of view, see Section \ref{sec:watanabe} below. 
\begin{figure}[!h]
   \includegraphics[scale=0.9]{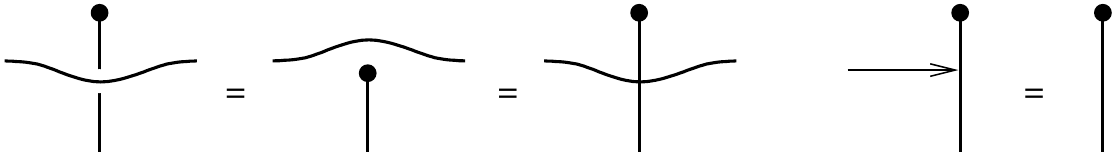}
  \caption{Additional moves for welded arcs, and the corresponding extra w-tree move}\label{fig:arc}
\end{figure}

Figure \ref{fig:arc} also gives the additional move for welded arcs in terms of w-trees: we can freely delete a w-tree whose head is adjacent to an arc endpoint. 
This is reminiscent of the case of welded long knots. Indeed, if a welded long knot is obtained from the trivial diagram $\1$ by surgery along a w-tree $T$ whose head is adjacent to an endpoint of $\1$, then by the Fork Lemma \ref{lem:fork}, 
we have $\1_T=\1$. This was observed in the proof of Lemma  \ref{thm:ftialexlong}. 
A consequence is that the proof of this lemma can be applied verbatim to welded arcs (in particular, the key fact of Figure \ref{fig:Twk} applies).
This shows that welded arcs up to $\w_k$-equivalence form an abelian group, which is isomorphic to that of welded long knots up to $\w_k$-equivalence, for any $k\ge 1$.  
Finite type invariants of welded arcs are thus classified similarly. 

To be more precise, there is a natural \emph{capping} map $C$ from welded long knots to welded arcs, which replaces the (fixed) endpoints by (free) large dots. This map $C$ is clearly surjective and the above observation says that it induces a bijective map when working up to $w_k$-equivalence. 
It seems however unknown whether the map $C$ itself is injective. 

\subsection{Finite type invariants of ribbon $2$-knots and torus-knots}\label{sec:watanabe}

As outlined above, the notion of welded arcs is relevant for the study of ribbon $2$-knots in $4$-space. 
Indeed, applying the Tube map to a welded arc, capping off by disks at the endpoints, yields a ribbon $2$-knot, and any ribbon $2$-knot arises in this way \cite{Satoh}. 
Combining this with the surjective map $C$ from Section \ref{sec:arcs} above, we obtain: 
\begin{fact}\label{fact1}
Any ribbon $2$-knot can be presented, via the Tube map, by a welded long knot. 
\end{fact}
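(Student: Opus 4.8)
The plan is to obtain Fact \ref{fact1} as an immediate composition of two surjectivity statements already available: Satoh's description of ribbon $2$-knots via the Tube map, and the surjectivity of the capping map $C$ of Section \ref{sec:arcs}. First I would recall, following \cite{Satoh}, that applying the Tube map to a welded arc and then capping off the two (free) endpoints with disks produces a ribbon $2$-knot, and that \emph{every} ribbon $2$-knot arises in this way. Second, I would invoke the capping map $C$ from welded long knots to welded arcs, which replaces the fixed endpoints of a long knot by free large dots; as noted in Section \ref{sec:arcs}, this map is surjective.

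Given a ribbon $2$-knot $R$, the argument then runs as follows. By Satoh's result, choose a welded arc $\alpha$ whose tube $\Tube(\alpha)$, capped by two disks, equals $R$. By surjectivity of $C$, choose a welded long knot $K$ with $C(K)=\alpha$. Then $K$ presents $R$ via the Tube map, in the sense that the ribbon $2$-knot associated to $K$ --- namely $\Tube(C(K))$ capped off by disks, equivalently $\Tube(K)$ with its two boundary circles capped --- is exactly $R$. The only point requiring attention is to fix, once and for all, the convention for what "presented via the Tube map" means for an object with endpoints: the ribbon $2$-knot attached to a welded long knot is the one obtained by first applying $C$, then Tube, then capping by disks.

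I do not expect any genuine obstacle here. All the content sits in Satoh's theorem and in the (elementary) surjectivity of $C$, and the Fact is essentially their juxtaposition. If a fully self-contained account were desired, the only mild subtlety would be to check that tubing a welded arc and then capping agrees, as a ribbon surface in $4$-space, with tubing a long-knot representative and then capping its boundary circles; but this compatibility is built into the definition of the Tube map and of $C$, so no separate verification is needed.
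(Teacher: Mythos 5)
Your proof is correct and follows the paper's own argument: the paper obtains Fact \ref{fact1} exactly by combining Satoh's result that every ribbon $2$-knot arises from a welded arc via Tube plus capping, with the surjectivity of the capping map $C$ from welded long knots to welded arcs (Section \ref{sec:arcs}). Nothing essential differs in your route.
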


Recall that K.~Habiro introduced in \cite{Habiro} the notion of $C_k$-equivalence, and more generally the calculus of claspers, 
and proved that two knots share all finite type invariants of degree $<k$ if and only if they are $C_k$-equivalent. 
As a $4$-dimensional analogue of this result, T.~Watanabe introduced in \cite{watanabe} the notion of $RC_k$-equivalence, and a topological calculus for ribbon $2$-knots. He proved the following.
\begin{theorem}\label{thm:watanabe}
Two ribbon $2$-knots share all finite type invariants of degree $<k$ if and only if they are $RC_k$-equivalent. 
\end{theorem}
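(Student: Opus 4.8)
The plan is to reduce Watanabe's theorem to the welded-level results already established, using the Tube map as a dictionary between the two settings. The key observation is that $RC_k$-equivalence of ribbon $2$-knots corresponds, under the Tube map, to $\w_k$-equivalence of welded long knots. Indeed, Watanabe's $RC_k$-move is, topologically, a ``surgery along a ribbon clasper of degree $k$'', and inflating a welded surgery along a $\w_k$-tree via the Tube map produces precisely such a ribbon clasper (this is the $4$-dimensional incarnation of Remark~\ref{rem:expansion} and the discussion in Remark~\ref{rem:ftitube}, where the virtualization move corresponds to a ``crossing change at a crossing circle''). So the first step is to make this correspondence precise: show that if two welded long knots $K, K'$ are $\w_k$-equivalent, then $\Tube(K)$ and $\Tube(K')$ are $RC_k$-equivalent, and conversely that every $RC_k$-move between ribbon $2$-knots can be realized by a $\w_k$-move between welded long knots presenting them (using Fact~\ref{fact1}).

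Granting this, the theorem follows by combining three ingredients. First, by Fact~\ref{fact1}, any ribbon $2$-knot is $\Tube(K)$ for some welded long knot $K$; write $R = \Tube(K)$, $R' = \Tube(K')$. Second, by Remark~\ref{rem:faithful} and Remark~\ref{rem:ftitube}, finite type invariants of ribbon $2$-knots correspond exactly to finite type invariants of welded long knots: a degree-$k$ welded invariant $v$ extends to a degree-$k$ invariant $v^{(4)}$ with $v^{(4)}(\Tube(D)) = v(D)$, and conversely the finite type filtration for ribbon $2$-knots is generated by crossing changes at crossing circles, which pull back to virtualization moves. Hence $R$ and $R'$ share all finite type invariants of degree $<k$ if and only if $K$ and $K'$ do. Third, by Corollary~\ref{cor:ftiwk}, the latter holds if and only if $K$ and $K'$ are $\w_k$-equivalent, which by the first step is equivalent to $R$ and $R'$ being $RC_k$-equivalent. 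Chaining these equivalences gives the statement.

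The main obstacle is the first step: carefully matching Watanabe's $RC_k$-calculus with $\w_k$-equivalence via the Tube map. The ``easy'' direction (welded $\w_k$-move $\Rightarrow$ topological $RC_k$-move) requires knowing explicitly how $\Tube$ acts on a neighborhood of a $\w_k$-tree and recognizing the resulting local picture as one of Watanabe's ribbon-clasper surgeries; this is essentially a careful reading of \cite{Satoh} and \cite{watanabe} side by side, together with the observation that the iterated-commutator structure of the expansion (Remark~\ref{rem:commutator}) matches the degree filtration on ribbon claspers. The converse direction is more delicate, since a given ribbon $2$-knot may admit many welded long knot presentations (the Tube map is not injective), and one must check that an $RC_k$-move on the surface can always be lifted to a $\w_k$-move on \emph{some} presentation — here one can use that any two welded long knots presenting the same ribbon $2$-knot are $\w_1$-equivalent (since they are both $\w_1$-equivalent to $\1$ by Section~\ref{sec:w1}, as any two welded long knots are), though one actually needs the stronger comparison that they are $\w_k$-equivalent for the argument to close, which may require invoking the isomorphism of abelian groups underlying Corollary~\ref{cor:abelian} together with Remark~\ref{rem:ftitube}. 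An alternative, cleaner route that sidesteps the lifting problem entirely: prove directly that both ``$RC_k$-equivalence'' and ``sharing finite type invariants of degree $<k$'' are detected on ribbon $2$-knots by the same data — namely the normalized Alexander polynomial truncated at order $k$ (by Corollary~\ref{cor:topo2} for the finite-type side, and by Watanabe's own computation on the $RC_k$ side) — and conclude that the two relations coincide. This is likely the shortest path and is the one I would pursue, falling back on the explicit Tube-map dictionary only if Watanabe's paper does not directly supply the needed invariant computation.
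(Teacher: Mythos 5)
The forward (hard) implication of your proposal is essentially the paper's argument, and that part is fine: if $R$ and $R'$ share all finite type invariants of degree $<k$, then they share the coefficients $\alpha_i$, $i<k$ (these are finite type of degree $i$ for ribbon $2$-knots by \cite{HKS}); by Remark~\ref{rem:faithful} the welded long knot presentations $K,K'$ furnished by Fact~\ref{fact1} share the same $\alpha_i$, hence are $\w_k$-equivalent by Corollary~\ref{cor:ftiwk}, and Fact~\ref{fact2} pushes this to $RC_k$-equivalence of $R,R'$. Note that only the $\alpha_i$ need to cross the bridge, so your stronger claim that finite type invariants ``correspond exactly'' at the two levels is unnecessary (and, as justified only by Remarks~\ref{rem:faithful} and \ref{rem:ftitube}, incomplete in one direction --- that direction in fact needs Corollary~\ref{cor:ftiwk} again).

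The genuine gap is in the converse implication. You route ``$RC_k$-equivalent $\Rightarrow$ shared finite type invariants'' through a lifting of $RC_k$-moves on surfaces to $\w_k$-equivalences of welded presentations. That lifting is nowhere established: Fact~\ref{fact2} goes only one way, and your suggested patches do not close it --- Corollary~\ref{cor:abelian} and Remark~\ref{rem:ftitube} give no control over what an $RC_k$-move does to a welded presentation, while your ``cleaner route'' is circular, since ``$RC_k$-equivalence is detected by the truncated Alexander polynomial'' contains precisely the hard half of the statement being reproved, and Corollary~\ref{cor:topo2} is itself deduced in the paper from the present theorem's argument. The paper avoids all of this: the converse is the \emph{easy} implication and is simply quoted from Watanabe (\cite[Lem.~5.7]{watanabe}), i.e. $RC_k$-moves preserve finite type invariants of degree $<k$, the $4$-dimensional analogue of Proposition~\ref{prop:ftiwk}; no lifting through the Tube map is needed. (Once that lemma is in hand, your lifting does become true --- an $RC_k$-move preserves the $\alpha_i$, $i<k$, so the presentations are $\w_k$-equivalent by Corollary~\ref{cor:ftiwk} --- but at that point it is redundant.)
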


We will not recall the definition of the $RC_k$-equivalence here, but only note the following.
\begin{fact}\label{fact2}
If two welded long knots are $w_k$-equivalent, then their images by the Tube map are $RC_k$-equivalent. 
\end{fact}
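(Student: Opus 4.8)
The statement to prove is Fact~\ref{fact2}: if two welded long knots are $\w_k$-equivalent, then their images under the Tube map are $RC_k$-equivalent.

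The plan is to reduce the $\w_k$-equivalence relation to its generators and track what each generator does under the Tube map. By definition, two $\w_k$-equivalent welded long knots are connected by a finite sequence of moves, each of which is either a generalized Reidemeister move or a surgery along a $\w_l$-tree with $l \ge k$. Generalized Reidemeister moves do not change the underlying welded knotted object, hence act trivially after applying the Tube map (which is well-defined on welded objects). So the entire content is to show that a single surgery along a $\w_l$-tree, $l \ge k$, is sent by the Tube map to an $RC_k$-move (or more precisely, to an $RC_l$-move, which is in particular an $RC_k$-move since $RC_l$-equivalence refines $RC_k$-equivalence).

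The key step is therefore to understand surgery along a $\w_l$-tree $T$ in terms of the defining calculus of $RC_k$-equivalence. First I would use the Expansion move (E) to replace surgery along $T$ by surgery along its expansion, a union of w-arrows whose heads are arranged along the diagram according to an $l$-fold iterated commutator in the tails $1,\dots,l$ (Remark~\ref{rem:commutator}), together with the Brunnian-type property of Remark~\ref{rem:expansion}: deleting all w-arrows with tails on a single component yields a trivial surgery. This Brunnian/commutator structure is precisely the diagrammatic shadow of Watanabe's $RC_k$-move, which is built from a ``ribbon clasper'' of degree $k$ whose leaves grab the surface in a commutator-like fashion. So the argument is: apply the Tube map to the local picture of surgery along the expansion of $T$; invoke the known compatibility of the Tube map with the group system (Remark~\ref{rem:faithful}) to identify the effect on the ribbon $2$-knot with the effect of attaching a ribbon clasper whose associated word is the same $l$-fold commutator; and conclude that the two ribbon $2$-knots differ by an $RC_l$-move, hence are $RC_l$-equivalent and a fortiori $RC_k$-equivalent.

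The main obstacle I expect is dictionary-matching: carefully aligning our combinatorial w-tree local model with Watanabe's topological $RC_k$-calculus in \cite{watanabe}. This requires knowing the precise definition of the $RC_k$-move (which the excerpt deliberately omits), and checking that the iterated-commutator pattern produced by the expansion of a $\w_l$-tree corresponds exactly to the surgery instruction carried by a degree-$l$ ribbon clasper. A clean way to bypass the finer combinatorics is to argue at the level of the ``realizing'' w-trees $L_l$ (Figure~\ref{fig:K0}) or the Milnor-type trees $T_I$ (Figure~\ref{fig:wMilnor}): by Lemma~\ref{lem:wklong} and the results of Section~\ref{sec:wk_knots}, any $\w_l$-surgery changes a welded long knot, up to $\w_{l+1}$-equivalence, by connect-summing with copies of $L_l^{\pm 1}$; if one checks that $\Tube(L_l)$ is obtained from $\Tube(\1)$ by a single $RC_l$-move — which should follow from comparing the $\w_l$-tree defining $L_l$ with the elementary $RC_l$-clasper — then an induction on $l$ (descending through the degree filtration, using that $\w_{k+1}$-equivalence implies $\w_k$-equivalence and similarly for $RC$) closes the argument. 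Either route reduces to the same essential verification, which is geometric rather than combinatorial, and is the one genuinely non-formal point in the proof.
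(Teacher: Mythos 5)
Your primary route is essentially the paper's own proof: the paper likewise reduces to a single surgery along a $\w_l$-tree ($l\ge k$), obtains the $k=1$ case directly from the definitions (Watanabe's Fig.~3), and verifies the higher-degree cases by expanding the tree via (E) and matching the resulting commutator pattern against Watanabe's moves (his Fig.~6), leaving exactly the same dictionary check to the reader. One caution: your alternative route via $L_l$ and Lemma~\ref{lem:wklong} does not close, because the residual $\w_{l+1}$-equivalence regenerates the same problem in ever higher degree and the proposed induction on $l$ has no terminating case, so that shortcut should be discarded in favor of the direct verification.
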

\noindent This follows from the definitions for $k=1$ (see Figure 3 of \cite{watanabe}), and can be verified using (E) and Watanabe's moves \cite[Fig.~6]{watanabe} for higher degrees. 

Corollary \ref{cor:ftiwk} gives a welded version of Theorem \ref{thm:watanabe}, and can actually be used to reprove it. 
\begin{proof}[Proof of Theorem \ref{thm:watanabe}]
Let $R$ and $R'$ be two ribbon $2$-knots and, using Fact \ref{fact1}, let $K$ and $K'$ be two welded long knots representing $R$ and $R'$, respectively. 
If $R$ and $R'$ share all finite type invariants of degree $<k$, then they have same normalized coefficients of the Alexander polynomial $\alpha_i$ for $1<i<k$, by \cite{HKS}. As seen in Remark \ref{rem:faithful}, this means that $K$ and $K'$ have same $\alpha_i$ for $1<i<k$, hence are $\w_k$-equivalent by Corollary \ref{cor:ftiwk}. 
By Fact \ref{fact2}, this shows that $R$ and $R'$ are $RC_k$-equivalent, as desired. (The converse implication is easy, see \cite[Lem.~5.7]{watanabe}). 
\end{proof}

Using very similar arguments, we now provide quick proofs for the topological consequences of Corollaries \ref{cor:ftiwk} and \ref{cor:ftiwklong}. 

\begin{proof}[Proof of Corollary \ref{cor:topo2}]
If two ribbon $2$-knots have same invariants $\alpha_i$ for $1<i<k$, 
then the above argument using Corollary \ref{cor:ftiwk} shows that they are $RC_k$-equivalent. This implies that they 
cannot be distinguished by any finite type invariant (\cite[Lem.~5.7]{watanabe}).   
\end{proof}

\begin{proof}[Proof of Corollary \ref{cor:topo1}]
Let $T$ be a ribbon torus-knot. In order to show that $T$ and the trivial torus-knot share all finite type invariants, it suffices to show that they are $RC_k$-equivalent for any integer $k$. 
But this is now clear from Fact \ref{fact2}, since any welded knot $K$ such that $\Tube(K)=T$ is $\w_k$-equivalent to the trivial diagram, by Theorem \ref{thm:weldedknots}. 
\end{proof}

\subsection{Welded string links and universal invariant}\label{sec:wslfti}

We expect that Arrow calculus can be successfully used to study welded string links, beyond the homotopy case treated in Section \ref{sec:homotopy}. 
In view of Corollary \ref{cor:ftiwklong}, and of Habiro's work in the classical case \cite{Habiro}, 
it is natural to ask whether finite type invariants of degree $<k$ classify welded string links up to $\w_k$-equivalence. 
A study of the low degree cases, using the techniques of \cite{MY3}, seem to support this fact. 

A closely related problem is to understand the space of finite type invariants of weldeds string links. 
One can expect that there are essentially no further invariants than those studied in this paper, i.e. that the normalized Alexander polynomial and welded Milnor invariants together provide a universal finite type invariant of welded string links. 
One way to attack this problem, at least in the case of rational-valued invariants, is to relate those invariants to the universal invariant $Z^w$ of D.~Bar-Natan and Z.~Dancso \cite{WKO1}. 
It is actually already shown in \cite{WKO1} that $Z^w$ is equivalent to the normalized Alexander polynomial for welded long knots, 
and it is very natural to conjecture that the `tree-part' of $Z^w$ is equivalent to welded Milnor invariants, in view of the classical case \cite{HM}. 
Observe that, from this perspective, w-trees appear as a natural tool, as they provide a `realization' of the space of oriented diagrams where $Z^w$ takes its values (see also \cite{polyak_arrow}), 
just like Habiro's claspers realize Jacobi diagrams for classical knotted objects. 
In this sense, Arrow calculus provides the Goussarov-Habiro theory for welded knotted objects. 

\subsection{$\w_n$-equivalence versus $C_n$-equivalence}\label{sec:cneq}

Recall that, for $n\ge 1$, a $C_n$-move is a local move on knotted objects involving $n+1$ strands, as shown in Figure \ref{fig:cn}.
\begin{figure}[!h]
 \includegraphics[scale=0.9]{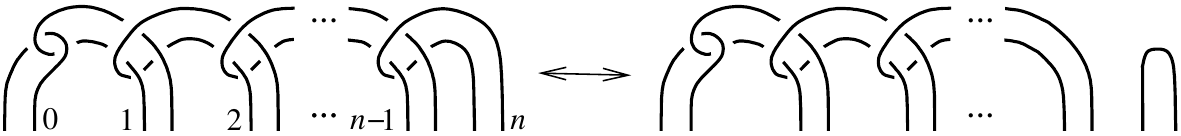}
 \caption{A $C_n$-move.}\label{fig:cn}
\end{figure} (A $C_1$-move is by convention a crossing change.)
The $C_n$-equivalence is the equivalence relation generated by $C_n$-moves and isotopies. 
\begin{proposition}\label{prop:wc}
For all $n\ge 1$, $C_n$-equivalence implies $\w_n$-equivalence. 
\end{proposition}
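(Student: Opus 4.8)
The plan is to show that a single $C_n$-move can be realized by surgery along $\w$-trees of degree $\ge n$, together with generalized Reidemeister moves; since $C_n$-equivalence is generated by $C_n$-moves and isotopies, this immediately yields the claimed implication. First I would recall that a $C_n$-move takes place inside a ball meeting the diagram in $n+1$ trivial strands, and that by Habiro's clasper language this move is precisely surgery along a \emph{simple tree clasper} of degree $n$ sitting in that ball. So it suffices to compare surgery along a degree-$n$ tree clasper with surgery along an appropriate union of $\w_n$-trees localized in the same ball.

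The key step is the observation, essentially Remark \ref{rem:expansion} (the Brunnian-type property) together with Remark \ref{rem:commutator}: the expansion of a $\w_k$-tree produces an iterated commutator of w-arrows, each of which is (by Figure \ref{fig:wcross}) a devirtualization move, i.e.\ a crossing change combined with virtual crossings that can be pushed away by the detour move. Thus surgery along a $\w_n$-tree on a trivial tangle produces a Brunnian tangle whose associated change of the Wirtinger presentation lies in $\Gamma_n F$ — exactly the behavior of a $C_n$-move, whose defining feature (as in \cite[\S 6]{Habiro}) is that the difference $L_T - L$ lies in the degree-$\ge n$ part of the relevant filtration and that deleting any strand trivializes it. Concretely, I would take the standard model $\w_n$-tree $T$ with tails on the first $n$ strands and head on the $(n+1)$st strand of the trivial $(n+1)$-tangle (as in Figure \ref{fig:wMilnor}), compute $(\1_{n+1})_T$ via the expansion, and verify by a direct isotopy/detour-move argument that the resulting diagram agrees, as a welded tangle, with the result of the $C_n$-move on the trivial tangle. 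This is the heart of the matter, and I expect it to be a finite but slightly delicate check: one must match the iterated-commutator pattern of heads along the diagram with the explicit clasper surgery picture, for one fixed choice of orientations, the remaining choices following by the Head/Tail Reversal moves (2) and twist involutivity as used throughout Section \ref{sec:moves}.

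With that local model in hand, the global argument is routine. Given two $C_n$-equivalent welded (or classical) knotted objects, connect them by a finite sequence of $C_n$-moves and isotopies. Each isotopy is in particular a sequence of generalized Reidemeister moves, hence allowed in the $\w_n$-equivalence. Each $C_n$-move occurs in some ball; inside that ball one rewrites the ambient diagram via an Arrow presentation (Proposition \ref{prop:wApres}) so that the $n+1$ strands are trivial, applies the local model to replace the $C_n$-move by surgery along a $\w_n$-tree plus generalized Reidemeister moves, and then undoes the local rewriting. All the intermediate steps are either generalized Reidemeister moves or surgeries along $\w_l$-trees with $l\ge n$, so the two objects are $\w_n$-equivalent.

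The main obstacle is precisely the explicit local comparison in the second paragraph: one needs to be careful that Habiro's $C_n$-move — which is defined for \emph{classical} tangles and a priori involves genuine classical crossings among all $n+1$ strands — is reproduced by the w-tree surgery, in which the w-tree itself carries only virtual crossings. The point that makes this work is that the classical crossings created by a w-arrow surgery lie only on the diagram strands (not on the tree), and the iterated-commutator arrangement of the heads along the $(n+1)$st strand reproduces exactly the clasper surgery pattern after applying the detour and w-detour moves (Remark \ref{rem:wdetour}) to remove the auxiliary virtual crossings. Once this single model computation is done for one orientation convention, everything else is bookkeeping already established in Sections \ref{sec:warrsurg}--\ref{sec:wtreesurg}.
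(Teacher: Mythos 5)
Your overall strategy (realize a single $C_n$-move by surgeries along w-trees of degree $\ge n$, then conclude by locality) is the same as the paper's, but the key step you propose is not correct as stated. You claim that surgery on the trivial $(n+1)$-strand tangle along the single standard $\w_n$-tree $T$ (tails on strands $1,\dots,n$, head on strand $n+1$) agrees, \emph{as a welded tangle}, with the result of the clasper surgery defining the $C_n$-move. This already fails for $n=2$: surgery along a simple degree-$2$ tree clasper on $\1_3$ gives the Borromean string link, all three of whose longitudes are nontrivial commutators, so that (by Remark \ref{rem:classico}) its welded Milnor invariants satisfy $\mu^w_{231}=\pm 1$, $\mu^w_{312}=\pm1$; by contrast, surgery along a single $\w_2$-tree only conjugates the strand carrying the head, the tail strands keep trivial longitudes, and hence $\mu^w_{231}=\mu^w_{312}=0$ (Lemma \ref{lem:Milnor}). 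Since welded Milnor invariants are welded invariants, no amount of isotopy, detour or w-detour moves can identify the two tangles, so the ``heart of the matter'' computation you plan cannot succeed. The weaker observation you offer in its place --- that both operations are Brunnian-type and change the Wirtinger presentation only by terms in $\Gamma_nF$ --- does not repair this: agreement of group-theoretic data is not known to imply $\w_n$-equivalence (the paper explicitly states that a $\w_k$-analogue of Habiro's Proposition 3.22 is not known), and in any case $\w_n$-equivalence must be exhibited by explicit surgeries.

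What is actually true, and what the paper proves, is weaker and requires genuine bookkeeping: one constructs by induction on $n$ a w-tree presentation $F_n$ of one side of the $C_n$-move over the trivial $(n+1)$-strand diagram, controlling the \emph{index} of each tree (the set of strands meeting its endpoints), so that every component of $F_n$ has index $\{0,1,\dots,i\}$ for some $i$; the other side of the move is then presented by the sub-family obtained by deleting the trees of full index $\{0,1,\dots,n\}$. Those deleted trees have degree $\ge n$, and their removal is realized by degree-$\ge n$ surgeries via the Inverse Lemma \ref{lem:inverse}; the inductive step rests on Corollary \ref{lem:exchange}, which lets one cancel the four auxiliary w-arrows of index $\{n-1,n\}$ at the cost of new trees of full index only. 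Your proposal contains no substitute for this induction, so the proof has a genuine gap at its central step.
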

\begin{proof}
It suffices to show that a $C_n$-move can be realized by surgery along w-trees of degree $\ge n$, which is done by induction. 
It is convenient to use the following notion; given a w-tree $T$ for a diagram with components labeled from $0$ to $n$, the  
\emph{index} of $T$ is the set of all indices $i$ such that $T$ intersects the $i$th component of $D$ at some endpoint.  
We prove the following. 
\begin{claim}\label{claim:cnwn}
For all $n\ge 1$, the diagram shown on the left-hand side of Figure \ref{fig:cn} is obtained from the $(n+1)$-strand trivial diagram 
by surgery along a union $F_n$ of w-trees, such that each component of $F_n$ has index $\{0,1,\cdots,i\}$ for some $i$. 
\end{claim}
Before showing Claim \ref{claim:cnwn}, let us observe that it implies Proposition \ref{prop:wc}. 
Note that, if we delete those $w$-trees in $F_n$ having index $\{0,1,...,n\}$, we obtain a $\w$-tree presentation of the right-hand side of Figure \ref{fig:cn}. Such w-trees have degree $\geq n$, 
and by the Inverse Lemma \ref{lem:inverse}, deleting them can be realized by surgery along w-trees of degree $\geq n$.  
Therefore we have shown Proposition \ref{prop:wc}. 

Let us now turn to the proof of Claim \ref{claim:cnwn}. 
The case $n=1$ is clear, since it was already noted that a crossing change can be achieved by a sequence of (de)virtualization moves or, equivalently, by surgery along w-arrows (see Section \ref{sec:w1}). 
Now, using the induction hypothesis, consider the following w-tree presentation for the $(n+1)$-strand diagram on the left-hand side of Figure \ref{fig:cn}: 
 $$ \textrm{ \includegraphics[scale=1]{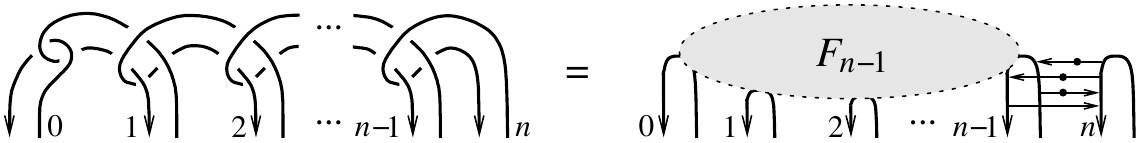}} $$
 
\noindent (Here, we have made a choice of orientation of the strands, but it is not hard to check that other choices can be handled similarly.) 
By moving their endpoints accross $F_{n-1}$, the four depicted w-arrows with index $\{n-1,n\}$ can be cancelled pairwise. 
By Corollary \ref{lem:exchange}, moving w-arrow ends accross $F_{n-1}$ can be made at the expense of additional w-trees with index $\{0,1,\cdots,n\}$. This completes the induction.
\end{proof}

\subsection{Arrow presentations allowing classical crossings}

In the definition of an Arrow presentation (Def. \ref{def:arrowpres}), we have restricted ourselves to diagrams with only virtual crossings. 
Actually, we could relax this condition, and consider more general Arrow presentations with both classical and virtual crossings. 
The inconvenience of this more general setting is that some of the moves involving w-arrows and crossings are not valid in general. 
For example, although passing a diagram strand \emph{above} a w-arrow tail is a valid move (as one can easily check using the OC move), passing \emph{under} a w-arrow tail is not permitted, as it would violate the forbidden UC move. 
Note that passing above or under a w-arrow head is allowed. 
Since one of the main interests of Arrow calculus resides, in our opinion, in its simplicity of use, we do not further develop this more general (and delicate) version in this paper. 

\subsection{Arrow calculus for virtual knotted objects}\label{sec:virtual}

Although we restricted ourselves to the study of welded (and classical) knotted objects, the main definitions of this paper can be applied verbatim for virtual ones. Specifically, we can use the notions of Arrow and w-tree presentations for virtual knotted objects. 
The main difference is that the corresponding calculus is more constrained, and significantly less simple in practice. 
Among the six Arrow moves of  Section \ref{sec:arrow_moves}, moves (1), (2), (4) and (5) are still valid, but the Tails Exchange move (3) is forbidden (we indeed saw that it is essentially equivalent to the OC move); the Slide move (6) is not valid in the given form, as the proof also uses the OC move, but a version can be given for virtual diagrams, which is closer in spirit to Gauss diagram versions of the Reidemeister III move. 
The calculus for w-trees is thus also significantly alterred. 

The $\w_k$-equivalence relation also makes sense for virtual objects. It is noteworthy that Proposition \ref{prop:ftiwk} still holds in this context: two virtual knotted objects that are $w_k$-equivalent ($k\ge 1$) cannot be distinguished by finite type invariants of degree $<k$. 
Indeed, as seen in Section \ref{sec:noteworthy}, the proof is mainly formal, and only uses the definition of w-trees, and more precisely their Brunnian property (Remark \ref{rem:expansion}). It would be interesting to study the converse implication for virtual (long) knots. 

From the universal invariant point of view, however, a full virtual extension of Arrow calculus should provide a diagrammatic realization of Polyak's algebra \cite{polyak_arrow}, which implies a significant enlargement; for example, vertices with one ingoing and two outgoing edges, and the moves involving such w-trees, should be investigated. 

\bibliographystyle{abbrv}
\bibliography{arrow}

\end{document}